\documentclass[10pt]{amsart}

\usepackage{amssymb,amsmath,amsthm}
\usepackage[all]{xy}
\usepackage{eucal}



\usepackage{color}

\theoremstyle{plain}
\newtheorem{thm}[subsection]{Theorem}

\newtheorem{prop}[subsection]{Proposition}
\newtheorem{assumption}[subsection]{Basic Assumption}

\newtheorem{cofibrancy}[subsection]{Cofibrancy Condition}

\theoremstyle{definition}
\newtheorem{defn}[subsection]{Definition}

\theoremstyle{remark}
\newtheorem{rem}[subsection]{Remark}


\makeatletter
\let\c@equation\c@subsection

\makeatother

\newcommand{\ZZ}{{ \mathbb{Z} }}

\newcommand{\DD}{{ \mathsf{D} }}

\newcommand{\Ho}{{ \mathsf{Ho} }}

\newcommand{\sSet}{{ \mathsf{sSet} }}

\newcommand{\Mod}{{ \mathsf{Mod} }}
\newcommand{\ModR}{{ \mathsf{Mod}_\capR }}

\newcommand{\Spectra}{{ \mathsf{Sp}^\Sigma }}

\newcommand{\Cat}{{ \mathsf{Cat} }}

\newcommand{\M}{{ \mathsf{M} }}

\newcommand{\SymSeq}{{ \mathsf{SymSeq} }}

\newcommand{\Set}{{ \mathsf{Set} }}

\newcommand{\Alg}{{ \mathsf{Alg} }}

\newcommand{\LL}{{ \mathsf{L} }}
\newcommand{\RR}{{ \mathsf{R} }}

\newcommand{\Cube}{{ \mathsf{Cube} }}

\newcommand{\TQ}{{ \mathsf{TQ} }}
\newcommand{\TAQ}{{ \mathsf{TAQ} }}
\newcommand{\K}{{ \mathsf{K} }}

\newcommand{\coAlg}{{ \mathsf{coAlg} }}
\newcommand{\res}{{ \mathsf{res} }}
\newcommand{\BK}{{ \mathsf{BK} }}
\newcommand{\CGHaus}{{ \mathsf{CGHaus} }}

\newcommand{\AlgJ}{{ \Alg_J }}
\newcommand{\coAlgK}{{ \coAlg_\K }}
\newcommand{\AlgO}{{ \Alg_\capO }}

\newcommand{\capX}{{ \mathcal{X} }}
\newcommand{\capY}{{ \mathcal{Y} }}

\newcommand{\capO}{{ \mathcal{O} }}
\newcommand{\capR}{{ \mathcal{R} }}

\newcommand{\capP}{{ \mathcal{P} }}

\newcommand{\powerset} {{ \capP }}

\newcommand{\Sk}{{ \mathrm{sk} }}

\newcommand{\ev}{{ \mathrm{ev} }}

\newcommand{\id}{{ \mathrm{id} }}
\newcommand{\op}{{ \mathrm{op} }}

\newcommand{\Smash}{{ \,\wedge\, }}

\newcommand{\tensor}{{ \otimes }}

\newcommand{\tensorcheck}{{ \check{\tensor} }}

\newcommand{\tensordot}{{ \dot{\tensor} }}

\newcommand{\wequiv}{{ \ \simeq \ }}

\newcommand{\Iso}{{  \ \cong \ }}
\newcommand{\Equal}{{ \ = \ }}
\newcommand{\rarrow}{{ \rightarrow }}
\newcommand{\larrow}{{ \leftarrow }}

\newcommand{\function}[3]{{ {#1}\colon\thinspace{#2}\rarrow{#3} }}
\newcommand{\functionlong}[3]{{ {#1}\colon\thinspace{#2}\longrightarrow{#3} }}

\DeclareMathOperator*{\colim}{colim}
\DeclareMathOperator*{\holim}{holim}

\DeclareMathOperator{\Map}{Map}

\DeclareMathOperator{\BAR}{Bar}
\DeclareMathOperator{\Cobar}{Cobar}

\DeclareMathOperator{\Tot}{Tot}
\DeclareMathOperator{\Sing}{Sing}

\DeclareMathOperator{\Hombold}{\mathbf{Hom}}
\DeclareMathOperator{\hombold}{\mathbf{hom}}
\DeclareMathOperator{\hofib}{hofib}
\DeclareMathOperator{\iter}{iterated}

\DeclareMathOperator{\im}{im}

\title[Derived {K}oszul duality and $\TQ$-homology completion]{Derived {K}oszul duality and $\TQ$-homology completion of structured ring spectra}

\author{Michael Ching}
\author{John E. Harper}

\address{Department of Mathematics and Statistics, Amherst College, Amherst, MA, 01002, USA}

\email{mching@amherst.edu}

\address{Department of Mathematics, The Ohio State University, Newark, 1179 University Dr, Newark, OH 43055, USA}
\email{harper.903@math.osu.edu}

\begin{document}

\begin{abstract}
Working in the context of symmetric spectra, we consider any higher algebraic structures that can be described as algebras over an operad $\capO$. We prove that the fundamental adjunction comparing $\capO$-algebra spectra with coalgebra spectra over the associated comonad $\K$, via topological Quillen homology (or $\TQ$-homology), can be turned into an equivalence of homotopy theories by replacing $\capO$-algebras with the full subcategory of $0$-connected $\capO$-algebras. This resolves in the affirmative the $0$-connected case of a conjecture of Francis-Gaitsgory.

This derived Koszul duality result can be thought of as the spectral algebra analog of the fundamental work of Quillen and Sullivan on the rational homotopy theory of spaces, and the subsequent $p$-adic and integral work of Goerss and Mandell on cochains and homotopy type---the following are corollaries of our main result: (i) $0$-connected $\capO$-algebra spectra are weakly equivalent if and only if their $\TQ$-homology spectra are weakly equivalent as
derived $\K$-coalgebras, and (ii) if a $\K$-coalgebra spectrum is
$0$-connected and cofibrant, then it comes from the $\TQ$-homology spectrum
of an $\capO$-algebra. We construct the spectral algebra analog of the unstable Adams spectral sequence that starts from the $\TQ$-homology groups $\TQ_*(X)$ of an $\capO$-algebra $X$, and prove that it converges strongly to $\pi_*(X)$ when $X$ is $0$-connected.
\end{abstract}

\maketitle

\section{Introduction}

Some of the most well-developed and powerful tools for studying spaces are those that relate their two primary invariants---homotopy groups and homology groups. The aim of this paper is to resolve in the affirmative the $0$-connected case of a conjecture of Francis-Gaitsgory \cite{Francis_Gaitsgory}, and subsequently, to prove the spectral algebra analogs of several foundational results known for spaces concerning completion, the unstable Adams spectral sequence, Quillen and Sullivan theory on (co)homology and homotopy type, and Koszul duality phenomena; e.g., in the new context of ring spectra, $E_n$ ring spectra, and more generally, algebras parametrized by operads $\capO$ of spectra. At the heart of our attack on the Francis-Gaitsgory conjecture is the construction of new tools and methods for studying spectral algebras---these powerful new tools for higher algebra are in the spirit of the work of Dundas \cite{Dundas_relative_K_theory} and Goodwillie \cite{Goodwillie_calculus_2}, where Dundas exploits to great effect the magic of Goodwillie's powerful higher cubical diagram theorems for spaces.

With the rapid developments in derived algebraic geometry (e.g., Francis \cite{Francis_tangent_complex}, Lurie \cite{Lurie_dag, Lurie_higher_algebra, Lurie_higher_topos}, and To\"en-Vezzosi \cite{Toen_Vezzosi_hag_1, Toen_Vezzosi_hag_2}) and algebraic $K$-theory (e.g., Hesselholt-Madsen \cite{Hesselholt_Madsen} and Rognes \cite{Rognes_logarithmic}), and the central nature of topological Quillen homology as a primary notion of a ``homology'' invariant that is sensitive to the algebraic-topological structure, the development of these new tools for spectral algebras will have rich potential payoffs for any applications exploiting structured ring spectra.

\subsection{Topological Quillen homology}
In \cite[II.5]{Quillen}, Quillen defined a notion of \emph{homology} for objects, in a wide variety of homotopical settings, to be the total left derived functor of abelianization, if it exists. For a given algebraic structure, Quillen homology comes in several flavors, depending on whether one is working in a reduced or relative setting (Section \ref{sec:quillen_homology_reduced_versus_relative}). Quillen homology, in the reduced setting of augmented commutative algebras, was studied and exploited to great effect by H.R. Miller \cite{Miller} in his proof of the Sullivan conjecture. In Miller's reduced setting, abelianization of an augmented commutative algebra $A$ is the indecomposable quotient $QX$ of the augmentation ideal $X$ of $A$, and Quillen homology in this reduced setting is the derived indecomposable quotient (or derived indecomposables for short) $\LL Q(X)$ of $X$. The subsequent work of Goerss \cite{Goerss_f2_algebras} is an extensive development, exploiting a homotopy point of view, of several properties of Quillen homology, in the same reduced setting used by Miller \cite{Miller}, of augmented commutative $\mathbb{F}_2$-algebras. In this context, Goerss \cite[4.3, 4.12]{Goerss_f2_algebras} also provides a comparison between Quillen homology in the reduced and relative settings, using the suspension $\Sigma A$ of an augmented commutative $\mathbb{F}_2$-algebra $A$ (see \cite[p. 51]{Goerss_f2_algebras}) exploited in Miller \cite[Section 5]{Miller}; see also Dwyer-Spalinski \cite[11.3]{Dwyer_Spalinski} for a useful discussion.

Topological Quillen homology, which we often refer to as $\TQ$-homology for short, is the precise topological analog of Quillen homology in the higher algebra setting of structured ring spectra. For a given algebraic structure on spectra (resp. $\capR$-modules), $\TQ$-homology  comes in several flavors, depending on the setting in which one is working (e.g., in a reduced or relative setting).

\subsection{$\TQ$-homology in the relative setting}
\label{sec:quillen_homology_reduced_versus_relative}
There is a close connection, which is worth pointing out, between Quillen homology in the reduced setting and Quillen homology in the relative setting. Quillen homology was originally developed by Andr\'e \cite{Andre} and Quillen \cite{Quillen_rings}  for commutative algebras, in the relative setting, which is now called Andr\'e-Quillen homology. In the relative setup, Quillen homology of a commutative algebra takes the form of a cotangent complex. Basterra \cite[4.1]{Basterra} originally studied the following construction of topological Quillen homology, in the relative context. The \emph{topological Andr\'e-Quillen homology} of $A$ over $R$ is the $A$-module
\begin{align}
\label{eq:topological_andre_quillen_of_A_over_R}
  \TAQ^R(A):=\TAQ(A/R):=\LL Q(\RR I(A\Smash_R A))
\end{align}
where $R$ (resp. $A$) is a cofibrant commutative $S$-algebra (resp. $R$-algebra) and $S$ is the sphere spectrum. In other words, \eqref{eq:topological_andre_quillen_of_A_over_R} is the derived indecomposables (of the derived augmentation ideal) of $A\Smash_R A$; this is the \emph{cotangent complex} of $A$ relative to $R$ and is denoted $\mathsf{L\Omega}_R A$ in Basterra-Mandell \cite[Section 2]{Basterra_Mandell}; for a useful introduction to Quillen homology, from a homotopy point of view, see Goerss \cite{Goerss_f2_algebras}, Goerss-Hopkins \cite{Goerss_Hopkins}, Goerss-Schemmerhorn \cite{Goerss_Schemmerhorn}, and Miller \cite{Miller}. For a useful introduction to $\TAQ^R$, together with a development of its connections to algebraic $\K$-theory, see Rognes \cite[Section 10]{Rognes_logarithmic}.

\subsection{$\TQ$-homology in the reduced setting}
Topological Quillen homology in the reduced setting is where derived Koszul duality naturally lives and is precisely the setting of a conjecture in Francis-Gaitsgory \cite{Francis_Gaitsgory} on homotopy pro-nilpotent $\capO$-algebras. The main aim of this paper is to resolve in the affirmative the $0$-connected case of their conjecture (Theorem \ref{MainTheorem}).

\begin{assumption}
\label{assumption:commutative_ring_spectrum}
From now on in this paper, we assume that $\capR$ is any commutative ring spectrum; i.e., we assume that $\capR$ is any commutative monoid object in the category $(\Spectra,\tensor_S,S)$ of symmetric spectra (e.g., Hovey-Shipley-Smith \cite{Hovey_Shipley_Smith} and Schwede \cite{Schwede_book_project}). We work mostly in the category of $\capR$-modules which we denote by $\ModR$.
\end{assumption}

Henceforth we work in the reduced spectral algebra setting of $\capO$-algebras, where $\capO$ is an operad of $\capR$-modules with trivial $0$-ary operations; this is, of course, equivalent to working in the augmented context, but for technical reasons is better behaved. In this setting, the $\TQ$-homology spectrum of an $\capO$-algebra $X$ is the derived indecomposables spectrum $\LL Q(X)$ of $X$ (Section \ref{sec:TQ_homology_completion}).

\subsection{Hurewicz maps for $\TQ$-homology}
It turns out that $\TQ$-homology of $\capO$-algebras behaves remarkably like the ordinary homology of spaces. This intuition has been powerfully developed and exploited in the work of Goerss \cite{Goerss_f2_algebras}, Goerss-Hopkins \cite{Goerss_Hopkins}, and Miller \cite{Miller}. In recent work, Harper-Hess \cite{Harper_Hess} prove a spectral algebra analog of Serre's finiteness theorem for spaces and Ching-Harper \cite{Ching_Harper} prove a spectral algebra analog of Goodwillie's \cite{Goodwillie_calculus_2} higher Blakers-Massey theorems.

Similar to the context of spaces and ordinary homology, $\TQ$-homology of $\capO$-algebras has a comparison map, which we call the Hurewicz map,
\begin{align}
\label{eq:TQ_hurewicz_map_introduction}
  \pi_*(X)\rarrow \TQ_*(X)
\end{align}
of graded abelian groups. Assume that $X$ is a cofibrant $\capO$-algebra (e.g, let $X'$ be any $\capO$-algebra and denote by $X\wequiv X'$ its cofibrant replacement). Then this comparison map comes from a Hurewicz map (Section \ref{sec:TQ_homology_completion}) on the level of $\capO$-algebras of the form
\begin{align}
\label{eq:TQ_hurewicz_map_point_set_level_introduction}
  X\rarrow UQ(X)
\end{align}
and applying $\pi_*$ recovers the map \eqref{eq:TQ_hurewicz_map_introduction}.

Once one has such a Hurewicz map on the level of $\capO$-algebras, it is natural to form a cosimplicial resolution (or Godement resolution) of $X$ with respect to $\TQ$-homology of the form
\begin{align}
\label{eq:TQ_homology_resolution_derived_functor_form_introduction}
\xymatrix{
  X\ar[r] &
  \TQ(X)\ar@<-0.5ex>[r]\ar@<0.5ex>[r] &
  \TQ^2(X)
  \ar@<-1.0ex>[r]\ar[r]\ar@<1.0ex>[r]\ar@<-2.0ex>[l] &
  \TQ^3(X)\cdots\ar@<-2.5ex>[l]\ar@<-3.5ex>[l]
  }
\end{align}
In other words, iterating the $\capO$-algebra level Hurewicz map \eqref{eq:TQ_hurewicz_map_point_set_level_introduction} results in a cosimplicial resolution of $X$  that can be thought of as encoding the spectrum level co-operations on the $\TQ$-homology spectra. The associated homotopy spectral sequence (Section \ref{sec:outline_of_the_argument}) is the precise spectral algebra analog of the unstable Adams spectral sequence of a space; see Bousfield-Kan \cite{Bousfield_Kan_spectral_sequence}, and the subsequent work of Bendersky-Curtis-Miller \cite{Bendersky_Curtis_Miller} and Bendersky-Thompson \cite{Bendersky_Thompson}.

As pointed out in the work of Hess \cite{Hess}, resolutions of the form \eqref{eq:TQ_homology_resolution_derived_functor_form_introduction} fit into an adjunction \eqref{eq:fundamental_adjunction_comparing_AlgO_with_coAlgK} comparing $\capO$-algebras with coalgebras over the associated comonad $\K$; this comonad naturally arises from the adjunction defining $\TQ$-homology---it formally drops out of the homotopy theory---and can be thought of as encoding co-operations on the $\TQ$-homology spectra (Section \ref{sec:TQ_homology_completion}). More precisely, applying $\holim_\Delta$ to \eqref{eq:TQ_homology_resolution_derived_functor_form_introduction}, regarded as a map of cosimplicial $\capO$-algebras, recovers the derived unit map associated to  the fundamental adjunction \eqref{eq:fundamental_adjunction_comparing_AlgO_with_coAlgK} that appears in a conjecture of Francis-Gaitsgory \cite{Francis_Gaitsgory}. The right-hand side of the resulting derived unit map
\begin{align*}
  X\rarrow X^\wedge_\TQ
\end{align*}
is the $\TQ$-homology completion of an $\capO$-algebra $X$ (Section \ref{sec:TQ_homology_completion}); this is the precise structured ring spectrum analog of the completions and localizations of spaces originally studied in Sullivan \cite{Sullivan_mit_notes, Sullivan_genetics}, and subsequently in Bousfield-Kan \cite{Bousfield_Kan} and Hilton-Mislin-Roitberg \cite{Hilton_Mislin_Roitberg}; there is an extensive literature---for a useful introduction, see also Bousfield \cite{Bousfield_localization_spaces}, Dwyer \cite{Dwyer_localizations}, and May-Ponto \cite{May_Ponto}

\subsection{The main result}
It is natural to ask the question of when the fundamental adjunction \eqref{eq:fundamental_adjunction_comparing_AlgO_with_coAlgK} comparing $\capO$-algebras with $\K$-coalgebras can be modified to turn it into an equivalence of homotopy theories. In \cite[3.4.5]{Francis_Gaitsgory}, J. Francis and D. Gaitsgory  conjecture that replacing $\capO$-algebras with the full subcategory of homotopy pro-nilpotent $\capO$-algebras will do the trick. In recent work of Harper-Hess \cite{Harper_Hess}, it is proved that every 0-connected $\capO$-algebra is homotopy pro-nilpotent; i.e., is the homotopy limit of a tower of nilpotent $\capO$-algebras.

In this paper we shall prove the following theorem, resolving in the affirmative the 0-connected case of the Francis-Gaitsgory conjecture; it implies that a $0$-connected $\capO$-algebra $X$ can be recovered from its $\TQ$-homology spectrum $\LL Q(X)$ together with its $\K$-coalgebra structure, but our result is much stronger---the homotopy theories are equivalent.

\begin{thm}
\label{MainTheorem}
Let $\capO$ be an operad in $\capR$-modules with trivial $0$-ary operations. Assume that $\capR,\capO$ are $(-1)$-connected. The fundamental adjunction \eqref{eq:fundamental_adjunction_comparing_AlgO_with_coAlgK} comparing $\capO$-algebras to $\K$-coalgebras, via topological Quillen homology,
\begin{align}
\label{eq:fundamental_adjunction_comparing_AlgO_with_coAlgK}
\xymatrix{
  \AlgO\ar@<0.5ex>[r]^-{Q} & \coAlgK\ar@<0.5ex>[l]^-{\lim_\Delta C}
}
\end{align}
induces an equivalence of homotopy theories, after restriction to the full subcategories of $0$-connected cofibrant objects. More precisely:
\begin{itemize}
\item[(a)] If $Y$ is a $0$-connected cofibrant $\K$-coalgebra, then the natural derived $\K$-coalgebra map of the form
\begin{align*}
  \LL Q\bigl(\holim\nolimits_\Delta C(Y)\bigr)\xrightarrow{\wequiv} Y
\end{align*}
is a weak equivalence; i.e., the derived counit map associated to the fundamental adjunction \eqref{eq:fundamental_adjunction_comparing_AlgO_with_coAlgK} is a weak equivalence (Definition \ref{defn:derived_counit_map}).
\item[(b)] If $X$ is a $0$-connected $\capO$-algebra, then the natural $\capO$-algebra map of the form
\begin{align*}
  X\xrightarrow{\wequiv}\holim\nolimits_\Delta C(\LL Q(X))= X^\wedge_\TQ
\end{align*}
is a weak equivalence; i.e., the derived unit map associated to the fundamental adjunction \eqref{eq:fundamental_adjunction_comparing_AlgO_with_coAlgK} is a weak equivalence.
\end{itemize}
Here, we assume that the operad $\capO$ satisfies Cofibrancy Condition \ref{CofibrancyCondition}; this mild cofibrancy condition is not restrictive in any way (Remark \ref{rem:no_loss_of_generality}).
\end{thm}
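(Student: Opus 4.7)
The plan is to establish part (b) as the core technical statement and then to derive part (a) from it via a comonadic descent argument built around the triangular identities of the adjunction \eqref{eq:fundamental_adjunction_comparing_AlgO_with_coAlgK}.

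\textbf{Part (b).} The derived unit $X \to \holim_\Delta C(Q(X))$ can be identified, up to equivalence, with the map from $X$ into the Bousfield--Kan totalization of the $\TQ$-Godement resolution $\TQ^{\bullet+1}(X)$, built by iterating the $\capO$-algebra Hurewicz map $X \to UQ(X)$. This factors through the tower of partial totalizations $\{\Tot^s \TQ^{\bullet+1}(X)\}_{s \geq 0}$, and the task reduces to showing that the homotopy fiber of $X \to \Tot^s \TQ^{\bullet+1}(X)$ has connectivity tending to infinity with $s$. The essential input is the higher Blakers--Massey theorem for structured ring spectra of Ching--Harper, applied to the cubical diagrams of coface subsets inside the cosimplicial resolution $\TQ^{\bullet+1}(X)$. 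The base case is a Hurewicz-type connectivity estimate for $X \to UQ(X)$ when $X$ is $n$-connected with $n \geq 0$; iterating the cubical estimate inductively along the Bousfield--Kan tower gives a connectivity gain that is linear in $s$. The $0$-connectedness of $X$, together with the $(-1)$-connectedness of $\capR$ and $\capO$, is precisely what keeps this inductive connectivity estimate closed.

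\textbf{Part (a).} Given a $0$-connected cofibrant $\K$-coalgebra $Y$, set $X := \holim_\Delta C(Y)$. I would first verify that $X$ is a $0$-connected $\capO$-algebra, by running the same Bousfield--Kan connectivity analysis developed for part (b) on the cosimplicial object $C(Y)$. Part (b) applied to $X$ then shows that the derived unit $\eta_X : X \to \holim_\Delta C(\LL Q(X))$ is a weak equivalence, and the triangular identity $\holim_\Delta C(\varepsilon_Y) \circ \eta_X = \id_X$ forces $\holim_\Delta C(\varepsilon_Y)$ to be a weak equivalence. A detection lemma --- that $\holim_\Delta C$ reflects weak equivalences between $0$-connected cofibrant $\K$-coalgebras --- then upgrades this to the desired conclusion that $\varepsilon_Y$ itself is a weak equivalence; the detection lemma is again proved via the connectivity/spectral sequence technology of part (b), reducing the question to the level of $\TQ$-homology where it becomes tautological.

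\textbf{Main obstacle.} The heart of the argument, and by far the hardest step, is packaging Ching--Harper's higher Blakers--Massey theorem into a clean, inductively closed connectivity estimate along the Bousfield--Kan totalization tower for $\TQ^{\bullet+1}(X)$. The cubical combinatorics of iterated $\TQ$-homology are intricate, and one must carefully balance the connectivity contributions of the coface maps against the connectivity gains obtained from each application of $Q$. The $0$-connectedness hypothesis is exactly what allows these gains to accumulate rather than dissipate, and without it the $\TQ$-Adams spectral sequence fails to converge strongly, so the whole strategy breaks down.
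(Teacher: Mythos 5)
Your overall framing of part (b) is correct in outline, but you have misidentified the key technical input. The paper does \emph{not} invoke the higher Blakers--Massey theorem of Ching--Harper to estimate the connectivity of the map $X \to \Tot^s$ of the $\TQ$-Godement resolution. Instead, Proposition \ref{prop:homotopy_fiber_calculation} identifies the homotopy fiber of $X \to \holim_{\Delta^{\leq n}} C(QX)$ with the total homotopy fiber of the coface $(n+1)$-cube, and Proposition \ref{prop:connectivities_for_total_homotopy_fiber_homotopy_completion_cube_no_zigzags} then proves directly that this total fiber is $(n+1)$-connected by rewriting the coface cube as an iterated simplicial bar construction and grinding through the combinatorics of iterated fibers (Propositions \ref{prop:calculations_for_iterated_fibers_part1}--\ref{prop:calculations_for_iterated_fibers_part3}, in the spirit of the Harper--Hess $\TQ$-Hurewicz proof) together with the spectral sequence of Proposition \ref{prop:homotopy_spectral_sequence}. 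No cubical Blakers--Massey machinery appears in part (b) at all; your plan to ``package Ching--Harper's higher Blakers--Massey theorem into a clean, inductively closed connectivity estimate'' for part (b) is aiming much heavier artillery at the problem than is needed, and it is not what the paper does.

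For part (a), your route via the triangular identity and a ``detection lemma'' is structurally different from the paper's, but the difference is largely cosmetic and the hard work is hidden inside the lemma. The paper proves (a) directly by factoring the derived counit as $\LL Q\holim_\Delta C(Y) \to \holim_\Delta \LL Q\, C(Y) \to Y$. The first map is handled by Theorem \ref{thm:connectivities_for_map_that_commutes_Q_into_inside_of_holim}, which is where the higher \emph{dual} Blakers--Massey theorem of Ching--Harper enters: one feeds the $|T|$-cartesian estimates for the faces of the $\infty$-cartesian cube $\widetilde{\mathfrak{C}(Y)}$ (Propositions \ref{prop:base_case_of_induction_argument_connectivity}--\ref{prop:estimates_for_the_face_from_S_to_T_of_associated_cube}, ultimately resting on the $2n$-connectivity of codegeneracy cubes in Proposition \ref{prop:multisimplicial_calculation_of_iterated_hofiber_codegeneracy_cube_no_zigzags}) into \cite[1.11]{Ching_Harper} to show $\LL Q$ commutes with $\holim_{\Delta^{\leq n}}$ up to $(n+4)$-connectivity. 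The second map is a left-cofinality argument exploiting the extra codegeneracies of $\Cobar(\K,\K,Y)$ (Theorem \ref{thm:FQC_commutes_with_desired_holim}, via Dror--Dwyer). Unpack your detection lemma --- that $\holim_\Delta C$ reflects weak equivalences of $0$-connected cofibrant $\K$-coalgebras --- and you will find it requires exactly these two steps: apply $\LL Q$, commute past $\holim_\Delta$ (dual Blakers--Massey), then collapse $\holim_\Delta \LL Q\, C(Y)$ to $Y$ via extra codegeneracies. So the triangular-identity preamble buys you nothing; it only defers the two genuinely hard inputs into an unstated lemma, and it additionally requires you to establish the triangular identity at the derived level, which is nontrivial in this setting because $\coAlgK$ carries no model structure and the derived counit is a carefully constructed map of cosimplicial mapping objects (Definition \ref{defn:derived_counit_map}, Remark \ref{rem:elaboration_on_the_proof_of_main_theorem}). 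Two concrete omissions you would need to fill: the $2n$-connectivity estimate for codegeneracy $n$-cubes of $C(Y)$ (your proposal never isolates this, yet it drives both the $0$-connectedness of $\holim_\Delta C(Y)$ and the input to dual Blakers--Massey), and the Dror--Dwyer extra-codegeneracy cofinality argument, which you gesture at as ``tautological at the level of $\TQ$-homology'' but do not identify.
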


In particular, Theorem \ref{MainTheorem}(a) means that the $\TQ$-homology spectrum functor $\LL Q$ is homotopically essentially surjective on $0$-connected objects, and as shown in Arone-Ching \cite{Arone_Ching_classification} and elaborated in Section \ref{sec:homotopy_theory_K_coalgebras} for the context of this paper, it follows from Theorem \ref{MainTheorem}(b) that $\LL Q$ is homotopically fully faithful on $0$-connected objects; in other words, the $\TQ$-homology spectrum functor $\LL Q$ induces a weak equivalence
\begin{align*}
  \LL Q\colon\thinspace\Map_\AlgO(X,Y)\xrightarrow{\wequiv}\Map_\coAlgK\bigl(\LL Q(X),\LL Q(Y)\bigr)
\end{align*}
of mapping spaces, where $X$ is cofibrant (resp. $Y$ is fibrant) in $\AlgO$; equivalently, the cofibrancy (resp. fibrancy) conditions can be dropped by replacing the mapping complex $\Map_{\AlgO}$ with the Dwyer-Kan homotopy function complex $\Map^h_{\AlgO}$.

One may view the derived Koszul duality result in Theorem \ref{MainTheorem} as a spectral algebra analog of the fundamental work of Quillen \cite{Quillen_rational} and Sullivan \cite{Sullivan_infinitesimal} on the rational homotopy theory of spaces, and the subsequent $p$-adic and integral work of Goerss \cite{Goerss_simplicial_chains} and Mandell \cite{Mandell, Mandell_cochains_homotopy_type} on cochains and homotopy type; see also the earlier $p$-adic work of Kriz \cite{Kriz} for spaces.

\begin{rem}
Closely related examples of such duality phenomena have been uncovered in a variety of special cases and settings. For instance, (i) Moore duality \cite{Moore} between associative algebras and coalgebras, (ii) Quillen \cite{Quillen_rational}, Schlessinger-Stasheff \cite{Schlessinger_Stasheff}, and Sullivan \cite{Sullivan_infinitesimal} between differential graded Lie algebras and differential graded commutative coalgebras, (iii) Ginzburg-Kapranov \cite{Ginzburg_Kapranov} for differential graded binary quadratic operads, (iv) Getzler-Jones \cite{Getzler_Jones} for $n$-algebras and $n$-coalgebras, (v) Fresse \cite{Fresse_Koszul_duality} for differential graded $E_n$ operads, and (vi) Ayala-Francis \cite{Ayala_Francis} and Lurie \cite{Lurie_higher_algebra} for $E_n$ operads, among others; see Remark \ref{rem:elaboration_on_accessibility_of_K} where we further elaborate this range of connections.
\end{rem}

\begin{rem}
\label{rem:elaboration_on_accessibility_of_K}
The comonad $\K$ is quite accessible from a homotopy point of view. For instance, suppose that $Y$ is a cofibrant $\K$-coalgebra and assume for notational simplicity that $\tau_1\capO=I$, where $I$ denotes the initial operad. Then there is a zigzag of weak equivalences of $\capR$-modules
\begin{align*}
  \K(Y)\xleftarrow{\wequiv}|\BAR(J,\capO,Y)|
  \xrightarrow{\wequiv}B(\capO)\circ(Y)
  \Equal \coprod_{t\geq 1}B(\capO)[t]\Smash_{\Sigma_t}Y^{\wedge t}
\end{align*}
where $B(\capO)$ denotes the bar construction $|\BAR(I,\capO,I)|$ defined in \cite[5.30]{Harper_bar_constructions} (compare \cite{Ching_bar_constructions, Ching_bar_cobar_duality, Harper_Hess}). It follows that if $\capR, \capO$ are $(-1)$-connected and $Y$ is $0$-connected, then there is a zigzag of weak equivalences
\begin{align*}
  \K(Y)\wequiv B(\capO)\circ(Y)
  \wequiv \prod_{t\geq 1}B(\capO)[t]\Smash_{\Sigma_t}Y^{\wedge t}
\end{align*}
and hence a $\K$-coalgebra structure on $Y$ consists of, up to a zigzag of weak equivalences, a collection of suitable morphisms of the form
\begin{align*}
  Y\rarrow B(\capO)[t]\Smash_{\Sigma_t}Y^{\wedge t},\quad\quad t\geq 1.
\end{align*}
In other words, a $\K$-coalgebra structure on $Y$ can be thought of as a kind of \emph{divided power} generalized coalgebra with $t$-ary co-operations parametrized by $B(\capO)[t]$; here the ``divided power'' structure corresponds to the fact that the target of these morphisms are the coinvariants rather than the corresponding invariants. In the notation of Francis-Gaitsgory \cite[Section 6.2.5]{Francis_Gaitsgory}, the comonad $\K$ is called the Koszul dual comonad associated to the monad $\capO$ (i.e., the free algebra monad associated to the operad $\capO$).

Theorem \ref{MainTheorem} can therefore be viewed as a spectral algebra derived Koszul duality between $\capO$-algebras and divided power generalized coalgebras with $t$-ary co-operations parametrized by $B(\capO)[t]$; this is a spectral algebra analog (and generalization) of a variety of results in the differential graded setting. For instance, (i) Ginzburg-Kapranov \cite{Ginzburg_Kapranov}, Fresse \cite{Fresse}, and Moore \cite{Moore} between non-unital dg algebras and coalgebras: if $\capO$ denotes the operad parametrizing non-unital associative algebra spectra, then $\K$-coalgebras are the spectrum analogs of non-unital differential graded coassociative coalgebras, (ii) Ginzburg-Kapranov \cite{Ginzburg_Kapranov}, Fresse \cite{Fresse}, Quillen \cite{Quillen_rational}, Schlessinger-Stasheff \cite{Schlessinger_Stasheff} and Sullivan \cite{Sullivan_infinitesimal} between differential graded Lie algebras and differential graded cocommutative coalgebras: if $\capO$ denotes the operad parametrizing non-unital commutative algebra spectra, then $\K$-coalgebras are the spectrum analogs of differential graded Lie coalgebras, and (iii) Fresse \cite{Fresse_Koszul_duality} for differential graded $E_n$ algebras: if $\capO$ denotes the operad parametrizing non-unital $E_n$ algebra spectra, then $\K$-coalgebras are the spectrum analogs of non-unital differential graded $E_n$ coalgebras.

Theorem \ref{MainTheorem} provides evidence for the spectrum analog of the self-duality of the stable $E_n$ operad; compare Fresse \cite{Fresse_Koszul_duality}. Ayala-Francis \cite{Ayala_Francis} and Lurie \cite{Lurie_higher_algebra} have developed versions of Koszul duality for $E_n$ algebras in which the dual of an $E_n$ algebra lies in the category of $E_n$ coalgebras. Our result, on the other hand, sets up a duality between $E_n$ algebras and divided power generalized coalgebras with $t$-ary co-operations parametrized by $B(E_n)[t]$. Theorem \ref{MainTheorem} is therefore evidence for the conjecture that $B(E_n)$, associated to $E_n$, is equivalent to the Spanier-Whitehead dual of the operad $E_n$, suitably shifted in dimension.
\end{rem}

\subsection{Corollaries of the main result}

The following three results are corollaries of Theorem \ref{MainTheorem}. The following classification theorem shows that a pair of $0$-connected $\capO$-algebras are weakly equivalent if and only if their associated $\TQ$-homology spectra are weakly equivalent as derived $\K$-coalgebras.

\begin{thm}[Classification theorem]
\label{thm:classification}
A pair of $0$-connected $\capO$-algebras $X$ and $Y$ are weakly equivalent if and only if the $\TQ$-homology spectra $\LL Q(X)$ and $\LL Q(Y)$ are weakly equivalent as derived $\K$-coalgebras.
\end{thm}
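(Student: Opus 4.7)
The plan is to deduce Theorem \ref{thm:classification} as an essentially formal consequence of Theorem \ref{MainTheorem}(b) together with functoriality of the derived $\TQ$-homology functor $\LL Q$ on derived $\K$-coalgebras.

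The forward direction will be straightforward: if $X\wequiv Y$ in $\AlgO$, then since $\LL Q$ is a homotopy functor, it will send this weak equivalence to a weak equivalence $\LL Q(X)\wequiv\LL Q(Y)$ of derived $\K$-coalgebras.

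For the reverse direction, I would first replace $X$ and $Y$ with cofibrant $\capO$-algebras without loss of generality, so that $\LL Q(X)=Q(X)$ and $\LL Q(Y)=Q(Y)$ as $\K$-coalgebras. Given a zigzag of $\K$-coalgebra weak equivalences between $\LL Q(X)$ and $\LL Q(Y)$, I would apply the total right derived functor of $\holim\nolimits_\Delta C$ (which is homotopy-invariant on the relevant subcategory of $\K$-coalgebras, as part of the infrastructure of the fundamental adjunction \eqref{eq:fundamental_adjunction_comparing_AlgO_with_coAlgK}) to produce a zigzag of weak equivalences of $\capO$-algebras
\begin{align*}
\holim\nolimits_\Delta C(\LL Q(X))\wequiv\cdots\wequiv\holim\nolimits_\Delta C(\LL Q(Y)).
\end{align*}
Since $X$ and $Y$ are $0$-connected, Theorem \ref{MainTheorem}(b) supplies weak equivalences $X\xrightarrow{\wequiv}\holim\nolimits_\Delta C(\LL Q(X))$ and $Y\xrightarrow{\wequiv}\holim\nolimits_\Delta C(\LL Q(Y))$. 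Splicing these three zigzags together will give the required weak equivalence $X\wequiv Y$ in $\AlgO$.

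The only substantive input needed beyond Theorem \ref{MainTheorem}(b) is homotopy-invariance of $\holim\nolimits_\Delta C$ on the relevant class of $\K$-coalgebra weak equivalences; this is really a question about the foundations of the homotopy theory of $\K$-coalgebras that the paper must establish in order to state Theorem \ref{MainTheorem} in the first place, and so should already be in hand. An equivalent, more abstract packaging of the argument is to observe that Theorem \ref{MainTheorem} asserts that $\LL Q$ restricts to an equivalence of homotopy categories between $0$-connected $\capO$-algebras and $0$-connected cofibrant $\K$-coalgebras, and any equivalence of categories is conservative; Theorem \ref{thm:classification} is precisely the statement that $\LL Q$ reflects isomorphisms on the relevant homotopy categories, so it follows immediately.
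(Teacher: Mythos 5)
Your overall route is the right one, and since the paper states Theorem \ref{thm:classification} as a corollary without spelling out a proof, the abstract packaging you give at the end is essentially the argument the paper has in mind. However, there are two points worth correcting.

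First, the concrete version of the reverse direction has a real gap. You propose to ``apply the total right derived functor of $\holim_\Delta C$'' to a zigzag of derived $\K$-coalgebra weak equivalences. But the paper does not set up $\holim_\Delta C$ (i.e., $Y\mapsto\Tot^\res\mathfrak{C}(Y)$) as a functor on the homotopy category $\Ho(\coAlgK)$: a derived $\K$-coalgebra map is a point of a mapping space $\Map_{\coAlgK}$ rather than a strict morphism of $\K$-coalgebras, and the machinery in Section \ref{sec:homotopy_theory_K_coalgebras} constructs the derived unit and counit as derived maps but never a functor $\Tot^\res\mathfrak{C}$ acting on such maps. So the middle zigzag $\holim_\Delta C(\LL Q(X))\wequiv\holim_\Delta C(\LL Q(Y))$ does not come for free from the infrastructure; making it precise would require new work or would reduce to the abstract argument anyway.

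Second, the final sentence mislabels the categorical input as conservativity. Conservativity is the statement that $\LL Q$ reflects isomorphisms of morphisms, whereas Theorem \ref{thm:classification} is the statement that $\LL Q$ reflects isomorphic objects; these are not the same (the forgetful functor $\mathsf{Grp}\to\mathsf{Set}$ is conservative but does not reflect isomorphic objects). What you actually need is full faithfulness of $\LL Q$ on $0$-connected objects, which the paper establishes from Theorem \ref{MainTheorem}(b) via Proposition \ref{prop:formal_adjunction_and_iso_argument} (and which is packaged as Theorem \ref{thm:classification_maps}). Given an isomorphism $\phi\colon\LL Q(X)\to\LL Q(Y)$ in $\Ho(\coAlgK)$, fullness lifts $\phi$ and $\phi^{-1}$ to $[f]\in[X,Y]$ and $[g]\in[Y,X]$; then faithfulness together with functoriality of $\LL Q$ gives $[gf]=[\id_X]$ and $[fg]=[\id_Y]$, so $f$ is a weak equivalence. (Alternatively, once $[Q(f)]=\phi$ is an isomorphism, $Q(f)$ is a $\TQ$-homology equivalence by Proposition \ref{prop:weak_equivalence_if_and_only_if_iso_in_homotopy_category}, and one can appeal to Theorem \ref{thm:strong_convergence_TQ_homology_completion}(b),(c).) With this adjustment, the proof is correct.
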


The following theorem provides a characterization of when a $\K$-coalgebra arises, up to a zigzag of derived $\K$-coalgebra weak equivalences, as the $\TQ$-homology spectrum of an $\capO$-algebra. It shows that if a $\K$-coalgebra is $0$-connected and cofibrant, then it comes from the $\TQ$-homology spectrum of an $\capO$-algebra.

\begin{thm}[Characterization theorem]
\label{thm:characterization}
A cofibrant $\K$-coalgebra is weakly equivalent to the $\TQ$-homology spectrum of a $0$-connected $\capO$-algebra if and only if it is $0$-connected.
\end{thm}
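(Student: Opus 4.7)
I would handle the two directions of the biconditional separately.

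The forward implication is the easy direction: if a cofibrant $\K$-coalgebra $Y$ is weakly equivalent to $\LL Q(X)$ as a derived $\K$-coalgebra for some $0$-connected $\capO$-algebra $X$, then $Y$ is $0$-connected on underlying spectra. Since $\capR$ and $\capO$ are assumed $(-1)$-connected, the derived indecomposables functor $\LL Q$ preserves $0$-connectivity: taking a cofibrant replacement of $X$ built by attaching free $\capO$-algebra cells on $0$-connected $\capR$-modules $M$, the point-set level functor $Q$ sends $\capO\circ M$ to $M$, and the cell filtration stays $0$-connected throughout, so the same is true after passage to homotopy groups.

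For the reverse implication, let $Y$ be a $0$-connected cofibrant $\K$-coalgebra and define the candidate $\capO$-algebra $X:=\holim_\Delta C(Y)$, using the right adjoint in the fundamental adjunction \eqref{eq:fundamental_adjunction_comparing_AlgO_with_coAlgK}. Theorem \ref{MainTheorem}(a) immediately provides a derived $\K$-coalgebra weak equivalence $\LL Q(X)\xrightarrow{\simeq} Y$, exhibiting $Y$ as the $\TQ$-homology spectrum of the $\capO$-algebra $X$. The only remaining task, and the main obstacle, is to verify that $X$ itself is $0$-connected.

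To verify this connectivity, my plan is to show that the cosimplicial $\capO$-algebra $C(Y)$ is levelwise $0$-connected and that $0$-connectivity is preserved by the homotopy limit. Remark \ref{rem:elaboration_on_accessibility_of_K} supplies, for $0$-connected $Y$, a zigzag of weak equivalences $\K(Y)\wequiv \prod_{t\geq 1}B(\capO)[t]\Smash_{\Sigma_t}Y^{\wedge t}$; the $t$-th factor is at least $(t-1)$-connected because $Y^{\wedge t}$ is, so the product remains $0$-connected. Iterating this estimate level by level establishes levelwise $0$-connectivity of $C(Y)$. For the $\holim_\Delta$ step, the fact that the $t$-ary co-operations contribute summands of connectivity growing in $t$ means the layers of the Bousfield-Kan Tot-tower for $C(Y)$ have improving connectivity, so passage to $\holim_\Delta$ retains $0$-connectivity of $X$. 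Combined with the weak equivalence $\LL Q(X)\simeq Y$ from Theorem \ref{MainTheorem}(a), this completes the reverse direction.
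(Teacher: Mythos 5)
Your overall plan is the right one and matches what the paper intends: the forward direction is the Harper-Hess $\TQ$-Hurewicz statement that $\LL Q$ preserves $0$-connectivity when $\capR,\capO$ are $(-1)$-connected, and the reverse direction takes $X:=\holim_\Delta C(Y)$ so that Theorem~\ref{MainTheorem}(a) hands you $\LL Q(X)\xrightarrow{\wequiv}Y$, leaving only the $0$-connectivity of $X$ to check. That is exactly how the corollary is meant to follow.

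The gap is in your verification that $X$ is $0$-connected. You establish levelwise $0$-connectivity of $C(Y)$ from the formula $\K(Y)\wequiv\prod_{t\geq 1}B(\capO)[t]\Smash_{\Sigma_t}Y^{\wedge t}$ and then assert this transfers to the homotopy limit because "the layers of the Tot-tower have improving connectivity." But levelwise $0$-connectivity alone does \emph{not} control $\pi_0\holim_\Delta$: by the Milnor sequence (Proposition~\ref{prop:short_exact_sequence}) a nontrivial $\lim^1$ of the $\pi_1$'s of the tower stages could contribute to $\pi_0$ of the limit. What you actually need is the quantitative statement that the maps $\holim_{\Delta^{\leq n}}C(Y)\to\holim_{\Delta^{\leq n-1}}C(Y)$ are $(n+1)$-connected between $0$-connected objects; this kills $\lim^1$ in the relevant range and forces $\holim_\Delta C(Y)$ to be $0$-connected. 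That statement is precisely Theorem~\ref{thm:estimating_connectivity_of_maps_in_tower_C_of_Y}, and it is not a consequence of the degree-$t$ filtration of $\K$ that you appeal to: it requires the codegeneracy cube analysis (Propositions~\ref{prop:iterated_homotopy_fibers_calculation} and \ref{prop:multisimplicial_calculation_of_iterated_hofiber_codegeneracy_cube_no_zigzags}), which identifies the layer of the Tot-tower with $\Omega^n$ of the total homotopy fiber of the codegeneracy $n$-cube and shows that fiber is $2n$-connected. Citing Theorem~\ref{thm:estimating_connectivity_of_maps_in_tower_C_of_Y} directly both closes the gap and makes the passage to $\holim_\Delta$ immediate.
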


The following theorem shows that $\TQ$-homology spectra do more than distinguish equivalence classes of $\capO$-algebras, they also distinguish equivalence classes of maps; compare, for instance, Mandell \cite[0.1]{Mandell_cochains_homotopy_type} in the case of integral cochains of a space. It is most clearly stated in terms of cofibrant $\capO$-algebras, but this is simply a matter of exposition and is no loss of generality (Remark \ref{rem:cofibrant_replacement_reminder_to_the_reader}).

\begin{thm}[Classification of maps theorem]
\label{thm:classification_maps}
Let $X,Y$ be cofibrant $\capO$-algebras. Assume that $Y$ is $0$-connected and fibrant.
\begin{itemize}
\item[(a)] Given any map $\phi$ in $[Q(X),Q(Y)]_\K$, there exists a map $f$ in $[X,Y]$ such that $\phi=Q(f)$.
\item[(b)] For each pair of maps $f,g$ in $[X,Y]$, $f=g$ if and only if $Q(f)=Q(g)$ in the homotopy category of $\K$-coalgebras.
\end{itemize}
\end{thm}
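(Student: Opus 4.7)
The plan is to deduce Theorem \ref{thm:classification_maps} directly from Theorem \ref{MainTheorem}(b) by passing to $\pi_0$ of the mapping space equivalence
$$\LL Q\colon\thinspace\Map_\AlgO(X,Y)\xrightarrow{\wequiv}\Map_\coAlgK\bigl(\LL Q(X),\LL Q(Y)\bigr)$$
stated just after the main theorem. To obtain this mapping space equivalence, I would first upgrade the fundamental adjunction \eqref{eq:fundamental_adjunction_comparing_AlgO_with_coAlgK} to a derived adjunction of simplicial mapping complexes of the form
$$\Map_\AlgO\bigl(X,\holim\nolimits_\Delta C(Z)\bigr)\wequiv\Map_\coAlgK\bigl(\LL Q(X),Z\bigr),$$
valid for $X$ a cofibrant $\capO$-algebra and $Z$ a fibrant $\K$-coalgebra. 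Since $Y$ is $0$-connected, cofibrant, and fibrant, Theorem \ref{MainTheorem}(b) supplies a weak equivalence $Y\xrightarrow{\wequiv}\holim\nolimits_\Delta C(\LL Q(Y))$, which combined with the derived adjunction at $Z=\LL Q(Y)$ produces the desired weak equivalence of mapping spaces.

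Applying $\pi_0$ then yields a bijection $[X,Y]\iso[\LL Q(X),\LL Q(Y)]_\K$ induced by $\LL Q$. Statement (a) is precisely the surjectivity of this bijection, and statement (b) is precisely its injectivity. One must additionally identify $\LL Q$ with the strict functor $Q$ at the level of homotopy classes; but this is immediate because $X$ is assumed cofibrant, so no cofibrant replacement is required before applying $Q$, and $Q$ preserves weak equivalences between cofibrant objects. By construction and naturality, the bijection $[X,Y]\iso[Q(X),Q(Y)]_\K$ sends the class of $f$ to the class of $Q(f)$, which is exactly what (a) and (b) assert.

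The main technical obstacle I would expect is the rigorous establishment of the derived simplicial adjunction. This requires both simplicial enrichments of $\AlgO$ and $\coAlgK$ that are suitably compatible with their model structures, and verification that $\holim\nolimits_\Delta C(-)$ computes the total right derived functor of the right adjoint on fibrant $\K$-coalgebras, so that the derived counit and derived unit of Theorem \ref{MainTheorem} genuinely serve as the counit and unit of a derived adjunction of $\infty$-categories restricted to the relevant $0$-connected subcategories. Most of this infrastructure should already be in place in the course of proving Theorem \ref{MainTheorem}; granting it, the classification of maps theorem is essentially a corollary obtained by taking path components.
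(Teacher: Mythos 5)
Your proposal takes essentially the same route as the paper. The paper's Proposition \ref{prop:formal_adjunction_and_iso_argument} is precisely the derived mapping-space equivalence you describe (stated there for $Y$ fibrant and $\TQ$-complete), Theorem \ref{MainTheorem}(b) supplies the $\TQ$-completeness of a $0$-connected $Y$, and applying $\pi_0$ together with the observation in Proposition \ref{prop:formal_adjunction_and_iso_argument} that the map sends $[f]\mapsto\gamma(Qf)=[Qf]$ yields both parts of the theorem. One small caveat in your framing: the paper does not equip $\coAlgK$ with a model structure, so ``fibrant $\K$-coalgebra'' is not the right hypothesis; the homotopy theory of $\K$-coalgebras is instead set up (in Section \ref{sec:homotopy_theory_K_coalgebras}) as an $A_\infty$-category of cofibrant $\K$-coalgebras whose mapping spaces are built via $\Tot^\res$ of the fattened cosimplicial resolutions $\Hombold_\AlgJ\bigl(Y,(F\K)^\bullet FY'\bigr)$, and the ``derived adjunction'' you want is realized concretely as the explicit zigzag of weak equivalences in the proof of Proposition \ref{prop:formal_adjunction_and_iso_argument} rather than as a Quillen adjunction in the classical sense. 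With that adjustment, your outline matches the paper's argument.
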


The following relatively weak cofibrancy condition is exploited in the proof of our main result.

\begin{rem}
\label{rem:no_loss_of_generality}
We do not lose any generality, since the comparison theorem in Harper-Hess \cite[3.26, 3.30]{Harper_Hess} shows that the operad $\capO$ can always be replaced by a weakly equivalent operad $\capO'$ that satisfies this cofibrancy condition; in fact, it satisfies a stronger $\Sigma$-cofibrancy condition. Most operads appearing in homotopy theoretic settings in mathematics already satisfy Cofibrancy Condition \ref{CofibrancyCondition} and therefore require no replacement;  for instance, this condition is satisfied by every operad in simplicial sets that is regarded as an operad in $\capR$-modules by adding a disjoint basepoint and tensoring with $\capR$ (\cite[4.1]{Harper_Hess}).
\end{rem}

\begin{cofibrancy}
\label{CofibrancyCondition}
If $\capO$ is an operad in $\capR$-modules, consider the unit map $\function{\eta}{I}{\capO}$ of the operad $\capO$ (see \cite[2.16]{Harper_Hess}) and assume that  $I[r]\rarrow\capO[r]$ is a flat stable cofibration (see \cite[7.7]{Harper_Hess}) between flat stable cofibrant objects in $\ModR$ for each $r\geq 0$.
\end{cofibrancy}

\begin{rem}
This is the same as assuming that $I[1]\rarrow\capO[1]$ is a flat stable cofibration in $\ModR$ and $\capO[r]$ is flat stable cofibrant in $\ModR$ for each $r\geq 0$. It can be thought of as the spectral algebra analog of the following cofibrancy condition: if $X$ is a pointed space, assume that the unique map $*\rightarrow X$ of pointed spaces is a cofibration.
\end{rem}

\subsection{Organization of the paper}

In Section \ref{sec:outline_of_the_argument} we outline the argument of our main result. The proof naturally breaks up into five subsidiary results. In Section \ref{sec:simplicial_structures} we recall some preliminaries on simplicial structures on $\capO$-algebras and verify that the natural transformations associated to $\TQ$-homology mesh well with the simplicial structures. In Section \ref{sec:TQ_homology_completion} we setup the cosimplicial cobar constructions associated to $\TQ$-homology completion and the fundamental adjunction; these lie at the technical heart of the paper. In Section \ref{sec:homotopy_limit_towers} we develop the needed results on towers of $\capO$-algebras associated to the truncation filtration of the simplicial category $\Delta$ and develop the properties that we need. In Section \ref{sec:homotopy_theory_K_coalgebras} we setup the homotopy theory of $\K$-coalgebras, which plays a central role in both the statement and proof of our main result. Section \ref{sec:cubical_diagrams_homotopical_analysis} is where we prove the connectivity estimates that we need. Finally in Section \ref{sec:proofs} we conclude with the proofs of several technical results needed in the paper.

\subsection*{Acknowledgments}

The second author would like to thank Bill Dwyer, Emmanuel Farjoun, Kathryn Hess, Rick Jardine, Jim McClure, Haynes Miller, and Crichton Ogle for useful suggestions and remarks and Greg Arone, Andy Baker, Clark Barwick, Kristine Bauer, Mark Behrens, Martin Bendersky, Lee Cohn, Bjorn Dundas, Benoit Fresse, Paul Goerss, Jens Hornbostel, Mark Hovey, Brenda Johnson, Niles Johnson, Tyler Lawson, Assaf Libman, Mike Mandell, Guido Mislin, Dmitri Pavlov, Luis Pereira, Nath Rao, Jakob Scholbach, Stefan Schwede, Dev Sinha, and Dennis Sullivan for helpful comments. The second author is grateful to Haynes Miller for a stimulating and enjoyable visit to the Department of Mathematics at the Massachusetts Institute of Technology in early spring 2015, and for his invitation which made this possible. The first author was supported in part by National Science Foundation grant DMS-1308933.

\section{Outline of the argument}
\label{sec:outline_of_the_argument}

We will now outline the proof of our main result (Theorem \ref{MainTheorem}). It naturally breaks up into five subsidiary results: one for the homotopical analysis of the derived unit map, and four associated to the homotopical analysis of the derived counit map. As corollaries we also obtain strong convergence results for the associated homotopy spectral sequences.

\begin{rem}
\label{rem:cofibrant_replacement_reminder_to_the_reader}
It is worth recalling here that the total left derived functor $\LL Q$ (Section \ref{sec:TQ_homology_completion}) can be calculated on any $\capO$-algebra $X$ as follows: $\LL Q(X)\wequiv Q(X')$ where $X'$ is a cofibrant replacement of $X$; hence in our arguments below it suffices to assume that an $\capO$-algebra $X$ is cofibrant. We make this assumption whenever it serves to clarify the statements or the constructions involving $Q(X)$; in all instances, the reader may simply drop the cofibrancy assumption and replace $Q(X)$ with $\LL Q(X)$ everywhere.
\end{rem}

\subsection{Homotopical analysis of the derived unit map}

Assume that $X$ is a cofibrant $\capO$-algebra. Then the $\TQ$-homology resolution \eqref{eq:TQ_homology_resolution_derived_functor_form_introduction} is precisely the coaugmented cosimplicial $\capO$-algebra
\begin{align*}
  X\rarrow C(QX)
\end{align*}
It follows that the $\TQ$-homology completion $X^{\wedge}_\TQ$ of $X$ can be expressed as
\begin{align*}
  X^\wedge_\TQ
  \wequiv\holim\nolimits_{\Delta}C(QX)
\end{align*}

The following observation is the first step in our attack on analyzing the cosimplicial resolution of an $\capO$-algebra with respect to $\TQ$-homology; it points out the homotopical significance of the coface $n$-cubes $\capX_n$. This is proved in Carlsson \cite{Carlsson} and Sinha \cite{Sinha_cosimplicial_models} in the contexts of spectra and spaces, respectively, and the same argument verifies it in our context; for the convenience of the reader, we include a concise proof in Section \ref{sec:cubical_diagrams_homotopical_analysis}.

\begin{prop}
\label{prop:homotopy_fiber_calculation}
Let $Z$ be a cosimplicial $\capO$-algebra coaugmented by $\function{d^0}{Z^{-1}}{Z^0}$. If $n\geq 0$, then there are natural zigzags of weak equivalences
\begin{align*}
  \hofib(Z^{-1}\rarrow\holim\nolimits_{\Delta^{\leq n}}Z)
  \wequiv
  (\iter\hofib)\capX_{n+1}
\end{align*}
where $\capX_{n+1}$ denotes the canonical $(n+1)$-cube associated to the coface maps of
\begin{align*}
\xymatrix{
  Z^{-1}\ar[r]^-{d^0} &
  Z^0\ar@<-0.5ex>[r]_-{d^1}\ar@<0.5ex>[r]^-{d^0} &
  Z^1\ \cdots\ Z^n
}
\end{align*}
the $n$-truncation of $Z^{-1}\rarrow Z$. We sometimes refer to $\capX_{n+1}$ as the \emph{coface} $(n+1)$-cube associated to the coaugmented cosimplicial $\capO$-algebra $Z^{-1}\rarrow Z$.
\end{prop}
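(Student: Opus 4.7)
The approach is to identify both sides with the homotopy fiber of $Z^{-1}$ into the homotopy limit of the punctured $(n+1)$-cube $\capX^-_{n+1}$, obtained from $\capX_{n+1}$ by deleting the initial vertex indexed by $\emptyset$. By the standard description of the iterated (total) homotopy fiber of a cube in a pointed simplicial model category, there is a natural zigzag of weak equivalences
$$(\iter\hofib)\capX_{n+1} \wequiv \hofib\bigl(Z^{-1} \rarrow \holim \capX^-_{n+1}\bigr),$$
so the proposition reduces to producing a natural zigzag $\holim \capX^-_{n+1} \wequiv \holim_{\Delta^{\leq n}} Z$ compatible with the coaugmentation from $Z^{-1}$.

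To produce this zigzag I would define a functor $F\colon \powerset_0(\{0,1,\ldots,n\}) \to \Delta^{\leq n}$ from the poset of non-empty subsets, ordered by inclusion, that sends $S = \{s_0 < \cdots < s_j\}$ to $[j]$, and sends an inclusion $S \hookrightarrow T$ to the unique order-preserving injection tracking position within $T$. A direct check using the cosimplicial coface identities shows that $Z \circ F$ is naturally isomorphic to $\capX^-_{n+1}$, compatibly with the coaugmentation from $Z^{-1}$. It then suffices to verify that $F$ is cofinal for homotopy limits, i.e., that each overcategory $F \downarrow [k]$ has contractible nerve, so that the classical homotopy initial functor theorem yields the desired equivalence $\holim_{\Delta^{\leq n}} Z \wequiv \holim \capX^-_{n+1}$. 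Assuming Reedy fibrancy of $Z$ (obtained, if necessary, by Reedy fibrant replacement), this reduces via the standard equivalence $\holim_{\Delta^{\leq n}} Z \wequiv \holim_{\Delta_+^{\leq n}} Z$, where $\Delta_+^{\leq n}$ denotes the direct subcategory of injective coface maps, to checking that the relevant overcategories deformation retract onto posets of non-empty subsets of the $[k]$'s, each of which has a terminal object and is therefore contractible.

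All of the above constructions can be carried out in the underlying category $\ModR$, since the forgetful functor $\U\colon \AlgO \to \ModR$ preserves and creates small (homotopy) limits; the resulting equivalences then lift canonically to $\AlgO$. The main obstacle I anticipate is the careful bookkeeping required to match $Z \circ F$ with $\capX^-_{n+1}$ on the nose — reconciling the cosimplicial identities with the indexing conventions for the coface cube — together with organizing the cofinality verification cleanly in the pointed simplicial model category setting. The actual homotopical content is classical and is essentially the argument given in the cited works of Carlsson and Sinha, adapted to our $\capO$-algebra context via the forgetful functor.
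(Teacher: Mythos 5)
Your overall strategy — identify $(\iter\hofib)\capX_{n+1}$ with $\hofib\bigl(Z^{-1}\rarrow\holim\capX^-_{n+1}\bigr)$, then produce the punctured cube $\capX^-_{n+1}$ as a restriction of $Z$ along a functor $F\colon\capP_0([n])\rarrow\Delta^{\leq n}$ and invoke left cofinality of $F$ — is the approach of Carlsson and Sinha that the paper cites, and the needed cofinality statement is exactly the paper's Proposition \ref{prop:left_cofinality_truncated_delta} (used via Proposition \ref{prop:punctured_cube_calculation_of_holim_truncated_delta}). The paper's stated elaboration via Proposition \ref{prop:retraction_two_cube_argument} tracks a retraction-based recasting of the same argument. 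So up to that point you are on the same track as the paper.

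However, your cofinality verification has a concrete error. You claim a ``standard equivalence'' $\holim_{\Delta^{\leq n}}Z\wequiv\holim_{\Delta_+^{\leq n}}Z$, where $\Delta_+^{\leq n}$ is the subcategory of injective maps (what the paper calls $\Delta_\res^{\leq n}$), and you use it to reduce the cofinality check to one over $\Delta_+^{\leq n}$. The paper explicitly warns that this step fails: the inclusion $\Delta_\res^{\leq n}\subset\Delta^{\leq n}$ is \emph{not} left cofinal for $n\geq 1$, and indeed $\Tot_n$ and $\Tot_n^{\res}$ are built from genuinely different pullback squares (Propositions \ref{prop:key_pullback_diagram_for_Tot_tower} and \ref{prop:key_pullback_diagram_for_restricted_Tot_tower}), so Reedy fibrancy does not rescue the claim. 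Already for $n=1$ one can check directly that the overcategory of $[0]$ under $\Delta_\res^{\leq 1}\subset\Delta^{\leq 1}$ has two parallel nontrivial edges and is not contractible, and similarly that the overcategory of $F$ over $[1]$ inside $\Delta_+^{\leq 1}$ has isolated components. What is true — and what is needed — is that the \emph{composite} $\capP_0([n])\Iso P\Delta[n]\rarrow\Delta_\res^{\leq n}\subset\Delta^{\leq n}$ is left cofinal, a fact which neither constituent inclusion enjoys; this must be verified for the overcategories taken in the full $\Delta^{\leq n}$, using the codegeneracy maps essentially (which is where the retraction structure of Proposition \ref{prop:retraction_two_cube_argument} quietly enters). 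So your blueprint is recoverable, but only after discarding the $\Delta_+^{\leq n}$ reduction and either verifying the cofinality of $F$ directly over $\Delta^{\leq n}$ or citing Proposition \ref{prop:left_cofinality_truncated_delta}.
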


The following proposition proved in Section \ref{sec:cubical_diagrams_homotopical_analysis} gives the connectivity estimates that we need.

\begin{prop}
\label{prop:connectivities_for_total_homotopy_fiber_homotopy_completion_cube_no_zigzags}
Let $X$ be a cofibrant $\capO$-algebra and $n\geq 1$. Denote by $\capX_n$ the coface $n$-cube associated to the cosimpicial $\TQ$-homology resolution $X\rarrow C(QX)$ of $X$. If $X$ is $0$-connected, then the total homotopy fiber of $\capX_n$ is $n$-connected.
\end{prop}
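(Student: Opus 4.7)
The plan is to proceed by induction on $n$, with each case reformulated via Proposition \ref{prop:homotopy_fiber_calculation} as a statement about $\hofib(X \rarrow \holim_{\Delta^{\leq k-1}}C(QX))$.

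The base case $n = 1$ is a Hurewicz-type statement: the homotopy fiber of the Hurewicz map $X \rarrow UQ(X)$ should be $1$-connected for a cofibrant $0$-connected $\capO$-algebra $X$. Using the bar-construction description $Q(X) \wequiv |\BAR(I, \capO, X)|$ (compare Remark \ref{rem:elaboration_on_accessibility_of_K}), the Hurewicz map corresponds to extraction of the arity-$1$ (linear) part of the bar resolution, so its homotopy fiber is assembled from contributions of the form $\capO[r] \wedge_{\Sigma_r} X^{\wedge r}$ with $r \geq 2$. Under the standing connectivity assumptions ($\capR,\capO$ are $(-1)$-connected, $X$ is $0$-connected), each such piece is at least $1$-connected, and this connectivity is preserved on passage to geometric realization.

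For the inductive step, my strategy is to analyze $\capX_n$ directly as a cubical diagram in $\AlgO$, rather than going solely through the truncation tower $\{\holim_{\Delta^{\leq k}}C(QX)\}_k$. The coface cube has a specific combinatorial shape that reflects the comonadic resolution $C(QX) = (UQ)^{\bullet+1}X$: many of its subcubes are controlled by iterated Hurewicz maps, while the subcubes supported entirely on the image of $U$ admit extra-degeneracy splittings (so their iterated homotopy fibers are contractible by Proposition \ref{prop:homotopy_fiber_calculation} applied to the d\'ecalage). The higher Blakers--Massey theorem for $\capO$-algebras of Ching--Harper \cite{Ching_Harper} relates the cartesian connectivity of a cube in $\AlgO$ to the connectivities of its associated cocartesian subcubes; applying this to $\capX_n$, combined with the $1$-connectivity of the Hurewicz fiber from the base case, should yield the desired $n$-connectivity of the total homotopy fiber.

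The hard part will be extracting enough cocartesian structure from $\capX_n$ to feed into Blakers--Massey and tracking the connectivity bookkeeping precisely, so that the estimate improves by one at each inductive stage rather than stagnating. This likely proceeds by decomposing $\capX_n$ along one coface direction into a map of $n$-cubes $\mathcal{A} \rarrow \mathcal{B}$, using the extra-degeneracy argument to kill $\mathcal{B}$'s contribution, and then applying the cubical Blakers--Massey estimate to the remaining cube together with a cross-effect analysis of the Hurewicz map. I expect Section \ref{sec:cubical_diagrams_homotopical_analysis} to implement this directly, in the spirit of the spectral algebra cubical techniques of Ching--Harper and in analogy with the classical cubical resolution arguments of Carlsson \cite{Carlsson} and Sinha \cite{Sinha_cosimplicial_models}, now adapted to structured ring spectra.
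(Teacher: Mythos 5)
Your base case is correct and matches the paper: the $n=1$ statement is the $\TQ$-Hurewicz theorem, and the paper's proof of Proposition~\ref{prop:connectivities_for_total_homotopy_fiber_homotopy_completion_cube} cites Harper--Hess for precisely this. But the inductive Blakers--Massey strategy you outline for $n\geq 2$ is not the paper's route, and it has a concrete gap. The paper's actual argument is a direct, non-inductive computation: the coface cube $\capX_n$ is replaced (Definition~\ref{defn:sequence_of_weakly_equivalent_models_of_the_coface_cube}) by an objectwise weakly equivalent cube $\capX'''_n$ of iterated simplicial bar constructions $|\BAR(\tau_1\capO,\capO,\dotsc)|$; then the iterated homotopy fiber is computed multisimplicially, and the symmetric sequence combinatorics of Propositions~\ref{prop:calculations_for_iterated_fibers_part1}, \ref{prop:calculations_for_iterated_fibers_part2}, \ref{prop:calculations_for_iterated_fibers_part3} show that in each multisimplicial degree it is a circle product that starts at level $n+1$ applied to $X$, hence $n$-connected once $X$ is $0$-connected. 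The homotopy spectral sequence for a realization (Proposition~\ref{prop:homotopy_spectral_sequence}) then gives $n$-connectivity of the total homotopy fiber. No cubical Blakers--Massey theorem is invoked, and no induction on $n$.

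The gap: your claim that the "subcubes supported entirely on the image of $U$ admit extra-degeneracy splittings (so their iterated homotopy fibers are contractible)" overstates what the codegeneracy retractions actually give. Take $n=2$: the $1$-subcubes of $\capX_2$ avoiding $X$ are the maps $d^0, d^1\colon\thinspace UQX\rarrow U\K QX$, each a unit-type map with the retraction $s^0$. A retraction does not make a map a weak equivalence, so these homotopy fibers are not contractible. What the retraction actually yields (Proposition~\ref{prop:retraction_two_cube_argument}) is $\hofib(d^i)\wequiv\Omega\,\hofib(s^0)$ -- a shift to the codegeneracy cube, which is exactly the paper's mechanism for the \emph{counit}-side estimate (Propositions~\ref{prop:iterated_homotopy_fibers_calculation} and~\ref{prop:multisimplicial_calculation_of_iterated_hofiber_codegeneracy_cube_no_zigzags}), not for this coface-cube estimate. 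More generally, to get cartesianness of $\capX_n$ via Ching--Harper's higher Blakers--Massey theorem you would need strong \emph{cocartesian} estimates on subcubes, and the retraction/extra-degeneracy phenomenon produces cartesian, not cocartesian, information; conflating the two is where the outline breaks down.

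Finally, there is a reason the paper does not take the Blakers--Massey route here even though it is available in principle. Blakers--Massey estimates carry slack -- compare the $(n+4)$-connectivity conclusion of Theorem~\ref{thm:connectivities_for_map_that_commutes_Q_into_inside_of_holim}, which \emph{is} proved by the higher dual Blakers--Massey theorem fed by the codegeneracy-cube estimates. For the unit map you need the sharp $n$-connectivity (so that the connectivity strictly increases in the $\holim$ tower), and the authors get it by computing the iterated homotopy fiber on the nose from the bar-construction models rather than by passing through a cocartesian-to-cartesian conversion. Your proposal would require first locating genuine cocartesian estimates and then verifying the arithmetic closes with no loss; the direct calculation sidesteps both difficulties.
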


\begin{rem}
It is important to note that the total homotopy fiber of an $n$-cube of $\capO$-algebras is weakly equivalent to its iterated homotopy fiber, and in this paper we use the terms interchangeably; we use the convention that the iterated homotopy fiber of a $0$-cube $\capX$ (or object $\capX_\emptyset$) is the homotopy fiber of the unique map $\capX_\emptyset\rarrow *$ and hence weakly equivalent to $\capX_\emptyset$.
\end{rem}

\begin{thm}
\label{thm:connectivity_estimates_canonical_map_n_th_stage_TQ_completion}
If $X$ is a $0$-connected cofibrant $\capO$-algebra and $n\geq 0$, then the natural map
\begin{align}
\label{eq:canonical_map_into_n_th_stage_holim_tower_out_of_X}
  X\longrightarrow\holim\nolimits_{\Delta^{\leq n}} C(QX)
\end{align}
is $(n+2)$-connected; for $n=0$ this is the Hurewicz map.
\end{thm}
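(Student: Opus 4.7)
The plan is to deduce the theorem immediately from Propositions~\ref{prop:homotopy_fiber_calculation} and \ref{prop:connectivities_for_total_homotopy_fiber_homotopy_completion_cube_no_zigzags}. First I would apply Proposition~\ref{prop:homotopy_fiber_calculation} to the coaugmented cosimplicial $\capO$-algebra $X\rarrow C(QX)$, taking $Z^{-1}=X$ and $Z=C(QX)$. This produces a natural zigzag of weak equivalences
\[
\hofib\bigl(X\rarrow\holim\nolimits_{\Delta^{\leq n}}C(QX)\bigr)
\wequiv (\iter\hofib)\capX_{n+1},
\]
where $\capX_{n+1}$ is precisely the coface $(n+1)$-cube associated to the cosimplicial $\TQ$-homology resolution $X\rarrow C(QX)$.

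Next I invoke Proposition~\ref{prop:connectivities_for_total_homotopy_fiber_homotopy_completion_cube_no_zigzags}: since $X$ is a $0$-connected cofibrant $\capO$-algebra, the total (equivalently, iterated) homotopy fiber of $\capX_{n+1}$ is $(n+1)$-connected. Recalling that a map of $\capO$-algebras with $k$-connected homotopy fiber is $(k+1)$-connected (read off from the long exact sequence of homotopy groups of the homotopy fiber sequence), it follows that the map \eqref{eq:canonical_map_into_n_th_stage_holim_tower_out_of_X} is $(n+2)$-connected. In the special case $n=0$, the target $\holim_{\Delta^{\leq 0}}C(QX)$ reduces naturally to $C(QX)^0=UQ(X)$ and the displayed map is identified with the Hurewicz map of Section~\ref{sec:TQ_homology_completion}, so the estimate specializes to the statement that the Hurewicz map is $2$-connected whenever $X$ is $0$-connected.

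The real content of this theorem is Proposition~\ref{prop:connectivities_for_total_homotopy_fiber_homotopy_completion_cube_no_zigzags}, which is where the substantive connectivity analysis of the coface cube $\capX_{n+1}$ lives and whose proof is deferred to Section~\ref{sec:cubical_diagrams_homotopical_analysis}; that is where I expect the main obstacle. By contrast, once Propositions~\ref{prop:homotopy_fiber_calculation} and \ref{prop:connectivities_for_total_homotopy_fiber_homotopy_completion_cube_no_zigzags} are in hand, the present theorem is essentially formal: it is a naturality and bookkeeping argument translating a total homotopy fiber connectivity estimate into a connectivity estimate for the canonical map into the $n$-th stage of the cosimplicial holim tower.
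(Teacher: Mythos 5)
Your proposal is correct and follows exactly the paper's own proof: both apply Proposition~\ref{prop:homotopy_fiber_calculation} to identify the homotopy fiber of \eqref{eq:canonical_map_into_n_th_stage_holim_tower_out_of_X} with the total homotopy fiber of the coface $(n+1)$-cube $\capX_{n+1}$, then invoke Proposition~\ref{prop:connectivities_for_total_homotopy_fiber_homotopy_completion_cube_no_zigzags} to get that this is $(n+1)$-connected, so the map is $(n+2)$-connected. The only difference is expository: you spell out the (standard) convention relating fiber connectivity to map connectivity and the $n=0$ identification with the Hurewicz map, both of which the paper leaves implicit.
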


\begin{proof}
By Proposition \ref{prop:homotopy_fiber_calculation} the homotopy fiber of the map \eqref{eq:canonical_map_into_n_th_stage_holim_tower_out_of_X} is weakly equivalent to the total homotopy fiber of the coface $(n+1)$-cube $\capX_{n+1}$ associated to $X\rightarrow C(QX)$, hence it follows from Proposition \ref{prop:connectivities_for_total_homotopy_fiber_homotopy_completion_cube_no_zigzags} that the map \eqref{eq:canonical_map_into_n_th_stage_holim_tower_out_of_X} is $(n+2)$-connected.
\end{proof}

\begin{proof}[Proof of Theorem \ref{MainTheorem}(b)]
Assume that $X$ is a cofibrant $\capO$-algebra. We want to verify that the natural map
$
  X\rarrow\holim_\Delta C(QX)
$
is a weak equivalence.  It suffices to verify that the connectivity of the natural map \eqref{eq:canonical_map_into_n_th_stage_holim_tower_out_of_X} into the $n$-th stage of the associated $\holim$ tower (Section \ref{sec:homotopy_limit_towers}) is strictly increasing with $n$, and Theorem \ref{thm:connectivity_estimates_canonical_map_n_th_stage_TQ_completion} completes the proof.
\end{proof}

\subsection{Homotopical analysis of the derived counit map}

The following calculation points out the homotopical significance of the codegeneracy $n$-cubes $\capY_n$ associated to a cosimplicial $\capO$-algebra. This is proved in Bousfield-Kan \cite[X.6.3]{Bousfield_Kan} for the $\Tot$ tower of a Reedy fibrant cosimplicial pointed space, and the same argument verifies it in our context; for the convenience of the reader, we include a concise proof in Section \ref{sec:cubical_diagrams_homotopical_analysis}.

\begin{prop}
\label{prop:iterated_homotopy_fibers_calculation}
Let $Z$ be a cosimplicial $\capO$-algebra and $n\geq 0$. There are natural zigzags of weak equivalences
\begin{align*}
  \hofib(\holim_{\Delta^{\leq n}}Z\rarrow\holim_{\Delta^{\leq n-1}}Z)
  \wequiv\Omega^n(\iter\hofib)\capY_n
\end{align*}
where $\capY_n$ denotes the canonical $n$-cube built from the codegeneracy maps of
\begin{align*}
\xymatrix{
  Z^0 &
  Z^1
  \ar[l]_-{s^0} &
  Z^2\ar@<-0.5ex>[l]_-{s^0}\ar@<0.5ex>[l]^-{s^1}
  \ \cdots\ Z^n
}
\end{align*}
 the $n$-truncation of $Z$; in particular, $\capY_0$ is the object (or $0$-cube) $Z^0$. Here, $\Omega^n$ is weakly equivalent, in the underlying category $\ModR$, to the $n$-fold desuspension $\Sigma^{-n}$ functor. We often refer to $\capY_n$ as the \emph{codegeneracy} $n$-cube associated to $Z$.
\end{prop}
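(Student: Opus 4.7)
The plan is to adapt the classical Bousfield--Kan argument (\cite{Bousfield_Kan} X.6.3) to the setting of $\capO$-algebras, using the simplicial model structure developed in Section \ref{sec:simplicial_structures}. First I would replace $Z$ by a weakly equivalent Reedy fibrant cosimplicial $\capO$-algebra (this uses that $\AlgO$ is a cofibrantly generated simplicial model category, so the Reedy model structure on cosimplicial objects exists and Reedy fibrant replacement is functorial). Under this replacement, $\holim_{\Delta^{\leq n}} Z$ is naturally weakly equivalent to the truncated totalization $\Tot^n Z = \Map^\Delta(\Sk^n\Delta[\bullet], Z)$, where $\Map^\Delta$ denotes the simplicial end.

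Next I would exploit the pushout of cosimplicial simplicial sets
\begin{align*}
  \partial\Delta^n\tensor F(n)\amalg_{\partial\Delta^n\tensor L^n F(n)}\Delta^n\tensor L^n F(n)\longrightarrow \Delta^n\tensor F(n)
\end{align*}
relating the $n$-skeleton to the $(n-1)$-skeleton (here $F(n)$ denotes the representable $\Delta[n]$ viewed cosimplicially and $L^n$ is the $n$-th latching object). Applying $\Map^\Delta(-, Z)$ to this pushout yields, for $Z$ Reedy fibrant, a homotopy pullback square whose associated homotopy fiber sequence is
\begin{align*}
  \Map_*(S^n, M^n Z)\rarrow \Tot^n Z\rarrow \Tot^{n-1} Z,
\end{align*}
where the fiber term arises from $\Delta^n/\partial\Delta^n\simeq S^n$ and $M^n Z$ is the $n$-th matching object (conormalization).

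Finally I would identify $M^n Z$, up to natural zigzag, with the iterated homotopy fiber of the codegeneracy $n$-cube $\capY_n$. This is the Eckmann--Hilton dual of the standard identification of the normalized chains of a simplicial object with the iterated cofiber of its face $n$-cube; here $M^n Z$ is by construction the (strict) fiber over the basepoint of the map $Z^n\rarrow\prod_{i=0}^{n-1} Z^{n-1}$ assembled from the codegeneracies $s^0,\ldots,s^{n-1}$, and a standard inductive cube argument (applied on Reedy fibrant $Z$ so that all relevant maps are fibrations) shows that this strict fiber computes the total homotopy fiber of $\capY_n$. Combined with the previous paragraph, and using that in the stable setting of $\capR$-modules $\Omega^n$ is equivalent to $\Sigma^{-n}$ on the underlying modules, this gives the desired natural zigzag $\hofib(\holim_{\Delta^{\leq n}}Z\rarrow\holim_{\Delta^{\leq n-1}}Z)\wequiv\Omega^n(\iter\hofib)\capY_n$.

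The main obstacle I anticipate is bookkeeping naturality of the zigzag through Reedy fibrant replacement: the intermediate identifications (truncated totalization, matching-object fiber sequence, and conormalization-as-total-fiber-of-$\capY_n$) each require Reedy fibrancy, yet the cosimplicial objects to which we ultimately apply the proposition (namely $X\rarrow C(QX)$ in Theorem \ref{thm:connectivity_estimates_canonical_map_n_th_stage_TQ_completion} and its analogues) need not be Reedy fibrant on the nose. Handling this cleanly---so that the statement produces a genuinely natural zigzag rather than a case-by-case identification---will require verifying that codegeneracy cubes are sufficiently homotopy invariant, which in turn rests on the Cofibrancy Condition \ref{CofibrancyCondition} on $\capO$ and the interaction between the simplicial structure and homotopy fibers established in Section \ref{sec:simplicial_structures}.
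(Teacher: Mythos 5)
Your approach is essentially the same as the paper's: both are adaptations of the Bousfield--Kan argument \cite[X.6.3]{Bousfield_Kan}, which the paper cites explicitly. The paper packages the key homotopy-theoretic input as repeated application of its retraction-$2$-cube lemma (Proposition \ref{prop:retraction_two_cube_argument}), using that codegeneracy maps are retractions of cofaces; your Reedy/skeleton exposition unwinds the same content. That said, a few points deserve correction.

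First, a terminological slip: what you call ``the $n$-th matching object $M^n Z$'' and identify with the fiber of $Z^n\rarrow\prod_{i=0}^{n-1}Z^{n-1}$ is actually the \emph{conormalization} $N^n Z:=\fiber(Z^n\rarrow M^{n-1}Z)$, not the matching object itself; the matching object $M^{n-1}Z$ is the target of the Reedy matching map, and it is a strict subobject of $\prod_i Z^{n-1}$, which is why the two strict fibers over the basepoint coincide. The fiber of $\Tot_n Z\rarrow\Tot_{n-1}Z$ is $\Omega^n N^n Z$, not $\Omega^n M^n Z$, and the pullback square encoding this is already recorded in the paper as Proposition \ref{prop:key_pullback_diagram_for_Tot_tower}. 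Your ``standard inductive cube argument'' identifying this conormalization with $(\iter\hofib)\capY_n$ is precisely where the paper invokes Proposition \ref{prop:retraction_two_cube_argument} and the cosimplicial identities; it would be cleaner to make that citation explicit rather than leaving the step as an unelaborated assertion.

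Second, the worry in your final paragraph misattributes what Cofibrancy Condition \ref{CofibrancyCondition} is for. That hypothesis on $\capO$ enters only through Propositions \ref{prop:homotopical_analysis_of_forgetful_functors} and \ref{prop:reedy_cofibrant_for_bar_constructions} --- ensuring the bar-construction cubes in the connectivity estimates (Propositions \ref{prop:connectivities_for_total_homotopy_fiber_homotopy_completion_cube} and \ref{prop:multisimplicial_calculation_of_iterated_hofiber_codegeneracy_cube}) are objectwise weak equivalences --- and plays no role in the present proposition. The iterated homotopy fiber of an $n$-cube of $\capO$-algebras is homotopy invariant unconditionally, and naturality of the zigzag is handled by functorial Reedy fibrant replacement (and by the fact that the paper's $\holim$ functors are already defined through functorial fibrant replacement, Section \ref{sec:homotopy_limit_towers}), not by any cofibrancy assumption on the operad.
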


The following proposition proved in Section \ref{sec:cubical_diagrams_homotopical_analysis} gives the connectivity estimates that we need.

\begin{prop}
\label{prop:multisimplicial_calculation_of_iterated_hofiber_codegeneracy_cube_no_zigzags}
Let $Y$ be a cofibrant $\K$-coalgebra and $n\geq 1$. Denote by $\capY_n$ the the codegeneracy $n$-cube associated to the cosimplicial cobar construction $C(Y)$ of $Y$. If $Y$ is $0$-connected, then the total homotopy fiber of $\capY_n$ is $2n$-connected.
\end{prop}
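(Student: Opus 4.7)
My plan is to compute the total homotopy fiber of $\capY_n$ via a layered analysis of counit maps, using the multi-simplicial decomposition of $C(Y)^n$ arising from the bar construction $B(\capO) = |\BAR_\bullet(I,\capO,I)|$. Under the standing hypotheses ($\capR,\capO$ are $(-1)$-connected; $Y$ is $0$-connected cofibrant), the preceding remark provides a zigzag of weak equivalences
\[
\K(Y) \simeq B(\capO)\circ Y \simeq \prod_{t\geq 1} B(\capO)[t]\wedge_{\Sigma_t} Y^{\wedge t}
\]
(taking $\tau_1\capO = I$ for notational simplicity; the general case is analogous), in which the counit $\epsilon\colon\K(Y)\to Y$ is identified with projection onto the $t=1$ summand. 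A key auxiliary input I would verify is that $B(\capO)[t]$ is $0$-connected for every $t\geq 2$: since $\BAR_0(I,\capO,I) = I$ satisfies $I[t] = *$ for $t\geq 2$, the geometric realization of the $(-1)$-connective simplicial spectrum $\BAR_\bullet(I,\capO,I)[t]$ with trivial $0$-simplices is at least $0$-connected. Consequently, $\tilde\K(Y):=\hofib(\epsilon)\simeq \prod_{t\geq 2}B(\capO)[t]\wedge_{\Sigma_t} Y^{\wedge t}$ is at least $2$-connected, establishing the case $n = 1$ of the proposition.

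For general $n$, I would invoke a matching-space description of the total homotopy fiber of a codegeneracy cube (dual to Dold--Kan, paralleling Proposition \ref{prop:iterated_homotopy_fibers_calculation} and Bousfield--Kan X.6.3): the total homotopy fiber of $\capY_n$ is weakly equivalent to the normalized $n$-th cochain $N^n C(Y)\simeq \hofib\bigl(C(Y)^n \to M_{n-1}C(Y)\bigr)$, equivalently the iterated homotopy fiber of the $n$ codegeneracies $s^0,\ldots,s^{n-1}$ out of $C(Y)^n$. Since $C(Y)^n$ is an iterated composition of $\K$'s applied to $Y$ and each codegeneracy corresponds to applying $\epsilon$ at a specific layer of this composition, the iterated fiber can be computed by peeling off counits one layer at a time. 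At each such step the relevant fiber takes the form $\hofib\bigl((\K Z)^{\wedge t}\to Z^{\wedge t}\bigr)$ for an appropriate $Z$ and $t\geq 2$, which via the cofiber sequence $\tilde\K Z\to \K Z\to Z$ and the associated smash-product filtration contains $\tilde\K Z\wedge Z^{\wedge(t-1)}$ as its minimum-connectivity summand; smashing further with $B(\capO)[t]$ (itself $0$-connected for $t\geq 2$) then propagates a net $+2$ connectivity gain per layer. Iterated over $n$ layers, this yields a total boost of $+2n$ beyond the base $0$-connectivity of $Y$, so the total homotopy fiber of $\capY_n$ is at least $2n$-connected.

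The main technical obstacle will be making the layer-by-layer analysis rigorous: specifically, matching the codegeneracy $n$-cube $\capY_n$ with the iterated-counit-fiber configuration described above, and tracking the $\Sigma_{t}$-coinvariants in the decomposition of $\K$ via norm-style arguments. Because $\K$ is not a linear functor the inductive step requires care; however, the preservation of cofiber sequences under smashing with connective spectra, combined with the smash-product filtration above, ensures that the connectivity bound propagates correctly through each layer. The $0$-connectivity of $Y$, together with the $(-1)$-connectivities of $\capR$ and $\capO$, is precisely what is needed to secure the $+2$-per-layer connectivity gain.
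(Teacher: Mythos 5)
Your approach is in the right spirit and closely parallels the paper's own strategy in outline (both start from the identification $\K(Y)\wequiv B(\capO)\circ(Y)$ and use the $0$-connectivity of $B(\capO)[t]$ for $t\geq 2$ to see the base case), but there is a genuine gap in the inductive/iterative step that you acknowledge only loosely as "the main technical obstacle."

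The clean recursion that actually holds is the following: the codegeneracy $n$-cube can be written as a map of $(n-1)$-cubes $\K(\capY_{n-1})\xrightarrow{\varepsilon}\capY_{n-1}$ (the $v_1=0$ face is $\K$ applied objectwise to the lower codegeneracy cube, with $s^0$ the counit), so
\[
\text{tfib}(\capY_n) \;\wequiv\; \text{tfib}\bigl(\tilde{\K}(\capY_{n-1})\bigr)
\;\wequiv\; \prod_{t\geq 2}B(\capO)[t]\wedge_{\Sigma_t}\text{tfib}\bigl((\capY_{n-1})^{\wedge t}\bigr).
\]
Your sketch replaces the object $\text{tfib}\bigl((\capY_{n-1})^{\wedge t}\bigr)$ by $\hofib\bigl((\K Z)^{\wedge t}\to Z^{\wedge t}\bigr)$; that is literally correct only for $n=2$ (where $\capY_1$ is a $1$-cube and $Z=Y$). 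For $n\geq 3$ the object in question is the \emph{total homotopy fiber of an objectwise $t$-fold smash power of a cube of dimension $\geq 2$}, and this does \emph{not} reduce to the fiber of a single map of smash powers—total fiber does not commute with objectwise smash powers. What you would actually need is a lemma estimating $\text{tfib}(\capX^{\wedge t})$ for a codegeneracy $k$-cube $\capX$, and this estimate cannot be stated only in terms of the connectivity of $\text{tfib}(\capX)$ and the vertices: the filtration of $\text{tfib}(\capX^{\wedge t})$ has pieces built from smashes of the total fiber, the various \emph{partial} fibers of $\capX$, and the vertices, and the required bookkeeping involves all of them. (A quick check with $n=3$, $t=2$: the weight-one layer of $\text{tfib}\bigl((\capY_2)^{\wedge 2}\bigr)$ is an extension of $F_2\wedge\K Y$ by $\tilde\K Y\wedge\tilde\K Y$, both $5$-connected; the partial fiber $\tilde\K Y$, not just $F_2$ and the vertices, is needed to see this.)

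This bookkeeping is exactly the combinatorial content that the paper handles differently: by passing through the bar-construction zigzag $\capY_n\wequiv\capY''''_n$ and reading the iterated homotopy fiber off the $n$-multisimplicial object degreewise, using Propositions \ref{prop:calculations_for_iterated_fibers_part1}--\ref{prop:calculations_for_iterated_fibers_part3} to control connectivity at each multi-simplicial degree, and then Proposition \ref{prop:homotopy_spectral_sequence} to pass to realizations. That route also absorbs the $\Sigma_t$-coinvariant issues via the circle-product formalism and the cofibrancy results (Propositions \ref{prop:homotopical_analysis_of_forgetful_functors} and \ref{prop:reedy_cofibrant_for_bar_constructions}), which you would otherwise have to control by hand. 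So while your weight-graded decomposition of $\K$ is a legitimate reformulation, the inductive step as you have phrased it is incorrectly formulated for $n\geq 3$, and closing the gap essentially amounts to redeveloping, in the weight-graded language, the multisimplicial connectivity analysis that the paper carries out via the bar constructions.
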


\begin{thm}
\label{thm:estimating_connectivity_of_maps_in_tower_C_of_Y}
If $Y$ is a $0$-connected cofibrant $\K$-coalgebra and $n\geq 1$, then the natural map
\begin{align}
\label{eq:tower_map_from_n_th_stage_to_next_lower_stage}
  \holim_{\Delta^{\leq n}}C(Y)&\longrightarrow
  \holim_{\Delta^{\leq n-1}}C(Y)
\end{align}
is an $(n+1)$-connected map between $0$-connected objects.
\end{thm}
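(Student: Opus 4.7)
The plan is to apply Proposition~\ref{prop:iterated_homotopy_fibers_calculation} and Proposition~\ref{prop:multisimplicial_calculation_of_iterated_hofiber_codegeneracy_cube_no_zigzags} in sequence, closely mirroring the argument given for Theorem~\ref{thm:connectivity_estimates_canonical_map_n_th_stage_TQ_completion}. Applying Proposition~\ref{prop:iterated_homotopy_fibers_calculation} to the cosimplicial $\capO$-algebra $Z = C(Y)$ identifies the homotopy fiber of the map \eqref{eq:tower_map_from_n_th_stage_to_next_lower_stage} with $\Omega^n(\iter\hofib)\capY_n$, where $\capY_n$ is the codegeneracy $n$-cube of $C(Y)$. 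Proposition~\ref{prop:multisimplicial_calculation_of_iterated_hofiber_codegeneracy_cube_no_zigzags} then asserts that this total homotopy fiber is $2n$-connected. Since $\Omega^n$ is modelled in the underlying category $\ModR$ by $n$-fold desuspension, the homotopy fiber of \eqref{eq:tower_map_from_n_th_stage_to_next_lower_stage} is at least $(2n - n) = n$-connected, and hence the map itself is $(n+1)$-connected, as claimed.

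To verify $0$-connectedness of both source and target, I would proceed by induction on $n \geq 0$, starting from the base case $n = 0$: one must show that $\holim_{\Delta^{\leq 0}} C(Y) \simeq C(Y)^0$ is $0$-connected. This follows from the construction of the cosimplicial cobar construction $C(Y)$ in Section~\ref{sec:TQ_homology_completion}, combined with the standing hypotheses that $Y$ is $0$-connected and that $\capR, \capO$ are $(-1)$-connected (these connectivity-preservation properties are exactly those needed to build the cobar resolution). For the inductive step, granted that $\holim_{\Delta^{\leq n-1}} C(Y)$ is $0$-connected, the connectivity analysis above shows the tower map is at least $2$-connected whenever $n \geq 1$, so its source $\holim_{\Delta^{\leq n}} C(Y)$ is also $0$-connected. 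This closes the induction.

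The genuine computational content is packaged inside Proposition~\ref{prop:multisimplicial_calculation_of_iterated_hofiber_codegeneracy_cube_no_zigzags}, where the $2n$-connectivity of the codegeneracy $n$-cube of $C(Y)$ is the hard input, and this is deferred to Section~\ref{sec:cubical_diagrams_homotopical_analysis}. Granting that, the remaining work here is essentially bookkeeping. The only conceptually delicate point — and the natural foil to the derived unit argument of Theorem~\ref{thm:connectivity_estimates_canonical_map_n_th_stage_TQ_completion} — is that, unlike the coface cube which gives the right connectivity estimate directly, the codegeneracy cube's connectivity must compensate for the $n$-fold desuspension built into Proposition~\ref{prop:iterated_homotopy_fibers_calculation}; the factor of $2$ in the $2n$-connectivity estimate is precisely what makes the connectivity of the tower map grow (at rate $n+1$), which is what drives strong convergence of the associated spectral sequence later.
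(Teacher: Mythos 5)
Your proof of the $(n+1)$-connectedness follows exactly the paper's argument: apply Proposition \ref{prop:iterated_homotopy_fibers_calculation} to identify the homotopy fiber with $\Omega^n(\iter\hofib)\capY_n$, then use the $2n$-connectivity from Proposition \ref{prop:multisimplicial_calculation_of_iterated_hofiber_codegeneracy_cube_no_zigzags} to conclude. Your additional inductive verification of $0$-connectedness (noting $\holim_{\Delta^{\leq 0}}C(Y)\cong C(Y)^0=UY$ is $0$-connected since $Y$ is, and propagating up the tower using the fact that the tower maps are at least $2$-connected for $n\geq 1$) is correct and actually spells out a claim that the paper's two-line proof leaves implicit.
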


\begin{proof}
The homotopy fiber of the map \eqref{eq:tower_map_from_n_th_stage_to_next_lower_stage} is weakly equivalent to $\Omega^n$ of the total homotopy fiber of the codegeneracy $n$-cube $\capY_{n}$ associated to $C(Y)$ by Proposition \ref{prop:iterated_homotopy_fibers_calculation}, hence by Proposition \ref{prop:multisimplicial_calculation_of_iterated_hofiber_codegeneracy_cube_no_zigzags} the map \eqref{eq:tower_map_from_n_th_stage_to_next_lower_stage} is $(n+1)$-connected.
\end{proof}

\begin{thm}
\label{thm:connectivities_for_map_into_n_th_stage}
If $Y$ is a $0$-connected cofibrant $\K$-coalgebra and $n\geq 0$, then the natural maps
\begin{align}
\label{eq:canonical_map_needed_to_discuss}
  \holim\nolimits_\Delta C(Y)&\longrightarrow
  \holim\nolimits_{\Delta^{\leq n}}C(Y)\\
\label{eq:the_other_canonical_map_needed}
  \LL Q\holim\nolimits_\Delta C(Y)&\longrightarrow
  \LL Q\holim\nolimits_{\Delta^{\leq n}}C(Y)
\end{align}
are $(n+2)$-connected maps between $0$-connected objects.
\end{thm}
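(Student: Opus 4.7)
The plan is to analyze the map \eqref{eq:canonical_map_needed_to_discuss} via the homotopy limit tower $X_m := \holim\nolimits_{\Delta^{\leq m}}C(Y)$, for which $\holim\nolimits_\Delta C(Y)\wequiv\holim\nolimits_m X_m$, and then to deduce \eqref{eq:the_other_canonical_map_needed} from \eqref{eq:canonical_map_needed_to_discuss} by connectivity preservation of $\LL Q$. By Theorem \ref{thm:estimating_connectivity_of_maps_in_tower_C_of_Y}, each tower map $X_m\rarrow X_{m-1}$ for $m\geq 1$ is $(m+1)$-connected, so its homotopy fiber $G_m$ is $m$-connected, and each $X_m$ is $0$-connected.

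I would then introduce the fibers $F_j := \hofib(X_{n+j}\rarrow X_n)$ for $j\geq 0$ and show by induction on $j$, using the fiber sequence $G_{n+j}\rarrow F_j\rarrow F_{j-1}$ (with $F_0 := *$), that each $F_j$ is $(n+1)$-connected, and moreover that the transition $F_{j+1}\rarrow F_j$ has $(n+j+1)$-connected homotopy fiber. The crucial consequence is that for each fixed $i$ the tower of groups $\{\pi_i F_j\}_j$ is eventually constant in $j$, so $\lim\nolimits^1_j \pi_{i+1}(F_j)=0$ and the Milnor short exact sequence collapses to $\pi_i(\holim_j F_j)\iso \lim_j \pi_i(F_j)$. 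By the $(n+1)$-connectivity of each $F_j$ this $\lim$ vanishes for $i\leq n+1$, so $\holim_j F_j\wequiv \hofib(\holim\nolimits_\Delta C(Y)\rarrow X_n)$ is $(n+1)$-connected and the map \eqref{eq:canonical_map_needed_to_discuss} is $(n+2)$-connected; zero-connectivity of the source is then inherited from that of $X_n$.

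For \eqref{eq:the_other_canonical_map_needed}, I would invoke that $\LL Q$ preserves the connectivity of maps between $0$-connected $\capO$-algebras: as a left adjoint on homotopy categories it preserves homotopy cofiber sequences, and the bar formula $\LL Q(-)\wequiv |\BAR(I,\capO,-)|$ combined with the $(-1)$-connectivity hypotheses on $\capR$ and $\capO$ ensures that $\LL Q$ of a $k$-connected ($k\geq 0$) $\capO$-algebra is at least $k$-connected in $\ModR$. Since $\ModR$ is stable (cofiber equals suspension of fiber), a $k$-connected map in $\AlgO$ between $0$-connected objects is sent to a $k$-connected map in $\ModR$; applying this to \eqref{eq:canonical_map_needed_to_discuss} yields the statement for \eqref{eq:the_other_canonical_map_needed}.

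The main obstacle I anticipate is the $\lim^1$-vanishing. Knowing only that each $F_j$ is $(n+1)$-connected, the Milnor sequence would leave open a potentially nonzero contribution $\lim\nolimits^1_j\pi_{n+2}(F_j)\rarrow\pi_{n+1}(\holim_j F_j)$, yielding only an $(n+1)$-connected map in \eqref{eq:canonical_map_needed_to_discuss} rather than the required $(n+2)$-connected bound. The quantitative connectivity improvement for the transition maps $F_{j+1}\rarrow F_j$ supplied by Theorem \ref{thm:estimating_connectivity_of_maps_in_tower_C_of_Y} applied to each stage beyond the $n$-th is precisely what renders the tower eventually constant level by level, and hence Mittag-Leffler, sharpening the bound by the one degree we need.
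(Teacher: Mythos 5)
Your treatment of the map \eqref{eq:canonical_map_needed_to_discuss} is correct and follows the same route as the paper: reduce to the tower estimates of Theorem \ref{thm:estimating_connectivity_of_maps_in_tower_C_of_Y}, which make the maps $\holim_{\Delta^{\leq m}}C(Y)\to\holim_{\Delta^{\leq m-1}}C(Y)$ at least $(n+2)$-connected for $m>n$. The paper's proof condenses the remainder into ``it follows that''; your $F_j$/Milnor-sequence/Mittag--Leffler argument is exactly the justification that phrase elides, and you correctly identify why the strictly increasing connectivities in Theorem \ref{thm:estimating_connectivity_of_maps_in_tower_C_of_Y} (and not merely the uniform bound $(n+1)$ on the connectivity of each $F_j$) are what kill $\lim^1$. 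The machinery for this is available in the paper as Propositions \ref{prop:short_exact_sequence} and \ref{prop:relative_connectivity_lemma}; invoking one of those would make your step a one-liner, but what you wrote is sound.

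For the map \eqref{eq:the_other_canonical_map_needed}, the paper simply cites the Harper--Hess $\TQ$-Hurewicz theorems \cite[1.8, 1.9]{Harper_Hess}, and in particular the relative version: an $n$-connected map between $0$-connected $\capO$-algebras is sent by $\LL Q$ to an $n$-connected map. Your sketch aims to re-derive this from three ingredients---$\LL Q$ preserves homotopy cofiber sequences (left Quillen), $\LL Q$ of a $k$-connected object is $k$-connected (bar formula, $(-1)$-connectivity of $\capR$ and $\capO$), and stability of $\ModR$. But there is a gap between these and the conclusion: to apply the absolute Hurewicz estimate to $\mathrm{cofib}_{\ModR}(\LL Q f)\simeq\LL Q(\mathrm{cofib}_{\AlgO} f)$, you would need to know that the homotopy cofiber of a $k$-connected map between $0$-connected objects, \emph{taken in $\AlgO$}, is itself $k$-connected. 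That is not a formal fact: $\AlgO$ is not stable, its underlying pushouts are not computed underlying in $\ModR$, and the required connectivity estimate for the $\AlgO$-cofiber is precisely the Blakers--Massey/Hurewicz phenomenon that Harper--Hess and Ching--Harper prove. (The same issue does not arise for fibers, but $\LL Q$ does not preserve those.) So your ingredients are correct but do not close; the paper's direct citation of Harper--Hess \cite[1.9]{Harper_Hess} is the missing step. A minor additional quibble: the bar formula should read $|\BAR(\tau_1\capO,\capO,-)|$ rather than $|\BAR(I,\capO,-)|$ unless you are in the special case $\tau_1\capO=I$.
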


\begin{proof}
Consider the first part. By Theorem \ref{thm:estimating_connectivity_of_maps_in_tower_C_of_Y} each of the maps in the holim tower $\{\holim_{\Delta^{\leq n}}C(Y)\}_n$ (Section \ref{sec:homotopy_limit_towers}), above level $n$, is at least $(n+2)$-connected. It follows that the map \eqref{eq:canonical_map_needed_to_discuss} is $(n+2)$-connected. The second part follows from the first part, since by the $\TQ$-Hurewicz theorems in Harper-Hess \cite[1.8, 1.9]{Harper_Hess}, $\TQ$-homology preserves such connectivities.
\end{proof}

The following theorem proved in Section \ref{sec:cubical_diagrams_homotopical_analysis} gives the connectivity estimates that we need. At the technical heart of the proof lies the spectral algebra higher dual Blakers-Massey theorem proved in Ching-Harper \cite[1.11]{Ching_Harper}; these structured ring spectrum analogs of Goodwillie's \cite{Goodwillie_calculus_2} higher cubical diagram theorems were worked out in \cite{Ching_Harper} as part of an arsenal of new tools for attacking the Francis-Gaitsgory conjecture, and for ultimately proving Theorem \ref{thm:connectivities_for_map_that_commutes_Q_into_inside_of_holim} below.

The input to the spectral algebra higher dual Blakers-Massey theorem \cite[1.11]{Ching_Harper} requires the homotopical analysis of an $\infty$-cartesian $(n+1)$-cube associated to the $n$-th stage, $\holim_{\Delta^{\leq n}}C(Y)$, of the $\holim$ tower associated to $C(Y)$; it is built from coface maps in $C(Y)$. The needed homotopical analysis is worked out in Section \ref{sec:cubical_diagrams_homotopical_analysis} by leveraging the connectivity estimates in Proposition \ref{prop:multisimplicial_calculation_of_iterated_hofiber_codegeneracy_cube_no_zigzags} with the fact that codegeneracy maps provide retractions for the appropriate coface maps.

\begin{thm}
\label{thm:connectivities_for_map_that_commutes_Q_into_inside_of_holim}
If $Y$ is a $0$-connected cofibrant $\K$-coalgebra and $n\geq 1$, then the natural map
\begin{align}
\label{eq:commuting_LQ_past_holim_delta}
  \LL Q\holim\nolimits_{\Delta^{\leq n}} C(Y)\longrightarrow
  \holim\nolimits_{\Delta^{\leq n}} \LL Q\,C(Y),
\end{align}
is $(n+4)$-connected; the map is a weak equivalence for $n=0$.
\end{thm}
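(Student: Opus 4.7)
My plan has three main parts, preceded by a trivial base case. For $n = 0$, the comparison map in \eqref{eq:commuting_LQ_past_holim_delta} is the identity on $\LL Q C(Y)^0$, since $\holim_{\Delta^{\leq 0}} Z \iso Z^0$ for any cosimplicial $\capO$-algebra $Z$; so we may assume $n \geq 1$ in what follows.

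First, I would construct an $\infty$-cartesian $(n + 1)$-cube $\capX$ of $\capO$-algebras, built from the coface maps of $C(Y)$, whose associated punctured homotopy limit encodes the $n$-th stage $\holim_{\Delta^{\leq n}} C(Y)$; applying the same recipe to the cosimplicial $\capO$-algebra $\LL Q C(Y)$ produces an analogous cube $\LL Q \capX$ whose punctured homotopy limit is $\holim_{\Delta^{\leq n}} \LL Q C(Y)$. Under this identification, the comparison map in \eqref{eq:commuting_LQ_past_holim_delta} becomes the canonical map from $\LL Q$ of the initial vertex of $\capX$ to the punctured homotopy limit of $\LL Q \capX$.

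Second, I would establish the connectivity estimates on the sub-faces of $\capX$ that serve as input to the higher dual Blakers-Massey theorem. The cosimplicial identity $s^j d^i = \id$ (for $i = j$ or $i = j + 1$) implies that each sub-face of $\capX$ admits, up to homotopy, a retraction onto a corresponding codegeneracy sub-cube of $C(Y)$; Proposition \ref{prop:multisimplicial_calculation_of_iterated_hofiber_codegeneracy_cube_no_zigzags} then supplies the required $2k$-connectivity bound on the total homotopy fiber of each codegeneracy $k$-sub-cube, with the retraction ensuring the bounds transfer correctly to $\capX$; the $0$-connectedness of $Y$ is what feeds these estimates. Third, I would apply the spectral algebra higher dual Blakers-Massey theorem of Ching-Harper \cite[1.11]{Ching_Harper} to the cube $\capX$: its hypotheses are precisely an $\infty$-cartesian cube together with sub-face connectivity estimates of the sort produced in the previous step, and its conclusion converts these into a connectivity estimate for how far $\LL Q$ fails to commute past the punctured homotopy limit of $\capX$, which is the $(n + 4)$-connectivity asserted for the map in \eqref{eq:commuting_LQ_past_holim_delta}.

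The main obstacle I expect is the second part: arranging the codegeneracy retractions so that they organize into a coherent structure on the sub-faces of $\capX$, and then tracking the indices so that each codegeneracy sub-cube of dimension $k$ contributes precisely its $2k$-connectivity. This careful bookkeeping of how the $2k$ bounds combine via the higher dual Blakers-Massey formula is what should yield the bound $n + 4$ rather than a weaker estimate.
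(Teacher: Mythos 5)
Your proposal captures the overall skeleton of the paper's argument---constructing the $\infty$-cartesian $(n+1)$-cube from the cosimplicial cobar construction, estimating connectivities of sub-faces via codegeneracy retractions, and invoking the higher dual Blakers-Massey theorem of Ching-Harper---but it elides a step that is not mere bookkeeping, and as stated the final deduction is wrong.

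The issue is in your third step. The higher dual Blakers-Massey theorem of Ching-Harper \cite[1.11]{Ching_Harper} takes cartesianness estimates on the sub-faces of a cube in $\AlgO$ and concludes that the whole cube is \emph{cocartesian} to within a certain degree; it says nothing directly about $\LL Q$ or about cartesianness. Applied to the cube $\widetilde{\mathfrak{C}(Y)}$, with the face estimates you describe (namely that $\partial_{W-V}^W\widetilde{\mathfrak{C}(Y)}$ is $(|V|+1)$-cartesian for $\emptyset\neq V\subsetneq W=[n]$), its output is that $\widetilde{\mathfrak{C}(Y)}$ is $(2n+4)$-cocartesian in $\AlgO$. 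To turn this into the $(n+4)$-cartesianness of $\LL Q\widetilde{\mathfrak{C}(Y)}$ (which is what the asserted $(n+4)$-connectivity of the comparison map actually amounts to), two further arguments are required and both are absent from your proposal. First, one must show that the $(2n+4)$-cocartesianness persists after applying $\LL Q\simeq FQc$, which is not formal: $Q$ is a left Quillen functor but one also needs the relative $\TQ$-Hurewicz theorem of Harper-Hess \cite[1.9]{Harper_Hess} to control the connectivity. Second, one converts cocartesianness to cartesianness by passing, via the Quillen equivalence along $J\to\tau_1\capO$, to the stable category $\Alg_{\tau_1\capO}\iso\Mod_{\capO[1]}$, where a $k$-cocartesian $(n+1)$-cube is $(k-n)$-cartesian (Ching-Harper \cite[3.10]{Ching_Harper}); this is exactly where the bound $2n+4$ drops to $n+4$. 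Without this stability step, you would only obtain a cocartesian estimate for $\LL Q\widetilde{\mathfrak{C}(Y)}$, not the cartesian one the theorem asserts.

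A smaller point: in step two, when you transfer the $2k$-connectivity of the codegeneracy $k$-cube to a face $\partial_{\{t\}}^T\widetilde{\mathfrak{C}(Y)}$ via the retraction argument, each retraction applied via Proposition \ref{prop:retraction_two_cube_argument} introduces a loop, so the total homotopy fiber of $\partial_{\{t\}}^T\widetilde{\mathfrak{C}(Y)}$ is weakly equivalent to $\Omega^{|T|-1}$ applied to the total homotopy fiber of $\capY_{|T|-1}$, which is therefore $(|T|-1)$-connected, not $2(|T|-1)$-connected; one then needs an inductive argument to handle arbitrary faces $\partial_S^T$. You gesture at this as bookkeeping, but it is worth stating explicitly, since losing track of the $\Omega$-shift gives the wrong face estimates feeding into the dual Blakers-Massey theorem.
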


The following is a corollary of these connectivity estimates, together with a left cofinality argument in Dror-Dwyer \cite{Dror_Dwyer_long_homology}.

\begin{thm}
\label{thm:FQC_commutes_with_desired_holim}
If $Y$ is a $0$-connected cofibrant $\K$-coalgebra, then the natural maps
\begin{align}
\label{eq:FQC_commutes_with_desired_holim}
  \LL Q\holim\nolimits_\Delta C(Y)&\xrightarrow{\wequiv}
  \holim\nolimits_\Delta \LL Q\,C(Y)\xrightarrow{\wequiv}Y
\end{align}
are weak equivalences.
\end{thm}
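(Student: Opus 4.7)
The plan is to establish the two natural maps as weak equivalences separately; write $f\colon\thinspace\LL Q\holim_\Delta C(Y)\to\holim_\Delta\LL Q\,C(Y)$ for the first map and $g\colon\thinspace\holim_\Delta\LL Q\,C(Y)\to Y$ for the second.

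For $g$, the key point is that the cosimplicial object $\LL Q\,C(Y)$ is naturally weakly equivalent to the standard cobar resolution $\K^{\bullet+1}Y$ of $Y$ by the comonad $\K$ (since $\LL Q$ composed with the relevant forgetful functor is $\K$). This resolution is coaugmented by the coaction $Y\to\K Y$, and the counit $\K\to\id$ supplies an extra codegeneracy $s^{-1}\colon\thinspace\K Y\to Y$ splitting the coaugmentation. The left cofinality criterion of Dror--Dwyer \cite{Dror_Dwyer_long_homology} for split augmented cosimplicial objects then gives that the coaugmentation $Y\xrightarrow{\wequiv}\holim_\Delta\LL Q\,C(Y)$ is a weak equivalence; the map $g$ is its standard retraction (projection to cosimplicial level zero followed by $s^{-1}$), hence also a weak equivalence.

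For $f$, I would use a $\holim$-tower comparison. For each $n\geq 0$, consider the commutative square
\begin{equation*}
\xymatrix{
  \LL Q\holim_\Delta C(Y)\ar[r]^-{f}\ar[d] &
  \holim_\Delta\LL Q\,C(Y)\ar[d]\\
  \LL Q\holim_{\Delta^{\leq n}}C(Y)\ar[r] &
  \holim_{\Delta^{\leq n}}\LL Q\,C(Y)
}
\end{equation*}
By Theorem \ref{thm:connectivities_for_map_into_n_th_stage} the left vertical is $(n+2)$-connected, and by Theorem \ref{thm:connectivities_for_map_that_commutes_Q_into_inside_of_holim} the bottom horizontal is $(n+4)$-connected; hence the clockwise composite is at least $(n+2)$-connected. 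For the right vertical, Proposition \ref{prop:iterated_homotopy_fibers_calculation} identifies the homotopy fiber of the tower map $\holim_{\Delta^{\leq n}}\LL Q\,C(Y)\to\holim_{\Delta^{\leq n-1}}\LL Q\,C(Y)$ with $\Omega^n$ applied to the total homotopy fiber of the codegeneracy $n$-cube of $\LL Q\,C(Y)\simeq\K^{\bullet+1}Y$. Repeating the multisimplicial argument of Proposition \ref{prop:multisimplicial_calculation_of_iterated_hofiber_codegeneracy_cube_no_zigzags}, now with $\K^{\bullet+1}Y$ in place of $C(Y)$ (and using that $Y$ is $0$-connected, $\capR,\capO$ are $(-1)$-connected, together with the Harper--Hess $\TQ$-Hurewicz-type connectivity preservation for $\LL Q$), shows these total homotopy fibers are $2n$-connected; so each tower map is $(n+1)$-connected and the right vertical is at least $(n+1)$-connected as well. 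Commutativity of the square then forces $f$ to induce isomorphisms on $\pi_k$ in a range growing with $n$, and letting $n\to\infty$ shows $f$ is a weak equivalence.

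The main obstacle lies in correctly identifying $\LL Q\,C(Y)$ as a cosimplicial $\K$-coalgebra so that both the extra codegeneracy (needed for $g$) and the codegeneracy cube connectivity estimate (needed for $f$) are available. One must verify that $\LL Q$ applied levelwise to the cosimplicial cobar construction $C(Y)$ agrees, up to a zigzag of cosimplicial weak equivalences, with $\K^{\bullet+1}Y$ as an object coaugmented by $Y$, and that under this identification the codegeneracies genuinely split the relevant coface maps in the homotopy category. With this identification in hand, both the Dror--Dwyer cofinality argument and the multisimplicial connectivity estimate proceed essentially as in the analogous analyses of $C(Y)$ carried out in Sections \ref{sec:homotopy_limit_towers} and \ref{sec:cubical_diagrams_homotopical_analysis}.
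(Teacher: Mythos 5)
Your argument for the right-hand map $\holim_\Delta\LL Q\,C(Y)\to Y$ matches the paper's exactly: both identify $\LL Q\,C(Y)$ with $\Cobar(\K,\K,Y)$ up to objectwise weak equivalence, observe that the counit supplies the extra codegeneracy maps $s^{-1}$ (Dwyer--Miller--Neisendorfer), and conclude via the Dror--Dwyer left cofinality criterion.

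For the left-hand map $f$ there is a gap, and your route is also heavier than needed. You invoke a third connectivity estimate --- that the right vertical $\holim_\Delta\LL Q\,C(Y)\to\holim_{\Delta^{\leq n}}\LL Q\,C(Y)$ has growing connectivity --- which you propose to obtain by ``repeating'' the codegeneracy-cube estimate of Proposition \ref{prop:multisimplicial_calculation_of_iterated_hofiber_codegeneracy_cube_no_zigzags} with $\K^{\bullet+1}Y$ in place of $C(Y)$. That claim is only asserted, and it is not a formal restatement of the existing result: the codegeneracy cube of $\K^{\bullet+1}Y$ lives in $\AlgJ$ rather than $\AlgO$, so one would have to redo the multisimplicial bar-construction combinatorics of Section \ref{sec:cubical_diagrams_homotopical_analysis} (Definition \ref{defn:sequence_of_weakly_equivalent_codegeneracy_cubes}, etc.) in that setting. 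The paper avoids this entirely. Since the left vertical $\LL Q\holim_\Delta C(Y)\to\LL Q\holim_{\Delta^{\leq n}}C(Y)$ is $(n+2)$-connected (Theorem \ref{thm:connectivities_for_map_into_n_th_stage}) and the bottom horizontal $\LL Q\holim_{\Delta^{\leq n}}C(Y)\to\holim_{\Delta^{\leq n}}\LL Q\,C(Y)$ is $(n+4)$-connected (Theorem \ref{thm:connectivities_for_map_that_commutes_Q_into_inside_of_holim}), the compatible system of composite maps $\LL Q\holim_\Delta C(Y)\to\holim_{\Delta^{\leq n}}\LL Q\,C(Y)$ --- which, by commutativity of your square, is $f$ followed by the tower projections --- has connectivity tending to infinity. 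As $\holim_\Delta\LL Q\,C(Y)$ is the homotopy limit of the target tower, for each $k$ the towers of homotopy groups $\{\pi_k\}$ and $\{\pi_{k+1}\}$ eventually stabilize, so the Milnor $\lim^1$-sequence (Proposition \ref{prop:short_exact_sequence}) forces $f$ to be a weak equivalence, with no separate estimate on $\holim_\Delta\LL Q\,C(Y)\to\holim_{\Delta^{\leq n}}\LL Q\,C(Y)$ ever needed. This is the same pattern used to derive Theorem \ref{MainTheorem}(b) from Theorem \ref{thm:connectivity_estimates_canonical_map_n_th_stage_TQ_completion}. You should either drop the unverified third estimate and argue as above, or actually supply the missing multisimplicial analysis for $\K^{\bullet+1}Y$.
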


\begin{proof}
Consider the left-hand map. It suffices to verify that the connectivities of the natural maps \eqref{eq:the_other_canonical_map_needed} and \eqref{eq:commuting_LQ_past_holim_delta}
 are strictly increasing with $n$, and Theorems \ref{thm:connectivities_for_map_into_n_th_stage} and \ref{thm:connectivities_for_map_that_commutes_Q_into_inside_of_holim} complete the proof. Consider the case of the right-hand map. Since the cosimplicial cobar construction $\Cobar(\K,\K,Y)$, which is weakly equivalent to $\LL Q\, C(Y)$, has extra codegeneracy maps $s^{-1}$ (Dwyer-Miller-Neisendorfer \cite[6.2]{Dwyer_Miller_Neisendorfer}), it follows from the cofinality argument in Dror-Dwyer \cite[3.16]{Dror_Dwyer_long_homology} that the right-hand map is a weak equivalence.
\end{proof}

\begin{proof}[Proof of Theorem \ref{MainTheorem}(a)]
We want to verify that the natural map
\begin{align*}
  \LL Q\holim\nolimits_\Delta C(Y)\xrightarrow{\wequiv} Y
\end{align*}
is a weak equivalence; since this is the composite \eqref{eq:FQC_commutes_with_desired_holim}, Theorem \ref{thm:FQC_commutes_with_desired_holim} completes the proof.
\end{proof}

\subsection{Strong convergence of the $\TQ$-completion spectral sequence}

The following strong convergence result for the $\TQ$-homology completion spectral sequence (Section \ref{sec:homotopy_limit_towers}) is a corollary of the connectivity estimates in Theorem \ref{thm:connectivity_estimates_canonical_map_n_th_stage_TQ_completion}.

\begin{thm}
\label{thm:strong_convergence_TQ_homology_completion}
Let $\function{f}{X}{Y}$ be a map of $\capO$-algebras.
\begin{itemize}
\item[(a)] If $X$ is $0$-connected, then the $\TQ$-homology completion spectral sequence
\begin{align*}
  E^2_{-s,t} &= \pi^s\pi_t\TQ_\cdot(X)
  \Longrightarrow
  \pi_{t-s}X^{\wedge}_\TQ
\end{align*}
converges strongly (Remark \ref{rem:strong_convergence}).
\item[(b)] If $X$ is $0$-connected, then the natural  coaugmentation $X\wequiv X^\wedge_\TQ$ is a weak equivalence.
\item[(c)] If $f$ induces a weak equivalence $\TQ(X)\wequiv\TQ(Y)$ on topological Quillen homology, then $f$ induces a weak equivalence $X^\wedge_\TQ\wequiv Y^\wedge_\TQ$ on $\TQ$-homology completion.
\end{itemize}
Here, the $\TQ$-homology cosimplicial resolution $\TQ_\cdot(X)$ of $X$ denotes a functorial Reedy fibrant replacement of the cosimplicial $\capO$-algebra $C(Q(X^c))$, where $X^c$ denotes a functorial cofibrant replacement of $X$.
\end{thm}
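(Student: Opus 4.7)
The plan is to reduce each of the three parts to results that are already proved. Part (b) is essentially a reformulation of Theorem \ref{MainTheorem}(b): after cofibrant replacement one has $X^\wedge_\TQ \wequiv \holim_\Delta C(Q(X^c))$ and the coaugmentation $X \to X^\wedge_\TQ$ is precisely the natural map $X^c \to \holim_\Delta C(QX^c)$, which is a weak equivalence for $0$-connected $X$ by the main theorem. So (b) will be immediate.

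For part (a), I would describe the $\TQ$-completion spectral sequence as the homotopy spectral sequence associated to the tower of homotopy limits $\{\holim_{\Delta^{\leq n}} \TQ_\cdot(X)\}_n$, with $E^2$-page $\pi^s \pi_t \TQ_\cdot(X)$ and abutment $\pi_{t-s}(X^\wedge_\TQ)$. The plan is to invoke the standard Bousfield--Kan strong convergence criterion: it suffices that the homotopy fibers of the tower maps $\holim_{\Delta^{\leq n+1}} \to \holim_{\Delta^{\leq n}}$ have connectivity tending to infinity with $n$, since this ensures only finitely many $E^r$-terms contribute to any fixed total degree and that the filtration is complete. By Theorem \ref{thm:connectivity_estimates_canonical_map_n_th_stage_TQ_completion}, the natural map $X \to \holim_{\Delta^{\leq n}} C(QX)$ is $(n+2)$-connected, so a standard fiber sequence argument shows the homotopy fiber of the $n$-th tower map is at least $(n+1)$-connected --- exactly what is required.

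For part (c), I would argue functorially: the cosimplicial resolution $C(QX^c)$ is built from $QX^c$ by iterated application of the $\K$-coalgebra comonad, and each cosimplicial level is a composite of functors (essentially $Q$ and the free $\capO$-algebra functor) that preserve weak equivalences between cofibrant objects. Consequently, a $\TQ$-equivalence $f$ will induce a weak equivalence $Q(f^c)$, hence a levelwise weak equivalence $C(QX^c) \to C(QY^c)$ of cosimplicial $\capO$-algebras; after functorial Reedy fibrant replacement, $\holim_\Delta$ preserves such levelwise equivalences, yielding $X^\wedge_\TQ \wequiv Y^\wedge_\TQ$. The main obstacle --- the delicate connectivity analysis of the coface cube of the $\TQ$-resolution --- is already subsumed in Theorem \ref{thm:connectivity_estimates_canonical_map_n_th_stage_TQ_completion}; the remaining work will be the bookkeeping described above for functoriality and cosimplicial homotopical behavior in (c), together with identifying the correct filtration (for strong convergence in (a)).
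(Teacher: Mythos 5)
Your proposal is correct and takes essentially the same route as the paper, whose proof is just the one sentence ``This follows from the connectivity estimates in Theorem \ref{thm:connectivity_estimates_canonical_map_n_th_stage_TQ_completion}.'' You correctly trace (b) back to Theorem \ref{MainTheorem}(b) (itself an application of those estimates), fill in the standard Bousfield--Kan bookkeeping for (a) by translating the $(n+2)$-connectivity of $X\to\holim_{\Delta^{\leq n}}C(QX)$ into growing connectivity of the tower fibers, and supply for (c) the levelwise functoriality argument that the paper leaves implicit.
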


\begin{proof}
This follows from the connectivity estimates in Theorem \ref{thm:connectivity_estimates_canonical_map_n_th_stage_TQ_completion}.
\end{proof}

\begin{rem}
The completion result $X\wequiv X^\wedge_\TQ$ in Theorem \ref{thm:strong_convergence_TQ_homology_completion}(b) shows that $X$ can be recovered from its $\TQ$-homology. This can be thought of as the spectral algebra analog of several closely related known examples of completion phenomena in the context of spaces. For instance, assuming that $Y$ is a simply connected pointed space, (i) $Y\wequiv Y^\wedge_\ZZ$ in Bousfield-Kan \cite{Bousfield_Kan}, (ii) $Y\wequiv Y^\wedge_{\Omega^n\Sigma^n}$ in Bousfield \cite{Bousfield_cosimplicial_space}, (iii) $Y\wequiv Y^\wedge_{\Omega^\infty\Sigma^\infty}$ in Carlsson \cite{Carlsson_equivariant} and subsequently in Arone-Kankaanrinta \cite{Arone_Kankaanrinta}, and (iv) $Y\wequiv \holim_\Delta \mathbf{C}(Y,Y)$ where $\mathbf{C}(Y,Y)^n\wequiv\Sigma Y\vee\dots\vee\Sigma Y$ ($n$-copies) in Hopkins \cite{Hopkins_iterated_suspension} (recovers $Y$ from $\Sigma Y$), among others.
\end{rem}

\begin{rem}
\label{rem:strong_convergence}
By \emph{strong convergence} of $\{E^r\}$ to $\pi_*(X^\wedge_\TQ)$ we mean that (i) for each $(-s,t)$, there exists an $r$ such that $E^r_{-s,t}=E^\infty_{-s,t}$ and (ii) for each $i$, $E^\infty_{-s,s+i}=0$ except for finitely many $s$. Strong convergence implies that for each $i$, $\{E^\infty_{-s,s+i}\}$ is the set of filtration quotients from a finite filtration of $\pi_i(X^\wedge_\TQ)$; see, for instance, Bousfield-Kan \cite[IV.5.6, IX.5.3, IX.5.4]{Bousfield_Kan} and Dwyer \cite{Dwyer_strong_convergence}.
\end{rem}

\subsection{Strong convergence for the $\holim_\Delta C(Y)$ spectral sequence}

The following strong convergence result for the homotopy spectral sequence (Section \ref{sec:homotopy_limit_towers}) associated to the cosimplicial cobar construction $C(Y)$ of a $\K$-coalgebra $Y$ is a corollary of the connectivity estimates in Theorem \ref{thm:estimating_connectivity_of_maps_in_tower_C_of_Y}.

\begin{thm}
If $Y$ is a $0$-connected cofibrant $\K$-coalgebra, then the homotopy spectral sequence
\begin{align*}
  E^2_{-s,t} &= \pi^s\pi_t C(Y)^\mathrm{f}
  \Longrightarrow
  \pi_{t-s}\bigl(\holim\nolimits_\Delta C(Y)\bigr)
\end{align*}
converges strongly (Remark \ref{rem:strong_convergence}); here, $C(Y)^\mathrm{f}$ denotes a functorial Reedy fibrant replacement of the cosimplicial $\capO$-algebra $C(Y)$.
\end{thm}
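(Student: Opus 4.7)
The plan is to deduce strong convergence directly from the connectivity estimates of Theorem \ref{thm:estimating_connectivity_of_maps_in_tower_C_of_Y}, following the same template used for Theorem \ref{thm:strong_convergence_TQ_homology_completion}. The spectral sequence in question is the standard homotopy spectral sequence associated to the tower of partial homotopy limits $\{\holim_{\Delta^{\leq n}}C(Y)^{\mathrm{f}}\}_n$; since $C(Y)^{\mathrm{f}}$ is Reedy fibrant, the homotopy inverse limit of this tower is $\holim_\Delta C(Y)$, and the standard analysis of the spectral sequence of a Reedy fibrant cosimplicial object identifies the $E^2$-term with $\pi^s\pi_t C(Y)^{\mathrm{f}}$.

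The key input is Theorem \ref{thm:estimating_connectivity_of_maps_in_tower_C_of_Y}, which asserts that the tower map $\holim_{\Delta^{\leq n}}C(Y)\to\holim_{\Delta^{\leq n-1}}C(Y)$ is $(n+1)$-connected between $0$-connected objects; equivalently, the homotopy fiber $F_n$ of this map is $n$-connected. This is precisely the hypothesis needed for both clauses of Remark \ref{rem:strong_convergence}. For clause (i), given any fixed bidegree $(-s,t)$, the differentials into and out of position $(-s,t)$ involve $\pi_*(F_n)$ for tower-positions $n$ that grow with $r$, so these differentials must vanish once the connectivity of $F_n$ exceeds $t$, yielding $E^r_{-s,t}=E^\infty_{-s,t}$ for $r$ sufficiently large. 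For clause (ii), for each fixed total degree $i$, the natural filtration on $\pi_i\bigl(\holim_\Delta C(Y)\bigr)$ has $s$-th subquotient controlled by $\pi_{s+i}$ of $F_s$, which vanishes once $s>i$, so only finitely many $s$ contribute.

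The main obstacle, while quite modest, is verifying that the Bousfield--Kan style strong convergence formalism of \cite[IV.5.6, IX.5.3, IX.5.4]{Bousfield_Kan}, developed originally in the context of cosimplicial pointed spaces, transfers verbatim to the present spectral algebra setting of $\capO$-algebras. This is routine: the argument depends only on the long exact sequences in $\pi_*$ associated to the homotopy fiber sequences
\begin{align*}
  F_n\rarrow\holim\nolimits_{\Delta^{\leq n}}C(Y)^{\mathrm{f}}\rarrow\holim\nolimits_{\Delta^{\leq n-1}}C(Y)^{\mathrm{f}},
\end{align*}
together with the connectivity bound on each $F_n$ supplied by Theorem \ref{thm:estimating_connectivity_of_maps_in_tower_C_of_Y}; once these ingredients are in place the spectral-sequence bookkeeping is formally identical to the classical case, and strong convergence follows immediately.
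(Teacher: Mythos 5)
Your argument is correct and is precisely the (unstated) content of the paper's one-line proof, which simply cites the connectivity estimates of Theorem \ref{thm:estimating_connectivity_of_maps_in_tower_C_of_Y} and leaves the Bousfield--Kan bookkeeping implicit. One small indexing slip: with the paper's convention $E^r_{-s,s+i}=\pi_i F_s^{(r-1)}$, the $s$-th filtration subquotient of $\pi_i$ is controlled by $\pi_i F_s$ (not $\pi_{s+i}F_s$), which vanishes once $s\geq i$ by the $s$-connectedness of $F_s$; this does not affect the conclusion.
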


\begin{proof}
This follows from the connectivity estimates in Theorem \ref{thm:estimating_connectivity_of_maps_in_tower_C_of_Y}.
\end{proof}

\section{Simplicial structures and $\capO$-algebras}
\label{sec:simplicial_structures}

First we recall the simplicial structure on $\capO$-algebras (Harper-Hess \cite[6.1]{Harper_Hess}).

\begin{defn}
\label{defn:simplicial_structure_mapping_object}
Let $\capO$ be an operad in $\capR$-modules. Let $X,X'$ be $\capO$-algebras and $K$ a simplicial set. The \emph{mapping object} $\hombold_{\AlgO}(K,X)$ in $\AlgO$ is defined by
\begin{align*}
  \hombold_\AlgO(K,X):=\Map(K_+,X)
\end{align*}
with left $\capO$-action map induced by $\function{m}{\capO\circ(X)}{X}$, together with the natural maps $K\rarrow K^{\times t}$ in $\sSet$ for $t\geq 0$; these are the diagonal maps for $t\geq 1$ and the constant map for $t=0$. For ease of notation purposes, we sometimes drop the $\AlgO$ decoration from the notation and simply denote the mapping object by $\hombold(K,X)$.
\end{defn}

\begin{defn}
\label{defn:simplicial_structure_tensor_product}
Let $\capO$ be an operad in $\capR$-modules. Let $X,X'$ be $\capO$-algebras and $K$ a simplicial set. The \emph{tensor product} $X\tensordot K$ in $\AlgO$ is defined by the reflexive coequalizer
\begin{align}
\label{eq:tensordot_definition_as_coequalizer}
  X\tensordot K &:=\colim
  \Bigl(
  \xymatrix{
  \capO\circ(X\Smash K_+) &
  \capO\circ\bigl(\capO\circ(X)\Smash K_+\bigr)
  \ar@<0.5ex>[l]^-{d_1}\ar@<-0.5ex>[l]_-{d_0}
  }
  \Bigr)
\end{align}
in $\AlgO$, with $d_0$ induced by operad multiplication $\function{m}{\capO\circ\capO}{\capO}$ and the natural map $\function{\nu}{\capO\circ(X)\Smash K_+}{\capO\circ(X\Smash K_+)}$ (see \cite[6.1]{Harper_Hess}), while $d_1$ is induced by the left $\capO$-action map $\function{m}{\capO\circ(X)}{X}$.
\end{defn}

\begin{defn}
\label{defn:simplicial_structure_mapping_space}
Let $\capO$ be an operad in $\capR$-modules. Let $X,X'$ be $\capO$-algebras. The \emph{mapping space} $\Hombold_{\AlgO}(X,X')$ in $\sSet$ is defined degreewise by
\begin{align*}
  \Hombold_\AlgO(X,X')_n := \hom_\AlgO(X\tensordot\Delta[n],X')
\end{align*}
For ease of notation purposes, we often drop the $\AlgO$ decoration from the notation and simply denote the mapping space by $\Hombold(X,X')$.
\end{defn}

\begin{prop}
Let $\capO$ be an operad in $\capR$-modules. Consider $\AlgO$ with the positive flat stable model structure. Then $\AlgO$ is a simplicial model category (see  Goerss-Jardine \cite[II.3]{Goerss_Jardine}) with the above definitions of mapping object, tensor product, and mapping space.
\end{prop}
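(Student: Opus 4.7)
The plan is to establish the two ingredients of a simplicial model structure on $\AlgO$ in turn: first, that $(X,K)\mapsto X\tensordot K$, $(K,X)\mapsto \hombold(K,X)$, and $(X,X')\mapsto\Hombold(X,X')$ assemble into a two-variable adjunction making $\AlgO$ enriched, tensored, and cotensored over $\sSet$; and second, that the pushout-product axiom (SM7) holds against the positive flat stable model structure on $\AlgO$ and the standard Kan-Quillen model structure on $\sSet$.

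For the adjunction, I would leverage the free-forgetful adjunction $\capO\circ(-)\colon\ModR\rightleftarrows\AlgO\colon\U$ and the fact that $\ModR$ with the positive flat stable structure is already a simplicial monoidal model category under $M\tensordot K=M\Smash K_+$ and $\hombold(K,M)=\Map(K_+,M)$. The cotensor $\hombold_\AlgO(K,X)=\Map(K_+,X)$ is defined so that its underlying $\capR$-module is $\Map(K_+,\U X)$, hence the adjunction $\hom_\AlgO(X,\hombold(K,X'))\cong\hom_\sSet(K,\Hombold(X,X'))$ follows from the $\sSet$-enrichment of $\ModR$ once one checks that the cotensor $\capO$-algebra structure is the correct one; this is formal from the diagonal maps $K\rarrow K^{\times t}$. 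The other adjunction $\hom_\AlgO(X\tensordot K,X')\cong\hom_\sSet(K,\Hombold(X,X'))$ holds by definition when $K=\Delta[n]$ and extends to all $K$ because, as a left adjoint defined via a reflexive coequalizer of free-algebra constructions, $(-)\tensordot K$ commutes with the relevant colimits; the coequalizer in Definition \ref{defn:simplicial_structure_tensor_product} is designed precisely so that on free $\capO$-algebras it reduces to $(\capO\circ M)\tensordot K\cong\capO\circ(M\Smash K_+)$, matching the $\ModR$ tensor structure after applying the free functor.

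For SM7, the strategy is to reduce to the analogous property in $\ModR$. The generating (acyclic) cofibrations of the positive flat stable model structure on $\AlgO$ are of the form $\capO\circ(f)$ for $f$ a generating (acyclic) cofibration of the positive flat stable structure on $\ModR$, and the identity $(\capO\circ M)\tensordot K\cong\capO\circ(M\Smash K_+)$ converts the pushout-product $(\capO\circ f)\square g$ of such a generator with a (acyclic) cofibration $g\colon K\rarrow L$ of simplicial sets into $\capO\circ(f\square g)$; the pushout-product property is then inherited from the known fact that $\ModR$ is simplicial monoidal. For a general cofibration $X\rarrow Y$ in $\AlgO$, which is a retract of a transfinite composition of pushouts of generating cofibrations, one propagates the pushout-product property using the filtration analysis of operadic-algebra cell attachments carried out in Harper-Hess (and the references therein), verifying at each stage that $(-)\tensordot K$ is compatible with the filtration quotients.

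The main obstacle I anticipate is the SM7 verification for general, non-free cofibrations: one must show that the small-object-argument filtration of a pushout $X\coprod_{\capO\circ M}(\capO\circ N)$ used to present cofibrations of $\capO$-algebras is preserved by $(-)\tensordot K$, and that tensoring with a generating acyclic cofibration $g$ of $\sSet$ preserves the stable weak equivalences that appear at each filtration layer. This is where the positive flat stable structure on $\ModR$ is essential, as it provides the monoidal and $\sSet$-enrichment properties (in particular, compatibility of $(-)\Smash K_+$ with flat stable cofibrations and weak equivalences) that are needed to push SM7 across the free-forgetful adjunction.
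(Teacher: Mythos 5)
The paper does not give a proof of this statement; it simply cites Harper--Hess~\cite[6.18]{Harper_Hess}. Your outline reconstructs the content of that reference and is essentially sound: the two adjunction isomorphisms in Remark~\ref{rem:useful_adjunction_isomorphisms_simplicial_structure} do follow from the fact that the coequalizer in Definition~\ref{defn:simplicial_structure_tensor_product} is engineered precisely so that $\hom_\AlgO(X\tensordot K,X')$ computes to the equalizer $\hom_\sSet(K,\Hombold_\AlgO(X,X'))$, and the free-algebra compatibility $(\capO\circ M)\tensordot K\Iso\capO\circ(M\Smash K_+)$, combined with the fact that $\capO\circ(-)$ is a colimit-preserving left Quillen functor, correctly reduces the pushout-product axiom to the simplicial monoidal structure on $\ModR$.

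One correction to the last paragraph: the detour through the filtration analysis of operadic cell attachments is not needed for SM7. Once the pushout-product property is established on the generating (acyclic) cofibrations $\capO\circ(f)$, it propagates to arbitrary cofibrations automatically because, for each fixed cofibration $g$ in $\sSet$, the class of maps $i$ in $\AlgO$ for which $i\square g$ is a (acyclic) cofibration is closed under retracts, pushouts, and transfinite composition; this uses only that $-\tensordot K$ preserves colimits (as a left adjoint) and standard closure properties of cofibrations. The filtration analysis in Harper--Hess is required for other purposes (e.g.\ Propositions~\ref{prop:homotopical_analysis_of_forgetful_functors} and~\ref{prop:reedy_cofibrant_for_bar_constructions}, which control the underlying $\ModR$-level behavior of cofibrant $\capO$-algebras), but not for verifying the simplicial model category axioms themselves.
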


\begin{proof}
This is proved in \cite[6.18]{Harper_Hess}.
\end{proof}

\begin{rem}
\label{rem:useful_adjunction_isomorphisms_simplicial_structure}
In particular, there are isomorphisms
\begin{align}
\label{eq:tensordot_adjunction_isomorphisms}
  \hom_\AlgO(X\tensordot K,X')
  &\Iso\hom_\AlgO(X,\hombold(K,X'))\\
\notag
  &\Iso\hom_\sSet(K,\Hombold(X,X'))
\end{align}
in $\Set$, natural in $X,K,X'$, that extend to isomorphisms
\begin{align*}
  \Hombold_\AlgO(X\tensordot K,X')
  &\Iso\Hombold_\AlgO(X,\hombold(K,X'))\\
  &\Iso\Hombold_\sSet(K,\Hombold(X,X'))
\end{align*}
in $\sSet$, natural in $X,K,X'$.
\end{rem}

\subsection{Simplicial functors and natural transformations}

Recall that if $\function{f}{\capO}{\capO'}$ is a map of operads in $\capR$-modules,   then the change of operads adjunction
\begin{align}
\label{eq:change_of_operads_adjunction_general}
\xymatrix{
  \Alg_{\capO}\ar@<0.5ex>[r]^-{f_*} & \Alg_{\capO'}\ar@<0.5ex>[l]^-{f^*}
}
\end{align}
is a Quillen adjunction with left adjoint on top and $f^*$ the forgetful functor (or the ``restriction along $f$ of the operad action''); in particular, for each $\capO$-algebra $X$ and $\capO'$-algebra $Y$ there is an isomorphism
\begin{align}
\label{eq:hom_set_adjunction_change_of_operads_general}
  \hom_{\Alg_{\capO'}}(f_*(X),Y)\Iso\hom_\AlgO(X,f^*(Y))
\end{align}
in $\Set$, natural in $X,Y$.

The following proposition, whose proof we defer to Section \ref{sec:proofs}, is fundamental to this paper. It verifies that the change of operads adjunction \eqref{eq:change_of_operads_adjunction_general} meshes nicely with the simplicial structure; this is closely related to Goerss-Jardine \cite[II.2.9]{Goerss_Jardine}.

\begin{prop}
\label{prop:useful_properties_of_the_adjunction}
Let $\function{f}{\capO}{\capO'}$ be a map of operads in $\capR$-modules. Let $X$ be an $\capO$-algebra, $Y$ an $\capO'$-algebra, and $K,L$ simplicial sets.  Then
\begin{itemize}
\item[(a)] there is a natural isomorphism
$
  \sigma\colon\thinspace f_*(X)\tensordot K \xrightarrow{\Iso}f_*(X\tensordot K)
$;
\item[(b)] there is an isomorphism
\begin{align*}
  \Hombold(f_*(X),Y)\Iso\Hombold(X,f^*(Y))
\end{align*}
in $\sSet$, natural in $X,Y$, that extends the adjunction isomorphism in  \eqref{eq:hom_set_adjunction_change_of_operads_general};
\item[(c)] there is an isomorphism
\begin{align*}
  f^*\hombold(K,Y)\Equal\hombold(K,f^*Y)
\end{align*}
in $\AlgO$, natural in $K,Y$.
\item[(d)] there is a natural map
$
  \function{\sigma}{f^*(Y)\tensordot K}{f^*(Y\tensordot K)}
$
induced by $f$.
\item[(e)] the functors $f_*$ and $f^*$ are simplicial functors (Remark \ref{rem:simplicial_functors}) with the structure maps $\sigma$ of (a) and (d), respectively.
\end{itemize}
\end{prop}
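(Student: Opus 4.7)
The strategy is to prove (c) first by direct inspection, then to bootstrap (a) from (c) via a Yoneda argument, and finally to deduce (b), (d), (e) formally from (a) and (c) using the tensor--hom adjunctions in $\AlgO$ and $\Alg_{\capO'}$. For (c), observe that $f^*\hombold(K,Y)$ and $\hombold(K, f^*Y)$ share the common underlying $\capR$-module $\Map(K_+,Y)$, since $f^*$ is the identity on underlying objects. By Definition \ref{defn:simplicial_structure_mapping_object}, the left $\capO'$-action on $\hombold(K,Y)$ is built from the action map $\capO' \circ (Y)\rarrow Y$ together with the diagonals $K\rarrow K^{\times t}$; restricting this $\capO'$-action along $f$ factors through the $\capO$-action on $f^*Y$ and invokes the very same diagonals, which is exactly the construction of $\hombold(K, f^*Y)$. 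Hence the two structure maps coincide on the nose.

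For (a), the chain of natural bijections
\begin{align*}
  \hom_{\Alg_{\capO'}}(f_*(X)\tensordot K,\, Y)
  &\cong \hom_{\Alg_{\capO'}}(f_*(X),\, \hombold(K,Y))\\
  &\cong \hom_\AlgO(X,\, f^*\hombold(K,Y))\\
  &\cong \hom_\AlgO(X,\, \hombold(K,f^*Y))\\
  &\cong \hom_\AlgO(X\tensordot K,\, f^*Y)\\
  &\cong \hom_{\Alg_{\capO'}}(f_*(X\tensordot K),\, Y),
\end{align*}
applies in turn the tensor--hom adjunction in $\Alg_{\capO'}$, the change-of-operads adjunction \eqref{eq:hom_set_adjunction_change_of_operads_general}, part (c), the tensor--hom adjunction in $\AlgO$, and \eqref{eq:hom_set_adjunction_change_of_operads_general} again. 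Yoneda then produces the desired natural isomorphism $\sigma$. For (b), evaluate at $K=\Delta[n]$: by Definition \ref{defn:simplicial_structure_mapping_space} this gives a natural bijection $\Hombold(f_*(X),Y)_n \cong \Hombold(X, f^*Y)_n$ in each simplicial degree, and naturality of $\sigma$ in $\Delta[n]$ ensures compatibility with face and degeneracy operators, extending the hom-set adjunction in \eqref{eq:hom_set_adjunction_change_of_operads_general} to the claimed isomorphism of simplicial sets.

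For (d), take the map adjoint in the tensor--hom adjunction of $\AlgO$ to the composite
\[
  f^*(Y) \xrightarrow{\ f^*\eta\ } f^*\hombold\bigl(K,\, Y\tensordot K\bigr) \Equal \hombold\bigl(K,\, f^*(Y\tensordot K)\bigr),
\]
where $\eta$ is the unit of the $(-)\tensordot K \dashv \hombold(K,-)$ adjunction in $\Alg_{\capO'}$ and the equality invokes (c); naturality of $\eta$ and of (c) makes this natural in $K$ and $Y$. Finally, (e) asserts that the structure maps from (a) and (d) satisfy the associativity and unit coherence axioms of a simplicial functor; after unpacking Definitions \ref{defn:simplicial_structure_mapping_object}, \ref{defn:simplicial_structure_tensor_product}, and \ref{defn:simplicial_structure_mapping_space}, these reduce to the naturality and coherence of the adjunctions used in the Yoneda argument above. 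The main obstacle is (a): since $\tensordot$ is a reflexive coequalizer involving free algebras over two different operads, a direct point-set comparison between $f_*(X)\tensordot K$ and $f_*(X\tensordot K)$ is awkward. The Yoneda strategy sidesteps this entirely by trading the comparison for the transparent identification in (c).
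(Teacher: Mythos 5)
Your proof is essentially correct and follows the same overall strategy as the paper: establish (c) by direct inspection of the underlying $\capR$-module and the $\capO$-action map, then bootstrap (a) via the displayed chain of adjunction isomorphisms and Yoneda, and deduce (b) by evaluating at $\Delta[n]$. Your definition of $\sigma$ in (d) as the adjoint of $f^*\eta$ coincides with the hom-set description the paper also records, but the paper's primary definition works directly through the coequalizer presentation \eqref{eq:tensordot_definition_as_coequalizer} — it induces $\sigma$ from the map $f\circ\id\colon\capO\circ(Y\Smash K_+)\rarrow\capO'\circ(Y\Smash K_+)$. These give the same map, so nothing is lost, but the two descriptions trade off differently in part (e). Where your argument is genuinely thinner than the paper's is in (e) for $f^*$: here $\sigma$ is not an isomorphism, so the Yoneda shortcut available for $f_*$ does not apply, and you need to actually check Hirschhorn's diagrams (1) and (2). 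The paper does this by chasing through the coequalizer, using that $f$ respects the operad multiplications $\capO\circ\capO\rarrow\capO$ and $\capO'\circ\capO'\rarrow\capO'$. Your assertion that the coherence ``reduces to the naturality and coherence of the adjunctions'' is plausible — one can verify diagram (2) by taking adjoints twice and using (c) naturally in $K$ together with the enriched category coherence — but as written it is a claim rather than a proof, and you should either carry out the adjunction-level verification or import the coequalizer description of $\sigma$ for this step.
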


\begin{rem}
\label{rem:simplicial_functors}
For a useful reference on simplicial functors in the context of homotopy theory, see Hirschhorn \cite[9.8.5]{Hirschhorn}.
\end{rem}

Recall the following notion of a simplicial natural transformation (see Goerss-Jardine \cite[IX.1]{Goerss_Jardine}); note that evaluating diagram \eqref{eq:simplicial_natural_transformation} at simplicial degree $0$ recovers the usual naturality condition required for a collection of maps $\function{\tau_X}{FX}{GX}$, $X\in\AlgO$, to define a natural transformation of the form $\function{\tau}{F}{G}$.

\begin{defn}
Let $\capO,\capO'$ be operads in $\capR$-modules. Let $\function{F,G}{\AlgO}{\Alg_{\capO'}}$ be simplicial functors (Remark \ref{rem:simplicial_functors}). A natural transformation $\function{\tau}{F}{G}$ is called a \emph{simplicial natural transformation} if the diagram
\begin{align}
\label{eq:simplicial_natural_transformation}
\xymatrix{
  \Hombold(X,Y)\ar[d]_-{G}\ar[r]^-{F} & \Hombold(FX,FY)\ar[d]^-{(\id,\tau_Y)}\\
  \Hombold(GX,GY)\ar[r]_(0.48){(\tau_X,\id)} & \Hombold(FX,GY)
}
\end{align}
of mapping spaces in $\sSet$ commutes, for every $X,Y\in\AlgO$.
\end{defn}

The following proposition, which is an exercise left to the reader, provides a useful characterization of simplicial natural transformations.

\begin{prop}
\label{prop:characterization_of_simplicial_natural_transformations}
Let $\capO,\capO'$ be operads in $\capR$-modules. Let $\function{F,G}{\AlgO}{\Alg_{\capO'}}$ be simplicial functors. A natural transformation $\function{\tau}{F}{G}$ is a simplicial natural transformation if and only if it respects the simplicial structure maps; i.e., if and only if the left-hand diagram
\begin{align*}
\xymatrix{
  F(X)\tensordot\Delta[n]\ar[d]_-{\tau_X\tensordot\id}\ar[r]^-{\sigma} &
  F(X\tensordot\Delta[n])\ar[d]^-{\tau_{X\tensordot\Delta[n]}}\\
  G(X)\tensordot\Delta[n]\ar[r]_-{\sigma} &
  G(X\tensordot\Delta[n])
}\quad\quad
\xymatrix{
  F(X)\tensordot K\ar[d]_-{\tau_X\tensordot\id}\ar[r]^-{\sigma} &
  F(X\tensordot K)\ar[d]^-{\tau_{X\tensordot K}}\\
  G(X)\tensordot K\ar[r]_-{\sigma} &
  G(X\tensordot K)
}
\end{align*}
in $\Alg_{\capO'}$ commutes for every $X\in\AlgO$ and $n\geq 0$, if and only if the right-hand diagram in $\Alg_{\capO'}$ commutes for every $X\in\AlgO$ and $K\in\sSet$.  Here, we denote by $\sigma$ the indicated simplicial structure maps (see Hirschhorn \cite[9.8.5]{Hirschhorn}).
\end{prop}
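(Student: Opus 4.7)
The plan is to establish the equivalence in two steps, $(i)\Leftrightarrow(ii)$ and $(ii)\Leftrightarrow(iii)$, by unwinding the enriched data of $F,G$ via the adjunction \eqref{eq:tensordot_adjunction_isomorphisms} and the structure maps from Proposition \ref{prop:useful_properties_of_the_adjunction}.

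For $(i)\Leftrightarrow(ii)$, I first unwind the simplicial naturality condition on $n$-simplices. By \eqref{eq:tensordot_adjunction_isomorphisms}, $n$-simplices of $\Hombold(X,Y)$ are morphisms $f\colon X\tensordot\Delta[n]\to Y$, and $n$-simplices of $\Hombold(FX,GY)$ are morphisms $FX\tensordot\Delta[n]\to GY$. Under this identification the enriched functoriality map $F\colon\Hombold(X,Y)\to\Hombold(FX,FY)$ sends $f$ to $F(f)\circ\sigma_F$ (the adjoint transpose of the definition of $\sigma_F$), and similarly for $G$. Tracing $f$ around the square \eqref{eq:simplicial_natural_transformation} and using the naturality of $\tau$ applied to $f$ to rewrite $\tau_Y\circ F(f)=G(f)\circ\tau_{X\tensordot\Delta[n]}$, the two resulting $n$-simplices are
\begin{align*}
  G(f)\circ\tau_{X\tensordot\Delta[n]}\circ\sigma_F
  \quad\text{and}\quad
  G(f)\circ\sigma_G\circ(\tau_X\tensordot\id).
\end{align*}
Hence $(i)$ is equivalent to the assertion that these composites agree for every $X,Y,n,f$. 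Specializing to $Y:=X\tensordot\Delta[n]$ and $f:=\id$ extracts the left-hand $\Delta[n]$-square, yielding $(i)\Rightarrow(ii)$; conversely, $(ii)$ gives the general identity by postcomposition with $G(f)$, yielding $(ii)\Rightarrow(i)$.

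For $(ii)\Leftrightarrow(iii)$, the direction $(iii)\Rightarrow(ii)$ is trivial. For $(ii)\Rightarrow(iii)$, I will use the presentation $K=\colim_{\Delta[n]\to K}\Delta[n]$ together with the fact that, for fixed $A$, the functor $A\tensordot(-)\colon\sSet\to\Alg_{\capO'}$ preserves colimits, since it is a left adjoint by \eqref{eq:tensordot_adjunction_isomorphisms}. Consequently $F(X)\tensordot K\iso\colim F(X)\tensordot\Delta[n]$, and by naturality of $\tau$ and of the simplicial structure maps $\sigma_F,\sigma_G$ in their $K$-variable (Proposition \ref{prop:useful_properties_of_the_adjunction}) each of the two composites $F(X)\tensordot K\rightrightarrows G(X\tensordot K)$ restricts, along each simplex $\Delta[n]\to K$, to the corresponding $\Delta[n]$-composite. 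By the universal property of the colimit, agreement for every $\Delta[n]$ forces agreement for $K$.

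The main obstacle is purely notational: the proof is essentially bookkeeping with the adjunction. The one identification requiring care is that, on $n$-simplices, the enriched functoriality $F_{X,Y}\colon\Hombold(X,Y)\to\Hombold(FX,FY)$ coincides with $f\mapsto F(f)\circ\sigma_F$; this is obtained as the adjoint transpose of $\sigma_F$ combined with ordinary naturality of $F$ in $Y$.
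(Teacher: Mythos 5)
The paper explicitly states that this proposition ``is an exercise left to the reader'' and supplies no proof, so there is no argument to compare against; your proof is correct and is the standard argument one would give. Both directions check out: the identification of the enriched functoriality $F_{X,Y}$ on $n$-simplices as $f\mapsto F(f)\circ\sigma_F$ is right, and unwinding the square together with ordinary naturality of $\tau$ reduces $(i)$ to the $\Delta[n]$-condition $(ii)$ exactly as you say (take $Y = X\tensordot\Delta[n]$, $f = \id$ for one implication, postcompose with $G(f)$ for the other). For $(ii)\Rightarrow(iii)$, writing $F(X)\tensordot K$ as a colimit over simplices $\psi\colon\Delta[n]\to K$ and using that $\sigma_F$, $\sigma_G$ are natural in the simplicial-set variable, together with naturality of $\tau$ applied to $\id\tensordot\psi$, shows both composites $F(X)\tensordot K\rightrightarrows G(X\tensordot K)$ precomposed with $\id\tensordot\psi$ equal $G(\id\tensordot\psi)\circ\tau_{X\tensordot\Delta[n]}\circ\sigma_F^{\Delta[n]}$, so they agree by the universal property. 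One small notational quibble: the naturality of $\sigma_F,\sigma_G$ in $K$ is part of the definition of simplicial functor (Hirschhorn [9.8.5]), not something specific to Proposition \ref{prop:useful_properties_of_the_adjunction}, which only treats the change-of-operads functors $f_*,f^*$; the citation is slightly misplaced but the mathematical point stands.
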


The following proposition, whose proof we defer to Section \ref{sec:proofs}, is fundamental to this paper. In particular, it verifies that the natural transformations (see \eqref{eq:TQ_homology_spectrum_functor_natural_transformations}) associated to the $\TQ$-homology spectrum functor respect the simplicial structure maps.

\begin{prop}
\label{prop:unit_and_counit_are_simplicial}
Let $\function{f}{\capO}{\capO'}$ be a map of operads in $\capR$-modules. Consider the monad $f^*f_*$ on $\AlgO$ and the comonad $f_*f^*$ on $\Alg_{\capO'}$ associated to the adjunction $(f_*,f^*)$ in \eqref{eq:change_of_operads_adjunction_general}. The associated natural transformations
\begin{align*}
  \id\xrightarrow{\eta} f^*f_*&\quad\text{(unit)},\quad\quad\quad
  &\id\xleftarrow{\varepsilon}f_*f^*& \quad\text{(counit)}, \\
  f^*f_*f^*f_*\rarrow f^*f_*&\quad\text{(multiplication)},\quad\quad\quad
  &f_*f^*f_*f^*\xleftarrow{m}f_*f^*& \quad\text{(comultiplication)}
\end{align*}
are simplicial natural transformations.
\end{prop}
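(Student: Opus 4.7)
The plan is to apply Proposition \ref{prop:characterization_of_simplicial_natural_transformations}, which reduces the problem to verifying, for each of $\eta,\varepsilon,\mu,m$ and each $X$ and simplicial set $K$, the commutativity of a single square involving the simplicial structure maps $\sigma$ of Proposition \ref{prop:useful_properties_of_the_adjunction}(a), (d). The underlying observation is that, by part (b) of that proposition, $(f_*,f^*)$ is a simplicial adjunction, and for a simplicial adjunction the unit and counit are automatically simplicial natural transformations; the verification is then a routine translation of this enriched-category-theoretic fact into the explicit characterization given by Proposition \ref{prop:characterization_of_simplicial_natural_transformations}.

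Concretely, for the unit $\eta \colon \id \to f^*f_*$, the required identity asserts that the composite
\begin{align*}
X\tensordot K \xrightarrow{\eta_X\tensordot\id} f^*f_*(X)\tensordot K \xrightarrow{\sigma_{f^*}} f^*\bigl(f_*(X)\tensordot K\bigr) \xrightarrow{f^*(\sigma_{f_*})} f^*f_*(X\tensordot K)
\end{align*}
agrees with $\eta_{X\tensordot K}$. I would verify this by applying the adjunction bijection \eqref{eq:hom_set_adjunction_change_of_operads_general} together with the isomorphism $\sigma_{f_*}$ from part (a): both sides transport to the identity on $f_*(X\tensordot K)$. For the right-hand side this is the defining property of the unit, while for the left-hand side, after passing the final $f^*(\sigma_{f_*})$ back across the adjunction, it follows from the triangle identity $\varepsilon_{f_*}\cdot f_*\eta = \id$ together with the description of $\sigma_{f^*}$ coming from the coequalizer \eqref{eq:tensordot_definition_as_coequalizer}. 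The argument for the counit $\varepsilon$ is formally dual, with the two candidate maps $f_*f^*(Y)\tensordot K \rightrightarrows Y\tensordot K$ both transporting under the adjunction (combined with the iso $\sigma_{f_*}$) to an identity map in $\Alg_{\capO'}$, by the other triangle identity.

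For the multiplication $\mu = f^*\varepsilon_{f_*}$ and comultiplication $m = f_*\eta_{f^*}$, it suffices to observe that whiskering a simplicial natural transformation by a simplicial functor on either side yields a simplicial natural transformation; this is a routine $2$-categorical diagram chase using the fact that the structure map of a composite simplicial functor factors as the composite of the structure maps of its components (as in the definition of $\sigma_{f^*f_*}$ used above). The main obstacle is bookkeeping with the maps $\sigma$: the structure map for $f_*$ is an isomorphism while that for $f^*$ is only a natural map, and one must carefully track their directions and factorizations on the composite functors $f^*f_*$ and $f_*f^*$. Once this bookkeeping is set up, all four verifications reduce by adjointness and Proposition \ref{prop:useful_properties_of_the_adjunction}(b) to the triangle identities for $(\eta,\varepsilon)$.
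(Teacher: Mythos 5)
Your proposal is correct and follows essentially the same approach as the paper: reduce via Proposition \ref{prop:characterization_of_simplicial_natural_transformations} to a diagram check, verify the unit and counit cases by transporting both composites through adjunction hom-sets (the paper routes this through a chain of tensor/hom isomorphisms and the identity $f^*\hombold(K,Y)=\hombold(K,f^*Y)$, where your version invokes the triangle identity more explicitly, but the content is the same), and handle the (co)multiplications by the whiskering observation. The only cosmetic difference is that you frame the adjunction transport as landing at the identity on $f_*(X\tensordot K)$ rather than, as the paper does, at the identity on $f_*(X)\tensordot K$, which are identified by $\sigma_{f_*}$.
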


\section{$\TQ$-homology completion}
\label{sec:TQ_homology_completion}

The purpose of this section is to introduce the $\TQ$-homology cosimplicial resolutions that are used in this paper, and the closely related cosimplicial cobar constructions for $\K$-coalgebras.

\subsection{Indecomposable quotient functors for $\capO$-algebras}
\label{sec:derived_indecomposables}

Basterra-Mandell \cite[Section 8]{Basterra_Mandell} construct an indecomposables functor $Q$ for spectra equipped with an action of an operad in spaces, via suspension spectra, with trivial $0$-ary operations. While many naturally occurring operads $\capO$ of spectra can be written as the suspension spectra of operads in spaces, this is not true in general.

Harper \cite[Section 1]{Harper_bar_constructions} constructs an indecomposables functor $I\circ_\capO-$ for $\capO$-algebras, motivated by the work of Fresse \cite{Fresse} and Rezk \cite{Rezk}, where $\capO$ is any operad of spectra satisfying $\tau_1\capO=I$ and whose total left derived functor is a model for $\TQ$-homology; here, $I$ denotes the initial operad. Subsequently, Harper-Hess \cite[3.15]{Harper_Hess} construct an indecomposables functor $\tau_1\capO\circ_\capO-$ for $\capO$-algebras, motivated by the work of Lawson \cite{Lawson}, where $\capO$ is now any operad of spectra with trivial $0$-ary operations, and whose total left derived functor is a model for $\TQ$-homology. In other words, the indecomposables functor constructed in Harper-Hess \cite{Harper_Hess} has a very simple and useful description: it is the left adjoint of the change of operads adjunction associated to the canonical map $\capO\rarrow\tau_1\capO$ of operads.

The correct construction of the indecomposables functor for $\capO$-algebras, in this generality, had not previously been well-understood. In particular, the indecomposables functor lands in the category of $\tau_1\capO$-algebras, which is isomorphic to the category of left $\capO[1]$-modules. There is a twisted group ring structure that comes into play when writing down a correct model for the indecomposables of $\capO$-algebras, and this structure is precisely encoded by the relative circle product $\tau_1\capO\circ_\capO-$ functor, an observation that seems to have been overlooked in the past.

\subsection{Simplicial bar constructions and $\TQ$-homology}
Basterra \cite{Basterra} shows that the total left derived functor of indecomposables on non-unital commutative algebra spectra can be calculated by a simplicial bar construction. Harper
\cite[1.10]{Harper_bar_constructions} shows that the total left derived functor (of the left-adjoint in any change of operads adjunction) can be calculated by a simplicial bar construction, and since the indecomposables construction in Harper-Hess \cite{Harper_Hess} is the left adjoint of a change of operads adjunction, it follows immediately that the $\TQ$-homology of any $\capO$-algebra can be calculated by a simplicial bar construction; this is a key ingredient in the proof of our main result.

\subsection{$\TQ$-homology completion and the fundamental adjunction}
Let $\capO$ be an operad in $\capR$-modules such that $\capO[0]$ is the null object $*$ and consider any $\capO$-algebra $X$.

In order to work with the cosimplicial $\TQ$-homology resolution \eqref{eq:TQ_homology_resolution_derived_functor_form_introduction}, it will be useful to introduce some notation. Consider any factorization of the canonical map $\capO\rarrow\tau_1\capO$ in the category of operads as
\begin{align*}
  \capO\xrightarrow{g}J\xrightarrow{h}\tau_1\capO
\end{align*}
a cofibration followed by a weak equivalence (\cite[3.16]{Harper_Hess}) with respect to the positive flat stable model structure on $\ModR$. These maps induce change of operads adjunctions
\begin{align}
\label{eq:induced_adjunctions_from_g_and_h}
\xymatrix{
  \AlgO\ar@<0.5ex>[r]^-{g_*} &
  \Alg_{J}\ar@<0.5ex>[l]^-{g^*}\ar@<0.5ex>[r]^-{h_*} &
  \Alg_{\tau_1\capO}\ar@<0.5ex>[l]^-{h^*}
}
\end{align}
with left adjoints on top and $g^*,h^*$ the forgetful functors. It is important to note that since $h$ is a weak equivalence, the right-hand adjunction $(h_*,h^*)$ is a Quillen equivalence (Harper \cite[1.4]{Harper_symmetric_spectra}). To simplify notation we denote by $Q:=g_*$ the indicated left adjoint and $U:=g^*$ the indicated forgetful functor. Associated to the $(Q,U)$ adjunction in \eqref{eq:induced_adjunctions_from_g_and_h} is the monad $UQ$ on $\AlgO$ and the comonad $\K:=QU$ on $\Alg_J$ of the form
\begin{align}
\label{eq:TQ_homology_spectrum_functor_natural_transformations}
  \id\xrightarrow{\eta} UQ&\quad\text{(unit)},\quad\quad\quad
  &\id\xleftarrow{\varepsilon}\K& \quad\text{(counit)}, \\
  \notag
  UQUQ\rarrow UQ&\quad\text{(multiplication)},\quad\quad\quad
  &\K\K\xleftarrow{m}\K& \quad\text{(comultiplication)}.
\end{align}

A fundamental observation emphasized in the homotopic descent work of Hess \cite[2.11]{Hess}, and subsequently in Arone-Ching \cite{Arone_Ching_classification} and the Francis-Gaitsgory conjecture \cite[3.4.5]{Francis_Gaitsgory}, is that there is a factorization of adjunctions
\begin{align}
\label{eq:factorization_of_adjunctions_TQ_homology}
\xymatrix@1{
  \AlgO\ar@<0.5ex>[rr]^-{Q}\ar@<0.5ex>[dr]^(0.6){Q} &&
  \Alg_{J}\ar@<0.5ex>[ll]^-{U}\ar@<0.5ex>[dl]^-{\K}\\
  & \coAlgK\ar@<0.5ex>[ul]^(0.4){\lim_\Delta C}\ar@<0.5ex>[ur]^-{}
}
\end{align}
with left adjoints on top and $\coAlgK\rarrow\Alg_J$ the forgetful functor. In other words, if we restrict to cofibrant objects in $\AlgO$, then the $\TQ$-homology spectrum $QX$ underlying the $\TQ$-homology $\capO$-algebra $UQ(X)$ of $X$ is naturally equipped with a $\K$-coalgebra structure. While we defer the definition of $C$ to the next subsection (Definition \ref{defn:cobar_construction}), to understand the comparison in \eqref{eq:factorization_of_adjunctions_TQ_homology} between $\AlgO$ and $\coAlgK$ it suffices to know that $\lim_\Delta C(Y)$ is naturally isomorphic to an equalizer of the form
\begin{align*}
  \lim_\Delta C(Y)\Iso
  \lim\Bigl(
  \xymatrix{
    UY\ar@<0.5ex>[r]^-{d^0}\ar@<-0.5ex>[r]_-{d^1} &
    U\K Y
  }
  \Bigr)
\end{align*}
where $d^0=m\id$, $d^1=\id m$, $\function{m}{U}{U\K=UQU}$ denotes the $\K$-coaction map on $U$ (defined by $m:=\eta\id$), and $\function{m}{Y}{\K Y}$ denotes the $\K$-coaction map on $Y$; this is because of the following property of cosimplicial objects (see Definition \ref{defn:cobar_construction}).

\begin{prop}
\label{prop:lim_of_cosimplicial_object}
Let $\M$ be a category with all small limits. If $X\in\M^\Delta$ (resp. $Y\in\M^{\Delta_\res}$), then its limit is naturally isomorphic to an equalizer of the form
\begin{align*}
  \lim_{\Delta}X\Iso
  \lim\bigl(
  \xymatrix{
    X^0\ar@<0.5ex>[r]^-{d^0}\ar@<-0.5ex>[r]_-{d^1} &
    X^1
  }
  \bigr)\quad\quad
  \Bigl(
  \text{resp.}\quad
  \lim_{\Delta_\res}Y\Iso
  \lim\bigl(
  \xymatrix{
    Y^0\ar@<0.5ex>[r]^-{d^0}\ar@<-0.5ex>[r]_-{d^1} &
    Y^1
  }
  \bigr)
  \Bigr)
\end{align*}
in $\M$, with $d^0$ and $d^1$ the indicated coface maps of $X$ (resp. $Y$).
\end{prop}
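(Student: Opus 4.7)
The plan is to establish the isomorphism by exhibiting a natural bijection between cones over $X$ (resp. $Y$) with vertex $T \in \M$ and cones over the parallel-arrow diagram $d^0, d^1 \colon X^0 \rightrightarrows X^1$. One direction is trivial: a cone $\{\alpha_n \colon T \rarrow X^n\}_{n \geq 0}$ over $X$ restricts to its components at $[0]$ and $[1]$, which automatically satisfies $d^0 \alpha_0 = \alpha_1 = d^1 \alpha_0$, hence yields a cone over the two-parallel-arrow diagram (i.e., an element of the equalizer).

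For the harder direction, given $\alpha_0 \colon T \rarrow X^0$ with $d^0 \alpha_0 = d^1 \alpha_0$, I would extend to a full cone as follows. For each $n \geq 0$, choose any morphism $\iota \colon [0] \rarrow [n]$ in $\Delta$ and set $\alpha_n := \iota_* \alpha_0$. The key step is to verify that $\alpha_n$ is independent of the choice of $\iota$. Any map $[0] \rarrow [n]$ has the form $\iota_i \colon 0 \mapsto i$ for some $i \in \{0, 1, \ldots, n\}$, so it suffices to show $(\iota_i)_* \alpha_0 = (\iota_{i+1})_* \alpha_0$ whenever $i+1 \leq n$; the general case then follows by iterating this equality. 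For this, consider the unique order-preserving edge map $\phi_{i,i+1} \colon [1] \rarrow [n]$ sending $0 \mapsto i$ and $1 \mapsto i+1$, and observe
\begin{align*}
  \phi_{i,i+1} \circ d^1 = \iota_i, \quad \phi_{i,i+1} \circ d^0 = \iota_{i+1}.
\end{align*}
Applying $(\phi_{i,i+1})_*$ to the equalizer condition $d^0_* \alpha_0 = d^1_* \alpha_0$ gives $(\iota_{i+1})_* \alpha_0 = (\iota_i)_* \alpha_0$, as desired.

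It then remains to check that the resulting family $\{\alpha_n\}$ really is a cone: for any $\theta \colon [n] \rarrow [m]$ in $\Delta$ and any chosen $\iota \colon [0] \rarrow [n]$, the composite $\theta \circ \iota \colon [0] \rarrow [m]$ is itself a morphism that could be used to define $\alpha_m$, so by the well-definedness established above, $\theta_* \alpha_n = \theta_* \iota_* \alpha_0 = (\theta \iota)_* \alpha_0 = \alpha_m$. Uniqueness of this extension is clear from the construction, so we obtain the desired natural bijection.

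For the $\Delta_{\res}$ case, the argument goes through verbatim: every morphism $\iota_i \colon [0] \rarrow [n]$ and every edge map $\phi_{i,i+1} \colon [1] \rarrow [n]$ is a strictly increasing injection, so only face-type morphisms are used throughout. The main obstacle is simply setting up the bookkeeping for the ``independence of $\iota$'' step cleanly; there are no substantive difficulties, and the result is a standard instance of the fact that $\{[0] \rightrightarrows [1]\}$ is initial (in the cofinality sense required for limits) inside both $\Delta$ and $\Delta_{\res}$.
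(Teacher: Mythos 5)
Your proof is correct and takes essentially the same approach as the paper's one-line sketch, which simply says to verify the universal property of limits using the cosimplicial identities; you carry this out explicitly by working with the vertex maps $\iota_i \colon [0]\to[n]$ and edge maps $\phi_{i,i+1}\colon [1]\to [n]$ in $\Delta$ (resp. $\Delta_\res$), which is the same content in a slightly different notation. (Minor stylistic note: your closing remark frames the result as a "standard instance" of the finality of $\{[0]\rightrightarrows[1]\}$ in $\Delta$, but that finality is precisely what the body of your argument proves, so the citation is superfluous rather than load-bearing.)
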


\begin{proof}
This follows easily by using the cosimplicial identities \cite[I.1]{Goerss_Jardine} to verify the universal property of limits.
\end{proof}

\subsection{Cosimplicial cobar constructions}

It will be useful to interpret the cosimplicial $\TQ$-resolution of $X$ in terms of the following cosimplicial cobar construction involving the comonad $\K$ on $\Alg_J$. First note that associated to the adjunction $(Q,U)$ is a right $\K$-coaction $\function{m}{U}{U\K}$ on $U$ (defined by $m:=\eta\id$) and a left $\K$-coaction (or $\K$-coalgebra structure) $\function{m}{QX}{\K QX}$ on $QX$ (defined by $m=\id\eta\id$), for any $X\in\AlgO$.

\begin{defn}
Denote by $\function{\eta}{\id}{F}$ and $\function{m}{FF}{F}$ the unit and multiplication maps of the simplicial fibrant replacement monad $F$ on $\AlgJ$ (Blumberg-Riehl \cite[6.1]{Blumberg_Riehl}).
\end{defn}

\begin{defn}
\label{defn:cobar_construction}
Let $Y$ be an object in $\coAlgK$. The \emph{cosimplicial cobar constructions} (or two-sided cosimplicial cobar constructions) $C(Y):=\Cobar(U,\K,Y)$ and $\mathfrak{C}(Y):=\Cobar(U,F\K,FY)$ in $(\AlgO)^{\Delta}$ look like (showing only the coface maps)
\begin{align}
\label{eq:cobar_construction}
&\xymatrix{
  C(Y): \quad\quad
  UY\ar@<0.5ex>[r]^-{d^0}\ar@<-0.5ex>[r]_-{d^1} &
  U\K Y
  \ar@<1.0ex>[r]\ar[r]\ar@<-1.0ex>[r] &
  U\K\K Y
  \cdots
}\\
\label{eq:cobar_construction_fattened_up}
&\xymatrix{
  \mathfrak{C}(Y): \quad\quad
  UFY\ar@<0.5ex>[r]^-{d^0}\ar@<-0.5ex>[r]_-{d^1} &
  U(F\K)FY
  \ar@<1.0ex>[r]\ar[r]\ar@<-1.0ex>[r] &
  U(F\K)(F\K)FY
  \cdots
}
\end{align}
and are defined objectwise by $C(Y)^n:=U\K^n Y$ and $\mathfrak{C}(Y)^n:=U(F\K)^n FY$ with the obvious coface and codegeneracy maps; see, for instance, the face and degeneracy maps in the simplicial bar constructions described in \cite[A.1]{Gugenheim_May}, \cite[Section 7]{May_classifying_spaces}, and dualize. For instance, in \eqref{eq:cobar_construction} the indicated coface maps are defined by $d^0:=m\id$ and $d^1:=\id m$, and similarly for \eqref{eq:cobar_construction_fattened_up} where they are modified in the obvious way by insertion of the unit map $\function{\eta}{\id}{F}$ of the simplicial fibrant replacement monad $F$.
\end{defn}

\begin{rem}
It may be helpful to note that while $F\K$ does not inherit the structure of a monad from $\K$, it does inherit the structure of a non-unital monad from $\K$; nevertheless, it is easy to verify that the cosimplicial cobar construction \eqref{eq:cobar_construction_fattened_up} is a well-defined cosimplicial $\capO$-algebra.
\end{rem}

\begin{prop}
\label{prop:fattened_version_of_C}
Let $Y$ be a $\K$-coalgebra. The unit map $\function{\eta}{\id}{F}$ induces a well-defined natural map of the form $C(Y)\rarrow\mathfrak{C}(Y)$ of cosimplicial $\capO$-algebras.
\end{prop}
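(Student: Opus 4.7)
The plan is to define the desired map cosimplicial-degree-wise by inserting the monad unit $\function{\eta}{\id}{F}$ in the appropriate places, and then to verify compatibility with coface and codegeneracy operators via naturality of $\eta$. Since $C(Y)^n=U\K^n Y$ and $\mathfrak{C}(Y)^n=U(F\K)^n FY$, at cosimplicial degree $n$ I would take
\[
U\K^n Y\rarrow U(F\K)^n FY
\]
to be the composite obtained by applying $\eta$ once in front of each of the $n$ copies of $\K$ and once in front of $Y$, i.e., a total of $n+1$ insertions of $\eta$. These insertions occur at disjoint positions of the functor string, so by naturality of $\eta$ they may be performed in any order and yield a single well-defined natural transformation $\eta_n\colon C(Y)^n\rarrow\mathfrak{C}(Y)^n$ of $\capO$-algebras, natural in $Y\in\coAlgK$.

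The bulk of the work is to check compatibility with the coface operators $d^0,\ldots,d^{n+1}$ on both sides. Recall that for $C(Y)$ these are built from (i) the right $\K$-coaction $m\colon U\rarrow U\K$ at $d^0$, (ii) the comultiplication $m\colon\K\rarrow\K\K$ at interior positions, and (iii) the left $\K$-coaction $m\colon Y\rarrow\K Y$ at $d^{n+1}$. The coface operators of $\mathfrak{C}(Y)$ employ exactly these same comonadic structure maps, but now further decorated by $\eta$'s that reconcile $F$-strings of different lengths with the multiplication $m\colon FF\rarrow F$ of the monad $F$. In each case the square to be checked commutes by pure naturality of $\eta$ with respect to the comonadic structure map being used. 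A completely analogous argument handles the codegeneracy operators, which on both sides are built from the counit $\varepsilon\colon\K\rarrow\id$.

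The step I expect to be most delicate is writing down the coface/codegeneracy operators of $\mathfrak{C}(Y)$ cleanly enough that the naturality argument above is evidently correct; since $F\K$ is only a non-unital monad and not a genuine comonad, the ``comultiplication'' and ``counit'' on $F\K$ must first be expressed in terms of $\eta$, the multiplication $m\colon FF\rarrow F$ of $F$, and the comonadic structure of $\K$. Once these formulas are in hand, naturality of $\eta$ produces every required commuting square, and the collection $\{\eta_n\}_{n\geq 0}$ assembles into a map of cosimplicial $\capO$-algebras $C(Y)\rarrow\mathfrak{C}(Y)$; naturality in $Y$ is then immediate from naturality of each $\eta_n$ in its input.
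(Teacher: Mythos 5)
Your degree-wise construction (insert $\eta$ once in front of each $\K$ and once in front of $Y$, so $n+1$ times in degree $n$) is the right map, and your reduction of the proof to checking compatibility with the coface and codegeneracy operators is also right. You also correctly identify where the subtlety lives: $F\K$ is not a comonad, so the cosimplicial structure on $\mathfrak{C}(Y)$ must be spelled out in terms of $\eta$, the multiplication $m\colon FF\to F$, and the comonadic structure maps of $\K$.

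However, the claim that ``naturality of $\eta$ produces every required commuting square'' overstates what naturality buys you, specifically for the codegeneracy squares. The codegeneracy operators of $\mathfrak{C}(Y)$ apply the counit $\varepsilon\colon\K\to\id$ at one slot \emph{and then} contract the resulting adjacent pair $FF$ using $m\colon FF\to F$; they are not built from $\varepsilon$ alone. When you compare the two paths around a codegeneracy square, naturality of $\eta$ and of $\varepsilon$ gets you part of the way, but at the final step you must cancel an $m\circ(\eta F)$ (or $m\circ(F\eta)$, depending on how you order the insertions) using the unit law of the monad $F$. For instance, already in the lowest nontrivial case the square
\begin{align*}
\xymatrix{
  U\K Y\ar[rr]^-{U\eta_{\K FY}\circ U\K\eta_Y}\ar[d]_-{U\varepsilon_Y} && UF\K FY\ar[d]^-{Um_Y\circ UF\varepsilon_{FY}}\\
  UY\ar[rr]_-{U\eta_Y} && UFY
}
\end{align*}
commutes, but verifying this requires chaining naturality of $\eta$ with respect to $\varepsilon$, then the unit law $m\circ\eta F=\id$, then naturality of $\varepsilon$ with respect to $\eta$; it does not fall out of naturality of $\eta$ by itself. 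So you should add the monad unit law to your list of ingredients. With that emendation, the plan is complete: the coface squares commute by naturality of $\eta$ (with respect to the various $\K$-coaction and comultiplication maps), the codegeneracy squares commute by naturality of $\eta$ and $\varepsilon$ together with the unit law of $F$, and naturality of the assembled map in $Y$ is immediate.
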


\begin{proof}
This is an exercise left to the reader.
\end{proof}

If $Y$ is a cofibrant $\K$-coalgebra, then the unit map $\function{\eta}{\id}{F}$ of the simplicial fibrant replacement monad induces a well-defined natural map
\begin{align*}
  C(Y)\xrightarrow{\wequiv}
  \mathfrak{C}(Y)
\end{align*}
of cosimplicial $\capO$-algebras; this map is an objectwise weak equivalence. The reason we introduce the cosimplicial cobar construction $\mathfrak{C}(Y)$, which can be thought of as a ``fattened up'' version of $C(Y)$, will become clear in Section \ref{sec:homotopy_theory_K_coalgebras}; it is needed for technical reasons involving the construction of homotopically meaningful mapping spaces of $\K$-coalgebras.

\section{Homotopy limit towers and cosimplicial $\capO$-algebras}
\label{sec:homotopy_limit_towers}

The purpose of this section is to make precise the several towers of $\capO$-algebras,   associated to a given cosimplicial $\capO$-algebra, that are needed in this paper. Most of our arguments involve $\holim_{\Delta}$ and $\holim_{\Delta^{\leq n}}$, which are defined in terms of the Bousfield-Kan homotopy limit functors $\holim^\BK_{\Delta}$ and $\holim^\BK_{\Delta^{\leq n}}$, which in turn are defined in terms of the totalization functor $\Tot$ for cosimplicial $\capO$-algebras. For technical reasons that arise in the homotopy theory of $\K$-coalgebras, we also require use of the restricted totalization $\Tot^\res$ functor. Furthermore, the construction of the homotopy spectral sequence associated to a cosimplicial $\capO$-algebra is defined in terms of the associated $\Tot$ tower of $\capO$-algebras, and the Bousfield-Kan identification of the resulting $E^2$ term requires the fundamental pullback diagrams constructing $\Tot^n$ from $\Tot^{n-1}$ for $\capO$-algebras. In other words, this section makes precise the various definitions and constructions for cosimplicial $\capO$-algebras what readers familiar with Bousfield-Kan \cite{Bousfield_Kan} will recognize as favorite tools from the context of cosimplicial pointed spaces.

\begin{defn}
A cosimplicial $\capO$-algebra $Z$ is \emph{coaugmented} if it comes with a map
\begin{align}
\label{eq:cosimplicial_object_equipped_with_a_coaugmentation}
  \function{d^0}{Z^{-1}}{Z^0}
\end{align}
of $\capO$-algebras such that $\function{d^0d^0=d^1d^0}{Z^{-1}}{Z^1}$; in this case, it follows easily from the cosimplicial identities (\cite[I.1]{Goerss_Jardine}) that \eqref{eq:cosimplicial_object_equipped_with_a_coaugmentation} induces a map
\begin{align*}
  Z^{-1}\rarrow Z
\end{align*}
of cosimplicial $\capO$-algebras, where $Z^{-1}$ denotes the constant cosimplicial $\capO$-algebra with value $Z^{-1}$; i.e., via the inclusion $Z^{-1}\in\AlgO\subset(\AlgO)^\Delta$ of constant diagrams.
\end{defn}

We follow Dror-Dwyer \cite{Dror_Dwyer_long_homology}
in use of the terms \emph{restricted cosimplicial objects} for $\Delta_\res$-shaped diagrams, and \emph{restricted simplicial category} $\Delta_\res$ to denote the subcategory of $\Delta$ with objects the totally ordered sets $[n]$ for $n\geq 0$ and morphisms the strictly monotone maps of sets $\function{\xi}{[n]}{[n']}$; i.e., such that $k<l$ implies $\xi(k)<\xi(l)$.

\begin{defn}
\label{defn:totalization_and_restricted_totalization}
Let $\capO$ be an operad in $\capR$-modules. The \emph{totalization} functor $\Tot$ for cosimplicial $\capO$-algebras and the \emph{restricted totalization} (or fat totalization) functor $\Tot^\res$ for restricted cosimplicial $\capO$-algebras are defined objectwise by the ends
\begin{align*}
  \function{\Tot}{(\AlgO)^\Delta}{\AlgO},\quad\quad
  &X\mapsto\hombold(\Delta[-],X)^\Delta\\
  \function{\Tot^\res}{(\AlgO)^{\Delta_\res}}{\AlgO},\quad\quad
  &Y\mapsto\hombold(\Delta[-],Y)^{\Delta_\res}
\end{align*}
We often drop the adjective ``restricted'' and simply refer to both functors as \emph{totalization} functors. It follows from the universal property of ends that $\Tot(X)$ is naturally isomorphic to an equalizer diagram of the form
\begin{align*}
  \Tot(X)&\Iso
  \lim\Bigl(
  \xymatrix{
    \prod\limits_{[n]\in\Delta}\hombold(\Delta[n],X^n)
    \ar@<0.5ex>[r]\ar@<-0.5ex>[r] &
    \prod\limits_{\substack{[n]\rightarrow [n']\\ \text{in}\,\Delta}}\hombold(\Delta[n],X^{n'})
  }
  \Bigr)
\end{align*}
in $\AlgO$, and similarly for $\Tot^\res(Y)$ by replacing $\Delta$ with $\Delta_\res$.  We sometimes refer to the natural maps $\Tot(X)\rarrow\hombold(\Delta[n],X^n)$ and $\Tot^\res(Y)\rarrow\hombold(\Delta[n],Y^n)$ as the \emph{projection} maps.
\end{defn}

\begin{prop}
\label{prop:tot_adjunctions}
Let $\capO$ be an operad in $\capR$-modules. The totalization functors $\Tot$ and $\Tot^\res$ fit into adjunctions
\begin{align}
\label{eqref:tot_adjunctions}
\xymatrix{
  \Alg_{\capO}\ar@<0.5ex>[r]^-{-\tensordot\Delta[-]} &
  (\Alg_\capO)^\Delta\ar@<0.5ex>[l]^-{\Tot}
},\quad\quad
\xymatrix{
  \Alg_{\capO}\ar@<0.5ex>[r]^-{-\tensordot\Delta[-]} &
  (\Alg_\capO)^{\Delta_\res}\ar@<0.5ex>[l]^-{\Tot^\res}
}
\end{align}
with left adjoints on top.
\end{prop}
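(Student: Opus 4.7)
The plan is to establish the adjunction as a chain of natural bijections, starting from the simplicial enrichment adjunction recorded in Remark \ref{rem:useful_adjunction_isomorphisms_simplicial_structure} and transporting it to the category of (restricted) cosimplicial $\capO$-algebras via the end description of $\Tot$ (resp. $\Tot^\res$) in Definition \ref{defn:totalization_and_restricted_totalization}. First I would check that $X\tensordot\Delta[-]$ is indeed a cosimplicial $\capO$-algebra: functoriality of $\tensordot$ in the simplicial-set variable, combined with the cosimplicial structure on $\Delta[-]$, produces a functor $\Delta\rarrow\AlgO$ (and similarly a functor $\Delta_\res\rarrow\AlgO$ by restriction).

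Next, for $X\in\AlgO$ and $Y\in(\AlgO)^\Delta$, a morphism $\function{f}{X\tensordot\Delta[-]}{Y}$ in $(\AlgO)^\Delta$ is a family $\function{f^n}{X\tensordot\Delta[n]}{Y^n}$, indexed by $n\geq 0$, such that for every $\function{\xi}{[n]}{[n']}$ in $\Delta$ the diagram
\begin{align*}
\xymatrix{
  X\tensordot\Delta[n]\ar[r]^-{\id\tensordot\xi_*}\ar[d]_-{f^n} & X\tensordot\Delta[n']\ar[d]^-{f^{n'}}\\
  Y^n\ar[r]_-{\xi_*} & Y^{n'}
}
\end{align*}
in $\AlgO$ commutes. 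Under the adjunction isomorphism $\hom_{\AlgO}(X\tensordot K,Z)\iso\hom_{\AlgO}(X,\hombold(K,Z))$ of Remark \ref{rem:useful_adjunction_isomorphisms_simplicial_structure}, applied levelwise, each $f^n$ corresponds to an adjoint map $\function{\tilde f^n}{X}{\hombold(\Delta[n],Y^n)}$, and the naturality square above translates into the equalizer condition that defines $\Tot(Y)$ as an end; this is purely formal from the contravariant/covariant functoriality of $\hombold$ in the two slots.

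Assembling these adjoints therefore produces a unique map $\function{\tilde f}{X}{\Tot(Y)}$ by the universal property of the end, and this construction is manifestly natural in $X$ and $Y$; reversing the steps gives the inverse bijection, so
\begin{align*}
  \hom_{(\AlgO)^\Delta}\bigl(X\tensordot\Delta[-],Y\bigr)\Iso\hom_{\AlgO}\bigl(X,\Tot(Y)\bigr).
\end{align*}
The restricted case is proved by the identical argument after replacing $\Delta$ with $\Delta_\res$ everywhere (note that the adjunction from Remark \ref{rem:useful_adjunction_isomorphisms_simplicial_structure} does not care about the shape category of the diagram, only about the degreewise simplicial enrichment). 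I expect no real obstacle here: the only point that requires any care is verifying that the naturality squares for $f$ and the end condition for $\tilde f$ are formally equivalent under the adjunction, which is routine once one writes out both sides.
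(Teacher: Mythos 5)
Your proof is correct and is precisely the elaboration of the paper's own terse argument, which simply invokes the universal property of ends together with the degreewise adjunction $\hom_{\AlgO}(X\tensordot K,Z)\iso\hom_{\AlgO}(X,\hombold(K,Z))$; you have filled in the routine naturality and end-compatibility checks that the paper leaves to the reader.
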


\begin{proof}
Consider the case of $\Tot$ (resp. $\Tot^\res)$. Using the universal property of ends, it is easy verify that the functor given objectwise by $A\tensordot\Delta[-]$ (resp. $A\tensordot\Delta[-]$) is a left adjoint of $\Tot$ (resp. $\Tot^\res$).
\end{proof}

\begin{defn}
Let $\capO$ be an operad in $\capR$-modules and $\DD$ a small category. The \emph{Bousfield-Kan homotopy limit} functor $\holim_\DD^\BK$ for $\DD$-shaped diagrams in $\AlgO$ is defined objectwise by
\begin{align*}
  \function{\holim\nolimits_\DD^\BK}{(\AlgO)^\DD}{\AlgO},\quad\quad
  &X\mapsto\Tot\xymatrix{\prod\nolimits^*_\DD} X
\end{align*}
We will sometimes suppress $\DD$ from the notation and simply write $\holim^\BK$ and $\prod^*$. Here, the cosimplicial replacement functor $\function{\prod\nolimits^*}{(\AlgO)^{\DD}}{(\AlgO)^\Delta}$ is defined objectwise by
\begin{align*}
\xymatrix{
  \prod^n X:= \prod\limits_{\substack{a_0\rightarrow\cdots\rightarrow a_n\\ \text{in}\,\DD}}X(a_n)
}
\end{align*}
with the obvious coface $d^i$ and codegeneracy maps $s^j$; in other words, such that  $d^i$ ``misses $i$'' and $s^j$ ``doubles $j$'' on the projection maps inducing these maps; compare, Bousfield-Kan \cite[XI.5]{Bousfield_Kan}. For a useful introduction in the dual setting of homotopy colimits, realization, and the simplicial replacement functor, in the context of spaces, see Dwyer-Henn \cite{Dwyer_Henn}.
\end{defn}

\begin{rem}
The basic idea behind the cosimplicial replacement $\prod^*_\DD X\in(\AlgO)^\Delta$ of a $\DD$-shaped diagram $X$ is that it arises as a natural cosimplicial resolution of $\lim_\DD X$; in other words, $\lim_\DD X$ is naturally isomorphic to an equalizer of the form
\begin{align*}
  \lim_\DD X\Iso
  \lim\Bigl(
  \xymatrix{
    \prod\limits_{a_0\in\DD}X(a_0)
    \ar@<0.5ex>[r]^-{d^0}\ar@<-0.5ex>[r]_-{d^1} &
    \prod\limits_{\substack{a_0\rightarrow a_1\\ \text{in}\,\DD}}X(a_1)
  }
  \Bigr)
  \Iso\xymatrix{\lim_\Delta\prod\nolimits^*_\DD X}
\end{align*}
This description of $\lim_\DD X$ naturally arises when verifying existence of $\lim_\DD X$ in terms of existence of small products and equalizers; i.e., the description that arises by writing down the desired universal property of the limiting cone of $\lim_\DD X$.
\end{rem}

\begin{prop}
\label{prop:fat_point_description_of_holim_bk}
Let $\capO$ be an operad in $\capR$-modules and $\DD$ a small category. Let $X\in(\AlgO)^\DD$. Then $\holim_\DD^\BK X$ is naturally isomorphic to the end construction
\begin{align*}
  \holim\nolimits_\DD^\BK X
  \Iso&\hombold_\DD(B(\DD/-),X)\\
  \Equal&\hombold(B(\DD/-),X)^\DD
\end{align*}
in $\AlgO$; the end construction $\hombold_\DD(B(\DD/-),X)$ can be thought of as the mapping object of $\DD$-shaped diagrams.
\end{prop}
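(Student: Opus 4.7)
The plan is to establish this by unfolding both sides to iterated ends over $\Delta$ and $\DD$, swapping the order of the ends by Fubini, and collapsing the inner end via a Yoneda-type argument adapted to the simplicial structure on $\AlgO$ from Section \ref{sec:simplicial_structures}. The key combinatorial input is the natural identification of $n$-simplices
\begin{align*}
  N(\DD/d)_n \Iso \coprod_{\sigma \in N\DD_n}\hom_\DD(\sigma_{\mathrm{last}}, d),
\end{align*}
obtained by recording an $n$-simplex $a_0\to\cdots\to a_n\to d$ of the slice $\DD/d$ as the pair consisting of the underlying chain $\sigma = (a_0\to\cdots\to a_n)$ in $\DD$, with terminal vertex $\sigma_{\mathrm{last}} := a_n$, together with the structure map $a_n\to d$.

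First I would unfold the Bousfield--Kan definition: using Definition \ref{defn:totalization_and_restricted_totalization} and the fact that $\hombold(\Delta[n],-)$ commutes with products, one has
\begin{align*}
  \holim\nolimits_\DD^\BK X \Iso \int_{[n]\in\Delta}\hombold\Bigl(\Delta[n],\prod\nolimits^n_\DD X\Bigr)\Iso \int_{[n]}\prod_{\sigma\in N\DD_n}\hombold\bigl(\Delta[n],X(\sigma_{\mathrm{last}})\bigr).
\end{align*}
On the other side, presenting $B(\DD/d) = N(\DD/d)$ as the canonical coend of standard simplices weighted by its sets of nondegenerate labels and applying the adjunction isomorphism of Remark \ref{rem:useful_adjunction_isomorphisms_simplicial_structure} together with the fact that $\hombold(-,X(d))$ turns coproducts of simplicial sets into products, one obtains
\begin{align*}
  \hombold\bigl(B(\DD/d),X(d)\bigr)\Iso \int_{[n]}\hombold\bigl(\Delta[n],X(d)\bigr)^{N(\DD/d)_n}.
\end{align*}

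Next I would take the end $\int_{d\in\DD}$ of the preceding display, substitute the combinatorial identification of $N(\DD/d)_n$, and commute the end over $d$ past products and past the outer end over $[n]$ (Fubini for ends), yielding an inner expression of the form $\int_d \hombold(\Delta[n],X(d))^{\hom_\DD(\sigma_{\mathrm{last}},d)}$ for each $\sigma\in N\DD_n$. This inner end is collapsed by the Yoneda end formula $\int_{d\in\DD} F(d)^{\hom_\DD(a,d)}\Iso F(a)$, applied to the covariant functor $F(d) := \hombold(\Delta[n],X(d))$ and $a := \sigma_{\mathrm{last}}$. Reassembling produces exactly the unfolded form of $\holim^\BK_\DD X$ recorded above, and naturality in $X$ is automatic from the naturality of each step.

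The main work will lie in verifying the Yoneda end isomorphism carefully in the $\capO$-algebra setting: the evident natural comparison map $F(a)\rarrow \int_d F(d)^{\hom_\DD(a,d)}$ induced by functoriality of $F$ must be shown to be an isomorphism of $\capO$-algebras. This reduces, via the fact that limits in $\AlgO$ are computed in the underlying category of $\capR$-modules and then in simplicial degree, to the classical Yoneda lemma, using that the mapping object $\hombold$ is built as $\Map(K_+,-)$ (Definition \ref{defn:simplicial_structure_mapping_object}) and hence behaves correctly under these limits. The Fubini step is formal in any complete category, and the decomposition of $N(\DD/d)_n$ is immediate from the definition of slice categories, so the Yoneda end is the only non-formal ingredient.
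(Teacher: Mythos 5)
Your argument is correct and is essentially the end-calculus proof from Bousfield--Kan [XI.3, XI.5] that the paper cites: unfold both sides as iterated ends over $\Delta$ and $\DD$ using the identification $N(\DD/d)_n\Iso\coprod_{\sigma\in N\DD_n}\hom_\DD(\sigma_{\mathrm{last}},d)$, swap the ends by Fubini, and collapse the inner $\DD$-end with the Yoneda end isomorphism; the only point requiring the simplicial structure of Section \ref{sec:simplicial_structures} is that $\hombold(-,Y)$ carries colimits of simplicial sets to limits in $\AlgO$, which follows from the two-variable adjunction in Remark \ref{rem:useful_adjunction_isomorphisms_simplicial_structure}. One small terminological slip: the coend presentation $K\Iso\int^{[n]}K_n\cdot\Delta[n]$ is indexed over \emph{all} $n$-simplices, not just nondegenerate ones, but your displayed formula already uses $N(\DD/d)_n$, so the argument is unaffected.
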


\begin{rem}
Here, $\DD/-$ denotes the over category $\DD\!\downarrow\!-$ (or comma category) functor, $\function{B}{\Cat}{\sSet}$ the nerve functor, and $\Cat$ the category of small categories (see \cite[II.6]{MacLane_Categories} and \cite[I.1.4]{Goerss_Jardine}).
\end{rem}

\begin{proof}[Proof of Proposition \ref{prop:fat_point_description_of_holim_bk}]
This is proved in Bousfield-Kan \cite[XI.5, XI.3]{Bousfield_Kan} in the context of spaces, and the same argument verifies it in our context.
\end{proof}

\begin{prop}
\label{prop:holim_BK_adjunctions}
Let $\capO$ be an operad in $\capR$-modules and $\DD$ a small category. The Bousfield-Kan homotopy limit functor $\holim_\DD^\BK$ fits into an adjunction
\begin{align}
\label{eqref:holim_BK_adjunctions}
\xymatrix@1{
  \Alg_{\capO}\ar@<0.5ex>[rr]^-{-\tensordot B(\DD/-)} &&
  (\Alg_\capO)^\DD\ar@<0.5ex>[ll]^-{\holim_\DD^\BK}
}
\end{align}
with left adjoint on top. Furthermore, this adjunction is a Quillen adjunction with respect to the projective model structure on $\DD$-shaped diagrams induced from $\AlgO$.
\end{prop}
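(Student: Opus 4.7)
The plan has two steps: first, establish the adjunction using the end description of $\holim^\BK_\DD$ from Proposition \ref{prop:fat_point_description_of_holim_bk}; second, verify the Quillen property by factoring $\holim^\BK_\DD$ through the Reedy model structure on cosimplicial $\capO$-algebras.

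For the adjunction, I would start from the natural isomorphism $\holim^\BK_\DD X \cong \hombold(B(\DD/-),X)^\DD$ and combine it with the simplicial tensor-mapping-object adjunction \eqref{eq:tensordot_adjunction_isomorphisms}. Since $\hom_\AlgO(A,-)$ preserves the end (a limit), this produces a chain of natural bijections
\begin{align*}
\hom_\AlgO\bigl(A, \holim\nolimits^\BK_\DD X\bigr)
\cong \hom_\AlgO\bigl(A, \hombold(B(\DD/-),X)^\DD\bigr)
\cong \hom_{(\AlgO)^\DD}\bigl(A \tensordot B(\DD/-), X\bigr),
\end{align*}
where the right-hand isomorphism is obtained by pushing the end outside and applying \eqref{eq:tensordot_adjunction_isomorphisms} at each object $d \in \DD$, then identifying the resulting end with the set of $\DD$-natural transformations. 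This identifies $A \mapsto A \tensordot B(\DD/-)$ as the left adjoint.

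For the Quillen property, I would use the definitional factorization $\holim^\BK_\DD = \Tot \circ \textstyle\prod^*_\DD$ and show each factor is right Quillen. For $\textstyle\prod^*_\DD\colon (\AlgO)^\DD_{\mathrm{proj}} \to (\AlgO)^\Delta_{\mathrm{Reedy}}$, each level $\textstyle\prod^n X = \prod_{a_0 \to \cdots \to a_n} X(a_n)$ is a product of values of $X$, so projective (i.e.\ objectwise) (trivial) fibrations $X \to Y$ produce level-wise (trivial) fibrations cosimplicially; the classical Bousfield-Kan identification of the matching object $M^n\textstyle\prod^*_\DD X$ as a product indexed by chains in $\DD$ with at least one non-identity composite then upgrades this to a Reedy (trivial) fibration. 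For $\Tot\colon (\AlgO)^\Delta_{\mathrm{Reedy}} \to \AlgO$, Proposition \ref{prop:tot_adjunctions} gives the left adjoint as $A \mapsto A \tensordot \Delta[-]$; the cosimplicial simplicial set $\Delta[-]$ is Reedy cofibrant because its latching maps are the boundary inclusions $\partial\Delta[n] \hookrightarrow \Delta[n]$, and combining this with the pushout-product property of the simplicial model structure on $\AlgO$ makes $-\tensordot \Delta[-]$ left Quillen into the Reedy model structure. Composing the two right Quillen functors then gives the desired Quillen adjunction.

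The hard part will be the Reedy analysis of $\textstyle\prod^*_\DD X$: tracking the combinatorics of the cosimplicial replacement to identify its matching object at each level with the appropriate product, so that the matching map decomposes as a product of objectwise structure maps of $X$ together with identity maps, and hence inherits (trivial) fibrancy from objectwise (trivial) fibrancy of $X \to Y$. This identification is classical (Bousfield-Kan, Chapter XI) but must be adapted to the current setting of $\capO$-algebras; once it is in hand, the Quillen adjunction follows by composition.
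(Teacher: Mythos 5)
Your proposal matches the paper's approach exactly: the paper cites Bousfield-Kan [XI.3, XI.8] for the adjunction and, for the Quillen property, appeals to [XI.5.3] for the claim that cosimplicial replacement $\prod^*_\DD$ sends projective (acyclic) fibrations to Reedy (acyclic) fibrations, then (implicitly) uses that $\Tot$ is right Quillen from the Reedy structure---the same two-step factorization you spell out. One small wording correction: the matching map at level $n$ projects onto the factors of $\prod^n_\DD X$ indexed by the \emph{degenerate} $n$-chains in $\DD$ (those containing at least one identity map), not chains with at least one non-identity composite.
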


\begin{proof}
This is proved in Bousfield-Kan \cite[XI.3, XI.8]{Bousfield_Kan} in the context of spaces, and the same argument verifies it in our context. For instance, to verify it is a Quillen adjunction, it suffices to verify that cosimplicial replacement $\prod^*$ sends objectwise (acyclic) fibrations in $(\AlgO)^\DD$ to (acyclic) Reedy fibrations in $(\AlgO)^\Delta$; this is proved in \cite[XI.5.3]{Bousfield_Kan} (see also Jardine \cite{Jardine_galois_descent} for a useful development) and exactly the same argument verifies it in our context.
\end{proof}

\begin{defn}
Let $\capO$ be an operad in $\capR$-modules and $\DD$ a small category. The $\holim_\DD$ functor for $\DD$-shaped diagrams in $\AlgO$ is defined objectwise by
\begin{align*}
  \function{\holim\nolimits_\DD}{(\AlgO)^\DD}{\AlgO},\quad\quad
  &X\mapsto \holim\nolimits_\DD^\BK X^f
\end{align*}
where $X^f$ denotes a functorial fibrant replacement of $X$ in $(\AlgO)^\DD$ with respect to the projective model structure on $\DD$-shaped diagrams induced from $\AlgO$.
\end{defn}

\begin{rem}
It follows that there is a natural weak equivalence
\begin{align*}
  \holim\nolimits_\DD X\wequiv\RR\holim\nolimits^\BK_\DD X
\end{align*}
and if furthermore, $X$ is objectwise fibrant, then the natural map
\begin{align*}
  \holim\nolimits^\BK_\DD X\xrightarrow{\wequiv}\holim\nolimits_\DD X
\end{align*}
is a weak equivalence. Here, $\RR\holim^\BK_\DD$ denotes the total right derived functor of $\holim^\BK_\DD$.
\end{rem}

\subsection{Truncation filtration and the associated $\holim^\BK$ tower in $\AlgO$}

The simplicial category $\Delta$ has a natural filtration by its truncated subcategories $\Delta^{\leq n}$ of the form
\begin{align*}
  \emptyset\subset\Delta^{\leq 0}\subset\Delta^{\leq 1}
  \subset\cdots\subset\Delta^{\leq{n}}\subset
  \Delta^{\leq n+1}\subset\cdots\subset\Delta
\end{align*}
where $\Delta^{\leq n}\subset\Delta$ denotes the full subcategory of objects $[m]$ such that $m\leq n$; we use the convention that $\Delta^{\leq -1}=\emptyset$ is the empty category. This leads to the following $\holim^\BK$ tower of a cosimplicial $\capO$-algebra.

\begin{prop}
Let $\capO$ be an operad in $\capR$-modules. If $X$ is a cosimplicial $\capO$-algebra, then $\holim\nolimits^\BK_\Delta X$ is naturally isomorphic to a limit of the form
\begin{align*}
  \holim\nolimits^\BK_\Delta X &\Iso\lim
  \bigl({*}\larrow
  \holim\nolimits^\BK_{\Delta^{\leq 0}}X\larrow
  \holim\nolimits^\BK_{\Delta^{\leq 1}}X\larrow
  \holim\nolimits^\BK_{\Delta^{\leq 2}}X\larrow
  \cdots\bigr)
\end{align*}
in $\AlgO$; in particular, $\holim^\BK_{\Delta^{\leq -1}} X=*$ and $\holim^\BK_{\Delta^{\leq 0}} X\Iso X^0$. We usually refer to the tower $\{\holim\nolimits^\BK_{\Delta^{\leq s}}\}_{s\geq -1}$ as the $\holim^\BK$ tower of $X$.
\end{prop}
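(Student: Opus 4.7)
The plan is to unpack the definition $\holim^\BK_\DD X = \Tot \prod^*_\DD X$ and exploit two facts: (i) $\Tot$ preserves limits, since it is a right adjoint by Proposition \ref{prop:tot_adjunctions}, and (ii) the cosimplicial replacement $\prod^*_\Delta X$ decomposes as an inverse limit indexed by the truncations $\Delta^{\leq n}$.

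First I would construct the tower. For each $n\geq 0$ the full subcategory inclusion $\Delta^{\leq n}\hookrightarrow\Delta^{\leq n+1}$ induces, for each $m\geq 0$, an inclusion on the sets of length-$m$ chains, and hence a projection
\begin{align*}
  \prod\nolimits^m_{\Delta^{\leq n+1}} X|_{\Delta^{\leq n+1}}
  \rarrow
  \prod\nolimits^m_{\Delta^{\leq n}} X|_{\Delta^{\leq n}}
\end{align*}
in $\AlgO$. Since the coface $d^i$ and codegeneracy $s^j$ maps of $\prod\nolimits^*$ are themselves built coordinate-wise from the product structure, these projections assemble into a map of cosimplicial $\capO$-algebras, and applying $\Tot$ yields the structure map $\holim^\BK_{\Delta^{\leq n+1}}X\rarrow\holim^\BK_{\Delta^{\leq n}}X$ of the tower. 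The same procedure applied to $\Delta^{\leq n}\hookrightarrow\Delta$ produces the canonical map $\holim^\BK_\Delta X\rarrow\holim^\BK_{\Delta^{\leq n}}X$.

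Next, to verify the limit property, observe that for each $m\geq 0$ the set of length-$m$ chains in $\Delta$ is the ascending union, over $n$, of the sets of length-$m$ chains in $\Delta^{\leq n}$. Since a product indexed by an ascending union of sets is naturally isomorphic to the inverse limit of products indexed by the subsets, there is a natural isomorphism
\begin{align*}
  \prod\nolimits^m_\Delta X\Iso\lim\nolimits_n\prod\nolimits^m_{\Delta^{\leq n}}X|_{\Delta^{\leq n}}
\end{align*}
in $\AlgO$. Because limits commute with limits and the cosimplicial structure maps on both sides are built coordinate-wise from product projections, this degreewise isomorphism assembles into an isomorphism in $(\AlgO)^\Delta$. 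Applying $\Tot$, which preserves limits, yields the desired isomorphism $\holim^\BK_\Delta X\Iso\lim\nolimits_n\holim^\BK_{\Delta^{\leq n}}X$.

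The ``in particular'' claims follow by direct inspection. For $n=-1$, the category $\Delta^{\leq -1}=\emptyset$ has no chains of any length, so $\prod^m_{\Delta^{\leq -1}}X=*$ (empty product) at every level, giving $\holim^\BK_{\Delta^{\leq -1}}X=\Tot(*)=*$. For $n=0$, the category $\Delta^{\leq 0}$ has the single object $[0]$ with only its identity morphism, so the unique length-$m$ chain is the constant chain at $[0]$; hence $\prod^*_{\Delta^{\leq 0}}X$ is constant at $X^0$, whose $\Tot$ is $X^0$. There is no serious obstacle here; the only point requiring minor care is verifying that the degreewise product decomposition is compatible with the cosimplicial structure maps, which is immediate from their coordinate-wise definition.
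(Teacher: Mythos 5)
Your proof is correct and follows essentially the same route as the paper's: both hinge on the natural isomorphism $\prod^*_\Delta X\Iso\lim_s\prod^*_{\Delta^{\leq s}}X$ of cosimplicial objects (which you justify via the ascending-union decomposition of the chain sets) followed by the fact that $\Tot$, being a right adjoint, commutes with limits. You spell out the degreewise verification and the two edge cases in more detail than the paper, but the underlying idea and structure of the argument are the same.
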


\begin{proof}
We know that $\Delta\Iso\colim_s\Delta^{\leq s}$, hence it follows that
\begin{align*}
  \xymatrix{\hombold_\Delta(\Delta[-],\prod\nolimits^*_\Delta X)}
    \Iso&\xymatrix{\lim_s\hombold_\Delta(\Delta[-],
  \prod\nolimits^*_{\Delta^{\leq s}} X)}
  \Equal\lim_s\holim\nolimits^\BK_{\Delta^{\leq s}} X
\end{align*}
Here, we have used the natural isomorphism $\prod^*_\Delta X\Iso\lim_s\prod^*_{\Delta^{\leq s}}X$ of cosimplicial $\capO$-algebras.
\end{proof}

\begin{prop}
\label{prop:full_subcategory_induced_map_on_cosimplicial_replacements}
Let $\capO$ be an operad in $\capR$-modules and $\DD$ a small category. If $\DD'\subset\DD$ is a subcategory and $X\in(\AlgO)^\DD$ is objectwise fibrant, then the natural map
\begin{align*}
  \xymatrix{\prod\nolimits^*_{\DD'} X\leftarrow
  \prod\nolimits^*_\DD X}
\end{align*}
in $(\AlgO)^\Delta$ is a Reedy fibration.
\end{prop}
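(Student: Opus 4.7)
The plan is to verify the Reedy fibration condition level by level. Recall that the map $\prod^*_\DD X \to \prod^*_{\DD'} X$ is a Reedy fibration if and only if, for each $n \geq 0$, the relative matching map
\[
\prod\nolimits^n_\DD X \longrightarrow \prod\nolimits^n_{\DD'} X \times_{M^n \prod^*_{\DD'} X} M^n \prod\nolimits^*_\DD X
\]
is a fibration in $\AlgO$, where $M^n$ denotes the $n$-th matching object of a cosimplicial $\capO$-algebra. My goal would be to identify this relative matching map, at each $n$, with a product of objectwise evaluations of $X$, and hence as a product of fibrations in $\AlgO$.

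The key step is an Eilenberg-Zilber-type decomposition for the cosimplicial replacement. Every functor $\sigma\colon [n] \to \DD$ factors uniquely as $\sigma = \tau \circ \phi$ with $\phi\colon [n] \twoheadrightarrow [k]$ a codegeneracy in $\Delta$ and $\tau\colon [k] \to \DD$ nondegenerate (meaning no consecutive morphism of $\tau$ is an identity in $\DD$). Writing $\Nondeg_k(\DD)$ for the set of nondegenerate chains of length $k$ in $\DD$, this yields a natural isomorphism of the form
\[
\prod\nolimits^n_\DD X \Iso \prod_{\tau \in \Nondeg_n(\DD)} X(\tau(n)) \;\times\; M^n\prod\nolimits^*_\DD X
\]
where, by unpacking the codegeneracy structure of the cosimplicial replacement, the second factor is canonically identified with the matching object. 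An entirely parallel decomposition holds for $\DD'$, using the inclusion $\Nondeg_k(\DD') \subset \Nondeg_k(\DD)$.

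The natural restriction map $\prod^n_\DD X \to \prod^n_{\DD'} X$ is then the identity on the matching-object factor (composed with the evident projection $M^n \prod^*_\DD X \to M^n \prod^*_{\DD'} X$) and the obvious projection on the nondegenerate-chain factors. Pasting these observations into the pullback that defines the relative matching map, one identifies that map with the product projection
\[
\prod_{\tau \in \Nondeg_n(\DD)} X(\tau(n)) \longrightarrow \prod_{\tau \in \Nondeg_n(\DD')} X(\tau(n))
\]
which splits as a product of the maps $X(\tau(n)) \to *$ indexed over the complement $\Nondeg_n(\DD) \setminus \Nondeg_n(\DD')$. Since $X$ is objectwise fibrant, each such map is a fibration in $\AlgO$, and a product of fibrations is a fibration. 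The main obstacle is the bookkeeping around the Eilenberg-Zilber decomposition---in particular, verifying that it is natural in the cosimplicial structure and compatible with the subcategory inclusion---but once that identification is in place the Reedy fibration claim follows immediately, exactly as in the Bousfield-Kan argument cited in the proof of Proposition \ref{prop:holim_BK_adjunctions}.
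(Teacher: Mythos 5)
Your proof is correct. The paper leaves this as an exercise, pointing to Bousfield-Kan [XI.5.2]; the Eilenberg-Zilber splitting of $\prod^n_\DD X$ into a nondegenerate-chain factor times the matching object, applied compatibly across the inclusion $\DD'\subset\DD$, is exactly the mechanism behind that argument, so you have written out the proof the paper has in mind.
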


\begin{proof}
This is an exercise left to the reader. It is closely related to Bousfield-Kan \cite[XI.5.2]{Bousfield_Kan} and the same argument works in our context; compare Dundas-Goodwillie-McCarthy \cite[A.7.2.4]{Dundas_Goodwillie_McCarthy}.
\end{proof}

\begin{prop}
\label{prop:full_subcategory_induced_map_on_holim_BK}
Let $\capO$ be an operad in $\capR$-modules and $\DD$ a small category. If $\DD'\subset\DD$ is a subcategory and $X\in(\AlgO)^\DD$ is objectwise fibrant, then the natural map
\begin{align*}
  \holim\nolimits^\BK_{\DD'} X\leftarrow
  \holim\nolimits^\BK_{\DD} X
\end{align*}
in $\AlgO$ is a fibration.
\end{prop}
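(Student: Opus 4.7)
The plan is to reduce the statement to the preceding Proposition \ref{prop:full_subcategory_induced_map_on_cosimplicial_replacements} together with the fact that the totalization functor $\Tot$ is right Quillen with respect to the Reedy model structure on cosimplicial $\capO$-algebras. By definition, $\holim^\BK_\DD X = \Tot(\prod^*_\DD X)$ and similarly for $\DD'$, and the subcategory inclusion $\DD' \subset \DD$ induces a natural map
\begin{align*}
  \prod\nolimits^*_{\DD'} X \leftarrow \prod\nolimits^*_{\DD} X
\end{align*}
of cosimplicial $\capO$-algebras to which $\Tot$ is applied to obtain the map in the statement.

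First I would invoke Proposition \ref{prop:full_subcategory_induced_map_on_cosimplicial_replacements} twice. Applied to the inclusion $\DD' \subset \DD$ it gives that the displayed map is a Reedy fibration. Applied to the empty subcategory $\emptyset \subset \DD$ (and to $\emptyset \subset \DD'$) it gives, since $\prod^*_\emptyset X = *$, that both $\prod^*_\DD X$ and $\prod^*_{\DD'} X$ are Reedy fibrant. Hence we have a Reedy fibration between Reedy fibrant objects in $(\AlgO)^\Delta$.

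Next I would verify that $\Tot$ sends such a map to a fibration in $\AlgO$. This comes from the adjunction
\begin{align*}
\xymatrix{
  \Alg_{\capO}\ar@<0.5ex>[r]^-{-\tensordot\Delta[-]} &
  (\Alg_\capO)^\Delta\ar@<0.5ex>[l]^-{\Tot}
}
\end{align*}
of Proposition \ref{prop:tot_adjunctions}: the left adjoint $-\tensordot\Delta[-]$ sends a cofibration in $\AlgO$ to a Reedy cofibration in $(\AlgO)^\Delta$, and sends an acyclic cofibration to an acyclic Reedy cofibration, because $\Delta[-]$ is Reedy cofibrant in $\sSet^\Delta$ and $\AlgO$ is a simplicial model category (the required pushout-product statement is the standard SM7-style verification for the simplicial structure established in Section \ref{sec:simplicial_structures}). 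Thus $(-\tensordot\Delta[-], \Tot)$ is a Quillen adjunction with $(\AlgO)^\Delta$ carrying the Reedy model structure, and as a right Quillen functor $\Tot$ preserves fibrations between fibrant objects. Applying this to the Reedy fibration produced in the previous step finishes the proof.

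The main obstacle is the Quillen-pair verification for $\Tot$ with the Reedy model structure; once this is in hand, the rest is an immediate consequence of Proposition \ref{prop:full_subcategory_induced_map_on_cosimplicial_replacements}. In fact, this Quillen-pair statement is standard for any simplicial model category and the argument for $\AlgO$ is the same as in Bousfield-Kan \cite[X.4, XI.3]{Bousfield_Kan}, so no new input beyond the simplicial model structure of Section \ref{sec:simplicial_structures} is required.
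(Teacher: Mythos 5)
Your argument is correct and follows essentially the same route as the paper's one-line proof (``This follows from Proposition \ref{prop:full_subcategory_induced_map_on_cosimplicial_replacements}''); you have simply spelled out the implicit ingredient, used elsewhere in the paper (notably in the proof of Proposition \ref{prop:holim_BK_adjunctions}), that $\Tot$ is right Quillen for the Reedy model structure on $(\AlgO)^\Delta$, via the pushout-product axiom applied to $\partial\Delta[n]\to\Delta[n]$. One small superfluity: a right Quillen functor preserves all fibrations, not merely those between fibrant objects, so the second invocation of Proposition \ref{prop:full_subcategory_induced_map_on_cosimplicial_replacements} with $\emptyset\subset\DD$ (to establish Reedy fibrancy of $\prod^*_\DD X$ and $\prod^*_{\DD'}X$) is harmless but unnecessary once the Quillen-pair property of $(-\tensordot\Delta[-],\Tot)$ is established.
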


\begin{proof}
This follows from Proposition \ref{prop:full_subcategory_induced_map_on_cosimplicial_replacements}.
\end{proof}

\begin{prop}
Let $\capO$ be an operad in $\capR$-modules.
\begin{itemize}
\item[(a)] If $X\in(\AlgO)^\Delta$ is objectwise fibrant, then the tower $\{\holim\nolimits^\BK_{\Delta^{\leq s}} X\}_{s\geq -1}$ is a tower of fibrations, and the natural map $\holim^\BK_\Delta X\rarrow\holim\nolimits^\BK_{\Delta^{\leq s}} X$ is a fibration for each $s\geq -1$; in particular, $\holim^\BK_\Delta X$ is fibrant.
\item[(b)] If $X\rarrow X'$ in $(\AlgO)^\Delta$ is a weak equivalence between objectwise fibrant objects, then $\holim^\BK_\Delta X\rarrow\holim^\BK_\Delta X'$ is a weak equivalence.
\end{itemize}
\end{prop}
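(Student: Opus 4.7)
The plan is to derive both parts directly from the Quillen-adjunction and fibration results already established in this section; no new homotopical input is needed.

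For part (a), I would apply Proposition \ref{prop:full_subcategory_induced_map_on_holim_BK} twice. For each $s\geq -1$, the inclusion $\Delta^{\leq s}\subset\Delta^{\leq s+1}$, combined with the fact that the restriction of an objectwise fibrant $X\in(\AlgO)^\Delta$ to any subcategory remains objectwise fibrant, gives via Proposition \ref{prop:full_subcategory_induced_map_on_holim_BK} that the map $\holim^\BK_{\Delta^{\leq s+1}} X\rarrow \holim^\BK_{\Delta^{\leq s}} X$ is a fibration in $\AlgO$; this exhibits $\{\holim^\BK_{\Delta^{\leq s}} X\}_{s\geq -1}$ as a tower of fibrations. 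Similarly, for each $s\geq -1$ the inclusion $\Delta^{\leq s}\subset\Delta$ yields that $\holim^\BK_\Delta X\rarrow \holim^\BK_{\Delta^{\leq s}} X$ is a fibration. Taking $s=-1$ and unpacking the convention $\Delta^{\leq -1}=\emptyset$, so that $\holim^\BK_{\Delta^{\leq -1}} X=*$ is the terminal object of $\AlgO$, then gives in particular that $\holim^\BK_\Delta X$ is fibrant.

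For part (b), I would invoke Proposition \ref{prop:holim_BK_adjunctions}, which says that $\holim^\BK_\Delta$ is the right adjoint in a Quillen adjunction between $\AlgO$ and $(\AlgO)^\Delta$ equipped with the projective model structure. In that model structure the weak equivalences and fibrations are defined objectwise; in particular, the fibrant objects are exactly the objectwise fibrant cosimplicial $\capO$-algebras, so a weak equivalence $X\rarrow X'$ between objectwise fibrant objects is precisely a weak equivalence between fibrant objects of $(\AlgO)^\Delta$. By Ken Brown's lemma, any right Quillen functor preserves weak equivalences between fibrant objects, and therefore $\holim^\BK_\Delta X\rarrow\holim^\BK_\Delta X'$ is a weak equivalence in $\AlgO$.

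Both statements are essentially repackagings of earlier results, so there is no serious obstacle here; the only points requiring care are to recognize that the projective-fibrancy relevant for Ken Brown's lemma coincides with objectwise fibrancy (immediate from the definition of the projective model structure), and to identify $\holim^\BK_{\Delta^{\leq -1}}X$ with the terminal object using $\Delta^{\leq -1}=\emptyset$.
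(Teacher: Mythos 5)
Your proposal is correct and follows exactly the same two routes as the paper: part (a) via Proposition \ref{prop:full_subcategory_induced_map_on_holim_BK} applied to the inclusions $\Delta^{\leq s}\subset\Delta^{\leq s+1}$ and $\Delta^{\leq s}\subset\Delta$, and part (b) via the Quillen adjunction in Proposition \ref{prop:holim_BK_adjunctions} together with Ken Brown's lemma (which is precisely the content of the Dwyer--Spalinski reference the paper cites). Your write-up simply supplies the details the paper leaves to the reader.
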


\begin{proof}
Part (a) follows from Proposition  \ref{prop:full_subcategory_induced_map_on_holim_BK} and part (b) follows from the Quillen adjunction \eqref{eqref:holim_BK_adjunctions}; see, for instance, Dwyer-Spalinski \cite[9.8, 9.9]{Dwyer_Spalinski}; it can also be argued exactly as in \cite[XI.5.4]{Bousfield_Kan}.
\end{proof}

The following proposition is proved in Dror-Dwyer \cite[3.17]{Dror_Dwyer_long_homology}.

\begin{prop}
\label{prop:left_cofinal_delta_restricted_to_delta}
The inclusion of categories $\Delta_\res\subset\Delta$ is left cofinal (i.e.,  homotopy initial).
\end{prop}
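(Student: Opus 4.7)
The plan is to prove this by showing directly that for each object $[n]\in\Delta$, the comma category $F\downarrow[n]$ associated to the inclusion $F\colon\Delta_\res\hookrightarrow\Delta$ has contractible nerve; this is the defining condition for $F$ to be left cofinal (homotopy initial). Objects of $F\downarrow[n]$ are pairs $([m],\xi)$ with $\xi\colon[m]\rarrow[n]$ an arbitrary order-preserving map, and morphisms $([m],\xi)\rarrow([m'],\xi')$ are strictly monotone maps $\eta\colon[m]\rarrow[m']$ satisfying $\xi'\eta=\xi$.

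The strategy will be to construct an endofunctor $G$ on this comma category together with natural transformations from both the identity and a constant functor to $G$; each such natural transformation induces a simplicial homotopy on classifying spaces, so composing will contract the identity to a constant map. Explicitly, I would define
\begin{align*}
G([m],\xi):=([m+1],\tilde\xi),\quad\tilde\xi|_{[m]}=\xi,\quad\tilde\xi(m+1)=n,
\end{align*}
and on a morphism $\eta\colon[m]\rarrow[m']$ set $G(\eta)|_{[m]}=\eta$, $G(\eta)(m+1)=m'+1$. Then $G(\eta)$ is strictly monotone since $\eta(m)\leq m'<m'+1$, and $\tilde{\xi'}G(\eta)=\tilde\xi$ by construction, so $G$ is a well-defined endofunctor of $F\downarrow[n]$.

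The natural transformation $\id\Rightarrow G$ has components the strictly monotone inclusions $\iota_m\colon[m]\hookrightarrow[m+1]$ onto the first $m+1$ elements, with compatibility $\tilde\xi\circ\iota_m=\xi$ immediate; naturality amounts to $G(\eta)\circ\iota_m=\iota_{m'}\circ\eta$, which is clear on $[m]$. For the other transformation, let $\xi_n\colon[0]\rarrow[n]$ denote the map $0\mapsto n$, and let $\kappa_m\colon[0]\hookrightarrow[m+1]$ be the strictly monotone map $0\mapsto m+1$. These assemble into a natural transformation $\mathrm{const}_{([0],\xi_n)}\Rightarrow G$, since $\tilde\xi\kappa_m(0)=\tilde\xi(m+1)=n=\xi_n(0)$ and naturality $G(\eta)\kappa_m=\kappa_{m'}$ reduces to $G(\eta)(m+1)=m'+1$. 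Passing to nerves, both $N(\id)$ and $N(\mathrm{const})$ are simplicially homotopic to $N(G)$, hence to each other, and so the nerve of $F\downarrow[n]$ is contractible.

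The main obstacle is really just bookkeeping: first, pinning down the correct convention (left cofinality for limits uses $F\downarrow[n]$ rather than $[n]\downarrow F$), and second, verifying that every extended map remains strictly monotone so that it is a legitimate morphism of $\Delta_\res$. One could instead cite \cite{Dror_Dwyer_long_homology} directly, or try to leverage the fact that $[0]$ is initial in $\Delta_\res$ to bootstrap the general case, but the direct contraction above is self-contained and the most transparent route.
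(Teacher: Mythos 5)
Your proof is correct. The paper does not include its own proof of this proposition; it simply cites Dror-Dwyer \cite[3.17]{Dror_Dwyer_long_homology}, so there is nothing in the paper to compare against directly. Your argument—contracting the comma category $F\downarrow[n]$ for the inclusion $F\colon\Delta_\res\hookrightarrow\Delta$ by the endofunctor $G$ that appends a new top element mapping to $n$, together with the zigzag of natural transformations $\id\Rightarrow G\Leftarrow\mathrm{const}_{([0],\xi_n)}$—is a correct and self-contained instance of the standard contraction technique. The convention you pin down ($F\downarrow[n]$ rather than $[n]\downarrow F$) is indeed the right one for left cofinality with respect to homotopy limits, and your verifications go through: $G(\eta)$ is strictly monotone because $\eta(m)\le m'<m'+1$, the compatibility $\widetilde{\xi'}\circ G(\eta)=\tilde\xi$ holds on both $[m]$ and the top element, the two naturality squares commute, and the comma category is manifestly nonempty (e.g.\ via $([0],\xi_n)$), so the resulting pair of simplicial homotopies indeed contracts its nerve.
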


\begin{prop}
\label{prop:comparing_holim_with_Tot_and_Tot_restricted}
If $X\in(\AlgO)^\Delta$ is objectwise fibrant and $Y\in(\AlgO)^\Delta$ is Reedy fibrant, then the natural maps
\begin{align*}
  \holim\nolimits_\Delta^\BK X\xrightarrow{\wequiv}
  \holim\nolimits_{\Delta_\res}^\BK X
  \xleftarrow{\wequiv}\Tot^\res X,\quad\quad
  \holim\nolimits_{\Delta}^\BK Y\xleftarrow{\wequiv}\Tot Y,
\end{align*}
in $\AlgO$ are weak equivalences.
\end{prop}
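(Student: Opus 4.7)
The plan is to handle the three weak equivalences separately, each via a different principle, and to reduce throughout to classical cosimplicial machinery that transfers from the context of pointed spaces (Bousfield-Kan \cite{Bousfield_Kan}) to our category of $\capO$-algebras, using the simplicial model structure on $\AlgO$ recalled in Section \ref{sec:simplicial_structures}.

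For the left-hand map $\holim^\BK_\Delta X \rarrow \holim^\BK_{\Delta_\res} X$, the approach is to invoke left cofinality of the inclusion $\Delta_\res \subset \Delta$ (Proposition \ref{prop:left_cofinal_delta_restricted_to_delta}). Concretely, I would compare the associated cosimplicial replacements: by Proposition \ref{prop:full_subcategory_induced_map_on_cosimplicial_replacements} the restriction map $\prod^*_\Delta X \rarrow \prod^*_{\Delta_\res} X$ is a Reedy fibration between Reedy fibrant cosimplicial $\capO$-algebras, and left cofinality upgrades this to a Reedy weak equivalence (objectwise, the restriction map factors through the nerve of a homotopy-initial subcategory at each simplicial level); applying $\Tot$ then yields the claimed weak equivalence, exactly as in \cite[XI.9.2]{Bousfield_Kan}.

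For the middle map $\Tot^\res X \rarrow \holim^\BK_{\Delta_\res} X$ and the right-hand map $\Tot Y \rarrow \holim^\BK_\Delta Y$, the key input is the classical last-vertex comparison of cosimplicial simplicial sets $B(\Delta/-) \rarrow \Delta[-]$ (and its restriction to $\Delta_\res$), which is a Reedy weak equivalence between Reedy cofibrant objects. Applying $\hombold(-,X^n)$ (resp. $\hombold(-,Y^n)$) termwise—which turns weak equivalences between cofibrant simplicial sets into weak equivalences of fibrant $\capO$-algebras, provided the target is fibrant—yields a termwise weak equivalence of the $\Delta_\res$-shaped (resp. $\Delta$-shaped) diagrams whose ends compute these totalizations and Bousfield-Kan homotopy limits (cf. Proposition \ref{prop:fat_point_description_of_holim_bk}). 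The end over $\Delta_\res$ preserves this because $\Delta_\res$ is a direct Reedy category (matching objects are initial), so objectwise fibrancy of $X$ suffices; the end over $\Delta$ preserves it because $Y$ is Reedy fibrant by assumption and $\Delta[-]$, $B(\Delta/-)$ are Reedy cofibrant cosimplicial simplicial sets.

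The main obstacle is ensuring the Reedy-theoretic bookkeeping needed for these ends to be homotopy invariant: the end $\hombold(K[-], W)^\DD$ only preserves levelwise weak equivalences under suitable Reedy (co)fibrancy of source and target, and one must verify that the various restriction and comparison maps respect the appropriate Reedy structure. All three pieces are established in detail for pointed spaces in Bousfield-Kan \cite[Ch.~X, XI]{Bousfield_Kan}, and the same arguments apply verbatim in our setting because $\AlgO$ is a simplicial model category whose mapping object, tensor, and mapping space constructions (Section \ref{sec:simplicial_structures}) satisfy the same formal properties used there.
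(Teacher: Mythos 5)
Your overall route matches the paper's: the paper's proof of this proposition is a one-line citation to left cofinality (Proposition \ref{prop:left_cofinal_delta_restricted_to_delta}) and the Bousfield--Kan machinery, and you are filling in the same ingredients. The treatment of the two right-hand comparisons $\Tot^\res X \to \holim^\BK_{\Delta_\res}X$ and $\Tot Y \to \holim^\BK_\Delta Y$ via the Reedy weak equivalence $B(\Delta/-)\to\Delta[-]$ between Reedy cofibrant cosimplicial simplicial sets, and the observation that $\Delta_\res$ being direct lets objectwise fibrancy of $X$ stand in for Reedy fibrancy, is exactly the right argument and is the same one the paper spells out for the truncated version in the proof of Proposition \ref{prop:punctured_cube_calculation_of_holim_truncated_delta}.

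However, the intermediate step you offer for the left-hand map is wrong. You assert that left cofinality upgrades the restriction map $\prod^*_\Delta X\to\prod^*_{\Delta_\res}X$ to a Reedy weak equivalence. It does not. At cosimplicial level $n$ this map is the projection from the product indexed by all $n$-chains $a_0\to\cdots\to a_n$ in $\Delta$ onto the subproduct indexed by chains lying in $\Delta_\res$; the complementary factors are copies of the various $X(a_n)$, which are not contractible, so the projection is not an objectwise (hence not a Reedy) weak equivalence. The cofinality theorem \cite[XI.9.2]{Bousfield_Kan} does not pass through such a levelwise comparison of cosimplicial replacements; it is proved by a different mechanism, essentially using the fat description $\holim^\BK_\DD X\cong\hombold_\DD(B(\DD/-),X)$ of Proposition \ref{prop:fat_point_description_of_holim_bk} together with the contractibility of the nerves $B(\Delta_\res/[n])$ guaranteed by Proposition \ref{prop:left_cofinal_delta_restricted_to_delta}, parallel to how you handle the other two arrows. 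Since you do end by invoking \cite[XI.9.2]{Bousfield_Kan}, your conclusion for the left map is still correct, but the Reedy-weak-equivalence claim you interpolate is false and should be removed; cite the cofinality theorem directly (or run the $\hombold_\DD(B(\DD/-),X)$ comparison explicitly) rather than routing through a nonexistent levelwise equivalence of cosimplicial replacements.
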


\begin{proof}
This follows from left cofinality (Proposition \ref{prop:left_cofinal_delta_restricted_to_delta}) and Bousfield-Kan \cite{Bousfield_Kan} in the context of spaces, and exactly the same argument verifies it in our context.
\end{proof}

\subsection{Skeletal filtration and the associated $\Tot$ towers in $\AlgO$}

\begin{defn}
\label{defn:totalization_functors}
Let $\capO$ be an operad in $\capR$-modules and $s\geq -1$. The functors $\Tot_s$ and $\Tot^\res_s$ are defined objectwise by the ends
\begin{align*}
  \function{\Tot_s}{(\AlgO)^\Delta}{\AlgO},\quad\quad
  &X\mapsto\hombold(\Sk_s\Delta[-],X)^\Delta\\
  \function{\Tot^\res_s}{(\AlgO)^{\Delta_\res}}{\AlgO},\quad\quad
  &Y\mapsto\hombold(\Sk_s\Delta[-],Y)^{\Delta_\res}
\end{align*}
Here we use the convention that the $(-1)$-skeleton of a simplicial set is the empty simplicial set. In particular, $\Sk_{-1}\Delta[n]=\emptyset$ for each $n\geq 0$; it follows immediately that $\Tot_{-1}(X)=*$ and $\Tot^\res_{-1}(Y)=*$.
\end{defn}

\begin{prop}
\label{prop:adjunctions_stages_of_tot_towers}
Let $\capO$ be an operad in $\capR$-modules and $s\geq 0$. The functors $\Tot_s$ and $\Tot^\res_s$ fit into adjunctions
\begin{align*}
\xymatrix@1{
  \Alg_{\capO}\ar@<0.5ex>[rr]^-{-\tensordot\Sk_s\Delta[-]} &&
  (\Alg_\capO)^\Delta\ar@<0.5ex>[ll]^-{\Tot_s}
},\quad\quad
\xymatrix@1{
  \Alg_{\capO}\ar@<0.5ex>[rr]^-{-\tensordot\Sk_s\Delta[-]} &&
  (\Alg_\capO)^{\Delta_\res}\ar@<0.5ex>[ll]^-{\Tot^\res_s}
}
\end{align*}
with left adjoints on top. It follows that there are natural isomorphisms $\Tot_0(X)\Iso X^0$ and $\Tot^\res_0(Y)\Iso Y^0$.
\end{prop}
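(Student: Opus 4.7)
The proof has two parts: establishing the two adjunctions, and then identifying $\Tot_0$ and $\Tot^\res_0$ with evaluation at $[0]$.

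For the adjunction, I would apply the universal property of ends together with the simplicial adjunction from Remark \ref{rem:useful_adjunction_isomorphisms_simplicial_structure}. For $A\in\AlgO$ and $X\in(\AlgO)^\Delta$, chain the natural bijections
\begin{align*}
\hom_\AlgO(A,\Tot_s(X))
&\Iso\hom_\AlgO\bigl(A,\catend_{[n]\in\Delta}\hombold(\Sk_s\Delta[n],X^n)\bigr)\\
&\Iso\catend_{[n]\in\Delta}\hom_\AlgO(A,\hombold(\Sk_s\Delta[n],X^n))\\
&\Iso\catend_{[n]\in\Delta}\hom_\AlgO(A\tensordot\Sk_s\Delta[n],X^n)\\
&\Iso\hom_{(\AlgO)^\Delta}(A\tensordot\Sk_s\Delta[-],X)
\end{align*}
where the first line is the definition of $\Tot_s$, the second uses that $\hom_\AlgO(A,-)$ preserves all limits (hence ends), the third applies the adjunction isomorphism \eqref{eq:tensordot_adjunction_isomorphisms} of Remark \ref{rem:useful_adjunction_isomorphisms_simplicial_structure}, which is natural in both $K$ and $X$ and therefore commutes with the end over $[n]\in\Delta$, and the fourth is the standard description of natural transformations between diagrams as an end. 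Naturality in $A$ and $X$ is clear from construction. The same argument with $\Delta$ replaced by $\Delta_\res$ yields the adjunction for $\Tot^\res_s$.

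For the identification $\Tot_0(X)\Iso X^0$, I would compare left adjoints. Since $\Sk_0\Delta[n]$ is the discrete simplicial set on the $n+1$ vertices of $\Delta[n]$, there is a canonical isomorphism $\Sk_0\Delta[n]\Iso\coprod_{i=0}^n\Delta[0]$ of simplicial sets. Because $A\tensordot-$ is a left adjoint (to $\hombold(-,X)$) in the second variable, it preserves coproducts; and a direct check from Definition \ref{defn:simplicial_structure_tensor_product}, using that $\Delta[0]_+\Iso S^0$ is a unit for the smash product on pointed objects, shows $A\tensordot\Delta[0]\Iso A$. Hence
\begin{align*}
A\tensordot\Sk_0\Delta[n]\Iso\coprod_{i=0}^n A
\end{align*}
in $\AlgO$, with cosimplicial structure coming from the coface and codegeneracy maps of $\Sk_0\Delta[-]$. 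This identifies the left adjoint $A\mapsto A\tensordot\Sk_0\Delta[-]$ with the well-known left adjoint $F_0$ of the evaluation functor $\ev_0\colon(\AlgO)^\Delta\to\AlgO$, whose value at $[n]$ is $\coprod_{[0]\to[n]\in\Delta}A$. By uniqueness of right adjoints (Yoneda), $\Tot_0\Iso\ev_0$, i.e.\ $\Tot_0(X)\Iso X^0$. The analogous argument, with $\Delta$ replaced by $\Delta_\res$ throughout, gives $\Tot^\res_0(Y)\Iso Y^0$.

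The main work here is bookkeeping rather than a conceptual obstacle: the only points requiring care are verifying that $A\tensordot\Delta[0]\Iso A$ from the coequalizer in Definition \ref{defn:simplicial_structure_tensor_product} and that the natural simplicial adjunction commutes with the defining end (which is immediate from naturality in both variables). Everything else is formal from the universal property of ends and the uniqueness of adjoints.
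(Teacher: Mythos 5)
Your proof is correct and follows essentially the same approach as the paper's: both parts rest on the universal property of ends, and the identification $\Tot_0(X)\Iso X^0$ is obtained in both cases by recognizing $A\tensordot\Sk_0\Delta[n]\Iso\coprod_{[0]\to[n]}A$ and appealing to uniqueness of right adjoints. (One small slip worth flagging: the right adjoint of $A\tensordot-\colon\sSet\to\AlgO$ is $\Hombold_\AlgO(A,-)$, not $\hombold(-,X)$, though the conclusion that $A\tensordot-$ preserves coproducts stands.)
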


\begin{proof}
The first part follows exactly as in the proof of Proposition \ref{prop:tot_adjunctions}. The second part follows from uniqueness of right adjoints, up to isomorphism. For instance, to verify the natural isomorphism $\Tot_0(X)\Iso X^0$, it suffices to verify that giving a map $A\rarrow X^0$ in $\AlgO$ is the same as giving a map $A\tensordot\Sk_0\Delta[-]\rarrow X$ in $(\AlgO)^\Delta$; this follows from the universal property of ends together with the natural isomorphisms\begin{align*}
  \xymatrix{A\tensordot\Sk_0\Delta[n]\Iso\coprod\limits_{\substack{[0]\rightarrow [n]\\ \text{in}\,\Delta}}A}
\end{align*}
in $\AlgO$. The case of $\Tot^\res_0(Y)\Iso Y$ is similar by replacing $\Delta$ with $\Delta_\res$.
\end{proof}

The following can be thought of as the analog of the $\holim^\BK$ tower of a cosimplicial $\capO$-algebra; under appropriate fibrancy conditions these towers are naturally weakly equivalent.

\begin{prop}
Let $\capO$ be an operad in $\capR$-modules. Let $X$ be a cosimplicial (resp. restricted cosimplicial) $\capO$-algebra. The totalization $\Tot(X)$ (resp. $\Tot^\res(X)$) is naturally isomorphic to a limit of the form
\begin{align*}
  \Tot(X)&\Iso\lim
  \bigl({*}\larrow\Tot_0(X)\larrow\Tot_1(X)\larrow\Tot_2(X)\larrow\cdots\bigr)\\
  \text{resp.}\quad
  \Tot^\res(X)&\Iso\lim
  \Bigl({*}\larrow\Tot^\res_0(X)\larrow\Tot^\res_1(X)\larrow\Tot^\res_2(X)\larrow\cdots
  \Bigr)
\end{align*}
in $\AlgO$. We often refer to the tower $\{\Tot_s(X)\}_{s\geq -1}$ (resp. $\{\Tot^\res_s(X)\}_{s\geq -1}$) as the $\Tot$ tower (resp. $\Tot^\res$ tower) of $X$ .
\end{prop}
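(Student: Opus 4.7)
The plan is to argue exactly in parallel with the proof given just above for the $\holim^\BK$ tower, where the key input was the filtration $\Delta\Iso\colim_s\Delta^{\leq s}$. Here the corresponding key input is the skeletal filtration of the standard simplices: $\Delta[n]\Iso\colim_s\Sk_s\Delta[n]$ in $\sSet$ (with the convention $\Sk_{-1}\Delta[n]=\emptyset$).

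First, I would verify that the mapping object functor $\hombold(-,X)\colon\sSet^\op\rarrow\AlgO$ sends colimits in the simplicial-set variable to limits. This follows directly from the defining formula $\hombold_\AlgO(K,X) = \Map(K_+,X)$ in Definition \ref{defn:simplicial_structure_mapping_object}: the functor $K\mapsto K_+$ is a left adjoint (so preserves colimits), and $\Map(-,X)$ is a cotensor that sends colimits to limits. Applied to the skeletal filtration, this gives natural isomorphisms
\begin{align*}
  \hombold(\Delta[n],X^n)\Iso\lim_s\hombold(\Sk_s\Delta[n],X^n)
\end{align*}
in $\AlgO$, for each $n\geq 0$.

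Next, since ends are themselves limits, and limits commute with limits, one may interchange the $s$-limit with the defining end of $\Tot(X)$:
\begin{align*}
  \Tot(X)
  \Equal\hombold(\Delta[-],X)^\Delta
  \Iso\bigl(\lim\nolimits_s\hombold(\Sk_s\Delta[-],X)\bigr)^\Delta
  \Iso\lim\nolimits_s\Tot_s(X).
\end{align*}
For the base of the tower, $\Sk_{-1}\Delta[n]=\emptyset$ gives $\Tot_{-1}(X)\Iso{*}$, as already noted in Definition \ref{defn:totalization_functors}, so the tower indeed starts at ${*}$. The restricted case is identical after replacing every instance of $\Delta$ by $\Delta_\res$ throughout the argument; the skeletal filtration of $\Delta[n]$ is unchanged and the end over $\Delta_\res$ still commutes with the $s$-limit for the same formal reason.

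There is no serious obstacle here: the proof is entirely a formal manipulation of (co)limits and the universal property of ends, paralleling the earlier argument for the $\holim^\BK$ tower. The only thing one needs to be slightly careful about is confirming that the mapping object $\hombold$ really does convert colimits of simplicial sets into limits in $\AlgO$, which reduces immediately to the analogous statement for $\Map$ in the underlying simplicial model structure on $\ModR$.
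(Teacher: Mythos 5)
Your proposal is correct and takes essentially the same approach as the paper: both use the skeletal filtration $\Delta[-]\Iso\colim_s\Sk_s\Delta[-]$ and the fact that the (contravariant) mapping-object-plus-end construction converts colimits of simplicial sets into limits in $\AlgO$. You simply spell out this last step in two stages (first $\hombold(-,X)$ objectwise, then commuting the end with the sequential limit), whereas the paper states it once for the composite end functor $\hombold(-,X)^\Delta$.
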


\begin{proof}
We know that $\Delta[-]\Iso\colim_s\Sk_s\Delta[-]$ in $\sSet^\Delta$. Since the contravariant functor $\function{\hombold(-,X)^\Delta}{\sSet^\Delta}{\AlgO}$ sends colimiting cones to limiting cones (see Remark \ref{rem:useful_adjunction_isomorphisms_simplicial_structure}), it follows that there are natural isomorphisms
\begin{align*}
  \hombold(\Delta[-],X)^\Delta\Iso
  \hombold(\colim_s\Sk_s\Delta[-],X)^\Delta\Iso
  \lim_s\hombold(\Sk_s\Delta[-],X)^\Delta
\end{align*}
which finishes the proof for the case of $\Tot(X)$. The case of $\Tot^\res(X)$ is similar by replacing $\Delta$ with $\Delta_\res$.
\end{proof}

The following two propositions, which construct $\Tot_s$ from $\Tot_{s-1}$ (resp. $\Tot^\res_s$ from $\Tot^\res_{s-1}$), play a key role in the homotopical analysis of the totalization functors in this paper; it is useful to contrast the two pullback constructions as a helpful way to understand the difference between $\Tot$ and $\Tot^\res$.

\begin{prop}
\label{prop:key_pullback_diagram_for_Tot_tower}
Let $\capO$ be an operad in $\capR$-modules. Let $X$ be a cosimplicial $\capO$-algebra and $s\geq 0$. There is a pullback diagram of the form
\begin{align*}
\xymatrix{
  \Tot_s(X)\ar[d]\ar[r] & \hombold(\Delta[s],X^s)\ar[d]\\
  \Tot_{s-1}(X)\ar[r] &
  \hombold(\partial\Delta[s],X^s)\times_{\hombold(\partial\Delta[s],M^{s-1}X)}
  \hombold(\Delta[s],M^{s-1}X)
}
\end{align*}
in $\AlgO$. Here, we are using the matching object notation $M^{s-1}X$ for a cosimplicial object (see \cite[VIII.1]{Goerss_Jardine}); in particular, $M^{-1}X=*$ and $M^{0}X\Iso X^0$.
\end{prop}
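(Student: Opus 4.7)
The plan is to use the adjunction from Proposition \ref{prop:adjunctions_stages_of_tot_towers} to convert the desired pullback in $\AlgO$ into a corresponding pushout in $\sSet^\Delta$. Specifically, I would introduce the ``free cosimplicial simplicial set on a generator in degree $s$'', namely $F^s\in\sSet^\Delta$ with $(F^s)^n := \Delta([s],[n])\cdot *$, the discrete simplicial set on morphisms $[s]\to [n]$ in $\Delta$, equipped with its evident cosimplicial structure by post-composition. By the (enriched) Yoneda lemma one has natural isomorphisms
\begin{align*}
  \hombold(F^s,X)^\Delta \Iso X^s,\quad\quad
  \hombold(L^s F^s,X)^\Delta \Iso M^{s-1}X,
\end{align*}
where $L^s F^s$ denotes the $s$-th latching object of $F^s$ in $\sSet^\Delta$; the second isomorphism rests on the fact that $L^s F^s$ is the colimit in $\sSet^\Delta$ over non-identity codegeneracies $[s]\twoheadrightarrow[k]$ with $k<s$ of the representables $F^k$.

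The first step is to establish the following cellular pushout in $\sSet^\Delta$:
\begin{align*}
\xymatrix{
  (\Delta[s]\otimes L^sF^s)\cup_{(\partial\Delta[s]\otimes L^sF^s)}(\partial\Delta[s]\otimes F^s)
  \ar[r]\ar[d] &
  \Sk_{s-1}\Delta[-]\ar[d]\\
  \Delta[s]\otimes F^s\ar[r] &
  \Sk_s\Delta[-]
}
\end{align*}
This expresses the passage from $\Sk_{s-1}\Delta[-]$ to $\Sk_s\Delta[-]$ as the attachment of a non-degenerate $s$-cell $\Delta[s]$ at cosimplicial level $s$, propagated along cofaces and codegeneracies via $F^s$, with boundary data prescribed by $L^s F^s$ (on which the restriction to $\Sk_{s-1}\Delta[-]$ is already forced to factor through $\Delta[s]$ since it arises via codegeneracies from lower levels).

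The second step is to apply the contravariant functor $\hombold(-,X)^\Delta\colon\sSet^\Delta\rarrow\AlgO$, which by construction as an end (Definition \ref{defn:totalization_and_restricted_totalization}) and by Remark \ref{rem:useful_adjunction_isomorphisms_simplicial_structure} converts colimits in the first variable to limits; hence it sends the pushout above to a pullback. Using the natural isomorphism $\hombold(K\otimes Z,X)^\Delta\Iso\hombold(K,\hombold(Z,X)^\Delta)$ for $K\in\sSet$ and $Z\in\sSet^\Delta$ together with the two identifications displayed above, each corner of the resulting pullback square is identified with the corresponding corner of the square in the statement: $\hombold(\Sk_s\Delta[-],X)^\Delta=\Tot_s(X)$, $\hombold(\Sk_{s-1}\Delta[-],X)^\Delta=\Tot_{s-1}(X)$, $\hombold(\Delta[s]\otimes F^s,X)^\Delta\Iso\hombold(\Delta[s],X^s)$, and so on through the four identifications needed for the bottom-right vertex.

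The main obstacle is verifying the first step, i.e., the cellular pushout presentation of $\Sk_s\Delta[-]$ in $\sSet^\Delta$. This is essentially a Reedy-cellularity calculation: the non-degenerate $s$-simplices of $\Delta[n]$ for varying $n$ are parametrized precisely by $(F^s)^n$, and $L^sF^s$ captures the ``already-present'' degeneracies. Once the pushout is in hand, Step 2 is a direct bookkeeping argument, and the universal property of the pullback is immediate by comparing hom-sets out of an arbitrary $\capO$-algebra $A$ via Proposition \ref{prop:adjunctions_stages_of_tot_towers}.
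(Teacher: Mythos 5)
Your proof is correct and is, in substance, the argument that underlies the Goerss-Jardine VIII.1 reference the paper cites: exhibit the skeletal inclusion $\Sk_{s-1}\Delta[-]\hookrightarrow\Sk_s\Delta[-]$ in $(\sSet)^\Delta$ as a cellular pushout built from the representable $F^s$ and its latching subobject $L^sF^s$, then dualize through the end $\hombold(-,X)^\Delta$, which converts colimits to limits. The cellular pushout you defer to a Reedy-cellularity calculation is standard and can be checked levelwise in both the cosimplicial level $n$ and the simplicial degree $m$ via the epi-mono factorization in $\Delta$; once that is in place the rest is, as you say, bookkeeping through the tensor-cotensor adjunctions.
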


\begin{proof}
This follows from Goerss-Jardine \cite[VIII.1]{Goerss_Jardine}.
\end{proof}

\begin{prop}
\label{prop:key_pullback_diagram_for_restricted_Tot_tower}
Let $\capO$ be an operad in $\capR$-modules. Let $Y$ be a restricted cosimplicial $\capO$-algebra and $s\geq 0$. There is a pullback diagram of the form
\begin{align*}
\xymatrix{
  \Tot^\res_s(Y)\ar[d]\ar[r] & \hombold(\Delta[s],Y^s)\ar[d]\\
  \Tot^\res_{s-1}(Y)\ar[r] & \hombold(\partial\Delta[s],Y^s)
}
\end{align*}
in $\AlgO$.
\end{prop}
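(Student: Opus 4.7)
The plan is to reduce the claim to a pushout square in $\sSet^{\Delta_\res}$ and then transport it across the simplicial tensor and the adjunction of Proposition \ref{prop:adjunctions_stages_of_tot_towers}.

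First, I would establish the following pushout square in the category $\sSet^{\Delta_\res}$ of restricted cosimplicial simplicial sets:
\begin{align*}
\xymatrix{
  \partial\Delta[s]\cdot\Delta_\res([s],-) \ar[r]\ar[d] & \Sk_{s-1}\Delta[-] \ar[d]\\
  \Delta[s]\cdot\Delta_\res([s],-) \ar[r] & \Sk_s\Delta[-]
}
\end{align*}
Here $\Delta_\res([s],-)$ is the representable at $[s]$, and $K\cdot\Delta_\res([s],-)$ denotes the restricted cosimplicial simplicial set with value $K\times\Delta_\res([s],[n])$ at level $n$. Since colimits in functor categories are formed levelwise, it suffices to check the pushout at each $[n]\in\Delta_\res$. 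The crucial observation is that the non-degenerate $s$-simplices of $\Delta[n]$ correspond bijectively to the strictly monotone maps $[s]\rarrow[n]$---that is, to the elements of $\Delta_\res([s],[n])$---while every degenerate $s$-simplex of $\Delta[n]$ already lies in $\Sk_{s-1}\Delta[n]$. Thus $\Sk_s\Delta[n]$ is obtained from $\Sk_{s-1}\Delta[n]$ by attaching a copy of $\Delta[s]$ along $\partial\Delta[s]$ for each element of $\Delta_\res([s],[n])$, with compatibility in $[n]$ given by postcomposition.

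Next, apply the levelwise functor $A\tensordot(-)\colon\thinspace\sSet^{\Delta_\res}\rarrow(\AlgO)^{\Delta_\res}$ for an arbitrary $\capO$-algebra $A$. By Remark \ref{rem:useful_adjunction_isomorphisms_simplicial_structure}, $A\tensordot(-)$ is a left adjoint, hence preserves colimits, and so produces a pushout square in $(\AlgO)^{\Delta_\res}$. Applying $\hom_{(\AlgO)^{\Delta_\res}}(-,Y)$ converts this into a pullback of sets. Using the Yoneda lemma in $\sSet^{\Delta_\res}$ together with the tensor-mapping object adjunction \eqref{eq:tensordot_adjunction_isomorphisms}, one obtains for $K=\Delta[s]$ or $\partial\Delta[s]$ the identifications
\begin{align*}
  \hom_{(\AlgO)^{\Delta_\res}}\bigl(A\tensordot(K\cdot\Delta_\res([s],-)),Y\bigr)
  &\Iso\hom_\AlgO(A\tensordot K,Y^s)\\
  &\Iso\hom_\AlgO\bigl(A,\hombold(K,Y^s)\bigr),
\end{align*}
while the adjunction of Proposition \ref{prop:adjunctions_stages_of_tot_towers} identifies the remaining two corners with $\hom_\AlgO(A,\Tot^\res_s(Y))$ and $\hom_\AlgO(A,\Tot^\res_{s-1}(Y))$, respectively.

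The pullback of sets so obtained is natural in $A\in\AlgO$, and a final application of the Yoneda lemma in $\AlgO$ promotes it to the desired pullback diagram in $\AlgO$. The main obstacle, albeit mild, is the verification of the pushout in $\sSet^{\Delta_\res}$; the essential input is that restricting to $\Delta_\res$ eliminates any contribution from codegeneracies---contrast with Proposition \ref{prop:key_pullback_diagram_for_Tot_tower}, whose matching object $M^{s-1}X$ encodes precisely the codegeneracy compatibilities that are absent in the restricted case, which is why the right-hand pullback corner here is simply $\hombold(\partial\Delta[s],Y^s)$.
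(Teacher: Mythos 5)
Your proof is correct, but it takes a genuinely different route from the paper's. The paper argues by reduction to the $\Tot$ case: since $\Tot^\res(Y)$ is naturally isomorphic to $\Tot$ applied to the right Kan extension of $Y$ along $\Delta_\res\subset\Delta$ (by uniqueness of right adjoints), the claim is made to follow from the matching-object pullback formula in Goerss--Jardine cited in Proposition \ref{prop:key_pullback_diagram_for_Tot_tower}. You instead argue directly and at the level of generators: you identify the levelwise pushout
\begin{align*}
\xymatrix{
  \partial\Delta[s]\times\Delta_\res([s],[n]) \ar[r]\ar[d] & \Sk_{s-1}\Delta[n] \ar[d]\\
  \Delta[s]\times\Delta_\res([s],[n]) \ar[r] & \Sk_s\Delta[n]
}
\end{align*}
coming from the fact that the nondegenerate $s$-simplices of $\Delta[n]$ are exactly the elements of $\Delta_\res([s],[n])$, check naturality in $\Delta_\res$ (which works precisely because strictly monotone maps preserve nondegeneracy), and then transport this pushout across $A\tensordot(-)$, the evaluation/left-Kan adjunction at $[s]$, the tensor/mapping-object adjunction of \eqref{eq:tensordot_adjunction_isomorphisms}, and Proposition \ref{prop:adjunctions_stages_of_tot_towers}. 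Both approaches are sound; yours is more self-contained and has the expository advantage of making visible exactly why the matching-object correction in Proposition \ref{prop:key_pullback_diagram_for_Tot_tower} disappears here --- the naturality of the cell-attachment in $[n]$ would fail under codegeneracy maps, which is precisely the obstruction that the matching object encodes in the unrestricted case.
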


\begin{proof}
This follows from Goerss-Jardine \cite[VIII.1]{Goerss_Jardine} because of the following: By uniqueness of right adjoints, up to isomorphism, $\Tot^\res(Y)$ is naturally isomorphic to $\Tot$ composed with the right Kan extension of $Y$ along the inclusion $\Delta_\res\subset\Delta$.
\end{proof}

\begin{rem}
In the special case of $s=0$, note that Propositions \ref{prop:key_pullback_diagram_for_Tot_tower} and \ref{prop:key_pullback_diagram_for_restricted_Tot_tower} are simply the assertions that the natural maps
\begin{align*}
  \Tot_0(X)\rarrow\hombold(\Delta[0],X^0)\Iso X^0,\quad\quad
  \Tot^\res_0(Y)\rarrow\hombold(\Delta[0],Y^0)\Iso Y^0,
\end{align*}
are isomorphisms (see Proposition \ref{prop:adjunctions_stages_of_tot_towers}); this is because it follows from our conventions (see Definition \ref{defn:totalization_functors}) that $\Tot_{-1}(X)=*$ and $\Tot^\res_{-1}(Y)=*$.
\end{rem}

The following proposition serves to contrast the properties of $\Tot$ and $\Tot^\res$ in the context of $\capO$-algebras. For the convenience of the reader, and because these properties are fundamental to the main results of this paper, we include a concise proof in Section \ref{sec:proofs}.

\begin{prop}
\label{prop:basic_properties_of_Tot_towers_etc}
Let $\capO$ be an operad in $\capR$-modules.
\begin{itemize}
\item[(a)] If $X\in(\AlgO)^\Delta$ is Reedy fibrant, then the $\Tot$ tower $\{\Tot_s(X)\}$ is a tower of fibrations, and the natural map $\Tot(X)\rarrow\Tot_s(X)$ is a fibration for each $s\geq -1$; in particular, $\Tot(X)$ is fibrant.
\item[(b)] If $Y\in(\AlgO)^{\Delta_\res}$ is objectwise fibrant, then the $\Tot^\res$ tower $\{\Tot^\res_s(Y)\}$ is a tower of fibrations, and the natural map $\Tot^\res(Y)\rarrow\Tot^\res_s(Y)$ is a fibration for each $s\geq -1$; in particular, $\Tot^\res(Y)$ is fibrant.
\item[(c)] If $X\rarrow X'$ in $(\AlgO)^\Delta$ is a weak equivalence between Reedy fibrant objects, then $\Tot(X)\rarrow\Tot(X')$ is a weak equivalence.
\item[(d)] If $Y\rarrow Y'$ in $(\AlgO)^{\Delta_\res}$ is a weak equivalence between objectwise fibrant diagrams, then $\Tot^\res(Y)\rarrow\Tot^\res(Y')$ is a weak equivalence.
\end{itemize}
\end{prop}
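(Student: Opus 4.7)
The plan is to handle (a), (b) by repeated application of the pullback descriptions in Propositions \ref{prop:key_pullback_diagram_for_Tot_tower} and \ref{prop:key_pullback_diagram_for_restricted_Tot_tower}, and then to deduce (c), (d) from (a), (b) together with Ken Brown's lemma applied to the Quillen adjunctions in Proposition \ref{prop:tot_adjunctions}.

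For part (a), I would argue by induction on $s\geq 0$ that $\Tot_s(X)\rarrow\Tot_{s-1}(X)$ is a fibration. The base case is trivial since $\Tot_{-1}(X)=*$. For the inductive step, the pullback square of Proposition \ref{prop:key_pullback_diagram_for_Tot_tower} reduces the claim to showing that the right-hand vertical map
\begin{align*}
  \hombold(\Delta[s],X^s)\longrightarrow
  \hombold(\partial\Delta[s],X^s)\times_{\hombold(\partial\Delta[s],M^{s-1}X)}\hombold(\Delta[s],M^{s-1}X)
\end{align*}
is a fibration. This is exactly the pullback-corner map associated to the cofibration $\partial\Delta[s]\hookrightarrow\Delta[s]$ in $\sSet$ and the Reedy matching fibration $X^s\rarrow M^{s-1}X$ in $\AlgO$, which is a fibration by the simplicial model category axiom SM7 verified in Harper-Hess \cite[6.18]{Harper_Hess}. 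Fibrations are closed under pullback, so $\Tot_s(X)\rarrow\Tot_{s-1}(X)$ is a fibration, and hence $\Tot(X)\Iso\lim_s\Tot_s(X)$ maps by a fibration to each $\Tot_s(X)$ (a countable inverse limit of fibrations in a right proper model category), and is in particular fibrant. Part (b) follows by the same strategy, using the simpler pullback square of Proposition \ref{prop:key_pullback_diagram_for_restricted_Tot_tower}: here the right-hand vertical map $\hombold(\Delta[s],Y^s)\rarrow\hombold(\partial\Delta[s],Y^s)$ is a fibration by SM7 applied to the cofibration $\partial\Delta[s]\hookrightarrow\Delta[s]$ and the fibration $Y^s\rarrow *$, which uses only the hypothesis that $Y$ is objectwise fibrant.

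For parts (c) and (d), I plan to invoke Ken Brown's lemma. The adjunctions
\begin{align*}
\xymatrix{
  \AlgO\ar@<0.5ex>[r]^-{-\tensordot\Delta[-]} & (\AlgO)^\Delta\ar@<0.5ex>[l]^-{\Tot}
},\quad
\xymatrix{
  \AlgO\ar@<0.5ex>[r]^-{-\tensordot\Delta[-]} & (\AlgO)^{\Delta_\res}\ar@<0.5ex>[l]^-{\Tot^\res}
}
\end{align*}
of Proposition \ref{prop:tot_adjunctions} are Quillen adjunctions with respect to the Reedy model structure on $(\AlgO)^\Delta$ and the objectwise (equivalently injective) model structure on $(\AlgO)^{\Delta_\res}$, respectively. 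Indeed, in either case the left adjoint $A\mapsto A\tensordot\Delta[-]$ is a left Quillen functor by the SM7-style argument: the generating (acyclic) cofibrations map to (acyclic) Reedy cofibrations (resp. objectwise (acyclic) cofibrations) because $\Delta[-]$ is Reedy cofibrant in $\sSet^\Delta$ and $\AlgO$ is a simplicial model category. Hence $\Tot$ and $\Tot^\res$ are right Quillen functors, and Ken Brown's lemma gives that they preserve weak equivalences between fibrant objects; by parts (a), (b) the fibrant objects include all Reedy fibrant $X$ (respectively all objectwise fibrant $Y$).

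The main obstacle I anticipate is the verification that the pullback-corner map in the induction step for (a) is indeed a fibration — this is where one must be careful that the SM7 axiom established for $\AlgO$ gives the correct statement for mapping objects against $\partial\Delta[s]\hookrightarrow\Delta[s]$, together with the Reedy fibrancy condition encoded through the matching object $M^{s-1}X$. Once this is in hand, the rest of the argument is a straightforward adaptation of the classical cosimplicial spaces arguments in \cite[VIII.1]{Goerss_Jardine} and \cite[X.3]{Bousfield_Kan}; for (c) and (d) one could alternatively replace the Ken Brown argument by a direct inductive comparison along the $\Tot$ and $\Tot^\res$ towers, using that each pullback square has a fibration on the right and that pullbacks of weak equivalences along fibrations are weak equivalences in the right proper model category $\AlgO$.
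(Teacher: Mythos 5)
Your proof of parts (a) and (b) matches the paper's argument: apply the pullback squares of Propositions \ref{prop:key_pullback_diagram_for_Tot_tower} and \ref{prop:key_pullback_diagram_for_restricted_Tot_tower} and show the right-hand vertical map is a fibration via SM7 and the Reedy matching (resp.\ objectwise) fibrancy; you supply more detail than the paper on the pullback-corner identification, which is the genuine content. For parts (c) and (d) you take a genuinely different route: the paper argues by direct induction along the $\Tot$ (resp.\ $\Tot^\res$) tower, using \cite[VIII.1]{Goerss_Jardine} to get weak equivalences on matching objects and then Propositions \ref{prop:key_pullback_diagram_for_Tot_tower}, \ref{prop:key_pullback_diagram_for_restricted_Tot_tower} to compare the towers stage by stage, finishing with $\lim_s$ of a levelwise weak equivalence between towers of fibrations. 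Your primary route is to upgrade the adjunctions of Proposition \ref{prop:tot_adjunctions} to Quillen adjunctions and invoke Ken Brown's lemma; this is cleaner and more structural, but it does require you to verify the Quillen-adjunction claim, which is not stated in Proposition \ref{prop:tot_adjunctions} and is nontrivial content. You also mention the paper's inductive comparison as an alternative, so you are aware of both routes.

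One point needs correcting. The model structure on $(\AlgO)^{\Delta_\res}$ whose fibrant objects are exactly the objectwise fibrant diagrams is the \emph{projective} model structure, which coincides with the Reedy model structure because $\Delta_\res$ is a direct Reedy category. Calling it ``equivalently injective'' is wrong: the injective model structure has the objectwise cofibrations, hence in general more cofibrations, fewer fibrations, and strictly fewer fibrant objects than objectwise fibrant. Your parenthetical ``(resp.\ objectwise (acyclic) cofibrations)'' points in the injective direction, but what you actually need for $-\tensordot\Delta[-]$ to be left Quillen into the projective/Reedy structure is that it produces \emph{Reedy} cofibrations; this follows because the latching map of $\Delta[\bullet]|_{\Delta_\res}$ at $[n]$ is $\partial\Delta[n]\hookrightarrow\Delta[n]$ and SM7 holds in $\AlgO$. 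With that fix, the Ken Brown argument is sound. Finally, right properness is not needed to conclude that $\Tot(X)\to\Tot_s(X)$ is a fibration: the inverse limit of a tower of fibrations is a fibration in any model category, by an iterated lifting argument.
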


\subsection{Comparing the $\holim^\BK$ and $\Tot$ towers in $\AlgO$}

The following is proved in Carlsson \cite[Section 6]{Carlsson} and Sinha \cite[6.7]{Sinha_cosimplicial_models}, and plays a key role in this paper.

\begin{prop}
\label{prop:left_cofinality_truncated_delta}
Let $n\geq 0$. The composite
\begin{align*}
  \capP_0([n])\Iso P\Delta[n]\longrightarrow\Delta_\res^{\leq n}
  \subset\Delta^{\leq n}
\end{align*}
is left cofinal (i.e., homotopy initial). Here, $\capP_0([n])$ denotes the poset of all nonempty subsets of $[n]$; this notation agrees with Goodwillie \cite{Goodwillie_calculus_2} and Ching-Harper \cite[3.1]{Ching_Harper}.
\end{prop}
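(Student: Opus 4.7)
My plan is to apply Quillen's Theorem A, reducing the claim to showing that the comma category $F\downarrow[k]$ has contractible nerve for every $[k]\in\Delta^{\leq n}$ (so necessarily $k\leq n$), where $F$ denotes the displayed composite. Unpacking the definitions, $F\downarrow[k]$ is naturally identified with the poset of pairs $(S,\alpha)$ where $\emptyset\neq S\subseteq[n]$ and $\alpha\colon S\to[k]$ is a weakly monotone map, with $(S,\alpha)\leq(T,\beta)$ exactly when $S\subseteq T$ and $\beta|_S=\alpha$.

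The key step is to cover this poset by the principal down-sets $D_\beta=\{(S,\beta|_S) : \emptyset\neq S\subseteq[n]\}$ indexed by monotone extensions $\beta\colon[n]\to[k]$. Each $D_\beta$ is isomorphic to $\capP_0([n])$ and so contractible (it has the terminal element $([n],\beta)$), and any nonempty intersection $D_{\beta_1}\cap\cdots\cap D_{\beta_r}$ is isomorphic to $\capP_0(A)$ for the common-agreement locus $A=\{i\in[n] : \beta_1(i)=\cdots=\beta_r(i)\}$, which is again contractible. By the nerve theorem for good covers of posets, $F\downarrow[k]$ has nerve weakly equivalent to the \emph{agreement complex} $\mathcal{A}_{n,k}$ on vertex set $\hom_\Delta([n],[k])$, having a simplex for each collection of monotone maps with nonempty common-agreement locus. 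Iterating the nerve theorem once more---covering $\mathcal{A}_{n,k}$ by the full subsimplices on the sets $V_{i,j}=\{\beta : \beta(i)=j\}$ for $(i,j)\in[n]\times[k]$---identifies this further with the \emph{staircase complex} $\mathcal{S}_{n,k}$ on $[n]\times[k]$, whose simplices are the graphs of monotone partial functions from $[n]$ to $[k]$.

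The main obstacle is then verifying that $\mathcal{S}_{n,k}$ is contractible in the assumed range $k\leq n$. This constraint is essential: for $k>n$ the complex is generally not contractible (for instance, $\mathcal{S}_{1,2}$ is homotopy equivalent to $S^1$), consistent with the expected failure of left cofinality outside the stated range. For $k\leq n$ one establishes contractibility by standard combinatorial techniques, such as exhibiting an explicit collapse onto a vertex, constructing a discrete Morse function, or proceeding by a double induction on $n$ and $k$ (peeling off the contributions of $\{i=n\}$ and $\{j=k\}$ and invoking the induction hypothesis on the remaining faces). The detailed combinatorial arguments implementing essentially this cofinality statement in closely related settings are carried out in Carlsson \cite{Carlsson} and Sinha \cite{Sinha_cosimplicial_models}, and I would refer the reader to those treatments for the complete argument.
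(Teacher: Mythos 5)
The paper itself does not supply a proof of this proposition; it is a citation to Carlsson and Sinha, so there is no in-house argument to compare against. Your reduction via (the dual of) Quillen's Theorem~A and the identification of the comma category $F\downarrow[k]$ with the poset of pairs $(S,\alpha)$, $\emptyset\neq S\subseteq[n]$, $\alpha\colon S\to[k]$ weakly monotone, ordered by extension, are both correct. However, both applications of the nerve theorem are superfluous: the assignment $(S,\alpha)\mapsto\mathrm{graph}(\alpha)\subseteq[n]\times[k]$ is already an isomorphism of posets from $F\downarrow[k]$ onto the face poset of your staircase complex $\mathcal{S}_{n,k}$ (inclusion of graphs translates exactly into the extension order), so the nerve of $F\downarrow[k]$ is the barycentric subdivision of $\mathcal{S}_{n,k}$, giving a homeomorphism with $|\mathcal{S}_{n,k}|$ directly---no detour through the agreement complex $\mathcal{A}_{n,k}$ or any covering lemma is needed.

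The genuine gap is the contractibility of $\mathcal{S}_{n,k}$ for $k\leq n$, which is the entire nontrivial content of the proposition; the rest is translation. You assert this can be established by "standard combinatorial techniques" and name three candidate strategies, but none of them is carried out, and the claim is not routine: as your own observation $\mathcal{S}_{1,2}\simeq S^1$ shows, the conclusion is genuinely sensitive to the hypothesis $k\leq n$, so any proposed collapse, Morse matching, or induction must make essential use of that inequality, and it is not visible from what you have written where or how it would enter. Falling back on Carlsson and Sinha at this point is no more informative than citing them for the proposition wholesale (which is what the paper does), and is arguably less direct, since they prove contractibility of the relevant overcategories rather than of $\mathcal{S}_{n,k}$ as such---a reader must retrace your reduction in reverse to connect their statements to yours. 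In short, the reduction is correct (if roundabout) and the explicit combinatorial reformulation is genuinely clarifying, but the proof is incomplete without the contractibility argument.
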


\begin{rem}
Note that $\capP_0([n])\Iso P\Delta[n]$ naturally arises as the category whose nerve is the subdivision $\mathrm{sd}\Delta[n]$ of $\Delta[n]$, and that applying realization recovers the usual barycentric subdivision of $\Delta^n$ (see Goerss-Jardine \cite[III.4]{Goerss_Jardine}).
\end{rem}

\begin{rem}
One can ask if left cofinality of the inclusion $\Delta_\res\subset\Delta$  (Proposition \ref{prop:left_cofinal_delta_restricted_to_delta}) remains true when $\Delta$ is replaced by its $n$-truncation $\Delta^{\leq n}$. It turns out that the inclusion $\Delta^{\leq n}_\res\subset\Delta^{\leq n}$ is not left cofinal for $n\geq 1$; Proposition \ref{prop:left_cofinality_truncated_delta} can be thought of as the $n$-truncated analog of Proposition \ref{prop:left_cofinal_delta_restricted_to_delta}.

The upshot is that if one further restricts to the poset $P\Delta[n]$ of non-degenerate simplices of $\Delta[n]$, ordered by the face relation (see \cite[III.4]{Goerss_Jardine}), then the resulting composite $P\Delta[n]\rarrow\Delta^{\leq n}$ is left cofinal. This observation plays a key role in this paper; see Proposition \ref{prop:punctured_cube_calculation_of_holim_truncated_delta}.
\end{rem}

\begin{prop}
\label{prop:punctured_cube_calculation_of_holim_truncated_delta}
Let $\capO$ be an operad in $\capR$-modules. If $X\in(\AlgO)^\Delta$ is objectwise fibrant and $Y\in(\AlgO)^\Delta$ is Reedy fibrant, then the natural maps
\begin{align*}
  \holim\nolimits_{\Delta^{\leq n}}^\BK X&\xrightarrow{\wequiv}
  \holim\nolimits_{P\Delta[n]}^\BK X\Iso
  \holim\nolimits_{\capP_0([n])}^\BK X\\
  \Tot_n Y&\xrightarrow{\wequiv}
  \holim\nolimits_{\Delta^{\leq n}}^\BK Y
\end{align*}
in $\AlgO$ are weak equivalences.
\end{prop}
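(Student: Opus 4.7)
The plan is to prove the two claimed weak equivalences independently.

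For the first, the approach is to appeal to the Bousfield--Kan cofinality theorem: if $\function{F}{\CC}{\DD}$ is left cofinal, then the induced map $\holim^\BK_\DD X \xrightarrow{\wequiv} \holim^\BK_\CC F^*X$ is a weak equivalence for any objectwise fibrant $\DD$-shaped diagram $X$. Restricting $X$ to $\Delta^{\leq n}$ preserves objectwise fibrancy, and Proposition \ref{prop:left_cofinality_truncated_delta} supplies the required left cofinality of the composite $\capP_0([n]) \cong P\Delta[n] \rarrow \Delta^{\leq n}$. The isomorphism $\capP_0([n]) \cong P\Delta[n]$ is by inspection: a nondegenerate $k$-simplex of $\Delta[n]$ is an injection $[k] \hookrightarrow [n]$, equivalently a $(k{+}1)$-element subset of $[n]$, with the face relation matching inclusion. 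The cofinality argument transfers verbatim to the $\AlgO$-setting since $\holim^\BK_\DD$ is defined purely from the simplicial structure on $\AlgO$ developed in Section \ref{sec:simplicial_structures}.

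For the second, the approach is a skeletal cofibrant replacement argument parallel to the Reedy fibrant part of Proposition \ref{prop:comparing_holim_with_Tot_and_Tot_restricted}. Rewrite $\Tot_n Y \Iso \hombold(\Sk_n \Delta[-], Y)^\Delta$ from Definition \ref{defn:totalization_functors} and $\holim^\BK_{\Delta^{\leq n}} Y \Iso \hombold(B(\Delta^{\leq n}/-), Y)^{\Delta^{\leq n}}$ from Proposition \ref{prop:fat_point_description_of_holim_bk}. The natural map $B(\Delta^{\leq n}/[k]) \rarrow \Delta[k]$ is a weak equivalence for each $[k] \in \Delta^{\leq n}$ (both sides are contractible), and these assemble into a Reedy weak equivalence of Reedy cofibrant cosimplicial simplicial sets on $\Delta^{\leq n}$. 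Since $Y$ is Reedy fibrant, the end functor $\hombold(-, Y)^{\Delta^{\leq n}}$ sends this to a weak equivalence in $\AlgO$. Finally, the end over $\Delta$ of $\hombold(\Sk_n\Delta[-], Y^-)$ collapses to the end over $\Delta^{\leq n}$ of $\hombold(\Delta[-], Y^-)$, since $\Sk_n\Delta[k] = \Delta[k]$ for $k \leq n$ and the equalizer conditions at levels $k > n$ are forced by the Reedy cofibrancy of $\Sk_n\Delta[-]$ together with Reedy fibrancy of $Y$. Composing these three comparisons gives the desired weak equivalence.

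The main obstacle is verifying this last collapse step cleanly, i.e., that imposing the end conditions over all of $\Delta$ against $\Sk_n\Delta[-]$ is equivalent to imposing them only over $\Delta^{\leq n}$ against $\Delta[-]|_{\Delta^{\leq n}}$. An alternative, more computational route would be inductive: use Proposition \ref{prop:key_pullback_diagram_for_Tot_tower} to express $\Tot_n Y$ as a homotopy pullback at level $n$ over $\Tot_{n-1} Y$ (with Reedy fibrancy of $Y$ making the pullback homotopical), establish a parallel homotopy pullback decomposition of $\holim^\BK_{\Delta^{\leq n}} Y$ using that $\Delta^{\leq n}$ is obtained from $\Delta^{\leq n-1}$ by adjoining $[n]$, and close the induction starting from the base case $\Tot_0 Y \Iso Y^0 \Iso \holim^\BK_{\Delta^{\leq 0}} Y$. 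Either route amounts to transcribing classical Bousfield--Kan / Carlsson / Sinha reasoning into the $\AlgO$-enriched simplicial setting of Section \ref{sec:simplicial_structures}; no genuinely new inputs are required.
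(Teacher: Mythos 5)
Your overall strategy matches the paper's proof exactly: the first equivalence is left cofinality applied to Proposition \ref{prop:left_cofinality_truncated_delta}, and the second is the composite
\begin{align*}
  \hombold(\Sk_n\Delta[-],Y)^\Delta\Iso\hombold(\Delta^{\leq n}[-],Y)^{\Delta^{\leq n}}
  \xrightarrow{\wequiv}\hombold(B(\Delta^{\leq n}/-),Y)^{\Delta^{\leq n}}\Iso\holim\nolimits^\BK_{\Delta^{\leq n}}Y,
\end{align*}
where the middle map is a weak equivalence because $B(\Delta^{\leq n}/-)\rarrow\Delta^{\leq n}[-]$ is a weak equivalence of Reedy cofibrant objects in $(\sSet)^{\Delta^{\leq n}}$ and $Y$ is Reedy fibrant. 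The one place where your reasoning goes slightly astray is the ``collapse'' step you flag as the main obstacle: you attribute it to Reedy cofibrancy of $\Sk_n\Delta[-]$ and Reedy fibrancy of $Y$, but in fact it is a purely formal isomorphism with no homotopical hypotheses whatsoever. The cosimplicial simplicial set $\Sk_n\Delta[-]$ is precisely the left Kan extension of $\Delta[-]|_{\Delta^{\leq n}}$ along the inclusion $i\colon\Delta^{\leq n}\hookrightarrow\Delta$ (indeed $\Sk_n\Delta[m]\Iso\colim_{\Delta^{\leq n}/[m]}\Delta[-]$), so the weighted-limit adjunction $\hombold(i_!W,Y)^\Delta\Iso\hombold(W,i^*Y)^{\Delta^{\leq n}}$ gives the collapse immediately, for any $Y$. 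This is why the paper writes it as $\Iso$ rather than $\wequiv$; the fibrancy of $Y$ only enters at the subsequent step. Once this is clarified, your alternative inductive route via Proposition \ref{prop:key_pullback_diagram_for_Tot_tower} is unnecessary, though it would also work.
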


\begin{proof}
This appears in Carlsson \cite{Carlsson} and Sinha \cite{Sinha_cosimplicial_models} in the contexts of spectra and spaces, respectively, and remains true in our context. The first case follows from Proposition \ref{prop:left_cofinality_truncated_delta} and Bousfield-Kan \cite{Bousfield_Kan}, and the second case follows from Bousfield-Kan \cite{Bousfield_Kan}, where the indicated natural weak equivalence is the composite
\begin{align*}
  \hombold_\Delta(\Sk_n\Delta[-],Y)
  \Iso&\hombold_{\Delta^{\leq n}}(\Delta^{\leq n}[-],Y)\\
  \xrightarrow{\wequiv}&\hombold_{\Delta^{\leq n}}(B(\Delta^{\leq n}/-),Y)\Iso\holim\nolimits^\BK_{\Delta^{\leq n}}Y
\end{align*}
in $\AlgO$; note that the natural map
$B(\Delta^{\leq n}/-)\xrightarrow{\wequiv}\Delta^{\leq n}[-]$ in $(\sSet)^{\Delta^{\leq n}}$ is a weak equivalence between Reedy cofibrant objects.
\end{proof}

\subsection{Homotopy spectral sequence of a tower of $\capO$-algebras}
\label{sec:homotopy_spectral_sequence}

Consider any tower $\{X_s\}$ of fibrations of $\capO$-algebras such that $X_{-1}=*$ and denote by $F_s\subset X_s$ the fiber of $X_s\rarrow X_{s-1}$, where $F_0=X_0$.

\begin{rem}
For ease of notational purposes, we will regard such towers as indexed by the integers such that $X_s=*$ (and hence $F_s=*$) for every $s<0$.
\end{rem}

Following the notation in Bousfield-Kan \cite[IX.4]{Bousfield_Kan} as closely as possible, recall that the collection of \emph{$r$-th derived long exact sequences}, $r\geq 0$,  associated to the collection of homotopy fiber sequences $F_s\rightarrow X_s\rightarrow X_{s-1}$, $s\in\ZZ$, has the form
\begin{align}
\label{eq:derived_long_exact_sequences}
   \cdots\rarrow\pi_{i+1}F_{s-r}^{(r)}&\rarrow
   \pi_{i+1}X_{s-r}^{(r)}\rarrow
   \pi_{i+1}X_{s-r-1}^{(r)}\rarrow\\
   \notag
   \pi_i F_s^{(r)}&\rarrow
   \pi_i X_s^{(r)}\rarrow
   \pi_iX_{s-1}^{(r)}\rarrow\\
   \notag
   \pi_{i-1}F_{s+r}^{(r)}&\rarrow
   \pi_{i-1}X_{s+r}^{(r)}\rarrow
   \pi_{i-1}X_{s+r-1}^{(r)}\rarrow\cdots
\end{align}
where we define
\begin{align}
  \label{eq:derived_groups}
  \pi_i X_s^{(r)} &:= \im(\pi_iX_s\larrow\pi_iX_{s+r}),
  \quad\quad r\geq 0,\\
  \label{eq:iterated_homologies_associated_to_exact_couple}
  \pi_i F_s^{(r)} &:= \frac{\ker(\pi_iF_s\rarrow\pi_iX_s/\pi_iX_s^{(r)})}
  {\partial_*\ker(\pi_{i+1}X_{s-1}\rarrow\pi_{i+1}X_{s-(r+1)})},
  \quad\quad r\geq 0,
\end{align}
and
$  \pi_i X_s^{(0)} = \pi_i X_s$ and
$  \pi_i F_s^{(0)} = \pi_i F_s$, for each $i\in\ZZ$. Here, we denote by $\partial$ the natural boundary maps appearing in the $0$-th derived long exact sequences (i.e., the long exact sequences in $\pi_*$) associated to each $F_s\rarrow X_s\rarrow X_{s-1}$.

In other words, the collection of long exact sequences in $\pi_*$ associated to the collection of homotopy fiber sequences $F_s\rarrow X_s\rarrow X_{s-1}$, $s\in\ZZ$, gives an exact couple $(\pi_*X_*, \pi_*F_*)$ of $\ZZ$-bigraded abelian groups of the form
\begin{align}
\label{eq:exact_couples}
\xymatrix{
  \pi_*X_*\ar[r] & \pi_* X_*\ar[d]\\
  & \pi_*F_*\ar@/^0.5pc/[ul]
}\quad\quad
\xymatrix{
  D\ar[r]^{(1,-1)} & D\ar[d]^{(0,0)}\\
  & E\ar@/^0.5pc/[ul]^{(-1,0)}
}
\end{align}
If we denote this exact couple by $(D,E)$, with bigradings defined by $E_{-s,t}:=\pi_{t-s}F_s$ and $D_{-s,t}:=\pi_{t-(s-1)}X_{s-1}$, then it is easy to verify that the three maps depicted on the right-hand side of \eqref{eq:exact_couples} have the indicated bidegrees. It follows that the associated collection of $r$-th derived exact couples, $r\geq 0$,  determines a left-half plane homologically graded spectral sequence $(E^r,d^r)$, $r\geq 1$ (see MacLane \cite[XI.5]{MacLane_homology} for a useful development); in particular, $d^r$ has bidegree $(-r, r-1)$. We will sometimes denote $E^r$ by $E^r\{X_s\}$ to emphasize the tower $\{X_s\}$ in the notation.

This is the \emph{homotopy spectral sequence of the tower of fibrations} $\{X_s\}$ of $\capO$-algebras and satisfies
\begin{align}
\label{eq:the_Er_page}
  E^r_{-s,s+i} := \pi_{i}F_s^{(r-1)},\quad\quad
  &\bigl(\text{equiv.}\quad\quad E^r_{-s,t} := \pi_{t-s}F_s^{(r-1)}\bigr), \quad\quad r\geq 1,
\end{align}
with differentials
$
  \function{d^r}{E^r_{-s,s+i}}{E^r_{-(s+r),s+r+i-1}}
$
given by the composite maps
\begin{align}
\label{eq:the_dr_differentials}
  \pi_i F_s^{(r-1)}\rarrow
  \pi_i X_s^{(r-1)}\rarrow
  \pi_{i-1}F_{s+r}^{(r-1)}.
\end{align}
It is essentially identical to the homotopy spectral sequence of a tower of fibrations in pointed spaces described in Bousfield-Kan \cite{Bousfield_Kan}, except it is better behaved in the sense that is has no ``fringing'' (e.g., Bousfield \cite{Bousfield_obstruction}, Goerss-Jardine \cite[VI.2]{Goerss_Jardine}).

\begin{defn}
\label{defn:model_structure_on_towers}
Let $\M$ be a model category with all small limits and let $\DD$ be the category $\{0\leftarrow 1\leftarrow 2\leftarrow\cdots\}$ with objects the non-negative integers and a single morphism $i\leftarrow j$ for each $i\leq j$. Consider the category $\M^\DD$ of $\DD$-shaped diagrams (or towers) in $\M$ with the injective model structure \cite[VI.1.1]{Goerss_Jardine}. The \emph{homotopy limit} functor $\function{\holim}{\Ho({\M^\DD})}{\Ho(\M)}$ is the total right derived functor of the limit functor $\function{\lim}{\M^\DD}{\M}$.
\end{defn}

\begin{defn}
Let $\capO$ be an operad in $\capR$-modules. Suppose $\{X_s\}$ is a tower in $\AlgO$. The \emph{homotopy spectral sequence $(E^r,d^r)$, $r\geq 1$, associated to the tower} $\{X_s\}$ is the homotopy spectral sequence of the functorial fibrant replacement $\{X_s\}^f$ of $\{X_s\}$ in the category of towers in $\AlgO$. We sometimes denote $E^r$ by $E^r\{X_s\}$ to emphasize in the notation the tower $\{X_s\}$.
\end{defn}

The following Milnor type short exact sequences are well known in stable homotopy theory (see, for instance, Dwyer-Greenlees \cite{Dwyer_Greenlees_Iyengar}); they can be established as a consequence of Bousfield-Kan \cite[IX]{Bousfield_Kan}.

\begin{prop}
\label{prop:short_exact_sequence}
Consider any tower $B_0\leftarrow B_1\leftarrow B_2\leftarrow\cdots$ of $\capO$-algebras. There are natural short exact sequences
\begin{align*}
  &0\rightarrow\lim\nolimits^1_k\pi_{i+1}B_k\rightarrow
  \pi_i\holim_k B_k\rightarrow
  \lim\nolimits_k\pi_i B_k\rightarrow 0.
\end{align*}
\end{prop}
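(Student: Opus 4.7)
The plan is to reduce to the classical Milnor $\lim^1$ short exact sequence for a tower of fibrations between fibrant spectra. First, replace $\{B_k\}$ by a pointwise-weakly-equivalent fibrant replacement $\{B_k^f\}$ in the injective model structure on towers in $\AlgO$ (Definition \ref{defn:model_structure_on_towers}); since this is an inverse-category diagram, fibrant objects here are exactly towers $B^f_0\leftarrow B^f_1\leftarrow\cdots$ in which every $B^f_k$ is fibrant in $\AlgO$ and every transition map $B^f_{k+1}\rarrow B^f_k$ is a fibration. By definition of $\holim$ as the total right derived functor of $\lim$, we have $\holim_k B_k\wequiv \lim_k B^f_k$, and since $\pi_i$ is a homotopy invariant, $\pi_iB_k\Iso\pi_iB^f_k$ for every $i,k$. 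It therefore suffices to construct a natural short exact sequence of the desired form for the fibrant tower $\{B^f_k\}$.

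Next, I would transfer the computation to $\ModR$. The forgetful functor $\function{U}{\AlgO}{\ModR}$ is right adjoint to the free $\capO$-algebra functor, so it preserves small limits; moreover, weak equivalences and fibrations in $\AlgO$ are created by $U$, hence $U$ preserves both. Consequently $\{UB^f_k\}$ is a tower of fibrations between fibrant objects of $\ModR$ with $U\lim_k B^f_k\Iso\lim_k UB^f_k$, and since $\pi_i$ of an $\capO$-algebra is by definition $\pi_i$ of its underlying $\capR$-module, the claimed short exact sequence for $\{B_k\}$ reduces to the corresponding sequence for the tower $\{UB^f_k\}$ in $\ModR$.

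Finally, the short exact sequence for a tower of fibrations between fibrant objects in the stable category $\ModR$ is standard: writing $\lim_k UB^f_k$ as an equalizer exhibits a fiber sequence
\begin{align*}
\lim\nolimits_k UB^f_k\rarrow \prod\nolimits_k UB^f_k\xrightarrow{1-\mathrm{shift}}\prod\nolimits_k UB^f_k,
\end{align*}
and the associated long exact sequence in homotopy groups, combined with the fact that $\pi_i$ commutes with small products in $\ModR$, identifies the kernel and cokernel of $1-\mathrm{shift}$ on $\prod_k\pi_iB^f_k$ with $\lim_k\pi_iB^f_k$ and $\lim\nolimits^1_k\pi_iB^f_k$ respectively---this is the algebraic description of derived limits for towers of abelian groups recorded in Bousfield-Kan \cite[IX]{Bousfield_Kan}.

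The main technical step is really just the reduction through $U$; once one is in $\ModR$ the result is a classical consequence of the stable fiber sequence above. Naturality of the short exact sequence in $\{B_k\}$ then follows from the functoriality of the injective fibrant replacement in towers, the functoriality of $U$ and $\lim$, and the naturality of the six-term exact sequence of abelian groups.
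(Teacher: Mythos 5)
Your proof is correct and is essentially the standard argument that the paper invokes by citing Dwyer--Greenlees--Iyengar and Bousfield--Kan~[IX]: replace by a Reedy (= injective) fibrant tower, pass to the underlying stable category $\ModR$ via the limit-preserving forgetful functor that creates fibrations and weak equivalences, and read off the Milnor sequence from the $\lim\rarrow\prod\rarrow\prod$ fiber sequence and the resulting long exact sequence in $\pi_*$. The paper gives no independent argument, so there is nothing to contrast beyond noting that your route (via the product fiber sequence) is the stable-homotopy-theoretic one matching the Dwyer--Greenlees--Iyengar reference, rather than the exact-couple derivation one would extract from Bousfield--Kan~[IX]; both give the same conclusion.
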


The following is a spectral algebra analog of the Bousfield-Kan connectivity lemma \cite[IX.5.1]{Bousfield_Kan} for spaces.

\begin{prop}
\label{prop:connectivity_lemma}
Let $\capO$ be an operad in $\capR$-modules. Suppose $\{X_s\}$ is a tower in $\AlgO$. Let $n\in\ZZ$ and $r\geq 1$. If $E^r_{-s,s+i}=0$ for each $i\leq n$ and $s$, then
\begin{itemize}
\item[(a)] $\holim_sX_s$ is $n$-connected,
\item[(b)] $\lim_s\pi_iX_s = 0 = \lim\nolimits^1_s\pi_{i+1}X_s$ for each $i\leq n$.
\end{itemize}
\end{prop}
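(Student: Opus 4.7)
The plan is to prove (b) first and deduce (a) by feeding (b) into the Milnor short exact sequence of Proposition \ref{prop:short_exact_sequence}. Since the homotopy spectral sequence is by definition that of a functorial fibrant replacement, we may assume without loss of generality that $\{X_s\}$ is a tower of fibrations of $\capO$-algebras with $X_{-1} = *$, so that the derived long exact sequences \eqref{eq:derived_long_exact_sequences} in $\pi_*$ and the identification $E^r_{-s,s+i} = \pi_i F_s^{(r-1)}$ from \eqref{eq:the_Er_page} are at our disposal.

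First I would extract pro-triviality of $\{\pi_i X_s\}_s$ in degrees $i \leq n$ from the four-term exact sequence
\[
\pi_i F_s^{(r-1)} \rarrow \pi_i X_s^{(r-1)} \rarrow \pi_i X_{s-1}^{(r-1)} \rarrow \pi_{i-1} F_{s+r-1}^{(r-1)}
\]
obtained by taking the $(r-1)$-th derived case of \eqref{eq:derived_long_exact_sequences}. The hypothesis vanishes both outer terms whenever $i \leq n$ (the right-hand one because then also $i - 1 \leq n$), forcing $\pi_i X_s^{(r-1)} \iso \pi_i X_{s-1}^{(r-1)}$. Induction on $s$ starting at $X_{-1} = *$ collapses this to
\[
\pi_i X_s^{(r-1)} = \im\bigl(\pi_i X_{s+r-1} \rarrow \pi_i X_s\bigr) = 0, \qquad i \leq n,\ s \geq -1,
\]
so the $(r-1)$-fold composition along the tower is already the zero map in these degrees. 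That is more than enough to give $\lim_s \pi_i X_s = 0$ and $\lim\nolimits^1_s \pi_i X_s = 0$ for every $i \leq n$, which handles the $\lim_s$ clause of (b) in full and the $\lim\nolimits^1_s \pi_{i+1} X_s$ clause in the subrange $i + 1 \leq n$.

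The one remaining case, $\lim\nolimits^1_s \pi_{n+1} X_s = 0$, I would settle by taking $i = n + 1$ in the same exact sequence: the right-hand term $\pi_n F_{s+r-1}^{(r-1)}$ still vanishes (since $n \leq n$), so $\{\pi_{n+1} X_s^{(r-1)}\}_s$ is a system of surjections sitting inside $\{\pi_{n+1} X_s\}_s$. I would then verify Mittag-Leffler on the ambient tower by showing the image filtration stabilizes at $\pi_{n+1} X_s^{(r-1)}$: given $x \in \pi_{n+1} X_s^{(r-1)}$ and any $k \geq r - 1$, the iterated surjection $\pi_{n+1} X_{s+k}^{(r-1)} \twoheadrightarrow \pi_{n+1} X_s^{(r-1)}$ from the surjective tower above lifts $x$ to $y \in \pi_{n+1} X_{s+k}^{(r-1)} \subseteq \pi_{n+1} X_{s+k}$ whose tower image in $\pi_{n+1} X_s$ equals $x$, so $x \in \pi_{n+1} X_s^{(k)}$. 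Hence $\pi_{n+1} X_s^{(k)} = \pi_{n+1} X_s^{(r-1)}$ for all $k \geq r - 1$, Mittag-Leffler holds, and $\lim\nolimits^1_s \pi_{n+1} X_s = 0$, completing (b). Plugging (b) into Proposition \ref{prop:short_exact_sequence} then kills both ends of
\[
0 \rarrow \lim\nolimits^1_s \pi_{i+1} X_s \rarrow \pi_i \holim\nolimits_s X_s \rarrow \lim\nolimits_s \pi_i X_s \rarrow 0
\]
for every $i \leq n$, yielding (a). The one non-formal point is the endpoint Mittag-Leffler check bridging surjectivity of $\{\pi_{n+1} X_s^{(r-1)}\}_s$ to Mittag-Leffler for $\{\pi_{n+1} X_s\}_s$; everything else is routine diagram chasing in the $(r-1)$-th derived exact couple, which goes through in $\AlgO$ exactly as for pointed spaces (cf.\ Bousfield-Kan \cite{Bousfield_Kan}), with no low-dimensional fringing to worry about since all homotopy groups of $\capO$-algebras are abelian.
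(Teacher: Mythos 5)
Your proof is correct and is essentially the Bousfield--Kan \cite[IX.5.1]{Bousfield_Kan} argument that the paper simply cites without spelling out: vanishing of $E^r$ in total degrees $\leq n$ forces $\pi_i X_s^{(r-1)}=0$ for $i\leq n$ via the $(r-1)$-th derived exact couple and downward induction from $X_{-1}=*$ (giving pro-triviality and hence $\lim=\lim^1=0$ in those degrees), the endpoint Mittag--Leffler check handles the remaining case $\lim^1_s\pi_{n+1}X_s$, and the Milnor sequence of Proposition \ref{prop:short_exact_sequence} delivers (a) from (b). This is the same approach as the paper (namely Bousfield--Kan's), running smoothly here because the stable setting has no fringing.
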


\begin{proof}
This is proved in Bousfield-Kan \cite[IX.5.1]{Bousfield_Kan} in the context of pointed spaces, and exactly the same argument verifies it in our context.
\end{proof}

The following is a spectral algebra analog of the Bousfield-Kan mapping lemma \cite[IX.5.2]{Bousfield_Kan} for spaces.

\begin{prop}
\label{prop:mapping_lemma}
Let $\capO$ be an operad in $\capR$-modules. Let $\function{f}{\{X_s\}}{\{X'_s\}}$ be a map of towers in $\AlgO$. Let $r\geq 1$. If $f$ induces an $E^r$-isomorphism $E^r\{X_s\}\Iso E^r\{X'_s\}$ between homotopy spectral sequences, then
\begin{itemize}
\item[(a)] $f$ induces a weak equivalence $\holim_sX_s\wequiv\holim_sX'_s$,
\item[(b)] $f$ induces isomorphisms
\begin{align*}
  \lim_s\pi_iX_s\Iso\lim_s\pi_iX'_s,
  \quad\quad
  \lim\nolimits^1_s\pi_iX_s\Iso\lim\nolimits^1_s\pi_iX'_s,
  \quad\quad i\in\ZZ,
\end{align*}
\item[(c)] $f$ induces a pro-isomorphism $\{\pi_iX_s\}\rarrow\{\pi_iX'_s\}$ for each $i\in\ZZ$.
\end{itemize}
\end{prop}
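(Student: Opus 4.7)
First I would functorially fibrantly replace both towers using the model structure of Definition \ref{defn:model_structure_on_towers}, so that both $\{X_s\}$ and $\{X'_s\}$ are towers of fibrations with $X_{-1}=X'_{-1}=*$, $\holim$ is computed by $\lim$, and the homotopy spectral sequences really are the ones built from the $E^1$-terms $\pi_*F_s$ and $\pi_*F'_s$ as in Section \ref{sec:homotopy_spectral_sequence}. Since $f$ is a map of towers of fibrations, it induces a map of exact couples \eqref{eq:exact_couples}, hence a map of spectral sequences strictly compatible with the differentials and the derived long exact sequences \eqref{eq:derived_long_exact_sequences}.

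\textbf{From $E^r$-iso to isomorphisms on derived $D$-terms.} The next observation is standard: a map of spectral sequences which is an isomorphism on $E^r$ is automatically an isomorphism on $E^{r+k}$ for every $k\geq 0$, since $f$ commutes with the differentials and $E^{r+1}$ is the homology of $(E^r,d^r)$. Viewing the pages $E^{r+k}$ as the $E$-terms of the iterated derived exact couples, and applying the five-lemma to the $r$-th derived long exact sequences \eqref{eq:derived_long_exact_sequences} — with its alternation of $E$-terms and $D$-terms, anchored by the induction start $X_{-1}=*=X'_{-1}$ — one then concludes that $f$ induces isomorphisms on the derived groups $\pi_iX_s^{(k)}=\im(\pi_iX_{s+k}\to\pi_iX_s)$ for all $k\geq r-1$, and the analogous subgroups of $\pi_iX'_s$.

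\textbf{Pro-iso, $\lim$, $\lim^1$, and $\holim$.} This image-level isomorphism for sufficiently large index shifts is exactly the condition for the tower map $\{\pi_iX_s\}\to\{\pi_iX'_s\}$ to be an isomorphism in the Artin-Mazur pro-category of abelian groups, yielding (c). Since both $\lim$ and $\lim^1$ of towers of abelian groups factor through the pro-category, (c) immediately implies (b). For (a), pairing Proposition \ref{prop:short_exact_sequence} applied to $\{X_s\}$ and $\{X'_s\}$ into a natural ladder of Milnor short exact sequences and using (b) together with the five-lemma gives $\pi_i\holim_s X_s\iso\pi_i\holim_s X'_s$ for every $i\in\ZZ$; this is a weak equivalence of $\capO$-algebras since in the stable (spectrum) setting $\pi_*$-isomorphisms detect weak equivalences and there is no fringing to worry about.

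\textbf{Main obstacle.} The technical heart of the argument is the second step — the bookkeeping needed to deduce isomorphisms on the derived $D$-terms $\pi_iX_s^{(k)}$ from the given $E^r$-isomorphism. One must carefully iterate the five-lemma up the derived exact couples, tracking the shift in bidegree at each stage, and exploit the vanishing $X_{-1}=*$ as the base case of the induction on $s$. Once that step is in place, the chain (c)$\Rightarrow$(b)$\Rightarrow$(a) is essentially formal; the entire argument is a direct transcription of Bousfield-Kan \cite[IX.5.2]{Bousfield_Kan} to our stable setting, with the simplification that the pointed-set complications arising from low-dimensional fringing in the space-level argument do not occur here.
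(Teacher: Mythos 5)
Your argument is correct and is exactly the Bousfield--Kan argument [IX.5.2] that the paper cites as its complete proof, transported to the stable setting where there is no fringing; you have simply supplied the exact-couple/five-lemma bookkeeping that the paper leaves implicit. One small wording slip: an isomorphism on the image subgroups $\pi_i X_s^{(k)}$ for a fixed $k\geq r-1$ is a \emph{sufficient} condition for the tower map to be a pro-isomorphism (which is the implication you actually use), not an equivalent characterization, since a pro-isomorphism need not induce isomorphisms on these image subgroups (e.g.\ multiplication by $2$ on the constant-value tower $\ZZ$ with bonding maps $\times 2$ is a pro-isomorphism but is never surjective on images).
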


\begin{proof}
This is proved in Bousfield-Kan \cite[IX.5.2]{Bousfield_Kan} in the context of pointed spaces, and exactly the same argument verifies it in our context.
\end{proof}

The following can be thought of as a relative connectivity lemma for towers of $\capO$-algebras; it is closely related to Bousfield-Kan \cite[I.6.2, IV.5.1]{Bousfield_Kan} in the context of spaces.

\begin{prop}
\label{prop:relative_connectivity_lemma}
Let $\capO$ be an operad in $\capR$-modules. Let $\function{f}{\{X_s\}}{\{X'_s\}}$ be a map of towers in $\AlgO$. Let $n\in\ZZ$ and $r\geq 1$. Assume that $f$ induces isomorphisms $E^r_{-s,s+i}\Iso {E'}^r_{-s,s+i}$ for each $i\leq n-1$ and $s$, and surjections $E^r_{-s,s+i}\rarrow {E'}^r_{-s,s+i}$ for $i=n$ and each $s$. Then
\begin{itemize}
\item[(a)] $f$ induces an $(n-1)$-connected map $\holim_s X_s\rarrow\holim_s X'_s$,
\item[(b)] $f$ induces a surjection
$
\lim\nolimits^1_s\pi_n X_s\rarrow\lim\nolimits^1_s\pi_n X'_s
$
and isomorphisms
\begin{align*}
  \lim_s\pi_iX_s\Iso\lim_s\pi_iX'_s,
  \quad\quad
  \lim\nolimits^1_s\pi_iX_s\Iso\lim\nolimits^1_s\pi_iX'_s,
  \quad\quad i\leq n-1.
\end{align*}
\end{itemize}
If furthermore, the towers $\{\pi_nX_s\},\{\pi_nX'_s\},\{\pi_{n+1}X'_s\}$ are pro-constant, then
\begin{itemize}
\item[(c)] $f$ induces an $n$-connected map $\holim_s X_s\rarrow\holim_s X'_s$,
\item[(d)] $f$ induces a surjection
$
\lim\nolimits^1_s\pi_{n+1}X_s\rarrow\lim\nolimits^1_s\pi_{n+1}X'_s
$
and isomorphisms
\begin{align*}
  \lim_s\pi_iX_s\Iso\lim_s\pi_iX'_s,
  \quad\quad
  \lim\nolimits^1_s\pi_iX_s\Iso\lim\nolimits^1_s\pi_iX'_s,
  \quad\quad i\leq n.
\end{align*}
\end{itemize}
Here, ${E'}^r$ denotes $E^r\{X'_s\}$ and $\holim_s X_s\rarrow\holim_s X'_s$ denotes the induced zigzag in the category of $\capO$-algebras with all backward facing maps weak equivalences.
\end{prop}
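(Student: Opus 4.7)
The plan is to reduce the proposition to the connectivity lemma (Proposition \ref{prop:connectivity_lemma}) applied to a tower of homotopy fibers. After functorially replacing $f$ so that each $f_s\colon X_s\to X'_s$ is a levelwise fibration, let $\{F_s\}$ be the tower in $\AlgO$ with $F_s:=\hofib(X_s\to X'_s)$. Since $\holim$ preserves levelwise homotopy fiber sequences of towers, $\holim_s F_s$ is naturally the homotopy fiber of the induced map $\holim_s X_s\to\holim_s X'_s$. Consequently, parts (a) and (c) reduce to controlling the connectivity of $\holim_s F_s$, while (b) and (d) follow by comparing the Milnor short exact sequences (Proposition \ref{prop:short_exact_sequence}) of the three towers $\{X_s\},\{X'_s\},\{F_s\}$.

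The first key step is to relate the three homotopy spectral sequences. Writing $G^Y_s:=\hofib(Y_s\to Y_{s-1})$ for $Y\in\{X,X',F\}$, an iterated-homotopy-fiber calculation applied to the commutative square formed by adjacent tower levels produces a homotopy fiber sequence
\begin{align*}
  G^F_s\to G^X_s\to G^{X'}_s
\end{align*}
for each $s$, giving a natural long exact sequence linking the $E^1$-pages of the three associated homotopy spectral sequences. Following Bousfield-Kan \cite[IV.5]{Bousfield_Kan}, the derived-exact-couple formalism propagates this to a long exact sequence at each page $E^r$ for $r\geq 1$. A diagram chase in the $E^r$-long exact sequence, using the hypothesized isomorphisms for $i\leq n-1$ and the surjection at $i=n$, yields the vanishing $E^r\{F_s\}_{-s,s+i}=0$ for all $s$ and all $i\leq n-1$.

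The connectivity lemma applied to $\{F_s\}$ then gives that $\holim_s F_s$ is $(n-1)$-connected with $\lim_s\pi_iF_s=0=\lim^1_s\pi_{i+1}F_s$ for $i\leq n-1$. This yields (a) at once (via the long exact sequence of the fiber sequence $\holim F_s\to\holim X_s\to\holim X'_s$), and combining with the Milnor sequences of $\{X_s\},\{X'_s\},\{F_s\}$ and the five-lemma produces the isomorphisms on $\lim$ and $\lim^1$ claimed in (b); the surjection on $\lim^1\pi_n$ is read off from the surjectivity hypothesis at $i=n$. For (c) and (d), pro-constancy of $\{\pi_nX_s\},\{\pi_nX'_s\},\{\pi_{n+1}X'_s\}$ forces the corresponding $\lim^1$ terms to vanish, since pro-constant pro-abelian groups have trivial $\lim^1$; feeding this vanishing into the same long exact sequence and Milnor sequences produces the one-degree improvement asserted.

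The principal obstacle is the second step, namely rigorously verifying the persistence of the long exact sequence from $E^1$ to each $E^r$. This is a standard application of the derived-exact-couple formalism to the short exact sequence of exact couples associated to the fiber sequence of towers, but it requires careful bookkeeping; an alternative is to bootstrap inductively across pages, exploiting the compatibility of differentials with $f$ to push iso/surj statements from $E^r$ to $E^{r+1}$ in the relevant bidegrees. The remainder of the proof consists of routine diagram chases combining the Milnor sequences with the known $\lim^1$ vanishing from pro-constancy.
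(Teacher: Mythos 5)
Your reduction to the tower of fibers $\{F_s\}$ is a natural first move, but the claim that the long exact sequence of $E^1$-pages persists to every $E^r$-page --- which you rightly flag as the principal obstacle --- is a genuine gap, and the suggested resolutions do not fill it. The fiber sequence of layer fibers $G^F_s\to G^X_s\to G^{X'}_s$ does produce a long exact sequence of $E^1$-pages, and the connecting maps commute with $d^1$, so the $E^1$-level data is a long exact sequence of chain complexes. But passing to $E^{r+1}=\ker d^r/\im d^r$ does not preserve the exactness of a three-term-periodic sequence: a \emph{long} exact sequence of chain complexes does not induce a long exact sequence in homology. The ``short exact sequence of exact couples'' framing doesn't apply either, since the fiber sequence gives a long exact, not short exact, sequence of the underlying groups, and derivation of exact couples involves subquotients which do not transport long exactness.

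There is also an internal inconsistency showing the route, even were the long exact sequence to persist, would prove too much. If $E^r\{F_s\}_{-s,s+i}=0$ for all $s$ and $i\leq n-1$, the connectivity lemma would make $\holim_sF_s$ $(n-1)$-connected, and the long exact sequence of the fibration $\holim F_s\to\holim X_s\to\holim X'_s$ would then make that map $n$-connected (in the paper's convention an $n$-connected map is one whose homotopy fiber is $(n-1)$-connected; see the proof of Theorem \ref{thm:estimating_connectivity_of_maps_in_tower_C_of_Y}). That is the conclusion of part (c), so the pro-constancy hypotheses would be superfluous. The obstruction being missed is the $\lim\nolimits^1\pi_{n+1}X'_s$ contribution to $\pi_n\holim X'_s$: the $E^r$-hypothesis reaches only $i=n$ and cannot control it, and the pro-constancy assumptions in (c), (d) are there precisely to kill that $\lim^1$-term. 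The salvageable part of your proposal is the bootstrap across pages: a direct diagram chase with the differentials $d^r$ (which commute with $f$) does show that the iso/surjection pattern on $E^r(f)$ propagates to $E^{r'}(f)$ for all $r'\geq r$. But that yields information about the \emph{map} on $E^{r'}$-pages, not vanishing of $E^{r'}\{F_s\}$, so it cannot be plugged into the connectivity lemma applied to the fiber tower; it must instead be combined with a direct $\lim/\lim^1$ analysis of the derived groups $\pi_iX_s^{(r)}$ as in the Bousfield-Kan references, bypassing the fiber tower.
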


\begin{proof}
This is the spectral algebra analog of the argument that underlies \cite[I.6.2]{Bousfield_Kan} and \cite[IV.5.1]{Bousfield_Kan}; it follows from \cite[III.2]{Bousfield_Kan} and \cite[IX]{Bousfield_Kan}.
\end{proof}

\section{Homotopy theory of $\K$-coalgebras}
\label{sec:homotopy_theory_K_coalgebras}

The purpose of this section is to setup, in the context of symmetric spectra used in this paper, the homotopy theory of $\K$-coalgebras developed in Arone-Ching \cite{Arone_Ching_classification}.

\begin{defn}
\label{defn:model_cat_coAlgK_language} A morphism in $\coAlgK$ is a \emph{cofibration} if the underlying morphism in $\AlgJ$ is a cofibration. An object $Y$ in $\coAlgK$ is \emph{cofibrant} if the unique map $\emptyset\rarrow Y$ in $\coAlgK$ is a cofibration.
\end{defn}

\begin{rem}
In $\coAlgK$ the initial object $\emptyset$ and the terminal object $*$ are isomorphic; here, the terminal object is the trivial $\K$-coalgebra with underlying object $*$. This follows from the basic assumption that $\capO[0]=*$.
\end{rem}

Recall that a morphism of $\K$-coalgebras from $Y$ to $Y'$ is a map $\function{f}{Y}{Y'}$ in $\Alg_J$ that makes the diagram
\begin{align}
\label{eq:commutative_square_defining_K_coalgebra_map}
\xymatrix{
  Y\ar[d]_-{f}\ar[r]^-{m} & \K Y\ar[d]^-{\id f}\\
  Y'\ar[r]_-{m} & \K Y'
}
\end{align}
in $\AlgJ$ commute. This motivates the following cosimplicial resolution of the set of $\K$-coalgebra maps from $Y$ to $Y'$.

\begin{defn}
\label{defn:derived_K_coalgebra_maps_second_attempt}
Let $Y,Y'$ be cofibrant $\K$-coalgebras. The cosimplicial object $\Hombold_\AlgJ(Y,\K^\bullet Y')$ in $\sSet$ looks like (showing only the coface maps)
\begin{align}
\label{eq:derived_K_coalgebra_maps_second_attempt}
\xymatrix{
  \Hombold_\AlgJ(Y,Y')\ar@<0.5ex>[r]^-{d^0}\ar@<-0.5ex>[r]_-{d^1} &
  \Hombold_\AlgJ(Y,\K Y')
  \ar@<1.0ex>[r]\ar[r]\ar@<-1.0ex>[r] &
  \Hombold_{\AlgJ}(Y,\K\K Y')\cdots
}
\end{align}
and is defined objectwise by $\Hombold_\AlgJ(Y,\K^\bullet Y')^n:=\Hombold_\AlgJ(Y,\K^n Y')$ with the obvious coface and codegeneracy maps induced by the comultiplication and coaction maps, and counit map, respectively; see, Arone-Ching \cite[1.3]{Arone_Ching_classification}.
\end{defn}

The basic idea is that in simplicial degree $0$, the maps $d^0,d^1$ in \eqref{eq:derived_K_coalgebra_maps_second_attempt} send $f$ to the right-hand and left-hand composites in diagram \eqref{eq:commutative_square_defining_K_coalgebra_map}, respectively, and that furthermore, the pair of maps $d^0,d^1$ naturally extend to a cosimplicial diagram; by construction, taking the limit recovers the set of $\K$-coalgebra maps from $Y$ to $Y'$ (Remark \ref{rem:intuition_behind_resolution}).

But there is a difficulty that arises in our context: $\K^nY'$ may not be fibrant in $\AlgJ$, and hence \eqref{eq:derived_K_coalgebra_maps_second_attempt} is not homotopically meaningful. This is easily corrected by appropriately ``fattening up'' the entries in the codomains without destroying the cosimplicial structure. The following can be thought of as a homotopically meaningful cosimplicial resolution of $\K$-coalgebra maps; it is obtained by appropriately modifying \eqref{eq:derived_K_coalgebra_maps_second_attempt} with the simplicial fibrant replacement monad $F$.

\begin{defn}
\label{defn:derived_K_coalgebra_maps}
Let $Y,Y'$ be cofibrant $\K$-coalgebras. The cosimplicial object $\Hombold_\AlgJ\bigl(Y,(F\K)^\bullet FY'\bigr)$ in $\sSet$ looks like (showing only the coface maps)
\begin{align*}
\xymatrix{
  \Hombold_\AlgJ(Y,FY')\ar@<0.5ex>[r]^-{d^0}\ar@<-0.5ex>[r]_-{d^1} &
  \Hombold_\AlgJ\bigl(Y,(F\K) FY'\bigr)
  \ar@<1.0ex>[r]\ar[r]\ar@<-1.0ex>[r] &
  \Hombold_{\AlgJ}\bigl(Y,(F\K)^2 FY'\bigr)\cdots
}
\end{align*}
and is defined objectwise by $\Hombold_\AlgJ\bigl(Y,(F\K)^\bullet FY'\bigr)^n:=\Hombold_\AlgJ\bigl(Y,(F\K)^n FY'\bigr)$ with the obvious coface and codegeneracy maps.
\end{defn}

These two resolutions can be compared as follows.

\begin{prop}
\label{prop:map_induced_by_unit_on_mapping_spaces}
Let $Y,Y'$ be cofibrant $\K$-coalgebras. The unit map $\function{\eta}{\id}{F}$ induces a well-defined map
\begin{align}
\label{eq:level_zero_of_mapping_space_resolutions}
\hom_\AlgJ(Y,\K^\bullet Y')\longrightarrow
&\hom_\AlgJ\bigl(Y,(F\K)^\bullet FY'\bigr)\\
\label{eq:mapping_space_resolutions}
\text{resp.}\quad
\Hombold_\AlgJ(Y,\K^\bullet Y')\longrightarrow
&\Hombold_\AlgJ\bigl(Y,(F\K)^\bullet FY'\bigr)
\end{align}
of cosimplicial objects in $\Set$ (resp. in $\sSet$), natural in all such $Y,Y'$. Note that \eqref{eq:level_zero_of_mapping_space_resolutions} is obtained from \eqref{eq:mapping_space_resolutions} by evaluating at simplicial degree $0$.
\end{prop}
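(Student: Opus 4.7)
The plan is to construct the maps \eqref{eq:level_zero_of_mapping_space_resolutions} and \eqref{eq:mapping_space_resolutions} levelwise and then verify cosimplicial compatibility via naturality of the unit $\eta\colon\id\rarrow F$. For each $n\geq 0$, the unit $\eta$ induces a natural map $\K^n Y'\rarrow (F\K)^n FY'$ in $\AlgJ$ by inserting one copy of $\eta$ at each of the $n+1$ positions (the $n$ instances of $\K$ and the coalgebra $Y'$); applying $\Hombold_\AlgJ(Y,-)$ gives the desired map at each cosimplicial level $n$.

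It remains to check that these levelwise maps commute with the coface and codegeneracy maps of both cosimplicial objects. The coface maps of $\Hombold_\AlgJ(Y,\K^\bullet Y')$ are built from: (i) the $\K$-coaction $m\colon Y\rarrow\K Y$ (contributing $d^0$ via pre-composition on the source), (ii) the comultiplication $m\colon\K\rarrow\K\K$ applied at internal positions, and (iii) the $\K$-coaction $m\colon Y'\rarrow\K Y'$ on the target (contributing the last coface map). The coface maps on $(F\K)^\bullet FY'$ are the evident analogues, built from these same structure maps of $\K, Y, Y'$ together with the unit and multiplication of $F$; this is precisely the structure that makes $\mathfrak{C}(Y)$ a well-defined cosimplicial $\capO$-algebra in Definition \ref{defn:cobar_construction}. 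In each case, the required commutativity reduces to a naturality square for $\eta$ against one of these structure maps, e.g., for the comultiplication piece, the square
\begin{align*}
\xymatrix{
  \K\K\ar[d]_-{\eta\K\cdot\eta}\ar@{<-}[r]^-{m} & \K\ar[d]^-{\eta}\\
  F\K F\K\ar@{<-}[r] & F\K
}
\end{align*}
commutes by naturality of $\eta$, and analogous squares handle the internal insertions and the boundary coaction contributions.

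The codegeneracy maps are handled identically: on the unfattened side they are induced by the counit $\varepsilon\colon\K\rarrow\id$, and on the fattened side by the evident analogue involving $F\varepsilon$ together with the multiplication $\mu\colon FF\rarrow F$; naturality of $\eta$ against $\varepsilon$, together with the unit axioms for the monad $F$, gives the needed commutativity. Naturality in $Y, Y'$ is immediate from the construction, and evaluating \eqref{eq:mapping_space_resolutions} at simplicial degree $0$ recovers \eqref{eq:level_zero_of_mapping_space_resolutions}, so it suffices to produce \eqref{eq:mapping_space_resolutions}. The principal bookkeeping obstacle is that $F\K$ is not itself a (co)monad, so the structure maps of the fattened cosimplicial object must be written out carefully in terms of the separate structure maps of $F$ and $\K$; once this is made explicit, every verification collapses to a naturality square for $\eta$, which commutes because $\eta$ is a natural transformation.
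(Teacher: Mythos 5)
The paper's own ``proof'' is a single sentence (``This is an exercise left to the reader''), so there is no official argument to compare against; your write-up supplies the intended content, and your plan is correct. You build the levelwise map $\K^n Y'\to(F\K)^n FY'$ by inserting $\eta$ at each of the $n+1$ positions and then check cosimplicial compatibility, which is the right decomposition. Two small points of precision are worth flagging. First, $d^0$ is not literally pre-composition with $m\colon Y\to\K Y$: it sends $f\colon Y\to\K^n Y'$ to $\K f\circ m_Y$, i.e., one must also apply $\K$ to $f$; your phrase ``pre-composition on the source'' elides this, though it does not affect the computation, since the needed identity $\eta_{\K^{n+1}Y'}\circ\K f=F\K f\circ\eta_{\K Y}$ is just naturality of $\eta$ at $\K f$. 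Second, the closing sentence ``every verification collapses to a naturality square for $\eta$'' slightly overstates the case: the codegeneracy compatibility, as you yourself note earlier, also uses naturality of the counit $\varepsilon$ and the monad unit axiom $\mu\circ F\eta=\id_F$ (for example, in degree $0$ one reduces $s^0\circ\alpha_1$ to $\eta_{Y'}\circ\varepsilon$ via exactly those two facts). You state these ingredients correctly in the body of the argument, so the proof is sound; just be careful not to oversell the ``one-size-fits-all naturality square'' slogan. Finally, the well-definedness of the multi-insertion map $\alpha_n$ (independence of the order in which the $n+1$ units are inserted) is itself a small naturality check worth mentioning explicitly, though it is routine.
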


\begin{proof}
This is an exercise left to the reader.
\end{proof}

Recall the usual notion of realization of a simplicial set, regarded as taking values in the category of compactly generated Hausdorff spaces, denoted $\CGHaus$ (e.g., Goerss-Jardine \cite{Goerss_Jardine}).

\begin{defn}
\label{defn:realization_sSet}
The \emph{realization} functor $|-|$ for simplicial sets is defined objectwise by the coend
\begin{align*}
  \function{|-|}{\sSet}{\CGHaus},\quad\quad
  X\mapsto X \times_{\Delta}\Delta^{(-)}
\end{align*}
Here, $\Delta^n$ in $\CGHaus$ denotes the topological standard $n$-simplex for each $n\geq 0$ (see Goerss-Jardine \cite[I.1.1]{Goerss_Jardine}).
\end{defn}

\begin{defn}
Let $X,Y$ be $\capO$-algebras. The mapping space $\Map_\AlgO(X,Y)$ in $\CGHaus$ is defined by realization
\begin{align*}
  \Map_{\AlgO}(X,Y)
  &:=|\Hombold_\AlgO(X,Y)|
\end{align*}
of the indicated simplicial set.
\end{defn}

The following definition of the mapping space of derived $\K$-coalgebra maps appears in Arone-Ching \cite[1.10]{Arone_Ching_classification} and is a key ingredient in both the statements and proofs of our main results. Even though the simplicial fibrant replacement monad $F$ was not required in  \cite{Arone_Ching_classification}, the arguments and constructions there easily carry over to our context, provided that we set things up appropriately.

\begin{defn}
Let $Y,Y'$ be cofibrant $\K$-coalgebras. The \emph{mapping spaces} of derived $\K$-coalgebra maps $\Hombold_{\coAlgK}(Y,Y')$ in $\sSet$ and $\Map_{\coAlgK}(Y,Y')$ in $\CGHaus$ are defined by the restricted totalizations
\begin{align*}
  \Hombold_{\coAlgK}(Y,Y')
  &:=\Tot^\res\Hombold_\AlgJ\bigl(Y,(F\K)^\bullet FY'\bigr)\\
  \Map_{\coAlgK}(Y,Y')
  &:=\Tot^\res\Map_\AlgJ\bigl(Y,(F\K)^\bullet FY'\bigr)
\end{align*}
of the indicated cosimplicial objects.
\end{defn}

\begin{rem}
\label{rem:intuition_behind_resolution}
To help understand why this is an appropriate notion of the space of derived $\K$-coalgebra maps in our setting, note that there are natural isomorphisms and natural zigzags of weak equivalences
\begin{align*}
  \hom_{\coAlgK}(Y,Y')&\Iso\lim_\Delta\hom_\AlgJ\bigl(Y,\K^\bullet Y'\bigr),\\
  \Hombold_{\coAlgK}(Y,Y')
  &\wequiv\holim_{\Delta}\Hombold_\AlgJ\bigl(Y,(F\K)^\bullet FY'\bigr),
\end{align*}
respectively.
\end{rem}

The following two propositions appear in Ogle \cite{Ogle}; they play a key role in this paper. We defer their proofs to Section \ref{sec:proofs}, which develops a concise line of argument suggested by Dwyer \cite{Dwyer}.

\begin{prop}
\label{prop:tot_commutes_with_realization}
If $X\in(\sSet)^{\Delta_\res}$ is objectwise fibrant and $Y\in(\sSet)^\Delta$ is Reedy fibrant, then the natural maps
\begin{align*}
  |\Tot^\res X|&\xrightarrow{\wequiv}\Tot^\res|X|\\
  |\Tot Y|&\xrightarrow{\wequiv}\Tot|Y|
\end{align*}
in $\CGHaus$ are weak equivalences.
\end{prop}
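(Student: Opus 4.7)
My plan is to prove both assertions by first establishing stagewise weak equivalences along the skeletal $\Tot$ and $\Tot^\res$ towers, and then passing to the inverse limit via a Milnor $\lim^1$ argument.

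First, I would show by induction on $s\geq 0$ that the natural comparison maps
\[
|\Tot_s Y|\longrightarrow\Tot_s|Y|,\qquad
|\Tot^\res_s X|\longrightarrow\Tot^\res_s|X|
\]
are weak equivalences in $\CGHaus$. The base case $s=0$ is trivial since $\Tot_0 Y\iso Y^0$ and $\Tot^\res_0 X\iso X^0$ by Proposition \ref{prop:adjunctions_stages_of_tot_towers}. For the inductive step I would invoke the pullback squares of Propositions \ref{prop:key_pullback_diagram_for_Tot_tower} and \ref{prop:key_pullback_diagram_for_restricted_Tot_tower}, which express $\Tot_s$ (resp. $\Tot^\res_s$) as a pullback whose right-hand vertical map is a Kan fibration because $Y$ is Reedy fibrant (resp. $X$ is objectwise fibrant). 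Two classical facts then combine: (i) realization $|-|\colon\sSet\rarrow\CGHaus$ preserves finite limits (Milnor / Gabriel--Zisman) and sends Kan fibrations to Serre fibrations; (ii) for a finite simplicial set $K$ and a Kan complex $W$, the canonical comparison $|\hombold_\sSet(K,W)|\rarrow\Map_{\CGHaus}(|K|,|W|)$ is a weak equivalence (the classical exponential correspondence). Together these identify, up to a natural zigzag of weak equivalences, the realization of the stage-$s$ pullback square with the corresponding pullback square in $\CGHaus$, and the inductive step follows by the usual gluing argument for pullbacks with a fibration leg.

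To pass to the inverse limit, I would use that, by Proposition \ref{prop:basic_properties_of_Tot_towers_etc}, both $\{\Tot_s Y\}$ and $\{\Tot^\res_s X\}$ are towers of Kan fibrations between Kan complexes. Applying realization yields towers of Serre fibrations between spaces of the homotopy type of CW complexes, and both $\pi_*|\Tot Y|$ and $\pi_*\Tot|Y|$ sit in compatible Milnor $\lim^1$ short exact sequences built from $\{\pi_*\Tot_s Y\}$ and $\{\pi_*\Tot_s|Y|\}$, respectively. Since the stagewise comparison maps are weak equivalences by the previous paragraph, they induce isomorphisms on the $\lim$ and $\lim^1$ terms, and a five lemma argument then forces $|\Tot Y|\rarrow\Tot|Y|$ to be a weak equivalence; the same argument handles the $\Tot^\res$ case.

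The main obstacle will be making the comparison maps $|\hombold_\sSet(K,W)|\rarrow\Map_{\CGHaus}(|K|,|W|)$ compatible across $s$ so that they assemble into an honest map of towers rather than a mere levelwise zigzag. The most direct remedy is to work throughout with a fixed enriched model of the topological internal hom and to exploit the symmetric monoidal structure of realization, so that the ends defining $\Tot$ and $\Tot^\res$ are preserved strictly at each finite stage; the tower-level weak equivalences then assemble tautologically, and the Milnor argument at the top of the tower goes through cleanly.
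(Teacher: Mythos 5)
Your approach is genuinely different from the paper's. You work up the skeletal $\Tot$-tower by induction, using the pullback squares of Propositions \ref{prop:key_pullback_diagram_for_Tot_tower} and \ref{prop:key_pullback_diagram_for_restricted_Tot_tower} together with the fact that $|\hombold_\sSet(K,W)|\rarrow\Map_{\CGHaus}(|K|,|W|)$ is a weak equivalence for $W$ Kan, and then pass to the limit via Milnor's $\lim^1$ sequence. The paper instead uses an adjunction trick: since $|-\times\Delta[-]|\iso|-|\times\Delta^{(-)}$, uniqueness of right adjoints gives $\Tot\Sing\Iso\Sing\Tot$; combined with the facts that $Y\rarrow\Sing|Y|$ is a weak equivalence between Reedy fibrant objects, that $\Tot$ preserves weak equivalences between Reedy fibrant objects, and that $|\Sing Z|\rarrow Z$ is a weak equivalence onto a fibrant target, the comparison $|\Tot Y|\rarrow\Tot|Y|$ is trapped inside a commuting square of weak equivalences and isomorphisms. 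The paper's route avoids the tower and the $\lim^1$ bookkeeping entirely; yours is longer but more explicit and more readily quantifiable (it also records the stagewise equivalences, which can be useful elsewhere). Both are correct proofs of the same statement.

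One caution about your final paragraph: the ``remedy'' you propose --- choosing models so that realization preserves the ends defining $\Tot_s$ ``strictly at each finite stage'' --- is not available. Realization is a left adjoint and does not preserve cotensors; already $|\hombold_\sSet(\Delta[s],W)|$ is not isomorphic to $\Map(|\Delta[s]|,|W|)$, only naturally weakly equivalent when $W$ is Kan, and $\Tot_s$ is built out of such cotensors. Fortunately the concern it was meant to address is illusory: the comparison maps $|\Tot_s Y|\rarrow\Tot_s|Y|$ are not ad hoc choices at each $s$ but are all induced, via the universal property of ends, from the single natural transformation $|\hombold_\sSet(K,W)|\rarrow\Map_{\CGHaus}(|K|,|W|)$ (equivalently, from the lax monoidal/simplicial structure of realization, as in Proposition \ref{prop:natural_tot_map_induced_by_simplicial_functor}). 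Naturality in $K$ and $W$ makes them automatically compatible with the maps $\Tot_s\rarrow\Tot_{s-1}$, so you get an honest map of towers with no further work, and the Milnor argument closes the proof as you intended.
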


\begin{prop}
\label{prop:holim_commutes_with_realization}
If $Y\in(\sSet)^\Delta$ is objectwise fibrant, then the natural map
\begin{align}
\label{eq:natural_map_realization_into_holim_nice}
  |\holim\nolimits^\BK_\Delta Y|\xrightarrow{\wequiv}
  \holim\nolimits^\BK_\Delta |Y|
\end{align}
in $\CGHaus$ is a weak equivalence.
\end{prop}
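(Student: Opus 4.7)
The plan is to reduce to Proposition \ref{prop:tot_commutes_with_realization} using the defining identity $\holim\nolimits^{\BK}_\Delta Y = \Tot\prod\nolimits^*_\Delta Y$. First, objectwise fibrancy of $Y$ implies via Proposition \ref{prop:full_subcategory_induced_map_on_cosimplicial_replacements} (applied with $\DD'=\DD=\Delta$) that the cosimplicial replacement $\prod\nolimits^*_\Delta Y$ is Reedy fibrant in $(\sSet)^\Delta$. Proposition \ref{prop:tot_commutes_with_realization} then produces a natural weak equivalence $|\Tot\prod\nolimits^*_\Delta Y| \xrightarrow{\wequiv} \Tot|\prod\nolimits^*_\Delta Y|$, and the natural map \eqref{eq:natural_map_realization_into_holim_nice} factors as
\[
  |\Tot\prod\nolimits^*_\Delta Y| \xrightarrow{\wequiv} \Tot|\prod\nolimits^*_\Delta Y| \xrightarrow{\Tot(\varphi)} \Tot\prod\nolimits^*_\Delta |Y|,
\]
where $\varphi\colon |\prod\nolimits^*_\Delta Y| \rarrow \prod\nolimits^*_\Delta |Y|$ is the natural map of cosimplicial spaces induced by commuting realization past the products in the cosimplicial replacement.

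It therefore suffices to show that $\Tot(\varphi)$ is a weak equivalence; by Proposition \ref{prop:basic_properties_of_Tot_towers_etc}(c) this reduces to checking that $\varphi$ is a Reedy weak equivalence between Reedy fibrant cosimplicial spaces. Reedy fibrancy of the target $\prod\nolimits^*_\Delta|Y|$ follows from Proposition \ref{prop:full_subcategory_induced_map_on_cosimplicial_replacements} applied to the objectwise fibrant diagram $|Y|$ in $(\CGHaus)^\Delta$, and Reedy fibrancy of the source $|\prod\nolimits^*_\Delta Y|$ follows from Reedy fibrancy of $\prod\nolimits^*_\Delta Y$ together with the fact that realization carries Kan fibrations between Kan complexes to Serre fibrations. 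At cosimplicial level $n$, the component $\varphi^n$ is the natural comparison map $|\prod_\alpha Y(a_n)| \rarrow \prod_\alpha |Y(a_n)|$ indexed over all $n$-chains $\alpha$ in $\Delta$.

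The main obstacle, which is the heart of the argument, is verifying that this level-wise comparison is a weak equivalence for arbitrary small products of Kan complexes; Milnor's theorem handles the case of finite products, but here the indexing set is infinite. Following the strategy suggested by Dwyer and exploited by Ogle, the plan is to check that $|\prod_\alpha K_\alpha| \rarrow \prod_\alpha |K_\alpha|$ induces an isomorphism on homotopy groups: the underlying continuous map is a bijection on points, and using that $\pi_n$ commutes with small products of pointed Kan complexes on both the simplicial-set and topological-space sides, one verifies the induced map on $\pi_*$ is an isomorphism. Once this level-wise equivalence is established, $\varphi$ is a Reedy weak equivalence and the factorization above composes to the desired weak equivalence.
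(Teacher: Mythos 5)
Your factorization of the map through $\Tot|\prod^*_\Delta Y| \to \Tot\prod^*_\Delta|Y|$ is exactly the paper's; the difference is in how you handle the second leg, and that is where the proposal goes wrong.

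A smaller issue first: your citation of Proposition \ref{prop:full_subcategory_induced_map_on_cosimplicial_replacements} with $\DD'=\DD=\Delta$ is vacuous --- it only says the identity on $\prod^*_\Delta Y$ is a Reedy fibration. To get Reedy fibrancy you want $\DD'=\emptyset$ (so the map is $*\leftarrow\prod^*_\Delta Y$), or simply cite Proposition \ref{prop:objectwise_fibrant_to_reedy_fibrant}, which is what the paper uses.

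The substantive gap is in your treatment of the levelwise comparison $\varphi^n\colon|\prod_\alpha K_\alpha|\rarrow\prod_\alpha|K_\alpha|$. The claim that this map ``is a bijection on points'' is false for infinite products. If the non-degenerate simplices of the $K_\alpha$ occur in unboundedly many dimensions (e.g.\ $K_\alpha=K$ for all $\alpha\in\mathbb{N}$, where $K$ is a Kan complex with a non-degenerate $n$-simplex for every $n$), then a tuple $(x_\alpha)_\alpha$ in which $x_\alpha$ lies in the interior of a non-degenerate $\alpha$-cell of $|K_\alpha|$ cannot lie in the image: every point of $|\prod_\alpha K_\alpha|$ is carried by a single simplex of some finite dimension $n$, forcing all coordinates into the $n$-skeleton. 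So the map is injective but not surjective on underlying sets. The homotopy-group strategy you sketch can in fact be salvaged (one needs to argue that every basepoint of $\prod_\alpha|K_\alpha|$ is path-connected to one in the image, which does hold, and then carefully track the compatibility of the identifications $\pi_n(|\prod K_\alpha|)\cong\prod\pi_n^{\mathrm{simp}}(K_\alpha)\cong\pi_n(\prod|K_\alpha|)$ with $\varphi^n$), but the proof as written contains a false step. You also misattribute: the ``Dwyer/Ogle strategy'' cited in the paper is not this direct homotopy-group argument but the slick diagram-chase through $\Sing$, used in the proof of Proposition \ref{prop:tot_commutes_with_realization} and invoked again (via Proposition \ref{prop:realization_commutes_with_cosimplicial_replacement_weak_equivalence}) exactly to prove $\varphi$ is a levelwise weak equivalence. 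The point of the $\Sing$ argument is precisely to avoid analyzing the comparison map for infinite products by hand; by uniqueness of right adjoints one shows $\prod^*\Sing Z\cong\Sing\prod^* Z$, and the Quillen-equivalence $|-|\dashv\Sing$ then delivers the weak equivalence without any discussion of points or topologies. Replacing your direct argument by a citation of Proposition \ref{prop:realization_commutes_with_cosimplicial_replacement_weak_equivalence} would recover the paper's proof.
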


The following corollary plays a key role in this paper.

\begin{prop}
Let $Y,Y'$ be cofibrant $\K$-coalgebras. Then the natural map
\begin{align*}
  |\Hombold_{\coAlgK}(Y,Y')|\xrightarrow{\wequiv}
  \Map_{\coAlgK}(Y,Y')
\end{align*}
is a weak equivalence.
\end{prop}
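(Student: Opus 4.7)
The proof is essentially a matter of unpacking the definitions and then applying Proposition \ref{prop:tot_commutes_with_realization}. By definition,
\begin{align*}
  \Map_{\coAlgK}(Y,Y') = \Tot^\res\Map_\AlgJ\bigl(Y,(F\K)^\bullet FY'\bigr)
  = \Tot^\res\bigl|\Hombold_\AlgJ\bigl(Y,(F\K)^\bullet FY'\bigr)\bigr|,
\end{align*}
where in the second equality we have applied the definition of the mapping space $\Map_\AlgJ$ objectwise in the restricted cosimplicial direction. Meanwhile,
\begin{align*}
  \bigl|\Hombold_{\coAlgK}(Y,Y')\bigr| = \bigl|\Tot^\res\Hombold_\AlgJ\bigl(Y,(F\K)^\bullet FY'\bigr)\bigr|.
\end{align*}
So the natural map in question is exactly the natural comparison map $|\Tot^\res X|\rarrow\Tot^\res|X|$ for the restricted cosimplicial simplicial set $X=\Hombold_\AlgJ\bigl(Y,(F\K)^\bullet FY'\bigr)$.

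The plan is therefore to apply the first part of Proposition \ref{prop:tot_commutes_with_realization}; all that remains is to verify that $X$ is objectwise fibrant as a diagram in $\sSet$, i.e., that $\Hombold_\AlgJ\bigl(Y,(F\K)^n FY'\bigr)$ is a Kan complex for every $n\geq 0$. For this, note that by Definition \ref{defn:model_cat_coAlgK_language}, the assumption that $Y$ is a cofibrant $\K$-coalgebra means that $Y$ is cofibrant in $\AlgJ$. On the other hand, since $F$ denotes the simplicial fibrant replacement monad on $\AlgJ$, the outermost functor applied to $Y'$ in $(F\K)^n FY'$ is $F$, so $(F\K)^n FY'$ is fibrant in $\AlgJ$ for every $n\geq 0$ (including $n=0$, where it is just $FY'$).

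Since $\AlgJ$ is a simplicial model category, the axiom SM7 implies that $\Hombold_\AlgJ(Y, (F\K)^n FY')$ is a Kan complex for every $n\geq 0$. Thus the cosimplicial object $X$ satisfies the hypothesis of Proposition \ref{prop:tot_commutes_with_realization}, which gives the desired weak equivalence. No step should present a serious obstacle here; the only mild care required is to track that the $F$ placed outermost in $(F\K)^n FY'$ really does guarantee fibrancy of the codomains, so that objectwise fibrancy of the mapping space cosimplicial diagram holds.
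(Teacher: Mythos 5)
Your proof is correct and takes the same route as the paper's (which simply cites Proposition \ref{prop:tot_commutes_with_realization}); you have filled in the definitional unwinding and the objectwise fibrancy check (via cofibrancy of $Y$ in $\AlgJ$, fibrancy of $(F\K)^n FY'$ coming from the outermost $F$, and SM7) that the paper leaves implicit.
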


\begin{proof}
This follows from Proposition \ref{prop:tot_commutes_with_realization}.
\end{proof}

The following provides a useful language for working with the spaces of derived $\K$-coalgebra maps; compare, Arone-Ching \cite[1.11]{Arone_Ching_classification}.

\begin{defn}
\label{defn:derived_K_coalgebra_map}
Let $Y,Y'$ be cofibrant $\K$-coalgebras. A \emph{derived $\K$-coalgebra map} $f$ of the form $Y\rarrow Y'$ is any map in $(\sSet)^{\Delta_\res}$ of the form
\begin{align*}
  \functionlong{f}{\Delta[-]}{\Hombold_\AlgJ\bigl(Y,(F\K)^\bullet FY'\bigr)}.
\end{align*}
A \emph{topological derived $\K$-coalgebra map} $g$ of the form $Y\rarrow Y'$ is any map in $(\CGHaus)^{\Delta_\res}$ of the form
\begin{align*}
  \functionlong{g}{\Delta^\bullet}{\Map_\AlgJ\bigl(Y,(F\K)^\bullet FY'\bigr)}.
\end{align*}
The \emph{underlying map} of a derived $\K$-coalgebra map $f$ is the map $\function{f_0}{Y}{FY'}$ that corresponds to the map $\function{f_0}{\Delta[0]}{\Hombold_\AlgJ(Y,FY')}$. Note that every derived $\K$-coalgebra map $f$ determines a topological derived $\K$-coalgebra map $|f|$ by realization.
\end{defn}

\begin{rem}
If $X,Y\in(\sSet)^\Delta$ their \emph{box product} $X\square Y\in(\sSet)^\Delta$ is the left Kan extension of objectwise product along ordinal sum (or concatenation)
\begin{align}
\label{eq:box_product_left_kan_extension}
\xymatrix{
  \Delta\times\Delta
  \ar[r]^-{X\times Y}\ar[d]^{\amalg} &
  \sSet\times\sSet\ar[r]^-{\times} & \sSet \\
  \Delta\ar[rr]^{X\square Y}_{\text{left Kan extension}} & & \sSet
}
\end{align}
and if $X,Y\in(\CGHaus)^\Delta$ their box product $X\square Y\in(\CGHaus)^\Delta$ is defined similarly by replacing $\sSet$ with $\CGHaus$ in \eqref{eq:box_product_left_kan_extension}. In particular, $(X\square Y)^0\Iso X^0\times Y^0$; see Arone-Ching \cite{Arone_Ching_classification} for an explicit calculation of $(X\square Y)^n$. A useful introduction and discussion of the box product is given in  McClure-Smith \cite{McClure_Smith_cosimplicial_objects}; it is a key ingredient in their proof of the Deligne Conjecture in \cite{McClure_Smith_solution}. The original source for this notion of a monoidal product on cosimplicial objects is Batanin \cite[3.2]{Batanin_coherent}; a dual version of the construction appears in Artin-Mazur \cite[III]{Artin_Mazur} for bisimplicial sets.
\end{rem}

\begin{prop}
\label{prop:composition_map}
Let $Y,Y',Y''$ be cofibrant $\K$-coalgebras. There is a natural map of the form
\begin{align*}
\xymatrix{
  \Hombold_\AlgJ\bigl(Y,(F\K)^\bullet FY'\bigr)\square
  \Hombold_\AlgJ\bigl(Y',(F\K)^\bullet FY''\bigr)\ar[d]^-{\mu}\\
  \Hombold_\AlgJ\bigl(Y,(F\K)^\bullet FY''\bigr)
}
\end{align*}
in $(\sSet)^\Delta$. We sometimes refer to $\mu$ as the \emph{composition} map.
\end{prop}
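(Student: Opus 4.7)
The plan is to exploit the universal property of the box product. Since $X \square Y$ is the left Kan extension of $X \times Y$ along the ordinal-sum functor $\amalg \colon \Delta \times \Delta \to \Delta$ (see diagram \eqref{eq:box_product_left_kan_extension}), to give a morphism $\mu \colon A \square B \to C$ in $(\sSet)^\Delta$ is the same data as a family of maps
\begin{align*}
\mu_{m,n}\colon A^m \times B^n \longrightarrow C^{m+n}
\end{align*}
natural in $([m],[n]) \in \Delta \times \Delta$. In our setting, unpacking the definitions, this amounts to constructing for every $m, n \geq 0$ a map
\begin{align*}
\Hombold_\AlgJ\bigl(Y, (F\K)^m FY'\bigr) \times \Hombold_\AlgJ\bigl(Y', (F\K)^n FY''\bigr) \longrightarrow \Hombold_\AlgJ\bigl(Y, (F\K)^{m+n} FY''\bigr)
\end{align*}
of simplicial sets, suitably natural in $[m]$ and $[n]$.

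First I would define $\mu_{m,n}$ on pairs of morphisms $(f, g)$ in $\AlgJ$ by the composite
\begin{align*}
Y \xrightarrow{f} (F\K)^m FY' \xrightarrow{(F\K)^m F(g)} (F\K)^m F (F\K)^n FY'' \xrightarrow{\nu} (F\K)^{m+n} FY''
\end{align*}
where $\nu$ is the natural transformation induced by the monad multiplication $FF \to F$ applied to the unique adjacent $FF$-pair inside $(F\K)^m F (F\K)^n F = (F\K)^m F F \K (F\K)^{n-1} F$; the degenerate cases $n = 0$ and $m = 0$ are handled by the same formula (collapsing $FF$ at the end, respectively at the start). Because $F$ is a simplicial functor by Proposition \ref{prop:useful_properties_of_the_adjunction} and the monad structure maps on $F$ are simplicial natural transformations by Proposition \ref{prop:unit_and_counit_are_simplicial}, this pointwise recipe promotes to a morphism at the level of simplicial mapping spaces, giving the required $\mu_{m,n}$.

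The remaining work is the verification of bi-cosimplicial naturality. The coface maps on $\Hombold_\AlgJ\bigl(Y, (F\K)^\bullet FZ\bigr)$ insert an $F\K$-factor using either the $\K$-coaction on $Z$ at the end, the comultiplication of $\K$ in the middle, or (combined with the unit of $F$) the coaction at the start, while codegeneracies collapse an $F\K$-factor via the counit of $\K$. For codegeneracies and for interior coface insertions, compatibility with $\nu$ is routine from the monad/comonad axioms. I expect the main obstacle to be the boundary coface maps at the interface between $(F\K)^m F$ and $F (F\K)^n F$, since insertions there sit precisely at the site of the $FF$-collapse encoded by $\nu$; handling these requires the observation that, inside the left Kan extension $A \square B$, the two candidate coface maps $A^m \times B^n \to A^{m+1} \times B^n \to C^{m+n+1}$ and $A^m \times B^n \to A^m \times B^{n+1} \to C^{m+n+1}$ become identified. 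The verification ultimately comes down to associativity of $FF \to F$ together with the compatibility of the $F$-unit with the bar-type cosimplicial structure on $(F\K)^\bullet FZ$, in the same spirit as the standard identification of the two-sided bar construction with a monoidal coend, now performed in the dual (cobar) setting.
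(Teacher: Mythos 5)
Your proposal is correct and takes essentially the same approach as the paper: the paper defines $\mu$ by the same collection of composites $\mu_{p,q}(f,g) = (\text{collapse } FF \to F) \circ (F\K)^p F(g) \circ f$ indexed by $(p,q) \in \Delta \times \Delta$, and then invokes the left Kan extension property of the box product (with the detailed bicosimplicial compatibility check deferred to Arone--Ching \cite[1.6]{Arone_Ching_classification}).

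One small correction: your citations for the simpliciality of $F$ are misplaced. Propositions \ref{prop:useful_properties_of_the_adjunction} and \ref{prop:unit_and_counit_are_simplicial} concern the change-of-operads adjunction $(f_*, f^*)$ and hence justify that $\K = QU = g_* g^*$ is a simplicial functor and that its comonad structure maps are simplicial natural transformations. The fact that $F$ itself is a simplicial functor and that $\eta\colon\id\to F$, $m\colon FF\to F$ are simplicial natural transformations is part of what it means for $F$ to be a \emph{simplicial} fibrant replacement monad, and is supplied directly by Blumberg--Riehl \cite[6.1]{Blumberg_Riehl}, not by the propositions you cite. To promote your pointwise recipe to simplicial mapping spaces you need both inputs (for $\K$ from the propositions, for $F$ from Blumberg--Riehl), since the cosimplicial functor $(F\K)^p F$ is a composite involving both.
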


\begin{proof}
This is proved exactly as in \cite[1.6]{Arone_Ching_classification}; $\mu$ is the map induced by the collection of composites
\begin{align*}
\xymatrix{
  \Hombold_\AlgJ\bigl(Y,(F\K)^p FY'\bigr)\times
  \Hombold_\AlgJ\bigl(Y',(F\K)^q FY''\bigr)\ar[d]^-{\id\times(F\K)^p F}\\
  \Hombold_\AlgJ\bigl(Y,(F\K)^p FY'\bigr)\times
  \Hombold_\AlgJ\bigl((F\K)^p FY',(F\K)^p F(F\K)^q FY''\bigr)\ar[d]^-{\mathrm{comp}}\\
  \Hombold_\AlgJ\bigl(Y,(F\K)^p F(F\K)^q FY''\bigr)\ar[d]^-{\wequiv}\\
  \Hombold_\AlgJ\bigl(Y,(F\K)^{p+q} FY''\bigr)
}
\end{align*}
where $p,q\geq 0$; here, the indicated weak equivalence is the map induced by multiplication $FF\rarrow F$ of the simplicial fibrant replacement monad.
\end{proof}

\begin{prop}
\label{prop:realization_commutes_with_box_product}
Let $A,B\in(\sSet)^\Delta$. There is a natural isomorphism of the form
$
  |A\square B|\Iso|A|\square|B|
$
in $(\CGHaus)^\Delta$.
\end{prop}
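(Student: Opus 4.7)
The plan is to argue pointwise at each cosimplicial level, using only that realization is a left adjoint and that it commutes with finite products in $\CGHaus$. First, I would unwind the definition of $A\square B$ as a left Kan extension: by the standard pointwise formula, for each $[n]\in\Delta$,
\begin{align*}
  (A\square B)^n \Iso \colim_{([p],[q],\xi)\in(\amalg\downarrow [n])} A^p\times B^q,
\end{align*}
where the colimit is taken over the comma category whose objects are triples $([p],[q],\xi)$ with $\xi\colon [p]\amalg[q]\rightarrow [n]$ a morphism in $\Delta$.

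Next, I would apply realization levelwise. Since $|\cdot|\colon\sSet\rightarrow\CGHaus$ is a left adjoint (to the singular functor $\Sing$), it preserves all small colimits. Moreover, it is a classical fact that realization preserves finite products when the target is taken in the category $\CGHaus$ of compactly generated Hausdorff spaces; see, e.g., Goerss-Jardine \cite[I.2.4]{Goerss_Jardine}. Combining these gives a natural chain of isomorphisms
\begin{align*}
  |(A\square B)^n|
  &\Iso \colim_{([p],[q],\xi)} |A^p\times B^q|\\
  &\Iso \colim_{([p],[q],\xi)} |A^p|\times|B^q|
  \Iso (|A|\square|B|)^n,
\end{align*}
where the last isomorphism invokes the same pointwise Kan extension formula applied to $|A|$ and $|B|$ in $(\CGHaus)^\Delta$.

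Finally, I would verify that this collection of levelwise isomorphisms assembles into an isomorphism of cosimplicial topological spaces, natural in $A$ and $B$. This is automatic from the universal property of the left Kan extension: the cosimplicial structure maps on both sides are induced by postcomposition with maps in $\Delta$ at the indexing category, and the isomorphisms above are constructed entirely from the universal property and from the product and colimit preservation of $|\cdot|$, all of which are natural constructions. Naturality in $A$ and $B$ likewise follows from functoriality of the Kan extension. I do not anticipate a genuine obstacle here; the only point that deserves emphasis is the use of $\CGHaus$ rather than an arbitrary category of topological spaces, which is what makes the product-preservation of realization hold without additional hypotheses.
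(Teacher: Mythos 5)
Your proof is correct and takes essentially the same approach as the paper. The paper's proof is the one-line remark ``This follows from the fact that realization commutes with finite products and all small colimits,'' and you have simply unwound that statement through the pointwise left Kan extension formula for $\square$, which is exactly the right elaboration.
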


\begin{proof}
This follows from the fact that realization commutes with finite products and all small colimits.
\end{proof}

\begin{prop}
\label{prop:composition_map_topological}
Let $Y,Y',Y''$ be cofibrant $\K$-coalgebras. There is a natural map of the form
\begin{align*}
\xymatrix{
  \Map_\AlgJ\bigl(Y,(F\K)^\bullet FY'\bigr)\square
  \Map_\AlgJ\bigl(Y',(F\K)^\bullet FY''\bigr)\ar[d]^-{\mu}\\
  \Map_\AlgJ\bigl(Y,(F\K)^\bullet FY''\bigr)
}
\end{align*}
in $(\CGHaus)^\Delta$. We sometimes refer to $\mu$ as the \emph{composition} map.
\end{prop}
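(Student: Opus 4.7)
The plan is to obtain this topological composition map as the realization of the simplicial composition map constructed in Proposition \ref{prop:composition_map}, using Proposition \ref{prop:realization_commutes_with_box_product} to identify the realization of a box product with the box product of realizations.

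First, I would start from the simplicial composition map
\begin{align*}
\xymatrix{
  \Hombold_\AlgJ\bigl(Y,(F\K)^\bullet FY'\bigr)\square
  \Hombold_\AlgJ\bigl(Y',(F\K)^\bullet FY''\bigr)\ar[d]^-{\mu}\\
  \Hombold_\AlgJ\bigl(Y,(F\K)^\bullet FY''\bigr)
}
\end{align*}
in $(\sSet)^\Delta$ provided by Proposition \ref{prop:composition_map}. Applying the realization functor $|-|\colon\thinspace\sSet\rightarrow\CGHaus$ objectwise produces a natural map in $(\CGHaus)^\Delta$ of the form
\begin{align*}
  \bigl|\Hombold_\AlgJ\bigl(Y,(F\K)^\bullet FY'\bigr)\square
  \Hombold_\AlgJ\bigl(Y',(F\K)^\bullet FY''\bigr)\bigr|
  \xrightarrow{|\mu|}
  \bigl|\Hombold_\AlgJ\bigl(Y,(F\K)^\bullet FY''\bigr)\bigr|.
\end{align*}
By Proposition \ref{prop:realization_commutes_with_box_product}, the domain is naturally isomorphic to
\begin{align*}
  \bigl|\Hombold_\AlgJ\bigl(Y,(F\K)^\bullet FY'\bigr)\bigr|
  \square
  \bigl|\Hombold_\AlgJ\bigl(Y',(F\K)^\bullet FY''\bigr)\bigr|.
\end{align*}
Using the definition $\Map_\AlgJ(-,-):=|\Hombold_\AlgJ(-,-)|$, the resulting map is precisely the desired natural composition map
\begin{align*}
  \Map_\AlgJ\bigl(Y,(F\K)^\bullet FY'\bigr)\square
  \Map_\AlgJ\bigl(Y',(F\K)^\bullet FY''\bigr)
  \xrightarrow{\mu}
  \Map_\AlgJ\bigl(Y,(F\K)^\bullet FY''\bigr)
\end{align*}
in $(\CGHaus)^\Delta$.

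Since every step is obtained by applying a functor (realization) to a previously constructed natural map, naturality in $Y,Y',Y''$ is inherited from Proposition \ref{prop:composition_map}, and no further argument is required. There is no serious obstacle here; the only thing to keep track of is the coherence of the natural isomorphism of Proposition \ref{prop:realization_commutes_with_box_product} at each cosimplicial level, which is automatic from the fact that realization preserves finite products and all small colimits and that the box product is defined via a left Kan extension along ordinal sum (see diagram \eqref{eq:box_product_left_kan_extension}). Thus the proposition follows by direct comparison with Propositions \ref{prop:composition_map} and \ref{prop:realization_commutes_with_box_product}.
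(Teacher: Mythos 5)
Your proof is correct and takes precisely the same approach as the paper: apply realization to the simplicial composition map of Proposition \ref{prop:composition_map} and use Proposition \ref{prop:realization_commutes_with_box_product} to identify the realization of the box product with the box product of realizations. The paper states this in one sentence; you have simply spelled out the same argument in more detail.
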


\begin{proof}
This follows from Proposition \ref{prop:composition_map} by applying realization, together with Proposition \ref{prop:realization_commutes_with_box_product}.
\end{proof}

\begin{prop}
\label{prop:coaugmentation_by_a_point}
Let $Y$ be a cofibrant $\K$-coalgebra. There is a coaugmentation map $*\rarrow\Hombold_\AlgJ(Y,\K^\bullet Y)$ of the form (showing only the coface maps)
\begin{align*}
\xymatrix{
  {*}\ar[r]^-{d^0} &
  \Hombold_\AlgJ(Y,Y)\ar@<0.5ex>[r]^-{d^0}\ar@<-0.5ex>[r]_-{d^1} &
  \Hombold_\AlgJ(Y,\K Y)\cdots
}
\end{align*}
Here, the left-hand map picks out the identity map $Y\xrightarrow{\id}Y$ in simplicial degree $0$.
\end{prop}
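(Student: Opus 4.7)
The plan is to exhibit the coaugmentation by picking out the identity map $\id_Y$ in simplicial degree $0$, and then to check the required cosimplicial compatibility. Since the source $*$ is a constant cosimplicial object, a map of cosimplicial simplicial sets $*\rarrow\Hombold_\AlgJ(Y,\K^\bullet Y)$ is equivalent to the data of a single element $\alpha\in\Hombold_\AlgJ(Y,Y)$ satisfying $d^0(\alpha)=d^1(\alpha)$ in $\Hombold_\AlgJ(Y,\K Y)$ (this is Proposition \ref{prop:lim_of_cosimplicial_object} applied to $\Hombold_\AlgJ(Y,\K^\bullet Y)$); all higher coface and codegeneracy compatibilities then follow automatically from the cosimplicial identities.

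First I would recall the formulas for the two coface maps in simplicial degree $0$. As explained in the discussion just after diagram \eqref{eq:commutative_square_defining_K_coalgebra_map}, the coface maps
\[
  d^0,d^1\colon\thinspace\Hombold_\AlgJ(Y,Y)\rarrow\Hombold_\AlgJ(Y,\K Y)
\]
send $\function{f}{Y}{Y}$ to the two composites in the defining square of a $\K$-coalgebra map; concretely, $d^0(f)=m\circ f$ and $d^1(f)=\K(f)\circ m$, where $\function{m}{Y}{\K Y}$ denotes the $\K$-coaction on $Y$. Setting $\alpha:=\id_Y$, the computation is immediate:
\[
  d^0(\id_Y)=m\circ\id_Y=m,\qquad
  d^1(\id_Y)=\K(\id_Y)\circ m=\id_{\K Y}\circ m=m,
\]
so both maps agree on $\id_Y$. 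This produces the claimed coaugmentation; at simplicial level $n$ the distinguished element is the iterated coaction $Y\rarrow\K^n Y$, built from $m$ and the comultiplication $\K\rarrow\K\K$, which is independent of the chosen sequence of coface maps precisely because of the cosimplicial relations.

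There is essentially no obstacle to this argument: the compatibility with the codegeneracy maps (built from the counit $\varepsilon\colon\K\rarrow\id$) reduces to the counit axiom $\varepsilon\circ m=\id_Y$ for the $\K$-coaction, while compatibility with the remaining cofaces is the coassociativity axiom for the coaction. Put differently, the coaugmentation is simply a repackaging of the defining $\K$-coalgebra axioms of $Y$ in cosimplicial form; the only content of the proof is to name the element $\id_Y$ in degree $0$ and carry out the two-line check above.
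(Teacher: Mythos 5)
Your proposal is correct and takes essentially the same approach as the paper: the paper's entire proof is the one-line observation that $d^0d^0=d^1d^0$, and you have simply spelled out the verification that both coface maps send $\id_Y$ to the coaction $m\colon Y\rarrow\K Y$. One small slip in exposition: the paper's convention (stated just after diagram \eqref{eq:commutative_square_defining_K_coalgebra_map}) is $d^0(f)=\K(f)\circ m$ (the right-hand composite) and $d^1(f)=m\circ f$ (the left-hand composite), which is the reverse of your labeling; but since both evaluate to $m$ at $f=\id_Y$, this does not affect the argument.
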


\begin{proof}
It suffices to observe that $d^0d^0=d^1d^0$.
\end{proof}

\begin{defn}
Let $Y$ be a cofibrant $\K$-coalgebra. The \emph{unit map} $\iota$ (compare, \cite[1.6]{Arone_Ching_classification}) is the composite
\begin{align}
\label{eq:unit_map_for_operad_action}
  *\rightarrow\Map_\AlgJ(Y,\K^\bullet Y)\rightarrow\Map_\AlgJ(Y,(F\K)^\bullet FY)
\end{align}
in $(\CGHaus)^\Delta$. Here, the left-hand map is realization of the coaugmentation in Proposition \ref{prop:coaugmentation_by_a_point}, and the right-hand map is realization of the natural map \eqref{eq:mapping_space_resolutions}.
\end{defn}

\begin{defn}
The non-$\Sigma$ operad $\mathsf{A}$ in $\CGHaus$ is the coendomorphism operad of $\Delta^\bullet$ with respect to the box product $\square$ (\cite[1.12]{Arone_Ching_classification}) and is defined objectwise by the end construction
\begin{align*}
  \mathsf{A}(n):=
  \Map_{\Delta_\res}\bigl(\Delta^\bullet,(\Delta^\bullet)^{\square n}\bigr)
  :=\Map\bigl(\Delta^\bullet,(\Delta^\bullet)^{\square n}\bigr)^{\Delta_\res}
\end{align*}
In other words, $\mathsf{A}(n)$ is the space of restricted cosimplicial maps from $\Delta^\bullet$ to $(\Delta^\bullet)^{\square n}$; in particular, note that $\mathsf{A}(0)=*$.
\end{defn}

Consider the natural collection of maps (\cite[1.13]{Arone_Ching_classification})
\begin{align}
\label{eq:A_infinity_composition_maps}
  \mathsf{A}(n)\times
  \Map_\coAlgK(Y_0,Y_1)\times\cdots\times
  \Map_\coAlgK(Y_{n-1},Y_n)\\
  \notag
  \longrightarrow
  \Map_\coAlgK(Y_0,Y_n),\quad\quad
  n\geq 0,
\end{align}
induced by (iterations of) the composition map $\mu$ (Proposition \ref{prop:composition_map_topological}); in particular, in the case $n=0$, note that \eqref{eq:A_infinity_composition_maps} denotes the map
\begin{align*}
  *=\mathsf{A}(0)&\longrightarrow\Map_\coAlgK(Y_0,Y_0),\quad\quad n=0,
\end{align*}
that is $\Tot^\res$ applied to the unit map \eqref{eq:unit_map_for_operad_action}.

\begin{prop}
The collection of maps \eqref{eq:A_infinity_composition_maps} determine a topological $A_\infty$ category with objects the cofibrant $\K$-coalgebras and morphism spaces the mapping spaces $\Map_\coAlgK(Y,Y')$.
\end{prop}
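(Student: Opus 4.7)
The plan is to verify directly the three conditions required for \eqref{eq:A_infinity_composition_maps} to constitute a topological $A_\infty$-category structure: (i) $\mathsf{A}$ is a non-$\Sigma$ operad in $\CGHaus$; (ii) the maps \eqref{eq:A_infinity_composition_maps} give an operadic action of $\mathsf{A}$ on the collection of mapping spaces $\Map_\coAlgK(Y,Y')$; and (iii) each space $\mathsf{A}(n)$ is contractible, so that $\mathsf{A}$ is an $A_\infty$ operad.

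For (i), I would appeal to the general coendomorphism construction. Since the box product $\square$ gives a (non-symmetric) monoidal structure on restricted cosimplicial objects in the sense of McClure-Smith \cite{McClure_Smith_cosimplicial_objects} and Batanin \cite{Batanin_coherent}, with monoidal unit the constant cosimplicial object at a point, the operad composition in $\mathsf{A}$ assembles formally by taking box products of restricted cosimplicial maps $\Delta^\bullet \rarrow (\Delta^\bullet)^{\square k_i}$ and pre-composing with an element of $\mathsf{A}(n)$; the identity of $\Delta^\bullet$ in $\mathsf{A}(1)$ provides the operadic unit, and the associativity and unit axioms follow immediately from those of $\square$.

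For (ii), I would unwind the construction: an element of $\mathsf{A}(n)$ is by definition a restricted cosimplicial map $\alpha\colon \Delta^\bullet \rarrow (\Delta^\bullet)^{\square n}$, and an element of $\Map_\coAlgK(Y_{i-1},Y_i)$ is a restricted cosimplicial map $f_i\colon \Delta^\bullet \rarrow \Map_\AlgJ(Y_{i-1},(F\K)^\bullet FY_i)$. The action map \eqref{eq:A_infinity_composition_maps} then sends $(\alpha,f_1,\dots,f_n)$ to the restricted cosimplicial map obtained by taking the external box product of the $f_i$, post-composing with the iterated composition map $\mu$ of Proposition \ref{prop:composition_map_topological}, and pre-composing with $\alpha$. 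The operadic associativity of \eqref{eq:A_infinity_composition_maps} reduces to the strict associativity of $\mu$ (which descends from the strict associativity of composition of underlying maps in $\AlgJ$, coherently with the multiplication of the fibrant replacement monad $F$), and the unit axiom is the statement that the point $\mathsf{A}(0)=*$ is sent to the identity of $Y$ via the unit map \eqref{eq:unit_map_for_operad_action}; these are diagrammatic verifications.

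The main obstacle is step (iii), the contractibility of each $\mathsf{A}(n)$. Since $\mathsf{A}$ depends only on the cosimplicial object $\Delta^\bullet$ and the box product---independently of our $\K$-coalgebra setting---this is the same contractibility statement already established by Arone-Ching \cite{Arone_Ching_classification}, whose argument uses the explicit McClure-Smith formula for $(\Delta^\bullet)^{\square n}$ together with a subdivision argument to show that the space of restricted cosimplicial maps $\Delta^\bullet\rarrow(\Delta^\bullet)^{\square n}$ deformation retracts onto a point. Their argument applies verbatim, completing the verification that $\mathsf{A}$ is an $A_\infty$ operad acting on the mapping spaces $\Map_\coAlgK(Y,Y')$.
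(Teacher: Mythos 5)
Your proposal is correct and takes essentially the same approach as the paper, which simply cites the analogous argument in Arone-Ching \cite[1.14]{Arone_Ching_classification}; you have unwound that citation into its constituent pieces, correctly identifying the formal parts (the coendomorphism-operad structure on $\mathsf{A}$, the operadic action via the box product and the composition map $\mu$, with strict associativity preserved by the monad multiplication $FF\rarrow F$) and the one substantive input (contractibility of each $\mathsf{A}(n)$), for which you appropriately defer to Arone-Ching.
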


\begin{proof}
This is proved exactly as in Arone-Ching \cite[1.14]{Arone_Ching_classification}.
\end{proof}

\begin{defn}
The \emph{homotopy category} of $\K$-coalgebras (compare, \cite[1.15]{Arone_Ching_classification}), denoted $\Ho(\coAlgK)$, is the category with objects the cofibrant $\K$-coalgebras and morphism sets $[X,Y]_\K$ from $X$ to $Y$ the path components
\begin{align*}
  [X,Y]_\K := \pi_0\Map_\coAlgK(X,Y)
\end{align*}
of the indicated mapping spaces.
\end{defn}

\begin{prop}
Let $X,Y$ be cofibrant $\K$-coalgebras. There is a natural map of morphism spaces of the form
\begin{align}
\label{eq:hom_space_comparison_map}
  \hom_\coAlgK(X,Y)\rarrow\Map_\coAlgK(X,Y)
\end{align}
\end{prop}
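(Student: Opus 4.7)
The plan is to construct \eqref{eq:hom_space_comparison_map} by combining Proposition \ref{prop:map_induced_by_unit_on_mapping_spaces} with the identification of $\hom_\coAlgK(X,Y)$ as a categorical equalizer, together with the observation that $\Tot^\res$ reduces to $\lim_{\Delta_\res}$ on discrete (restricted) cosimplicial spaces. First I would observe the natural inclusion
$$
\hom_\AlgJ\bigl(X,(F\K)^\bullet FY\bigr)\hookrightarrow
\Map_\AlgJ\bigl(X,(F\K)^\bullet FY\bigr)
$$
of restricted cosimplicial objects, where the source is regarded as a cosimplicial discrete space (each $\hom$-set sits inside the corresponding mapping space as its $0$-simplices). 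Since $\Tot^\res$ of a discrete restricted cosimplicial space agrees with $\lim_{\Delta_\res}$ of the underlying set-valued diagram, applying $\Tot^\res$ yields a natural map
$$
\lim_{\Delta_\res}\hom_\AlgJ\bigl(X,(F\K)^\bullet FY\bigr)\longrightarrow\Map_\coAlgK(X,Y).
$$

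Next I would precompose with the map of (restricted) cosimplicial sets provided by Proposition \ref{prop:map_induced_by_unit_on_mapping_spaces},
$$
\hom_\AlgJ(X,\K^\bullet Y)\longrightarrow\hom_\AlgJ\bigl(X,(F\K)^\bullet FY\bigr),
$$
and then identify the source with $\hom_\coAlgK(X,Y)$. By Proposition \ref{prop:lim_of_cosimplicial_object}, $\lim_{\Delta_\res}$ (equivalently $\lim_\Delta$) of the cosimplicial set $\hom_\AlgJ(X,\K^\bullet Y)$ is computed as the equalizer of its two coface maps $d^0, d^1$; unpacking Definition \ref{defn:derived_K_coalgebra_maps_second_attempt}, this equalizer is precisely the set of $J$-algebra maps $f\colon X\rarrow Y$ making the square \eqref{eq:commutative_square_defining_K_coalgebra_map} commute, i.e., $\hom_\coAlgK(X,Y)$. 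Composing the three natural maps then produces \eqref{eq:hom_space_comparison_map}.

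Naturality of the composite in $X$ and $Y$ follows at once from the naturality of each ingredient: the unit $\eta\colon\id\rarrow F$ of the simplicial fibrant replacement monad, the discrete inclusion of sets into mapping spaces, and the $\lim$/$\Tot^\res$ constructions. There is no genuine obstacle here; the content of the proposition is essentially formal once the cosimplicial resolutions of Definitions \ref{defn:derived_K_coalgebra_maps_second_attempt} and \ref{defn:derived_K_coalgebra_maps} are in place. Conceptually, the map records that every strict $\K$-coalgebra morphism, in the sense of diagram \eqref{eq:commutative_square_defining_K_coalgebra_map}, is canonically a derived $\K$-coalgebra map (Definition \ref{defn:derived_K_coalgebra_map}) whose underlying map is $\eta\circ f\colon X\rarrow FY$ and whose higher coherences are the constant maps out of $\Delta[n]$ supplied by the cosimplicial codegeneracies.
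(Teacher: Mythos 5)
Your proof is correct and takes essentially the same route as the paper: both factor through the identification $\hom_\coAlgK(X,Y)\iso\lim\hom_\AlgJ(X,\K^\bullet Y)$, use the unit-map comparison of Proposition \ref{prop:map_induced_by_unit_on_mapping_spaces}, include discrete $\hom$-sets into mapping spaces, and pass from $\lim_{\Delta_\res}$ to $\Tot^\res$. The only cosmetic difference is the order in which the $F$-fattening and the $\hom\hookrightarrow\Map$ inclusion are applied (you fatten first at the $\lim$ level and exploit $\lim_{\Delta_\res}\iso\Tot^\res$ for discrete diagrams, whereas the paper includes into $\Map$ first and then fattens after $\Tot^\res$), but these commute and yield the same natural map.
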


\begin{proof}
This is the composite of natural maps
\begin{align*}
  \hom_\coAlgK(X,Y)
  \Iso&\lim_\Delta\hom_\AlgJ(X,\K^\bullet Y)
  \rightarrow\lim_\Delta\Map_\AlgJ(X,\K^\bullet Y)\\
  \Iso&\lim_{\Delta_\res}\Map_\AlgJ(X,\K^\bullet Y)
  \rightarrow\Tot^\res\Map_\AlgJ(X,\K^\bullet Y)\\
  \rightarrow&\Tot^\res\Map_\AlgJ(X,(F\K)^\bullet FY)
  \Equal\Map_\coAlgK(X,Y)
\end{align*}
in $\CGHaus$.
\end{proof}

\begin{prop}
Let $X,Y$ be cofibrant $\K$-coalgebras. There is a natural map of morphism sets of the form
\begin{align}
\label{eq:hom_set_comparison_map}
  \hom_\coAlgK(X,Y)\rarrow [X,Y]_\K
\end{align}
\end{prop}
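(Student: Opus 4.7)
The plan is to construct the map in \eqref{eq:hom_set_comparison_map} as a direct consequence of the previous proposition, by post-composing the map \eqref{eq:hom_space_comparison_map} of morphism spaces with the canonical quotient map to path components. More precisely, I would define the desired natural map as the composite
\begin{align*}
  \hom_\coAlgK(X,Y)\longrightarrow\Map_\coAlgK(X,Y)\longrightarrow\pi_0\Map_\coAlgK(X,Y)\Equal [X,Y]_\K
\end{align*}
where the left-hand map is the natural comparison \eqref{eq:hom_space_comparison_map} and the right-hand map is the canonical quotient to the set of path components. Here the leftmost object is a set regarded as a discrete space in $\CGHaus$, so that the first map is interpreted as a continuous map from a discrete space.

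The main thing to check is naturality in $X$ and $Y$. Naturality of the first map was established in the previous proposition, and naturality of the second follows from functoriality of $\pi_0\colon\thinspace\CGHaus\rightarrow\Set$; hence the composite is natural in both variables.

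There is no real obstacle here: the proposition is essentially a matter of packaging. The only small subtlety worth noting is the interpretation of the set $\hom_\coAlgK(X,Y)$ as a $0$-dimensional object of $\CGHaus$ in order for the composition above to make sense in the topological category. Equivalently, one may observe that each element of $\hom_\coAlgK(X,Y)$ determines, via the simplicial $0$-simplices of $\Hombold_\AlgJ(X,(F\K)^\bullet FY)$ and the restricted totalization construction, a canonical point in $\Map_\coAlgK(X,Y)$ and hence a well-defined path component. This gives the map \eqref{eq:hom_set_comparison_map} on underlying sets, and naturality is inherited directly from the constructions used to build \eqref{eq:hom_space_comparison_map}.
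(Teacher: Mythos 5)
Your argument is the same as the paper's: apply $\pi_0$ to the comparison map \eqref{eq:hom_space_comparison_map} of morphism spaces, which is precisely what the paper does in one line. The extra remarks on viewing $\hom_\coAlgK(X,Y)$ as a discrete space and on naturality are harmless elaborations but add nothing beyond the paper's proof.
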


\begin{proof}
This follows by applying the path components functor $\pi_0$ to \eqref{eq:hom_space_comparison_map}.
\end{proof}

\begin{prop}
There is a well-defined functor
\begin{align*}
  \function{\gamma}{\coAlg_K|_\mathrm{cof}}{\Ho(\coAlgK)}
\end{align*}
that is the identity on objects and is the map \eqref{eq:hom_set_comparison_map} on morphisms; here, $\coAlg_K|_\mathrm{cof}\subset\coAlgK$ denotes the full subcategory of cofibrant $\K$-coalgebras.
\end{prop}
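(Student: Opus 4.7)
The plan is to verify the two functoriality axioms: that $\gamma$ preserves identities and composition.

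First I would verify preservation of identities. By construction of the $A_\infty$ composition \eqref{eq:A_infinity_composition_maps}, the identity of $X$ in $\Ho(\coAlgK)$ is the path component of the image of the unique point of $\mathsf{A}(0)=*$ under the unit map $\iota$ of \eqref{eq:unit_map_for_operad_action}. Unfolding this, $\iota$ is the realization of $\Tot^\res$ applied to the composite
\begin{align*}
  *\rightarrow\Hombold_\AlgJ(X,\K^\bullet X)\rightarrow\Hombold_\AlgJ(X,(F\K)^\bullet FX),
\end{align*}
whose first factor is the coaugmentation of Proposition \ref{prop:coaugmentation_by_a_point} and whose second is \eqref{eq:mapping_space_resolutions}. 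Under the natural isomorphism $\hom_\coAlgK(X,X)\cong\lim_\Delta\hom_\AlgJ(X,\K^\bullet X)$, the element $\id_X$ corresponds to the compatible family of iterated $\K$-coactions on $X$, which is precisely the image of the unique point of $*$ under the set-level coaugmentation of Proposition \ref{prop:coaugmentation_by_a_point} at the level of the limit. Since \eqref{eq:hom_space_comparison_map} is built from $\lim_\Delta\hookrightarrow\Tot^\res$ followed by the natural map \eqref{eq:mapping_space_resolutions}, the image of $\id_X$ under \eqref{eq:hom_space_comparison_map} agrees with $\iota$, and applying $\pi_0$ gives $\gamma(\id_X)=\id_X$ in $[X,X]_\K$.

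Next I would verify preservation of composition. Fix $f\in\hom_\coAlgK(X,Y)$ and $g\in\hom_\coAlgK(Y,Z)$. I must show $\gamma(g\circ f)=\gamma(g)\circ\gamma(f)$ in $[X,Z]_\K$, where the right-hand composition is the $A_\infty$ composition \eqref{eq:A_infinity_composition_maps} via any chosen $\phi\in\mathsf{A}(2)$; this composition is independent of $\phi$ on $\pi_0$ because $\mathsf{A}(2)$ is path-connected (established in \cite{Arone_Ching_classification}). The strategy is to construct a pre-$F$ composition map
\begin{align*}
  \nu\colon\Hombold_\AlgJ(X,\K^\bullet Y)\square\Hombold_\AlgJ(Y,\K^\bullet Z)\longrightarrow\Hombold_\AlgJ(X,\K^\bullet Z)
\end{align*}
in $(\sSet)^\Delta$, whose $(p,q)$-component is induced, exactly as in the proof of Proposition \ref{prop:composition_map}, by applying $\K^p$ to the second factor followed by $\AlgJ$-composition. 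The plan is then (a) to verify that $\nu$ and $\mu$ fit into a natural commutative square via the vertical maps \eqref{eq:mapping_space_resolutions}, which reduces to the monad unit axioms for $F$ applied pointwise, and (b) to observe that the restriction of $\nu$ to $\lim_\Delta$ recovers the strict composition of $\K$-coalgebra maps under the identification $\hom_\coAlgK(-,-)\cong\lim_\Delta\hom_\AlgJ(-,\K^\bullet -)$. Applying $\Tot^\res$, realization, and $\pi_0$ to (a) and (b) then yields $\gamma(g\circ f)=\gamma(g)\circ\gamma(f)$.

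The hard part will be (a): cleanly verifying the compatibility square between $\nu$ and $\mu$. Tracking how the unit and multiplication of the simplicial fibrant replacement monad $F$ interact with iterated applications of $\K$ and with the box-product bookkeeping will require a careful diagram chase that leverages the monad axioms for $F$ together with the naturality of its structure maps with respect to the comonad $\K$.
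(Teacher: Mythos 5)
Your proposal reconstructs the argument that the paper simply defers to Arone-Ching \cite[1.14]{Arone_Ching_classification}, and the approach is correct; your step (a) is precisely the extra content needed in the present setting, where the simplicial fibrant replacement monad $F$ has been inserted, and you rightly locate the work in the monad unit axioms and naturality of $\eta$.

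One place to tighten is the final sentence: the $A_\infty$ composite $\gamma(g)\circ\gamma(f)$ is not obtained by applying $\Tot^\res$ to the square in (a) — it involves a choice of $\phi\in\mathsf{A}(2)$ and precomposition $\Delta^\bullet\xrightarrow{\phi}(\Delta^\bullet)^{\square 2}$ before $\gamma'(f)\square\gamma'(g)$ and $\mu$. What closes the gap is an observation your plan leaves implicit: the map \eqref{eq:hom_space_comparison_map} sends a strict $\K$-coalgebra map to an element of $\Tot^\res\Map_\AlgJ(-,(F\K)^\bullet F-)$ that lies in the image of $\lim_{\Delta_\res}$, i.e., the corresponding map of restricted cosimplicial spaces factors through the constant object $\ast$. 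Since $\ast\square\ast\cong\ast$, the composite $\Delta^\bullet\xrightarrow{\phi}(\Delta^\bullet)^{\square 2}\to\Map\square\Map\xrightarrow{\mu}\Map$ likewise factors through $\ast$, hence also lies in the image of $\lim_{\Delta_\res}$ and is determined by its value in cosimplicial degree $0$, where $\phi^0$ is forced. At degree $0$ your square (a) together with (b) gives exactly $m\circ F(\eta_Z\circ g)\circ\eta_Y\circ f=\eta_Z\circ g\circ f$, which is the degree-$0$ value of $\gamma'(g\circ f)$, so the two elements of $\Tot^\res$ agree on the nose, and $\gamma(g\circ f)=\gamma(g)\circ\gamma(f)$ follows after $\pi_0$. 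With this point spelled out, the proof is complete and matches the paper's intended argument.
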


\begin{proof}
This is proved exactly as in \cite[1.14]{Arone_Ching_classification}.
\end{proof}

\begin{defn}
\label{defn:weak_equivalence_of_K_coalgebras}
A derived $\K$-coalgebra map $f$ of the form $X\rarrow Y$ is a \emph{weak equivalence} if the underlying map $\function{f_0}{X}{FY}$ is a weak equivalence.
\end{defn}

The following verifies that this definition is homotopically meaningful.

\begin{prop}
\label{prop:weak_equivalence_if_and_only_if_iso_in_homotopy_category}
Let $X,Y$ be cofibrant $\K$-coalgebras. A derived $\K$-coalgebra map $f$ of the form $X\rarrow Y$ is a weak equivalence if and only if the induced map $\gamma(f)$ in $[X,Y]_\K$ is an isomorphism in the homotopy category of $\K$-coalgebras.
\end{prop}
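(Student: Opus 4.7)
The plan is to prove the two implications separately, using the $A_\infty$ category structure from \eqref{eq:A_infinity_composition_maps} together with the homotopy invariance of $\Tot^\res$ under objectwise weak equivalences between objectwise fibrant restricted cosimplicial diagrams (the restricted analog of Proposition \ref{prop:basic_properties_of_Tot_towers_etc}(d)).

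For the forward direction, assume the underlying map $f_0 \colon X \to FY$ is a weak equivalence in $\AlgJ$. The key step is to show that post-composition with $f$ via the $A_\infty$ structure induces a weak equivalence $f_* \colon \Map_\coAlgK(Z, X) \xrightarrow{\wequiv} \Map_\coAlgK(Z, Y)$ for every cofibrant $\K$-coalgebra $Z$. Since $\Map_\coAlgK(Z,-) = \Tot^\res \Map_\AlgJ(Z, (F\K)^\bullet F(-))$, I would verify this level-by-level: the functor $F$ preserves all weak equivalences (as a fibrant replacement), $\K$ preserves weak equivalences between fibrant objects (via the Quillen adjunction $(Q, U)$ and the fact that $U$ preserves weak equivalences), and $\Map_\AlgJ(Z, -)$ preserves weak equivalences between fibrant objects for cofibrant $Z$; combining these with the $\Tot^\res$-invariance result gives that $f_*$ is a weak equivalence. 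Applying $\pi_0$ shows $\gamma(f)$ induces bijections $[Z, X]_\K \to [Z, Y]_\K$ for every cofibrant $Z$. Taking $Z = Y$ produces a derived $\K$-coalgebra map $g \colon Y \to X$ with $\gamma(f)\gamma(g) = \gamma(\id_Y)$; applying the bijection $f_* \colon [X, X]_\K \to [X, Y]_\K$ to the identity $f \circ g \circ f \sim f \sim f \circ \id_X$ then forces $\gamma(g)\gamma(f) = \gamma(\id_X)$, so $\gamma(f)$ is an isomorphism.

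For the reverse direction, suppose $\gamma(f)$ has an inverse $\gamma(g)$ for some derived $\K$-coalgebra map $g \colon Y \to X$. Then $g \circ f$ and $\id_X$ lie in the same path component of $\Map_\coAlgK(X, X)$; projecting to the $0$-th cosimplicial level, their underlying maps become homotopic in $\Map_\AlgJ(X, FX)$. Unwinding the composition at simplicial degree $0$ from Proposition \ref{prop:composition_map}, the underlying map of $g \circ f$ is the composite $X \xrightarrow{f_0} FY \xrightarrow{Fg_0} FFX \xrightarrow{m} FX$, where $m$ is the multiplication of the simplicial fibrant replacement monad; this is homotopic to $\eta_X \colon X \to FX$, which is itself a weak equivalence. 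Since $m$ is also a weak equivalence (by two-out-of-three applied to $m \circ \eta_{FX} = \id_{FX}$ together with $\eta_{FX}$ being a weak equivalence), it follows that $Fg_0 \circ f_0$ is a weak equivalence. Symmetrically $Ff_0 \circ g_0$ is a weak equivalence, and applying $F$ yields that $FFf_0 \circ Fg_0$ is a weak equivalence as well. The two-out-of-six property applied to the composable triple $f_0, Fg_0, FFf_0$ in the model category $\AlgJ$ then forces $f_0$ itself to be a weak equivalence.

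The main obstacle is the homotopical bookkeeping in the reverse direction: one must correctly identify the underlying map of an $A_\infty$-composition (which is twisted by the multiplication $m$ of the fibrant replacement monad rather than being the naive composite $g_0 \circ f_0$), verify that $m$ and $\eta$ are weak equivalences in the appropriate sense, and invoke the two-out-of-six property to extract invertibility of $f_0$ from the weak-equivalence data of the two-sided inverse relations. In the forward direction, the corresponding care needed is to confirm that the functor $(F\K)^n F$ preserves the relevant weak equivalences at each cosimplicial level $n$, so that the cosimplicial comparison becomes objectwise a weak equivalence between objectwise fibrant restricted cosimplicial diagrams before $\Tot^\res$ is applied.
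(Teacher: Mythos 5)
Your reverse direction is correct. You correctly unwind the degree-$0$ part of the $A_\infty$ composition from Proposition \ref{prop:composition_map} to see that the underlying map of $g\circ f$ is the \emph{twisted} composite $m\circ Fg_0\circ f_0$ (not the naive $g_0\circ f_0$), you correctly observe that $m\colon FF\to F$ and $\eta$ are weak equivalences, and your application of two-out-of-six to the composable triple $f_0,\ Fg_0,\ FFf_0$ (using both $Fg_0\circ f_0\simeq\eta_X$ and $FFf_0\circ Fg_0$ weak equivalences) is exactly the right way to extract invertibility of $f_0$. The projection $\Map_\coAlgK(X,X)\to\Map_\AlgJ(X,FX)$ to the bottom stage of the $\Tot^\res$ tower does send path components to path components, so passing from $g\circ f\simeq\id_X$ in $\pi_0\Map_\coAlgK$ to a homotopy $m\circ Fg_0\circ f_0\simeq\eta_X$ in $\Map_\AlgJ(X,FX)$ is legitimate.

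The forward direction has a genuine gap. You propose to verify that
\[
  f_*\colon\thinspace\Map_\coAlgK(Z,X)\longrightarrow\Map_\coAlgK(Z,Y)
\]
is a weak equivalence ``level-by-level'' in the cosimplicial direction, invoking that $F$, $\K$, and $\Map_\AlgJ(Z,-)$ preserve weak equivalences in the relevant senses together with the $\Tot^\res$-invariance lemma. But this argument would require $f$ to induce an \emph{objectwise} map of restricted cosimplicial objects
\[
  \Map_\AlgJ\bigl(Z,(F\K)^\bullet FX\bigr)\longrightarrow\Map_\AlgJ\bigl(Z,(F\K)^\bullet FY\bigr),
\]
i.e., a compatible system of maps $(F\K)^nFX\to(F\K)^nFY$ commuting with all coface and codegeneracy maps. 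The only candidate, built from $f_0\colon X\to FY$ via functoriality of $(F\K)^nF$ and the multiplication $m\colon FF\to F$, does \emph{not} commute with the coface maps coming from the $\K$-coaction, precisely because $f_0$ is not a strict $\K$-coalgebra map. The actual map $f_*$ is instead obtained from the box-product composition $\mu\colon A\square B\to C$ of Proposition \ref{prop:composition_map}, a fixed point $\alpha$ of the contractible space $\mathsf{A}(2)=\Map_{\Delta_\res}\bigl(\Delta^\bullet,\Delta^\bullet\square\Delta^\bullet\bigr)$, and the point $f\in\Tot^\res B$; this construction genuinely mixes cosimplicial levels through the box product and cannot be reduced to an objectwise comparison followed by $\Tot^\res$-invariance. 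To close the gap you would need a different argument for the forward direction---for instance, a comparison of the $\Tot^\res$ towers using the skeletal filtration together with the compatibility of the box product with that filtration, showing that $f_*$ induces weak equivalences on each layer of the tower---rather than a direct level-by-level check.
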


\begin{proof}
The is proved exactly as in Arone-Ching \cite[1.16]{Arone_Ching_classification}.
\end{proof}

\subsection{Derived counit map}

The basic idea behind Definition \ref{defn:derived_counit_map} (see also Remark \ref{rem:homotopical_analog_of_the_usual_counit_map}) is to look for a naturally occurring derived $\K$-coalgebra map of the form $\LL Q\holim_\Delta C(Y)\rightarrow Y$. The construction follows immediately from the observation that there are natural zigzags of weak equivalences
\begin{align*}
  \holim_\Delta C(Y)\wequiv
  \holim_\Delta\mathfrak{C}(Y)
  \wequiv\Tot^\res\mathfrak{C}(Y)
\end{align*}
of $\capO$-algebras.

\begin{defn}
Denote by $c$ the simplicial cofibrant replacement functor on $\AlgO$ (Farjoun \cite[I.C.11]{Farjoun_LNM}, Rezk-Schwede-Shipley \cite[6.3]{Rezk_Schwede_Shipley}).
\end{defn}

\begin{defn}
\label{defn:derived_counit_map}
The \emph{derived counit map} associated to the fundamental adjunction \eqref{eq:fundamental_adjunction_comparing_AlgO_with_coAlgK} is the derived $\K$-coalgebra map of the form
\begin{align}
\label{eq:derived_counit_map_of_the_form}
  Qc\Tot^\res\mathfrak{C}(Y)\longrightarrow Y
\end{align}
corresponding to the composite
\begin{align}
\label{eq:derived_identity_map}
  c\Tot^\res\mathfrak{C}(Y)\xrightarrow{\wequiv}
  \id\Tot^\res\mathfrak{C}(Y)\xrightarrow{\id}
  \Tot^\res\mathfrak{C}(Y)
\end{align}
in $\AlgO$, via the adjunctions \eqref{eqref:tot_adjunctions} and \eqref{eq:tensordot_adjunction_isomorphisms}; this composite can be thought of as a ``fattened up'' version of the identity map on $\Tot^\res\mathfrak{C}(Y)$ (see Remark \ref{rem:homotopical_analog_of_the_usual_counit_map}). Here, the indicated weak equivalence is the augmentation of the simplicial cofibrant replacement functor $c$. More precisely, the derived counit map is the derived $\K$-coalgebra map defined by the composite
\begin{align}
\label{eq:derived_counit_map}
  \Delta[-]\xrightarrow{(*)}
  &\Hombold_\AlgO\bigl(c\Tot^\res\mathfrak{C}(Y),\mathfrak{C}(Y)\bigr)\\
  \notag
  \Iso
  &\Hombold_\AlgJ\bigl(Qc\Tot^\res\mathfrak{C}(Y),(F\K)^\bullet FY\bigr)
\end{align}
in $(\sSet)^{\Delta_\res}$, where $(*)$ corresponds to the composite \eqref{eq:derived_identity_map} in $\AlgO$, via the adjunctions \eqref{eqref:tot_adjunctions} and \eqref{eq:tensordot_adjunction_isomorphisms}. In other words, \eqref{eq:derived_counit_map_of_the_form} is a derived $\K$-coalgebra map of the form
\begin{align*}
  \LL Q\bigl(\holim\nolimits_\Delta C(Y)\bigr)\longrightarrow Y
\end{align*}
that represents the derived counit map associated to the adjunction \eqref{eq:fundamental_adjunction_comparing_AlgO_with_coAlgK}.
\end{defn}

\begin{rem}
\label{rem:homotopical_analog_of_the_usual_counit_map}
Note that the derived counit map in Definition \ref{defn:derived_counit_map} can be thought of as a homotopical analog of the counit map $Q\lim_\Delta C(Y)\rarrow Y$ in $\coAlgK$, that corresponds to the identity map $\function{\id}{\lim_\Delta C(Y)}{\lim_\Delta C(Y)}$ in $\AlgO$ by \eqref{eq:fundamental_adjunction_comparing_AlgO_with_coAlgK}; compare with \eqref{eq:derived_identity_map}.
\end{rem}

\subsection{Mapping spaces of $\capO$-algebras and $\K$-coalgebras}

\begin{prop}
\label{prop:induced_map_on_mapping_spaces_built_from_Q}
Let $X,Y$ be cofibrant $\capO$-algebras.
There are natural morphisms of mapping spaces of the form
\begin{align*}
  Q\colon\thinspace\Hombold_\AlgO(X,Y)\rarrow&\Hombold_\coAlgK(QX,QY),\\
  Q\colon\thinspace\Map_\AlgO(X,Y)\rarrow&\Map_\coAlgK(QX,QY),
\end{align*}
in $\sSet$ and $\CGHaus$, respectively.
\end{prop}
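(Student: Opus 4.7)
The plan is to build the map $Q$ on mapping spaces in two stages: first use that $Q = g_*$ is a simplicial functor to produce a map into $\Hombold_\AlgJ(QX, QY)$, then use the canonical $\K$-coalgebra structure on $QY$ together with the coaugmentation of the cosimplicial cobar construction to factor through $\Hombold_\coAlgK(QX, QY)$.

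For the first stage, Proposition \ref{prop:useful_properties_of_the_adjunction}(e) asserts that $Q = g_*$ is a simplicial functor with respect to the simplicial structure on $\AlgO$ and $\AlgJ$ from Section \ref{sec:simplicial_structures}. Since a simplicial functor automatically induces maps on mapping spaces, I get a natural map
\begin{align*}
Q\colon \Hombold_\AlgO(X,Y) \longrightarrow \Hombold_\AlgJ(QX,QY)
\end{align*}
in $\sSet$, and by applying realization (Definition \ref{defn:realization_sSet}) a corresponding natural map $Q\colon \Map_\AlgO(X,Y) \to \Map_\AlgJ(QX,QY)$ in $\CGHaus$. Note that $QX$ and $QY$ are cofibrant in $\AlgJ$ since $Q$ is a left Quillen adjoint and $X,Y$ are cofibrant, so $QX, QY$ are cofibrant as $\K$-coalgebras in the sense of Definition \ref{defn:model_cat_coAlgK_language}.

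For the second stage, the factorization of adjunctions in \eqref{eq:factorization_of_adjunctions_TQ_homology} naturally equips $QY$ with a $\K$-coalgebra structure $\function{m}{QY}{\K QY}$. The associated cosimplicial cobar construction $\K^\bullet QY$ in $(\AlgJ)^\Delta$ carries a coaugmentation by $QY$ itself (sending $QY$ via $m$ into level zero), and composing with the unit $\function{\eta}{\id}{F}$ of the simplicial fibrant replacement monad produces a coaugmentation $QY \to (F\K)^\bullet FQY$ in $(\AlgJ)^\Delta$. Applying the contravariant functor $\Hombold_\AlgJ(QX,-)$ (resp.\ $\Map_\AlgJ(QX,-)$) turns this into a natural map from the constant (restricted) cosimplicial object $\Hombold_\AlgJ(QX,QY)$ into $\Hombold_\AlgJ(QX,(F\K)^\bullet FQY)$ (resp.\ the analogous map in $\CGHaus$).

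To finish, I translate this map of (restricted) cosimplicial (pointed or topological) spaces through the adjunction $(-\tensordot\Delta[-],\Tot^\res)$ of Proposition \ref{prop:tot_adjunctions}: a map from a constant restricted cosimplicial object $A$ into a restricted cosimplicial object $X^\bullet$ corresponds under this adjunction to a map $A \tensordot \Delta[-] \to X^\bullet$, and hence to a map $A \to \Tot^\res X^\bullet$ via the unit of the adjunction. This yields natural maps
\begin{align*}
\Hombold_\AlgJ(QX,QY) &\longrightarrow \Tot^\res \Hombold_\AlgJ(QX,(F\K)^\bullet FQY) = \Hombold_\coAlgK(QX,QY), \\
\Map_\AlgJ(QX,QY) &\longrightarrow \Tot^\res \Map_\AlgJ(QX,(F\K)^\bullet FQY) = \Map_\coAlgK(QX,QY),
\end{align*}
and composing with the maps from Step 1 produces the desired $Q$ on mapping spaces. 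The only non-formal input in this plan is the simplicial functoriality of $Q$ from Proposition \ref{prop:useful_properties_of_the_adjunction}(e); everything else is a routine manipulation of adjunctions, the universal property of $\Tot^\res$, and the compatibility of coface maps in the cobar construction with its coaugmentation. The main (modest) thing to verify is precisely this last compatibility — that the levelwise maps produced from the coaugmentation respect the cosimplicial identities — which follows directly from the cosimplicial structure encoded in $(F\K)^\bullet FQY$.
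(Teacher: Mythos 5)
Your Stage 1 is fine, but Stage 2 contains a genuine gap, and it is the crux of the whole construction. The issue is that $(F\K)^\bullet FQY$ is \emph{not} naturally a cosimplicial (or even restricted cosimplicial) object in $\AlgJ$, so there is no coaugmentation $QY \to (F\K)^\bullet FQY$ in $(\AlgJ)^{\Delta_\res}$ to which one could apply $\Hombold_\AlgJ(QX,-)$. At cosimplicial level $0$ the object is $FQY$, and a coaugmentation would require two natural maps $d^0,d^1\colon FQY\to (F\K)FQY$ with the candidate coaugmentation equalizing them; but the comonad $\K$ has no unit, so there is only the one natural map built from the coaction $m_{QY}$ (plus $\eta_F$). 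The missing leftmost coface only exists after applying either $U$ (where it comes from the right $\K$-coaction $m\colon U\to U\K$) or $\Hombold_\AlgJ(QX,-)$ (where it comes from \emph{pre}-composition with the $\K$-coalgebra structure $m_{QX}$ on $QX$). Your phrase ``sending $QY$ via $m$ into level zero'' also does not typecheck under either reading of ``$\K^\bullet QY$'', which is a symptom of this underlying problem.

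Even granting the weaker claim you actually need --- a map of restricted cosimplicial objects from the constant object on $\Hombold_\AlgJ(QX,QY)$ to $\Hombold_\AlgJ(QX,(F\K)^\bullet FQY)$ obtained by post-composition with $\eta_F\colon QY\to FQY$ --- the required compatibility fails. Transporting the coface maps across the isomorphism $\Hombold_\AlgJ(QX,(F\K)^\bullet FQY)\cong\Hombold_\AlgO(X,U(F\K)^\bullet FQY)$ of Proposition \ref{prop:cosimplicial_resolutions_of_K_coalgebras_respect_adjunction_isos}, one finds $d^0(g)=F\K(g)\circ(\eta_F\circ m_{QX})$ (pre-composition) while $d^1(g)=(F\K\eta_F\circ Fm_{QY})\circ g$ (post-composition). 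The equalizer condition $d^0(\eta_F\circ h)=d^1(\eta_F\circ h)$ for $h\in\Hombold_\AlgJ(QX,QY)$ reduces, after naturality of $\eta_F$, to $\K h\circ m_{QX}=m_{QY}\circ h$; i.e., it would force every $J$-algebra map $h\colon QX\to QY$ to be a strict $\K$-coalgebra map, which is false. So the proposed map $\Hombold_\AlgJ(QX,QY)\to\Hombold_\coAlgK(QX,QY)$ does not exist, and your factorization through $\Hombold_\AlgJ(QX,QY)$ cannot be carried out in the way you describe. The paper avoids this by staying on the $\AlgO$ side: it applies $\Hombold_\AlgO(X,-)$ to the genuine coaugmentation $Y\to\mathfrak{C}(QY)$ of cosimplicial $\capO$-algebras (where all cofaces of the mapping-space object really are post-compositions), and only at the very end invokes the isomorphism $\Tot^\res\Hombold_\AlgO(X,\mathfrak{C}(QY))\cong\Hombold_\coAlgK(QX,QY)$; it never needs the simplicial functoriality of $Q$ as an intermediate step.
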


\begin{proof}
In the first case, this is the composite
\begin{align*}
  \Hombold_\AlgO(X,Y)\Iso
  \lim_{\Delta_\res}\Hombold_\AlgO(X,Y)\rightarrow
  \Tot^\res\Hombold_\AlgO(X,Y)\\
  \xrightarrow{(*)}
  \Tot^\res\Hombold_\AlgO\bigl(X,\mathfrak{C}(QY)\bigr)\Iso
  \Hombold_\coAlgK(QX,QY)
\end{align*}
where $(*)$ is induced by the natural coaugmentation map $Y\rarrow\mathfrak{C}(QY)$ in $(\AlgO)^\Delta$. The second case is the composite
\begin{align*}
  \Map_\AlgO(X,Y)\xrightarrow{|Q|}
  |\Hombold_\coAlgK(QX,QY)|\xrightarrow{\wequiv}\Map_\coAlgK(QX,QY)
\end{align*}
where we have used Proposition \ref{prop:tot_commutes_with_realization} for the right-hand weak equivalence. Here, $\Hombold_\AlgO(X,Y)$ is regarded as a constant cosimplicial object in $\sSet$.
\end{proof}

\begin{prop}
There is an induced functor
\begin{align*}
  \function{\LL Q}{\Ho(\AlgO|_\mathsf{cof})}{\Ho(\coAlgK)}
\end{align*}
which on objects is the map $X\mapsto Q(X)$ and on morphisms is the map
\begin{align*}
  \function{\pi_0Q}{\pi_0\Map_\AlgO(X,Y)}{\pi_0\Map_\coAlgK(QX,QY)}
\end{align*}
which sends $[f]$ to $\gamma(Qf)=[Qf]$.
\end{prop}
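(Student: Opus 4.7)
The plan is to verify that the given assignment is a well-defined functor, which breaks into three routine checks: (i) that $Q$ carries cofibrant $\capO$-algebras to cofibrant $\K$-coalgebras, (ii) preservation of identities, and (iii) preservation of composition.

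For (i), I would use that $Q = g_*$ is the left adjoint in the Quillen adjunction \eqref{eq:induced_adjunctions_from_g_and_h} and hence preserves cofibrations; since cofibrancy in $\coAlgK$ is defined via the underlying $J$-algebra (Definition \ref{defn:model_cat_coAlgK_language}), one concludes that $QX$ is cofibrant in $\coAlgK$ whenever $X$ is cofibrant in $\AlgO$. For (ii), the identity $\id_X \in \Hombold_\AlgO(X,X)_0$ is carried by the composite defining $Q$ on mapping spaces (from the proof of Proposition \ref{prop:induced_map_on_mapping_spaces_built_from_Q}) to the point of $\Hombold_\coAlgK(QX,QX)$ represented by the restricted cosimplicial map $\Delta[-] \to \Hombold_\AlgJ\bigl(QX,(F\K)^\bullet FQX\bigr)$ corresponding to $\id_{QX}$; this is precisely the image of the unit map $\iota$ in \eqref{eq:unit_map_for_operad_action} at $Y = QX$, which represents the identity in $[QX,QX]_\K$ by construction of the $A_\infty$-category structure. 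Applying $\pi_0$ yields $\gamma(Q\id_X) = \id_{QX}$.

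For (iii), the task is to establish that the square
\[
\xymatrix@C=12pt{
\Map_\AlgO(X,Y)\times\Map_\AlgO(Y,Z)\ar[r]^-{\circ}\ar[d]_-{Q\times Q} & \Map_\AlgO(X,Z)\ar[d]^-{Q}\\
\Map_\coAlgK(QX,QY)\times\Map_\coAlgK(QY,QZ)\ar[r]^-{\mu_a} & \Map_\coAlgK(QX,QZ)
}
\]
commutes up to homotopy, where the bottom arrow $\mu_a$ comes from a choice of element $a \in \mathsf{A}(2)$ together with the composition map $\mu$ of Proposition \ref{prop:composition_map_topological}; passing to $\pi_0$ will then give $[Q(g\circ f)] = [Qg]\circ[Qf]$. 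The key input is that $Q$ is a simplicial functor (Proposition \ref{prop:useful_properties_of_the_adjunction}(e)), so strict composition on $\Hombold_\AlgO$ is carried to strict composition on $\Hombold_\AlgJ$ at cosimplicial degree zero, and the coaugmentations $Z \to \mathfrak{C}(QZ)$ used to build the map $Q$ on mapping spaces are natural with respect to that composition. The main obstacle is reconciling strict composition with the box-product-compatible composition $\mu_a$ on the $\Map_\coAlgK$ side; I expect to handle this by restricting attention to the image of the constant-cosimplicial inclusion at level zero (where $\mu$ reduces to strict composition, independently of the choice of $a$) and then running the argument of Arone-Ching \cite[1.14]{Arone_Ching_classification} verbatim in our setting. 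This finishes the composition law and completes the verification that $\LL Q$ is a well-defined functor.
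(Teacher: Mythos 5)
The paper's own proof of this proposition is a single sentence: ``This follows from Proposition \ref{prop:induced_map_on_mapping_spaces_built_from_Q}.'' Your proposal is a substantial expansion of this that correctly identifies the three things that actually need checking, and I think it is essentially sound, but the balance of effort is allocated a bit differently from the shortest available route.

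Your parts (i) and (ii) are fine and are implicit in the paper's construction: (i) is exactly the observation that $Q=g_*$ is left Quillen and cofibrancy in $\coAlgK$ is detected in $\AlgJ$; your identification in (ii) of $Q(\id_X)$ with the image of the unit $\iota$ from \eqref{eq:unit_map_for_operad_action} at $Y=QX$ checks out, since in each cosimplicial degree the coaugmentation $X\rarrow U(F\K)^n FQX$ corresponds under the $(Q,U)$-adjunction to the coaugmentation $QX\rarrow\K^n QX$ post-composed with $\eta\colon\id\rarrow F$. For (iii), however, your plan of ``reducing to the constant-cosimplicial inclusion at level zero'' is not quite the right way to say it, since the box-product composition $\mu_a$ genuinely involves all cosimplicial levels; what saves the argument is not a reduction in cosimplicial degree but the fact that your two composable elements land in the image of $\hom_\coAlgK\rarrow\Map_\coAlgK$ on strict $\K$-coalgebra maps, and the paper has already established (via the Arone-Ching argument) that on this image the $\pi_0$ of $\mu_a$-composition agrees with strict composition --- this is precisely the content of the proposition asserting that $\gamma\colon\coAlg_\K|_\mathrm{cof}\rarrow\Ho(\coAlgK)$ is a functor. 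Once you invoke that proposition, (iii) collapses to: $Q$ preserves strict composition of $\K$-coalgebra maps (trivially, $Q$ is a functor), plus the compatibility $\pi_0 Q([f])=\gamma(Qf)$ recorded in the statement of the present proposition and in Proposition \ref{prop:formal_adjunction_and_iso_argument}. So a shorter and cleaner version of your (iii) would bypass the $A_\infty$ operad $\mathsf{A}$ entirely and simply factor $\LL Q$ as $\gamma\circ Q$ on strict maps, using the already-proven functoriality of $\gamma$; this matches the logic the paper implicitly relies on when it cites only the mapping-space proposition. Your approach is not wrong, but it re-derives part of the $\gamma$-functoriality proposition rather than simply using it.
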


\begin{proof}
This follows from Proposition \ref{prop:induced_map_on_mapping_spaces_built_from_Q}.
\end{proof}

The following three propositions are fundamental to this paper; they verify that the cosimplicial resolutions of $\K$-coalgebra mapping spaces respect the adjunction isomorphisms associated to the $(Q,U)$ adjunction (Proposition \ref{prop:cosimplicial_resolutions_of_K_coalgebras_respect_adjunction_isos}).

\begin{prop}
\label{prop:adjunction_isos_respect_cosimplicial_relations}
Let $X\in\AlgO$ and $Y\in\coAlgK$. The adjunction isomorphisms associated to the $(Q,U)$ adjunction induce well-defined isomorphisms
\begin{align*}
  \hom_{\AlgJ}(QX,\K^\bullet Y)&\xrightarrow{\Iso}
  \hom_{\AlgO}(X,U\K^\bullet Y)\\
  \hom_{\AlgJ}\bigl(QX,(F\K)^\bullet FY\bigr)&\xrightarrow{\Iso}
  \hom_{\AlgO}\bigl(X,U(F\K)^\bullet FY\bigr)
\end{align*}
of cosimplicial objects in $\Set$, natural in $X,Y$.
\end{prop}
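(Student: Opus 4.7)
The plan is to show that the levelwise adjunction bijection assembles into an isomorphism of cosimplicial sets by checking compatibility with each type of coface and codegeneracy separately; the only subtle point is how the intrinsic $\K$-coaction on the forgetful functor $U$ is translated across the adjunction.

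Fix $X\in\AlgO$ and $Y\in\coAlgK$. For each cosimplicial degree $n$, the $(Q,U)$ adjunction provides the natural bijection
\begin{align*}
\Phi_n\colon\thinspace\hom_\AlgJ(QX,\K^n Y)\xrightarrow{\Iso}\hom_\AlgO(X,U\K^n Y),\quad f\mapsto Uf\circ\eta_X.
\end{align*}
Naturality of $\Phi_n$ in the second variable immediately implies that for any morphism $\phi\colon\thinspace\K^n Y\rarrow\K^{n\pm 1}Y$ in $\AlgJ$, post-composition with $\phi$ on the left corresponds to post-composition with $U\phi$ on the right. This handles all coface and codegeneracy maps on both sides that come from the comultiplication $m\colon\thinspace\K\rightarrow\K\K$, the counit $\varepsilon\colon\thinspace\K\rightarrow\id$, and the $\K$-coaction $\rho\colon\thinspace Y\rightarrow\K Y$, each of which is a morphism in $\AlgJ$.

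The key technical point concerns the ``boundary'' coface $d^0$ in the cobar construction $C(Y) = \Cobar(U,\K,Y)$ on the right-hand side, which per Definition \ref{defn:cobar_construction} is built from the coaction $m=\eta\id\colon\thinspace U\rightarrow U\K$ of $\K$ on $U$, rather than from applying $U$ to a morphism in $\AlgJ$. I would verify compatibility of $\Phi_n$ with this particular coface by a direct calculation: an element $f\colon\thinspace QX\rightarrow\K^n Y$ is translated by $\Phi_n$ to $Uf\circ\eta_X\colon\thinspace X\rightarrow U\K^n Y$, and post-composing with $d^0=\eta_{U\K^n Y}$ yields $\eta_{U\K^n Y}\circ Uf\circ\eta_X$. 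Using naturality of $\eta$ together with the triangle identities of the $(Q,U)$ adjunction, this rearranges to $\Phi_{n+1}$ applied to the corresponding coface of $f$ on the left-hand side, establishing the required compatibility.

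The second isomorphism involving $(F\K)^\bullet FY$ is proved by the identical argument, with $\K$ replaced by $F\K$ and $Y$ replaced by $FY$ throughout, using in addition naturality with respect to the unit $\function{\eta}{\id}{F}$ of the simplicial fibrant replacement monad. Naturality of the assembled cosimplicial-set isomorphisms in $X$ and $Y$ is inherited from the corresponding naturalities of the adjunction in both variables and of the cobar construction in $Y$. The main obstacle is the bookkeeping required to organize the various structure maps correctly; no conceptual difficulty arises beyond the triangle identities and naturality of the adjunction.
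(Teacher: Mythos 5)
The paper disposes of this proposition with ``This is an exercise left to the reader,'' so there is no argument of record to compare against; your proposal supplies a correct and complete verification. Your decomposition is the right one: the coface maps $d^i$ for $i\geq 1$ and all codegeneracies $s^j$ on the left-hand side are post-composition with a morphism $\phi$ in $\AlgJ$ (built from the comonad comultiplication, its counit, or the $\K$-coaction on $Y$), and the corresponding maps on the right-hand side in the cobar construction $C(Y)$ of Definition \ref{defn:cobar_construction} are post-composition with $U\phi$; naturality of the adjunction bijection in the second variable handles these immediately. The one genuinely nontrivial check is $d^0$, which you correctly isolate: it is \emph{not} post-composition on either side. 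On the right it is $\eta_{U\K^nY}$, and on the left it sends $f$ to $\K f\circ m_{QX}$, where $m_{QX}=Q\eta_X$ is the $\K$-coalgebra structure on $QX$. Your appeal to naturality of $\eta$ plus the triangle identity closes this case; one slightly cleaner way to see it is to note the identity
\begin{align*}
  d^0(f)\Equal\K f\circ m_{QX}\Equal QUf\circ Q\eta_X\Equal Q\bigl(Uf\circ\eta_X\bigr)\Equal Q\bigl(\Phi_n(f)\bigr),
\end{align*}
after which $\Phi_{n+1}\bigl(d^0(f)\bigr)=UQ\Phi_n(f)\circ\eta_X=\eta_{U\K^nY}\circ\Phi_n(f)=d^0\bigl(\Phi_n(f)\bigr)$ by a single application of naturality of $\eta$, with no triangle identity needed in this direction. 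The fattened version with $F$ goes through exactly as you say, since the insertions of $\eta\colon\thinspace\id\rarrow F$ and multiplications $FF\rarrow F$ occur uniformly on both sides and commute with the adjunction bijection by naturality. Naturality in $X$ and $Y$ is likewise formal. No gaps.
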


\begin{proof}
This is an exercise left to the reader.
\end{proof}

\begin{prop}
\label{prop:sigma_is_a_map_of_K_coalgebras_general_statement}
If $Y\in\coAlgK$ with comultiplication map $\function{m}{Y}{\K Y}$ and $L\in\sSet$, then $Y\tensordot L$ in $\AlgJ$ has a natural $\K$-coalgebra structure with  comultiplication map $\function{m}{Y\tensordot L}{\K(Y\tensordot L)}$ given by the composite
\begin{align*}
\xymatrix{
  Y\tensordot L\ar[r]^-{m\tensordot\id} &
  \K(Y)\tensordot L\ar[r]^-{\sigma} &
  \K(Y\tensordot L)
}
\end{align*}
\end{prop}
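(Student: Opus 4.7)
I will verify that the composite $m_Z := \sigma \circ (m \tensordot \id) \colon Y \tensordot L \rarrow \K(Y \tensordot L)$, with $Z := Y \tensordot L$, satisfies both the counit and coassociativity axioms for a $\K$-coalgebra. The essential inputs are that $\K = QU$ is a simplicial functor with simplicial structure map $\sigma$ (Proposition \ref{prop:useful_properties_of_the_adjunction}), and that both the comonad counit $\varepsilon \colon \K \rarrow \id$ and the comonad comultiplication $m_\K \colon \K \rarrow \K\K$ are simplicial natural transformations (Proposition \ref{prop:unit_and_counit_are_simplicial}), characterized by the respect-for-$\sigma$ property in Proposition \ref{prop:characterization_of_simplicial_natural_transformations}.

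For the counit axiom $\varepsilon_Z \circ m_Z = \id_Z$, I would apply Proposition \ref{prop:characterization_of_simplicial_natural_transformations} to the simplicial natural transformation $\varepsilon$ to obtain the identity $\varepsilon_{Y \tensordot L} \circ \sigma = \varepsilon_Y \tensordot \id$ in $\AlgJ$. Composing on the right with $m \tensordot \id$ and invoking the counit axiom $\varepsilon_Y \circ m = \id_Y$ for the original $\K$-coalgebra $Y$ then immediately gives $\varepsilon_Z \circ m_Z = \id_{Y \tensordot L}$.

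For the coassociativity axiom $\K(m_Z) \circ m_Z = (m_\K)_Z \circ m_Z$, I would expand the left-hand side as $\K \sigma \circ \K(m \tensordot \id) \circ \sigma \circ (m \tensordot \id)$ and apply naturality of $\sigma$ with respect to the $J$-algebra map $m \colon Y \rarrow \K Y$ to rewrite it as $\K \sigma \circ \sigma \circ ((\K m \circ m) \tensordot \id)$. For the right-hand side, the simplicial structure map of the composite functor $\K\K$ is the two-fold composite $\K \sigma \circ \sigma$, so Proposition \ref{prop:characterization_of_simplicial_natural_transformations} applied to $m_\K$ yields $(m_\K)_{Y \tensordot L} \circ \sigma = \K \sigma \circ \sigma \circ ((m_\K)_Y \tensordot \id)$. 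Composing on the right with $m \tensordot \id$ rewrites the right-hand side as $\K \sigma \circ \sigma \circ (((m_\K)_Y \circ m) \tensordot \id)$. The two expressions then agree by coassociativity of the $\K$-coalgebra structure on $Y$, which asserts precisely $\K m \circ m = (m_\K)_Y \circ m$.

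The main obstacle is purely bookkeeping: one must correctly identify the simplicial structure map on the iterated composite $\K\K$ as the twofold iterate of $\sigma$, and carefully distinguish the two kinds of naturality that come into play, namely the ordinary naturality of $\sigma$ in the $\AlgJ$-variable and the functor-variable compatibility encoded by Proposition \ref{prop:characterization_of_simplicial_natural_transformations}. No new ideas are required beyond the simplicial compatibilities already established.
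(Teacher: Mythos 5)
The paper itself dispatches this proposition with ``This is an exercise left to the reader,'' so there is no argument to compare against. Your proof is correct and supplies the details the authors omit: both axioms follow exactly as you indicate from the simplicial naturality of $\varepsilon$ and $m_\K$ (Propositions \ref{prop:unit_and_counit_are_simplicial} and \ref{prop:characterization_of_simplicial_natural_transformations}), the ordinary naturality of $\sigma$ in its $\AlgJ$-variable, and the identification $\sigma^{\K\K} = \K\sigma \circ \sigma_{\K(-)}$ of the simplicial structure map of the composite functor, which is indeed the one nontrivial bookkeeping point.
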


\begin{proof}
This is an exercise left to the reader.
\end{proof}

\begin{prop}
\label{prop:natural_isomorphism_sigma_respects_cosimplicial_relations}
Let $X\in\AlgO$ and $Y\in\coAlgK$. The natural isomorphism $\function{\sigma}{Q(X)\tensordot L}{Q(X\tensordot L)}$ induces well-defined isomorphisms
\begin{align*}
  \hom_{\AlgJ}\bigl(Q(X\tensordot L),\K^\bullet Y\bigr)&
  \xrightarrow[\Iso]{(\sigma,\id)}
  \hom_{\AlgO}\bigl(Q(X)\tensordot L,\K^\bullet Y\bigr)\\
  \hom_{\AlgJ}\bigl(Q(X\tensordot L),(F\K)^\bullet FY\bigr)&
  \xrightarrow[\Iso]{(\sigma,\id)}
  \hom_{\AlgO}\bigl(Q(X)\tensordot L,(F\K)^\bullet FY\bigr)
\end{align*}
of cosimplicial objects in $\Set$, natural in $X,Y$.
\end{prop}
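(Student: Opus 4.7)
The plan is to view the asserted map as precomposition with $\sigma$, and to observe that precomposition with \emph{any} morphism in $\AlgJ$ automatically respects the cosimplicial structure on the relevant hom-sets. More precisely, for any map $\function{\phi}{A}{B}$ in $\AlgJ$, precomposition
\[
  \function{\phi^{*}=\hom_{\AlgJ}(\phi,-)}{\hom_{\AlgJ}(B,-)}{\hom_{\AlgJ}(A,-)}
\]
is a natural transformation of $\Set$-valued functors on $\AlgJ$. Applying $\phi^*$ levelwise to the cosimplicial diagrams $\K^{\bullet} Y$ and $(F\K)^{\bullet} FY$ in $\AlgJ$ gives a map of cosimplicial sets: the compatibility with each coface $d^{i}$ and codegeneracy $s^{j}$ is automatic, since each of these arises as a component of a natural transformation between endofunctors of $\AlgJ$ evaluated at $Y$. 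Concretely, for $\function{f}{B}{\K^{n}Y}$ one has $\phi^{*}(d^{i}f)=d^{i}f\circ\phi=d^{i}(\phi^{*}f)$, with the analogous identity for $s^{j}$, and the same for the fattened-up diagram $(F\K)^{\bullet}FY$.

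Now I apply this observation with $\phi=\sigma\colon\thinspace Q(X)\tensordot L\rarrow Q(X\tensordot L)$. By Proposition \ref{prop:useful_properties_of_the_adjunction}(a), $\sigma$ is an isomorphism in $\AlgJ$, so $\sigma^{*}$ is a bijection at each cosimplicial level, hence an isomorphism of cosimplicial sets. Naturality in $X$ follows from naturality of the map $\sigma$ in $X$, and naturality in $Y$ follows because each coface and codegeneracy map of $\K^{\bullet}Y$ (resp. $(F\K)^{\bullet}FY$) is itself natural in $Y$, as recorded in the construction of the cobar constructions in Definition \ref{defn:cobar_construction}.

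There is no real obstacle here; the only thing to be careful about is that $(F\K)^{\bullet}FY$ really is a cosimplicial object of $\AlgJ$ (as opposed to merely a pre-cosimplicial one) even though $F\K$ is not a monad. This is the content of the non-unital monad structure on $F\K$ noted after Definition \ref{defn:cobar_construction}, and once that point is granted, the precomposition argument above applies uniformly to both diagrams, yielding the two asserted isomorphisms of cosimplicial sets natural in $X$ and $Y$.
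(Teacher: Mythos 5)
Your proof rests on the claim that precomposition with an arbitrary map $\function{\phi}{A}{B}$ in $\AlgJ$ automatically commutes with every coface and codegeneracy of the cosimplicial set $\hom_\AlgJ(B,\K^\bullet Y)$, because (you say) "each of these arises as a component of a natural transformation between endofunctors of $\AlgJ$ evaluated at $Y$." That is not true for the coface using the $\K$-coalgebra structure on the \emph{domain}. Unwinding Definition \ref{defn:derived_K_coalgebra_maps_second_attempt} and the discussion around diagram \eqref{eq:commutative_square_defining_K_coalgebra_map}, one of the coface maps on $\hom_\AlgJ(B,\K^n Y)$ is
\begin{align*}
  f\longmapsto \K(f)\circ m_B,
\end{align*}
where $\function{m_B}{B}{\K B}$ is the comultiplication on $B$. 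This is not postcomposition with any fixed map $\K^n Y\rarrow\K^{n+1}Y$, and indeed $\K^\bullet Y$ is not a cosimplicial object of $\AlgJ$ (at level $0$ one would need two distinct cofaces $Y\rightrightarrows\K Y$, but there is only the coaction). Consequently, for a general $\phi$ one computes $\phi^*(d^0 f)=\K(f)\circ m_B\circ\phi$ while $d^0(\phi^*f)=\K(f)\circ\K(\phi)\circ m_A$, and these agree precisely when $m_B\circ\phi=\K(\phi)\circ m_A$, i.e.\ when $\phi$ is a morphism of $\K$-coalgebras. So the "precomposition is automatic" step fails.

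The missing content is therefore exactly the assertion that $\function{\sigma}{Q(X)\tensordot L}{Q(X\tensordot L)}$ is a map of $\K$-coalgebras, where $Q(X)\tensordot L$ carries the $\K$-coalgebra structure of Proposition \ref{prop:sigma_is_a_map_of_K_coalgebras_general_statement} and $Q(X\tensordot L)$ carries the canonical coaction $Q\eta_{X\tensordot L}$. Once that is established, precomposition with $\sigma$ does commute with $d^0$, your remaining observations (the other $d^i$ and the $s^j$ are postcompositions; $\sigma$ is an isomorphism by Proposition \ref{prop:useful_properties_of_the_adjunction}(a); naturality in $X,Y$) carry the argument. But verifying that $\sigma$ is a $\K$-coalgebra map is a genuine diagram chase that uses the simplicial naturality of the unit $\function{\eta}{\id}{UQ}$ from Proposition \ref{prop:unit_and_counit_are_simplicial}, and it is exactly what the proposition is designed to record; by omitting it your proof assumes the point at issue.
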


\begin{proof}
This is an exercise left to the reader.
\end{proof}

\begin{prop}
\label{prop:cosimplicial_resolutions_of_K_coalgebras_respect_adjunction_isos}
Let $X\in\AlgO$ and $Y\in\coAlgK$. The adjunction isomorphisms associated to the $(Q,U)$ adjunction induce well-defined isomorphisms
\begin{align*}
  \Hombold_{\AlgJ}(QX,\K^\bullet Y)&\xrightarrow{\Iso}
  \Hombold_{\AlgO}(X,U\K^\bullet Y)\\
  \Hombold_{\AlgJ}\bigl(QX,(F\K)^\bullet FY\bigr)&\xrightarrow{\Iso}
  \Hombold_{\AlgO}\bigl(X,U(F\K)^\bullet FY\bigr)
\end{align*}
of cosimplicial objects in $\sSet$, natural in $X,Y$.
\end{prop}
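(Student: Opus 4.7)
The plan is to assemble the claim out of Propositions \ref{prop:adjunction_isos_respect_cosimplicial_relations} and \ref{prop:natural_isomorphism_sigma_respects_cosimplicial_relations} one simplicial level at a time. Unwinding Definition \ref{defn:simplicial_structure_mapping_space}, at each simplicial degree $n\geq 0$ we have
\begin{align*}
  \Hombold_{\AlgJ}(QX,\K^\bullet Y)_n &= \hom_{\AlgJ}\bigl(QX\tensordot\Delta[n],\K^\bullet Y\bigr),\\
  \Hombold_{\AlgO}(X,U\K^\bullet Y)_n &= \hom_{\AlgO}\bigl(X\tensordot\Delta[n],U\K^\bullet Y\bigr),
\end{align*}
both regarded as cosimplicial objects in $\Set$ along the $\bullet$-direction. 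First I would apply the natural isomorphism $\sigma\colon QX\tensordot\Delta[n]\xrightarrow{\Iso}Q(X\tensordot\Delta[n])$ of Proposition \ref{prop:useful_properties_of_the_adjunction}(a) in the first slot; by Proposition \ref{prop:natural_isomorphism_sigma_respects_cosimplicial_relations} this is a cosimplicial isomorphism rewriting the first line as $\hom_{\AlgJ}\bigl(Q(X\tensordot\Delta[n]),\K^\bullet Y\bigr)$. The $(Q,U)$ adjunction, together with Proposition \ref{prop:adjunction_isos_respect_cosimplicial_relations}, then identifies this with the second line by a further cosimplicial isomorphism. Composing yields a bijection of cosimplicial sets at each $n$, natural in $X$ and $Y$.

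To upgrade these degreewise bijections to an isomorphism of cosimplicial simplicial sets it remains only to check that the simplicial structure maps in the $\sSet$-direction are respected. Any order-preserving $\xi\colon [n']\to[n]$ induces $\Delta[n']\to\Delta[n]$, hence maps $X\tensordot\Delta[n']\to X\tensordot\Delta[n]$ and $QX\tensordot\Delta[n']\to QX\tensordot\Delta[n]$. That the composite bijection commutes with these two maps is immediate from naturality of $\sigma$ in its simplicial-set variable (Proposition \ref{prop:useful_properties_of_the_adjunction}(a), (e), together with Proposition \ref{prop:characterization_of_simplicial_natural_transformations}) and naturality of the $(Q,U)$ adjunction bijection in the first variable.

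The second claimed isomorphism, with $(F\K)^\bullet FY$ in place of $\K^\bullet Y$, is proved by the identical three-step recipe, since both Proposition \ref{prop:adjunction_isos_respect_cosimplicial_relations} and Proposition \ref{prop:natural_isomorphism_sigma_respects_cosimplicial_relations} supply the required cosimplicial statements in the fattened case; the only modification is to insert the unit $\eta\colon\id\to F$ in the appropriate places, which is harmless as it affects neither the $\sigma$-isomorphism nor the adjunction bijection. I do not anticipate any genuine obstacle: the entire substance has been packaged into the two referenced propositions (whose proofs are deferred to Section \ref{sec:proofs}), and the present result is essentially a formal index-bookkeeping corollary that combines the $\sigma$-isomorphism, the $(Q,U)$ adjunction bijection, and naturality in the simplicial variable $\Delta[n]$.
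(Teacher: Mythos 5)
Your proof is correct and takes essentially the same route as the paper: both argue that the composite $\hom(QX\tensordot\Delta[n],\K^\bullet Y)\cong\hom(Q(X\tensordot\Delta[n]),\K^\bullet Y)\cong\hom(X\tensordot\Delta[n],U\K^\bullet Y)$ is a well-defined cosimplicial isomorphism in $\Set$ for each $n$, by citing Propositions \ref{prop:adjunction_isos_respect_cosimplicial_relations} and \ref{prop:natural_isomorphism_sigma_respects_cosimplicial_relations}, and then handle the $F$-case by the same argument. You spell out the naturality in the simplicial degree $n$ a bit more explicitly than the paper does, which is a reasonable addition, but the substance and structure are the same.
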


\begin{proof}
Consider the first case. It suffices to verify that the composite
\begin{align*}
  \hom(Q(X)\tensordot\Delta[n],\K^\bullet Y)\Iso
  \hom(Q(X\tensordot\Delta[n]),\K^\bullet Y)\Iso
  \hom(X\tensordot\Delta[n],U\K^\bullet Y)
\end{align*}
is a well-defined map of cosimplicial objects in $\Set$, natural in $X,Y$, for each $n\geq 0$; this follows from Propositions \ref{prop:adjunction_isos_respect_cosimplicial_relations} and \ref{prop:natural_isomorphism_sigma_respects_cosimplicial_relations}. The case involving the simplicial fibrant replacement monad $F$ is similar.
\end{proof}

\begin{prop}
\label{prop:zigzag_of_weak_equivalences_tot_and_tq_completion}
If $X$ is a cofibrant $\capO$-algebra, then there is a zigzag of weak equivalences of the form
\begin{align*}
  X^\wedge_\TQ\wequiv\Tot^\res\mathfrak{C}(QX)
\end{align*}
in $\AlgO$, natural with respect to all such $X$.
\end{prop}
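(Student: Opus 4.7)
The strategy is to pass from $X^\wedge_\TQ \wequiv \holim_\Delta C(QX)$ through the ``fattened up'' cosimplicial resolution $\mathfrak{C}(QX)$, and then exploit the fact that the restricted totalization $\Tot^\res$ computes the Bousfield--Kan homotopy limit $\holim^\BK_\Delta$ on objectwise fibrant diagrams (Proposition \ref{prop:comparing_holim_with_Tot_and_Tot_restricted}). Concretely, the plan is to produce the natural zigzag
\begin{align*}
  X^\wedge_\TQ \wequiv \holim\nolimits_\Delta C(QX)
  \xrightarrow{\wequiv} \holim\nolimits_\Delta \mathfrak{C}(QX)
  \xleftarrow{\wequiv} \Tot^\res \mathfrak{C}(QX),
\end{align*}
where the first arrow comes from the $F$-fattening of $C(QX)$ and the second zigzag step is the comparison between $\holim^\BK_\Delta$ and $\Tot^\res$ afforded by left cofinality of $\Delta_\res\subset\Delta$.

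First I would observe that since $X$ is a cofibrant $\capO$-algebra and $Q = g_*$ is left Quillen (hence preserves cofibrations and the initial object), $QX$ is cofibrant in $\AlgJ$, and thus is a cofibrant $\K$-coalgebra in the sense of Definition \ref{defn:model_cat_coAlgK_language}. By the objectwise weak equivalence recorded immediately after Proposition \ref{prop:fattened_version_of_C}, the unit map $\function{\eta}{\id}{F}$ of the simplicial fibrant replacement monad then induces a natural objectwise weak equivalence $C(QX)\xrightarrow{\wequiv}\mathfrak{C}(QX)$ in $(\AlgO)^\Delta$. Applying $\holim_\Delta$---which, by its definition via functorial fibrant replacement followed by $\holim^\BK_\Delta$, preserves arbitrary weak equivalences between cosimplicial $\capO$-algebras---yields the first weak equivalence in the zigzag.

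For the second weak equivalence, I would note that $\mathfrak{C}(QX)$ is objectwise fibrant: each cosimplicial level $U(F\K)^n FQX$ is built by applying $F$ last, producing a fibrant object in $\AlgJ$, and the forgetful functor $\function{U}{\AlgJ}{\AlgO}$ is right Quillen, hence preserves fibrant objects. Consequently the natural comparison $\holim^\BK_\Delta\mathfrak{C}(QX)\xrightarrow{\wequiv}\holim_\Delta\mathfrak{C}(QX)$ is a weak equivalence, and Proposition \ref{prop:comparing_holim_with_Tot_and_Tot_restricted} supplies the natural zigzag of weak equivalences $\holim^\BK_\Delta\mathfrak{C}(QX)\xrightarrow{\wequiv}\holim^\BK_{\Delta_\res}\mathfrak{C}(QX)\xleftarrow{\wequiv}\Tot^\res\mathfrak{C}(QX)$. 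Composing the three steps gives the desired zigzag, and naturality in $X$ is inherited from the naturality of each constituent construction.

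The only real obstacle is the verification that $\mathfrak{C}(QX)$ is objectwise fibrant so that $\holim^\BK_\Delta$ correctly computes the derived homotopy limit and Proposition \ref{prop:comparing_holim_with_Tot_and_Tot_restricted} may be applied---this is precisely the technical reason the ``fattened'' cobar construction was introduced. Everything else is a straightforward assembly of results from Sections \ref{sec:TQ_homology_completion} and \ref{sec:homotopy_limit_towers}.
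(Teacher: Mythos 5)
Your proof is correct and follows exactly the same zigzag the paper uses: $X^\wedge_\TQ\wequiv\holim_\Delta C(QX)\wequiv\holim_\Delta\mathfrak{C}(QX)\wequiv\Tot^\res\mathfrak{C}(QX)$, with the key input being objectwise fibrancy of $\mathfrak{C}(QX)$. You have simply spelled out the justifications (cofibrancy of $QX$, the $F$-unit comparison, fibrancy of $U(F\K)^nFQX$ via $F$ followed by the right Quillen $U$, and Proposition \ref{prop:comparing_holim_with_Tot_and_Tot_restricted}) that the paper leaves implicit.
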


\begin{proof}
This follows from the zigzags of weak equivalences
\begin{align*}
  X^\wedge_\TQ\wequiv
  \holim\nolimits_\Delta C(QX)
  \wequiv\holim\nolimits_{\Delta}\mathfrak{C}(QX)
  \wequiv\Tot^\res\mathfrak{C}(QX)
\end{align*}
in $\AlgO$, natural with respect to all such $X$; here we used the fact that $\mathfrak{C}(QX)$ is objectwise fibrant.
\end{proof}

\begin{defn}
An $\capO$-algebra $Y$ is called \emph{$\TQ$-complete} if the natural coaugmentation $Y\wequiv Y^\wedge_\TQ$ is a weak equivalence.
\end{defn}

The following amounts to the observation that mapping into fibrant $\TQ$-complete objects induces the indicated weak equivalence on mapping spaces; this was pointed out in the homotopic descent work of Hess \cite[5.5]{Hess}, and subsequently in Arone-Ching \cite[2.15]{Arone_Ching_classification}. In particular, the $\TQ$-homology spectrum functor is homotopically fully faithful, after restriction to a full subcategory of objects, provided that each object in the subcategory is $\TQ$-complete.

\begin{prop}
\label{prop:formal_adjunction_and_iso_argument}
Let $X,Y$ be cofibrant $\capO$-algebras. If $Y$ is $\TQ$-complete and fibrant, then there is a natural zigzag
\begin{align*}
  Q\colon\thinspace\Map_\AlgO(X,Y)\xrightarrow{\wequiv}\Map_\coAlgK(QX,QY)
\end{align*}
of weak equivalences; applying $\pi_0$ gives the map $[f]\mapsto\gamma(Qf)=[Qf]$.
\end{prop}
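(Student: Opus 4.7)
The plan is to construct a natural zigzag of weak equivalences between $\Map_\AlgO(X,Y)$ and $\Map_\coAlgK(QX,QY)$ that represents the map $Q$ of Proposition \ref{prop:induced_map_on_mapping_spaces_built_from_Q}. First, Proposition \ref{prop:cosimplicial_resolutions_of_K_coalgebras_respect_adjunction_isos} supplies a natural isomorphism of cosimplicial simplicial sets
\begin{align*}
  \Hombold_\AlgJ\bigl(QX,(F\K)^\bullet FQY\bigr)
  \xrightarrow{\cong}
  \Hombold_\AlgO\bigl(X,U(F\K)^\bullet FQY\bigr)
  \Equal \Hombold_\AlgO\bigl(X,\mathfrak{C}(QY)\bigr);
\end{align*}
realizing and applying $\Tot^\res$, together with Proposition \ref{prop:tot_commutes_with_realization}, then identifies $\Map_\coAlgK(QX,QY)$ with $\Tot^\res\Map_\AlgO(X,\mathfrak{C}(QY))$ up to natural weak equivalence.

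Second, I would commute $\Tot^\res$ past $\Map_\AlgO(X,-)$. Since $\hom_\AlgO(X\tensordot\Delta[n],-)$ is a right adjoint for each $n$ (Remark \ref{rem:useful_adjunction_isomorphisms_simplicial_structure}), the simplicial mapping functor $\Hombold_\AlgO(X,-)$ preserves limits and hence commutes with the end defining $\Tot^\res$. Realizing, and once more invoking Proposition \ref{prop:tot_commutes_with_realization}, yields a natural weak equivalence
\begin{align*}
  \Tot^\res\Map_\AlgO\bigl(X,\mathfrak{C}(QY)\bigr)
  \xrightarrow{\wequiv}
  \Map_\AlgO\bigl(X,\Tot^\res\mathfrak{C}(QY)\bigr),
\end{align*}
where objectwise fibrancy of $\mathfrak{C}(QY)$ (each level $U(F\K)^nFQY$ is fibrant because the outer $F$ makes it fibrant in $\AlgJ$ and $U$ preserves fibrations) supplies the fibrancy hypotheses required on both sides.

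Third, Proposition \ref{prop:zigzag_of_weak_equivalences_tot_and_tq_completion} gives a natural zigzag $Y^\wedge_\TQ\wequiv\Tot^\res\mathfrak{C}(QY)$, and by $\TQ$-completeness of $Y$ the coaugmentation yields $Y\wequiv Y^\wedge_\TQ$. Composing, we obtain a natural zigzag $Y\wequiv\Tot^\res\mathfrak{C}(QY)$ between fibrant $\capO$-algebras; fibrancy of the right-hand object is Proposition \ref{prop:basic_properties_of_Tot_towers_etc}(b) applied to the objectwise fibrant $\mathfrak{C}(QY)$. Since $X$ is cofibrant and $\AlgO$ is a simplicial model category, $\Map_\AlgO(X,-)$ carries this zigzag to a zigzag of weak equivalences; splicing the three steps together yields the desired natural zigzag $\Map_\AlgO(X,Y)\wequiv\Map_\coAlgK(QX,QY)$.

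For the identification on $\pi_0$ with $[f]\mapsto\gamma(Qf)=[Qf]$, I would trace through the construction in Proposition \ref{prop:induced_map_on_mapping_spaces_built_from_Q}: the map $Q$ defined there is built from the coaugmentation $Y\rarrow\mathfrak{C}(QY)$ followed by the adjunction isomorphism of Proposition \ref{prop:cosimplicial_resolutions_of_K_coalgebras_respect_adjunction_isos}, which is exactly (up to the realization/totalization commutations verified above) the zigzag just constructed, read in the direction that inverts the weak equivalence $Y\wequiv\Tot^\res\mathfrak{C}(QY)$. I expect the main obstacle to be bookkeeping rather than anything deep: one must carefully track objectwise fibrancy of the cosimplicial objects $\mathfrak{C}(QY)$ and $\Hombold_\AlgO(X,\mathfrak{C}(QY))$ so that Proposition \ref{prop:tot_commutes_with_realization} and preservation of weak equivalences between fibrant objects by $\Map_\AlgO(X,-)$ both apply; the remaining content is a formal composition of previously established adjunction isomorphisms and zigzags.
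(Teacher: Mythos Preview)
Your proposal is correct and follows essentially the same approach as the paper: both arguments combine the zigzag $Y\wequiv Y^\wedge_\TQ\wequiv\Tot^\res\mathfrak{C}(QY)$ (Proposition \ref{prop:zigzag_of_weak_equivalences_tot_and_tq_completion}), the fact that $\Hombold_\AlgO(X,-)$ commutes with the end defining $\Tot^\res$, the cosimplicial adjunction isomorphism (Proposition \ref{prop:cosimplicial_resolutions_of_K_coalgebras_respect_adjunction_isos}), and Proposition \ref{prop:tot_commutes_with_realization}. The only cosmetic difference is that the paper stays in $\sSet$ with $\Hombold$ throughout and applies realization once at the very end, whereas you realize earlier and therefore invoke Proposition \ref{prop:tot_commutes_with_realization} at multiple intermediate steps.
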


\begin{proof}
This is proved exactly as in \cite[2.15]{Arone_Ching_classification}; $Y\wequiv Y^\wedge_\TQ$ by assumption, and we know that $Y^\wedge_\TQ\wequiv\Tot^\res\mathfrak{C}(QY)$ by Proposition \ref{prop:zigzag_of_weak_equivalences_tot_and_tq_completion}, hence there are zigzags of weak equivalences of the form
\begin{align*}
  \Hombold_\AlgO(X,Y)&\wequiv
  \Hombold_\AlgO(X,Y^\wedge_\TQ)\\
  &\wequiv\Hombold_\AlgO\bigl(X,\Tot^\res\mathfrak{C}(QY)\bigr)\\
  &\Iso\Tot^\res\Hombold_\AlgO\bigl(X,U(F\K)^\bullet FQY\bigr)\\
  &\Iso\Hombold_\coAlgK(QX,QY)
\end{align*}
in $\sSet$; applying realization, together with Proposition \ref{prop:tot_commutes_with_realization} finishes the proof.
\end{proof}

The following two propositions will be needed in the proof of our main result.

\begin{prop}
\label{prop:natural_tot_map_induced_by_simplicial_functor}
Let $\capO,\capO'$ be operads in $\capR$-modules. Let $\function{F}{\AlgO}{\Alg_{\capO'}}$ be a simplicial functor and $X$ a cosimplicial (resp. restricted cosimplicial) $\capO$-algebra. There is a natural map of the form
\begin{align*}
  F\Tot(X)\rarrow\Tot(FX),\quad\quad
  \Bigl(
  \text{resp.}\quad
  F\Tot^\res(X)\rarrow\Tot^\res(FX),
  \Bigr)
\end{align*}
in $\Alg_{\capO'}$ induced by the simplicial structure maps of $F$.
\end{prop}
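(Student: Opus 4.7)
The plan is to use the end description of $\Tot$ and $\Tot^\res$ together with the universal property of ends. Recall from Definition \ref{defn:totalization_and_restricted_totalization} that $\Tot(X) \iso \hombold(\Delta[-],X)^\Delta$ and $\Tot^\res(X) \iso \hombold(\Delta[-],X)^{\Delta_\res}$. Since $F$ is a simplicial functor (Remark \ref{rem:simplicial_functors}), the collection of natural structure maps of the form
\begin{align*}
  \sigma\colon\thinspace F(A)\tensordot K\rarrow F(A\tensordot K)
\end{align*}
(for $A\in\AlgO$ and $K\in\sSet$) determines, via the adjunction in Remark \ref{rem:useful_adjunction_isomorphisms_simplicial_structure}, a natural transformation
\begin{align*}
  \sigma\colon\thinspace F\hombold(K,A)\rarrow\hombold(K,FA)
\end{align*}
in $\Alg_{\capO'}$. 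This is natural in both $K\in\sSet$ and $A\in\AlgO$.

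First, I would construct for each $n\geq 0$ a natural candidate map by composing the projection $\Tot(X)\rarrow\hombold(\Delta[n],X^n)$ from Definition \ref{defn:totalization_and_restricted_totalization} with $F$ and with the simplicial structure map, giving
\begin{align*}
  F\Tot(X)\xrightarrow{F(\mathrm{pr}_n)}F\hombold(\Delta[n],X^n)
  \xrightarrow{\sigma}\hombold(\Delta[n],FX^n),
\end{align*}
and similarly for $\Tot^\res(X)$. The bulk of the work is then to verify that this collection of maps, indexed by $n$, equalizes the pair of parallel maps
\begin{align*}
  \xymatrix{
    \prod\limits_{[n]\in\Delta}\hombold(\Delta[n],FX^n)
    \ar@<0.5ex>[r]\ar@<-0.5ex>[r] &
    \prod\limits_{\substack{[n]\rightarrow [n']\\ \text{in}\,\Delta}}\hombold(\Delta[n],FX^{n'})
  }
\end{align*}
whose equalizer is $\Tot(FX)$ (and similarly with $\Delta_\res$ in place of $\Delta$ for $\Tot^\res$). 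Given this, the universal property of the equalizer provides the desired map $F\Tot(X)\rarrow\Tot(FX)$, and this is manifestly natural in $X$.

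Checking the equalizer condition amounts to verifying, for each morphism $\alpha\colon\thinspace [n]\rightarrow [n']$ in $\Delta$ (resp. $\Delta_\res$), the commutativity of the square
\begin{align*}
\xymatrix{
  F\hombold(\Delta[n'],X^{n'})\ar[r]^-{\sigma}\ar[d]_-{F\hombold(\alpha_*,\id)} &
  \hombold(\Delta[n'],FX^{n'})\ar[d]^-{\hombold(\alpha_*,\id)}\\
  F\hombold(\Delta[n],X^{n'})\ar[r]_-{\sigma} &
  \hombold(\Delta[n],FX^{n'})
}
\end{align*}
together with the analogous square obtained by replacing the horizontal maps induced by $\alpha$ with those induced by $X(\alpha)\colon\thinspace X^n\rightarrow X^{n'}$ (this latter square uses naturality of $\sigma$ in $A\in\AlgO$, and follows directly from $F$ being a functor). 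The first square is naturality of $\sigma$ in the $\sSet$ variable, which follows from $F$ being a simplicial functor; concretely, it is dual (via the adjunction) to the evident compatibility of $\sigma\colon\thinspace F(A)\tensordot K\rarrow F(A\tensordot K)$ with maps $K\rarrow K'$ in $\sSet$, which is part of the simplicial functor structure (see \cite[9.8.5]{Hirschhorn}). The two squares together, combined with the fact that the projections out of $\Tot(X)$ (resp. $\Tot^\res(X)$) already equalize the two parallel maps at the source, yield the required equalizer condition.

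I do not expect any essential obstacle here; the statement is a purely formal consequence of $F$ being a simplicial functor and $\Tot$, $\Tot^\res$ being defined as ends. The only thing to watch is keeping the indexing category straight (so that the restricted version uses only the strictly monotone maps in $\Delta_\res$), but the argument is identical in both cases since naturality of $\sigma$ holds for all morphisms in $\sSet$.
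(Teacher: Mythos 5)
Your proposal is correct and takes essentially the same approach as the paper: both construct the map from the simplicial structure map $\sigma$ of $F$ together with evaluation, passed through the adjunction \eqref{eq:tensordot_adjunction_isomorphisms}, and assemble it via the end description of $\Tot$ (resp.\ $\Tot^\res$). You spell out the equalizer-compatibility check that the paper leaves implicit, but the underlying construction is identical.
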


\begin{proof}
In both cases, the indicated map is induced by the composite maps
\begin{align*}
  F\bigl(\hombold(\Delta[n],X^n)\bigr)\tensordot\Delta[n]
  \xrightarrow{\sigma}
  F\bigl(\hombold(\Delta[n],X^n)\tensordot\Delta[n]\bigr)
  \xrightarrow{\id(\ev)}
  F(X^n),\quad\quad n\geq 0,
\end{align*}
via the natural isomorphisms in \eqref{eq:tensordot_adjunction_isomorphisms}.
\end{proof}

\begin{prop}
\label{prop:simplicial_natural_transformations_play_nicely_with_Tot}
Let $\capO,\capO'$ be operads in $\capR$-modules. Let $\function{F,G}{\AlgO}{\Alg_{\capO'}}$ be simplicial functors, $\function{\tau}{F}{G}$ a simplicial natural transformation, $X$ a cosimplicial $\capO$-algebra, and $Y$ a restricted cosimplicial $\capO$-algebra. Then the following diagrams
\begin{align*}
\xymatrix{
  F\Tot(X)\ar[d]\ar[r] & G\Tot(X)\ar[d]\\
  \Tot(FX)\ar[r] & \Tot(GX)
}\quad\quad
\xymatrix{
  F\Tot^\res(Y)\ar[d]\ar[r] & G\Tot^\res(Y)\ar[d]\\
  \Tot^\res(FY)\ar[r] & \Tot^\res(GY)
}
\end{align*}
commute.
\end{prop}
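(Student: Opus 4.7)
The plan is to reduce commutativity of each square to the characterization of simplicial natural transformations in Proposition \ref{prop:characterization_of_simplicial_natural_transformations}, via the end/adjunction description of $\Tot$. Since $\Tot$ is right adjoint to $-\tensordot\Delta[-]$ by Proposition \ref{prop:tot_adjunctions}, two maps into $\Tot(GX)$ coincide if and only if their adjoints into $GX^n$ agree for every $n\geq 0$. First I would transpose both squares in the statement via this adjunction, turning the problem into checking commutativity of a square with corners $F\Tot(X)\tensordot\Delta[n]$ and $GX^n$, for each $n\geq 0$.

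Next I would unwind the definition of the map $F\Tot(X)\rightarrow\Tot(FX)$ given in Proposition \ref{prop:natural_tot_map_induced_by_simplicial_functor}. By naturality of $\sigma$ in its first argument, its $n$-th adjoint agrees with the composite
\[
F\Tot(X)\tensordot\Delta[n]\xrightarrow{\sigma}F\bigl(\Tot(X)\tensordot\Delta[n]\bigr)\xrightarrow{F(e_n)}F(X^n),
\]
where $\function{e_n}{\Tot(X)\tensordot\Delta[n]}{X^n}$ is the map adjoint to the $n$-th projection $\Tot(X)\rightarrow\hombold(\Delta[n],X^n)$; the analogous description applies with $G$ in place of $F$. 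The transposed square then factors horizontally into two squares: a left-hand one whose horizontal arrows are $\sigma$ and vertical arrows are $\tau$ (applied with $A=\Tot(X)$ and $K=\Delta[n]$), and a right-hand one whose horizontal arrows are $F(e_n),G(e_n)$ and vertical arrows are $\tau$.

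The left-hand square commutes by Proposition \ref{prop:characterization_of_simplicial_natural_transformations}, which is precisely the statement that simplicial natural transformations respect the structure maps $\sigma$. The right-hand square commutes by ordinary naturality of $\tau$ at the morphism $e_n$ in $\Alg_\capO$. The $\Tot^\res$ case follows by the same argument after replacing $\Delta$ by $\Delta_\res$ throughout. There is no genuine obstacle here beyond careful bookkeeping; the only point requiring attention is identifying the $n$-th adjoint of the map in Proposition \ref{prop:natural_tot_map_induced_by_simplicial_functor} with the composite displayed above, which is immediate from naturality of $\sigma$ together with the definition of $e_n$ as the adjoint of the $n$-th projection.
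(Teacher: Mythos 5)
Your proof is correct. The paper leaves this proposition as an exercise for the reader, so there is no in-paper argument to compare against; your approach — transposing through the $(-\tensordot\Delta[-],\Tot)$ adjunction of Proposition \ref{prop:tot_adjunctions}, identifying the $n$-th adjoint of the comparison map of Proposition \ref{prop:natural_tot_map_induced_by_simplicial_functor} via naturality of $\sigma$ in its first argument, and then factoring the transposed square into a $\sigma$-square handled by Proposition \ref{prop:characterization_of_simplicial_natural_transformations} and an ordinary naturality square for $\tau$ at $e_n$ — is the natural and correct completion of that exercise.
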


\begin{proof}
This is an exercise left to the reader.
\end{proof}

\subsection{Another description of the derived counit map}

Consider the collection of maps
\begin{align}
\label{eq:objectwise_composition_of_canonical_map}
  \Delta[n]
  &\xrightarrow{\quad\ }
  \Hombold_\AlgJ\bigl(Qc\Tot^\res \mathfrak{C}(Y),(F\K)^n FY\bigr),
  \quad\quad
  n\geq 0,
\end{align}
in $\sSet$ described in \eqref{eq:derived_counit_map} associated to the derived counit map.

\begin{prop}
\label{prop:canonical_maps_for_derived_counit}
The maps in \eqref{eq:objectwise_composition_of_canonical_map} correspond with the maps
\begin{align}
\label{eq:canonical_maps_for_derived_counit}
  \bigl(Qc\Tot^\res \mathfrak{C}(Y)\bigr)\tensordot\Delta[n]
  \longrightarrow (F\K)^n FY,
  \quad\quad
  n\geq 0,
\end{align}
in $\AlgJ$, defined by the composite
\begin{align*}
  \bigl(Qc\Tot^\res \mathfrak{C}(Y)\bigr)\tensordot\Delta[n]
  \rightarrow\bigl(Q\id\Tot^\res \mathfrak{C}(Y)\bigr)\tensordot\Delta[n]
  \rightarrow
  \bigl(\Tot^\res Q \mathfrak{C}(Y)\bigr)\tensordot\Delta[n]\\
  \xrightarrow{\mathrm{proj}}
  \K(F\K)^n FY\xrightarrow{\varepsilon(\id)^n\id^2}\id(F\K)^n FY
\end{align*}
where $\mathrm{proj}$ denotes the indicated projection map.
\end{prop}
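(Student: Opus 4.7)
The plan is to unwind the derived counit map of Definition \ref{defn:derived_counit_map} through the various adjunctions and track it explicitly to level $n$. Under the adjunction $-\tensordot\Delta[-]\dashv\Tot^\res$ (Proposition \ref{prop:tot_adjunctions}), the map $(*)$ in \eqref{eq:derived_counit_map} corresponds, at level $n$, to the composite
\begin{align*}
c\Tot^\res\mathfrak{C}(Y)\tensordot\Delta[n]
\xrightarrow{\mathrm{aug}\tensordot\id}
\Tot^\res\mathfrak{C}(Y)\tensordot\Delta[n]
\xrightarrow{\ev}
\mathfrak{C}(Y)^n=U(F\K)^n FY
\end{align*}
in $\AlgO$, where $\mathrm{aug}\colon c\xrightarrow{\wequiv}\id$ is the augmentation of the simplicial cofibrant replacement and $\ev$ denotes the counit of $-\tensordot\Delta[-]\dashv\Tot^\res$ at level $n$. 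Applying the $(Q,U)$ adjunction, I would then identify the map in \eqref{eq:objectwise_composition_of_canonical_map} with the composite
\begin{align*}
Qc\Tot^\res\mathfrak{C}(Y)\tensordot\Delta[n]
&\xrightarrow{\sigma^{-1}} Q\bigl(c\Tot^\res\mathfrak{C}(Y)\tensordot\Delta[n]\bigr)\\
&\xrightarrow{Q(\mathrm{aug}\tensordot\id)} Q\bigl(\Tot^\res\mathfrak{C}(Y)\tensordot\Delta[n]\bigr)\\
&\xrightarrow{Q(\ev)} QU(F\K)^n FY \xrightarrow{\varepsilon} (F\K)^n FY,
\end{align*}
where $\sigma$ is the simplicial structure isomorphism from Proposition \ref{prop:useful_properties_of_the_adjunction}(a) and $\varepsilon\colon QU\rarrow\id$ is the counit of $(Q,U)$.

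Next I would invoke naturality of $\sigma$ to commute $\sigma^{-1}$ past $Q(\mathrm{aug}\tensordot\id)$, replacing it by $(Q(\mathrm{aug})\tensordot\id)$ followed by $\sigma^{-1}$; this yields the augmentation factor $c\rarrow\id$ appearing in \eqref{eq:canonical_maps_for_derived_counit}. By the very definition of the natural map $\tau\colon Q\Tot^\res\rarrow\Tot^\res Q$ induced by the simplicial structure of $Q$ (Proposition \ref{prop:natural_tot_map_induced_by_simplicial_functor}), the remaining composite $Q(\ev)\circ\sigma^{-1}$ factors as $\mathrm{proj}\circ(\tau\tensordot\id)$, where $\mathrm{proj}$ denotes the evaluation map $\Tot^\res Q\mathfrak{C}(Y)\tensordot\Delta[n]\rarrow Q\mathfrak{C}(Y)^n=\K(F\K)^n FY$. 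Composing with $\varepsilon$ in the outermost $QU=\K$ slot (which is precisely the map $\varepsilon(\id)^n\id^2$ in the notation of the proposition, since all inner factors $(F\K)^n F$ and $Y$ are acted on by the identity) gives exactly the composite displayed in \eqref{eq:canonical_maps_for_derived_counit}.

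This argument is entirely a diagram chase, so no substantive obstacle arises; the main subtlety is keeping careful track of the three adjunctions simultaneously (the tensor-hom adjunction \eqref{eq:tensordot_adjunction_isomorphisms}, the restricted totalization adjunction \eqref{eqref:tot_adjunctions}, and the $(Q,U)$ adjunction combined with $\sigma$), together with the naturality properties of $\sigma$ and the explicit construction of $\tau$ in Proposition \ref{prop:natural_tot_map_induced_by_simplicial_functor}.
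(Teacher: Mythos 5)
Your proposal is correct and carries out in detail exactly the diagram chase that the paper's one-line proof (``This follows from the adjunction isomorphisms'') leaves implicit: unwinding the two tensor--cotensor adjunctions and the $(Q,U)$ adjunction, commuting $\sigma$ past the augmentation by naturality, and then recognizing the remaining composite as $\mathrm{proj}\circ(\tau\tensordot\id)$ via the triangular identity for $-\tensordot\Delta[-]\dashv\Tot^\res$ and the construction of $\tau$ in Proposition \ref{prop:natural_tot_map_induced_by_simplicial_functor}. One small notational slip: by Proposition \ref{prop:useful_properties_of_the_adjunction}(a) the isomorphism $Q(X)\tensordot K\to Q(X\tensordot K)$ is denoted $\sigma$, so each of your occurrences of $\sigma^{-1}$ (which you use in the direction $Q(X)\tensordot\Delta[n]\to Q(X\tensordot\Delta[n])$) should be $\sigma$; since it is an isomorphism this does not affect the substance of the argument.
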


\begin{proof}
This follows from the adjunction isomorphisms.
\end{proof}

\begin{prop}
\label{prop:another_description_of_the_canonical_derived_counit_map}
For each $n\geq 0$, the map \eqref{eq:canonical_maps_for_derived_counit} is equal to the composite
\begin{align*}
  \bigl(Qc\Tot^\res \mathfrak{C}(Y)\bigr)\tensordot\Delta[n]
  \xrightarrow{\Equal}
  &\bigl(\id Qc\Tot^\res \mathfrak{C}(Y)\bigr)\tensordot\Delta[n]\\
  \xrightarrow{\wequiv}
  &\bigl(FQc\Tot^\res \mathfrak{C}(Y)\bigr)\tensordot\Delta[n]\\
  \xrightarrow{\quad\ }
  &\bigl(\Tot^\res FQc \mathfrak{C}(Y)\bigr)\tensordot\Delta[n]\\
  \xrightarrow{\wequiv}
  &\bigl(\Tot^\res FQ\id \mathfrak{C}(Y)\bigr)\tensordot\Delta[n]\\
  \xrightarrow{\mathrm{proj}}
  &F\K(F\K)^n FY\xrightarrow{s^{-1}}(F\K)^n FY
\end{align*}
where $\mathrm{proj}$ denotes the indicated projection map; here, it may be helpful to note that $FQ\mathfrak{C}(Y)=\Cobar(F\K,F\K,Y)$.
\end{prop}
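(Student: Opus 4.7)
The plan is to argue that the two composites coincide by a diagram chase that uses the naturality properties of the unit map $\eta\colon\id\to F$ of the simplicial fibrant replacement monad, together with the defining formula for the extra codegeneracy $s^{-1}$ of the cosimplicial cobar construction $FQ\mathfrak{C}(Y)=\Cobar(F\K,F\K,Y)$. Intuitively, inserting $F$ via $\eta$ at the start and then collapsing it via $s^{-1}$ at the end should produce the same effect as directly applying the counit $\varepsilon\colon\K\to\id$ to the outer $\K$.

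First I would observe that $\eta\colon\id\to F$ and the augmentation $c\xrightarrow{\wequiv}\id$ of the cofibrant replacement functor are both simplicial natural transformations (by construction of $F$ and $c$ as simplicial fibrant and cofibrant replacement functors, respectively). Invoking Proposition \ref{prop:simplicial_natural_transformations_play_nicely_with_Tot}, both $\eta$ and this augmentation commute past $\Tot^\res$ in the appropriate sense. Combining these naturality statements with the functoriality of $-\tensordot\Delta[n]$, the composite from the proposition statement can be rewritten in the form: first apply the augmentation $c\to\id$, then the simplicial structure map $\sigma$ moving $Q$ past $\Tot^\res$, then project to $Q\mathfrak{C}(Y)^n=\K(F\K)^n FY$, then insert $F$ via $\eta$ to land in $F\K(F\K)^n FY$, and finally apply $s^{-1}$. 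The first three steps of this rewriting agree verbatim with the first three steps of the composite \eqref{eq:canonical_maps_for_derived_counit}, so the proof reduces to verifying the identity $s^{-1}\circ\eta_{\K(F\K)^n FY}=\varepsilon_{(F\K)^n FY}$ as maps $\K(F\K)^n FY\to(F\K)^n FY$.

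To verify this last identity, recall that the extra codegeneracy $s^{-1}$ of $\Cobar(F\K,F\K,Y)$ at the relevant level acts by first applying $\varepsilon\colon\K\to\id$ to the outer $\K$ and then using the multiplication $m\colon FF\to F$ to combine the resulting adjacent pair of $F$'s. The naturality square for $\eta$ gives $F\varepsilon_{(F\K)^n FY}\circ\eta_{\K(F\K)^n FY}=\eta_{(F\K)^n FY}\circ\varepsilon_{(F\K)^n FY}$, so the composite $s^{-1}\circ\eta_{\K(F\K)^n FY}$ becomes $\varepsilon_{(F\K)^n FY}$ followed by $m\circ\eta F$; the latter is the identity by the monad unit axiom for $F$, which yields exactly $\varepsilon_{(F\K)^n FY}$ as desired.

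The main obstacle is purely bookkeeping: tracking the simplicial structure maps $\sigma$ through the various naturality squares, and keeping straight which of the several layers of $\K$ and $F$ is being acted on at each step. This is controlled uniformly by Proposition \ref{prop:simplicial_natural_transformations_play_nicely_with_Tot} together with the naturality of $\eta$. Beyond this bookkeeping and the monad unit axiom for $F$, no further homotopical input is required.
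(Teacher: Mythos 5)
Your proof is correct and follows the route the paper intends: the one-line proof in the paper invokes only Proposition \ref{prop:simplicial_natural_transformations_play_nicely_with_Tot} to commute $\eta$ and the cofibrant-replacement augmentation past $\Tot^\res$, and you fill in that step together with the remaining reduction to the identity $s^{-1}\circ\eta_{\K(F\K)^n FY}=\varepsilon_{(F\K)^n FY}$, which you verify via the naturality of $\eta$ and the unit axiom $m\circ\eta F=\id$ for the simplicial fibrant replacement monad. This is a more complete write-up of the paper's terse argument rather than a genuinely different one.
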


\begin{proof}
This follows from Proposition \ref{prop:simplicial_natural_transformations_play_nicely_with_Tot}.
\end{proof}

\begin{rem}
\label{rem:elaboration_on_the_proof_of_main_theorem}
For the convenience of the reader, we will further elaborate on the connection between the maps appearing in the proof of Theorem \ref{MainTheorem}(a) and the derived counit map. Consider the following commutative diagram
\begin{align*}
\xymatrix{
  \LL Q\holim\nolimits_\Delta C(Y)\ar[d]^{\wequiv}\ar[r]^-{(*)'} &
  \holim\nolimits_\Delta \LL Q\, C(Y)\ar[d]^-{\wequiv}\ar[r]^-{(*)''} &
  Y\ar@/^1pc/[ddd]^(0.3){\wequiv}_-(0.3){(**)} \\
  FQc\holim\nolimits^\BK_\Delta\mathfrak{C}(Y)\ar[d]_-{(*)}^-{\wequiv}
  \ar[r] &
  \holim\nolimits^\BK_\Delta FQ\mathfrak{C}(Y)\ar[d]_-{(*)}^-{\wequiv}
  \ar@/^2pc/[ddr]\\
  FQc\holim\nolimits^\BK_{\Delta_\res}\mathfrak{C}(Y)\ar[r] &
  \holim\nolimits^\BK_{\Delta_\res} FQ\mathfrak{C}(Y)\ar[dr]\\
  FQc\Tot^\res\mathfrak{C}(Y)\ar[r]\ar[u]^-{(*)}_-{\wequiv} &
  \Tot^\res FQ\mathfrak{C}(Y)\ar[r]\ar[u]^-{(*)}_-{\wequiv} & FY\\
  Qc\Tot^\res\mathfrak{C}(Y)\ar[u]^-{(**)}_-{\wequiv}\ar@/_1pc/[urr]_-{(\#)}
}
\end{align*}
where the arrow $(\#)$ is defined by the indicated composition; here, the top horizontal maps are the maps in \eqref{eq:FQC_commutes_with_desired_holim}. By Proposition \ref{prop:another_description_of_the_canonical_derived_counit_map}, the map underlying the derived counit map \eqref{eq:derived_counit_map_of_the_form} is precisely the map $(\#)$. The maps $(*)$ are weak equivalences by Proposition \ref{prop:comparing_holim_with_Tot_and_Tot_restricted}, the maps $(**)$ are weak equivalences since each is the unit of the simplicial fibrant replacement monad $F$. Hence to verify that the map $(\#)$ underlying the derived counit map is a weak equivalence, it suffices to verify that $(*)'$ and $(*)''$ are weak equivalences; this is verified in the proof of Theorem \ref{MainTheorem}(a). This reduction argument can be thought of as a homotopical Barr-Beck comonadicity theorem; see Arone-Ching \cite[2.20]{Arone_Ching_classification}.
\end{rem}

\section{Homotopical analysis of the cubical diagrams}
\label{sec:cubical_diagrams_homotopical_analysis}

The purpose of this section is to prove Propositions \ref{prop:homotopy_fiber_calculation}, \ref{prop:connectivities_for_total_homotopy_fiber_homotopy_completion_cube_no_zigzags}, \ref{prop:iterated_homotopy_fibers_calculation}, \ref{prop:multisimplicial_calculation_of_iterated_hofiber_codegeneracy_cube_no_zigzags}, and Theorem \ref{thm:connectivities_for_map_that_commutes_Q_into_inside_of_holim} that were needed in the proof of our main result (Section \ref{sec:outline_of_the_argument}). Aspects of our approach are in the same spirit as the work of Dundas \cite{Dundas_relative_K_theory}, where Goodwillie's higher Blakers-Massey theorems \cite{Goodwillie_calculus_2} for spaces are exploited to great effect.

Munson-Volic \cite{Munson_Volic_book_project} have pointed out that the following observation provides a useful building block for proving, with minimal effort, several useful propositions below.

\begin{prop}
\label{prop:retraction_two_cube_argument}
Consider any $2$-cube $\capX$ of the form
\begin{align*}
\xymatrix{
  X\ar@{=}[d]\ar[r]^-{f} & Y\ar[d]^-{g}\\
  X\ar@{=}[r] & X
}
\end{align*}
in $\AlgO$; in other words, suppose $g$ is a retraction of $f$. There are natural weak equivalences $\hofib(f)\wequiv\Omega\hofib(g)$; here $\Omega$ is weakly equivalent, in the underlying category $\ModR$, to the desuspension $\Sigma^{-1}$ functor.
\end{prop}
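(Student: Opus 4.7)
The plan is to identify both $\hofib(f)$ and $\Omega \hofib(g)$ with the total (iterated) homotopy fiber of the $2$-cube $\capX$ and then compare. Since $\capO[0] = *$, the category $\AlgO$ is pointed and admits a well-behaved notion of homotopy fiber; moreover, the total homotopy fiber of a $2$-cube admits two natural descriptions that agree up to a natural zigzag of weak equivalences, namely as the homotopy fiber of the induced map between the homotopy fibers of the two rows, or as the homotopy fiber of the induced map between the homotopy fibers of the two columns.

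Computing the total homotopy fiber of $\capX$ by rows: the top row $X\xrightarrow{f}Y$ has homotopy fiber $\hofib(f)$, while the bottom row $X = X$ has homotopy fiber naturally weakly equivalent to $*$. The total homotopy fiber is therefore $\hofib\bigl(\hofib(f)\rarrow *\bigr) \wequiv \hofib(f)$. Computing by columns: the left column $\id_X$ has homotopy fiber weakly equivalent to $*$, while the right column $g$ has homotopy fiber $\hofib(g)$; hence the total homotopy fiber is $\hofib\bigl(*\rarrow \hofib(g)\bigr) \wequiv \Omega \hofib(g)$. Comparing the two calculations produces the required natural zigzag of weak equivalences $\hofib(f) \wequiv \Omega \hofib(g)$.

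Equivalently, one may invoke the standard fiber sequence $\hofib(f)\rarrow\hofib(gf)\rarrow\hofib(g)$ associated to the composable pair $X\xrightarrow{f}Y\xrightarrow{g}X$; since $gf = \id_X$ and hence $\hofib(gf) \wequiv *$, this immediately delivers $\hofib(f) \wequiv \Omega \hofib(g)$. The final statement, that $\Omega$ is weakly equivalent to the desuspension $\Sigma^{-1}$ on underlying $\capR$-modules, follows from the stability of $\ModR$ together with the fact that the forgetful functor $\AlgO\rarrow\ModR$ preserves homotopy fibers. The only real obstacle is justifying the equivalence of the two iterated-homotopy-fiber descriptions of the total homotopy fiber of a $2$-cube; this is classical in any pointed homotopical setting and is already used implicitly throughout the paper, for instance in the discussions surrounding Propositions \ref{prop:homotopy_fiber_calculation} and \ref{prop:iterated_homotopy_fibers_calculation}.
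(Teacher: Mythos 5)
Your main argument — computing the iterated homotopy fiber of the square $\capX$ once by rows (giving $\hofib(f)$) and once by columns (giving $\Omega\hofib(g)$), then invoking the natural equivalence between the two iterated-fiber descriptions of a total homotopy fiber — is exactly the approach the paper takes (which it attributes to Munson--Voli\'c and leaves as a sketch). Your alternative via the fiber sequence $\hofib(f)\rarrow\hofib(gf)\rarrow\hofib(g)$ with $gf=\id_X$ is an equally valid and slightly more self-contained route to the same conclusion.
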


\begin{proof}
This is proved, for instance, in Munson-Volic \cite{Munson_Volic_book_project} in the context of spaces, and the same argument verifies it in our context. It follows easily by calculating the iterated homotopy fibers of $\capX$ in two different ways, by starting in the horizontal direction versus the vertical direction.
\end{proof}

\begin{prop}[Proposition \ref{prop:homotopy_fiber_calculation} restated]
\label{prop:homotopy_fiber_calculation_other_section}
Let $Z$ be a cosimplicial $\capO$-algebra coaugmented by $\function{d^0}{Z^{-1}}{Z^0}$. If $n\geq 0$, then there are natural zigzags of weak equivalences
\begin{align*}
  \hofib(Z^{-1}\rarrow\holim\nolimits_{\Delta^{\leq n}}Z)
  \wequiv
  (\iter\hofib)\capX_{n+1}
\end{align*}
where $\capX_{n+1}$ denotes the canonical $(n+1)$-cube associated to the coface maps of
\begin{align*}
\xymatrix{
  Z^{-1}\ar[r]^-{d^0} &
  Z^0\ar@<-0.5ex>[r]_-{d^1}\ar@<0.5ex>[r]^-{d^0} &
  Z^1\ \cdots\ Z^n
}
\end{align*}
the $n$-truncation of $Z^{-1}\rarrow Z$. We sometimes refer to $\capX_{n+1}$ as the \emph{coface} $(n+1)$-cube associated to the coaugmented cosimplicial $\capO$-algebra $Z^{-1}\rarrow Z$.
\end{prop}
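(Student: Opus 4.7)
The plan is to reduce the claim to Proposition \ref{prop:left_cofinality_truncated_delta}, which provides a left cofinal functor from the poset $\capP_0([n])$ of nonempty subsets of $\{0,1,\ldots,n\}$ into $\Delta^{\leq n}$, by identifying the coface $(n+1)$-cube $\capX_{n+1}$ with a diagram built naturally from $Z^{-1}\rightarrow Z$ via pullback along this functor.

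First I would unwind the construction of the coface $(n+1)$-cube to observe that $\capX_{n+1}$ is naturally isomorphic to the composite of two functors: the functor $F\colon\capP([n])\rightarrow\Delta^{\leq n}$ (defined on the full poset of subsets of $[n]$) sending a subset $S=\{i_0<i_1<\ldots<i_{k-1}\}$ to $[k-1]=[|S|-1]$ and sending an inclusion $S\subseteq T$ to the unique order-preserving injection $[|S|-1]\rightarrow[|T|-1]$ determined by the inclusion of sets, followed by the coaugmented cosimplicial diagram $Z^{-1}\rightarrow Z$ (with the empty subset corresponding to the coaugmentation object $Z^{-1}$). In particular, the restriction of $\capX_{n+1}$ to the punctured cube over $\capP_0([n])\subset\capP([n])$ is naturally identified with $Z\circ F$, where $F\colon\capP_0([n])\rightarrow\Delta^{\leq n}$ is the composite appearing in Proposition \ref{prop:left_cofinality_truncated_delta}.

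Next, by the definition of the iterated (equivalently, total) homotopy fiber of an $(n+1)$-cube as the homotopy fiber of the canonical map from the initial vertex into the homotopy limit of the punctured cube, there is a natural weak equivalence
\begin{align*}
(\iter\hofib)\capX_{n+1}\wequiv\hofib\Bigl(Z^{-1}\longrightarrow\holim\nolimits_{\capP_0([n])}\capX_{n+1}|_{\capP_0([n])}\Bigr).
\end{align*}
Combining this with the identification of the punctured cube as $Z\circ F$ and the left cofinality of $F$, a standard Bousfield-Kan cofinality argument (applied after functorially replacing $Z$ by an objectwise fibrant cosimplicial $\capO$-algebra, cf.\ the proof of Proposition \ref{prop:punctured_cube_calculation_of_holim_truncated_delta}) supplies a natural zigzag of weak equivalences $\holim_{\capP_0([n])}\capX_{n+1}|_{\capP_0([n])}\wequiv\holim_{\Delta^{\leq n}}Z$. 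Chaining these together yields the desired zigzag.

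The main obstacle is the bookkeeping in the first step: one must carefully verify that the structure maps of $\capX_{n+1}$ (which arise from the coface operators of the truncated coaugmented cosimplicial object, with a specific coface index determined by the position of the newly-added element within its containing subset) really do correspond, under the above identification, to the face inclusions $[|S|-1]\rightarrow[|T|-1]$ induced by the subset inclusions $S\subseteq T$. Once this identification is confirmed, the statement follows essentially formally from Proposition \ref{prop:left_cofinality_truncated_delta} and the definition of iterated homotopy fiber.
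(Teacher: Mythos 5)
Your proof is correct, and the mechanism it uses---identifying the punctured part of $\capX_{n+1}$ with $Z$ restricted along the left cofinal functor $\capP_0([n])\to\Delta^{\leq n}$ of Proposition \ref{prop:left_cofinality_truncated_delta}, then invoking cofinality together with the total-homotopy-fiber description---is exactly the substance of the Carlsson and Sinha arguments that the paper cites. The identification you worry about in the last paragraph is indeed the standard one: the coface cube is, by construction, precisely the restriction of the coaugmented cosimplicial diagram along the poset-to-$\Delta$ functor $S\mapsto[|S|-1]$, so the bookkeeping checks out (and the restriction to nonempty subsets is literally the functor appearing in Definition \ref{defn:the_wide_tilde_construction} and Proposition \ref{prop:punctured_cube_calculation_of_holim_truncated_delta}).

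One point of contrast with the paper's own terse proof: the paper's text describes the argument as ``repeated application of Proposition \ref{prop:retraction_two_cube_argument}'' using retractions furnished by codegeneracies, whereas you make no use of that lemma and instead make the cofinality step explicit. Both are legitimate (the cofinality statement is exactly what Proposition \ref{prop:punctured_cube_calculation_of_holim_truncated_delta} records, and that is what Carlsson and Sinha actually prove), but your version isolates the real content, namely the comparison $\holim_{\capP_0([n])}\wequiv\holim_{\Delta^{\leq n}}$, rather than routing through $2$-cube retraction manipulations; the retraction lemma is more central to the companion result on codegeneracy cubes (Proposition \ref{prop:iterated_homotopy_fibers_calculation}). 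Your version also correctly flags the fibrancy issue and points to the functorial objectwise-fibrant replacement needed before applying Bousfield--Kan cofinality.
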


\begin{proof}
This is proved in Carlsson \cite{Carlsson} and Sinha \cite{Sinha_cosimplicial_models} in the contexts of spectra and spaces, respectively, and the same argument verifies it in our context. It follows easily from the cosimplicial identities, together with repeated application of Proposition \ref{prop:retraction_two_cube_argument} by using the fact that codegeneracy maps provide retractions for the appropriate coface maps.
\end{proof}

The following homotopy spectral sequence for a simplicial symmetric spectrum is well known; for a recent reference, see \cite[X.2.9]{EKMM} and \cite[4.3]{Jardine_generalized_etale}.

\begin{prop}
\label{prop:homotopy_spectral_sequence}
Let $Y$ be a simplicial symmetric spectrum. There is a natural homologically graded spectral sequence in the right-half plane such that
\begin{align*}
  E^2_{p,q} = H_p(\pi_q(Y))\Longrightarrow\pi_{p+q}(|Y|)
\end{align*}
Here, $\pi_q(Y)$ denotes the simplicial abelian group obtained by applying $\pi_q$ levelwise to $Y$.
\end{prop}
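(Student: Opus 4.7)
The proof will follow the standard approach via the skeletal filtration of the realization $|Y|$. The plan is to write $|Y|$ as the colimit of its skeletal filtration
\[
|\Sk_0 Y|\rightarrow |\Sk_1 Y|\rightarrow |\Sk_2 Y|\rightarrow\cdots\rightarrow |Y|
\]
and then appeal to the standard homotopy spectral sequence associated with a sequence of cofibrations of symmetric spectra, as developed in the context of $\pi_*$ for symmetric spectra (e.g., the approach in EKMM \cite{EKMM} or in Jardine's work on generalized étale cohomology theories cited in the statement). First I would verify that each map $|\Sk_{p-1} Y|\rightarrow |\Sk_p Y|$ is a cofibration of symmetric spectra and identify the cofibers; the key point here is the standard fact that there is a natural pushout square expressing $|\Sk_p Y|$ as attaching $\Delta^p_+\wedge (Y_p/L_p Y)$ along $\partial\Delta^p_+\wedge (Y_p/L_p Y)$, where $L_p Y$ is the latching object (the ``degenerate'' sub-spectrum in degree $p$). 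Consequently the successive cofiber is naturally weakly equivalent to $\Sigma^p(Y_p/L_p Y)$, and so is $p$-connected relative to $\pi_*(Y_p/L_p Y)$.

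Next I would write down the exact couple in the standard way: applying $\pi_*$ to the tower of cofiber sequences
\[
|\Sk_{p-1}Y|\rightarrow |\Sk_p Y|\rightarrow \Sigma^p(Y_p/L_p Y)
\]
yields an exact couple whose $E^1$ term is $E^1_{p,q}=\pi_{p+q}\Sigma^p(Y_p/L_p Y)\iso \pi_q(Y_p/L_p Y)$, and whose $d^1$ differential is (up to sign conventions) induced by the alternating sum of the face maps in the connecting homomorphism. This identifies the $E^1$-term with the normalized (Moore) chain complex $N_p\pi_q(Y)$ of the simplicial abelian group $\pi_q(Y)$, and therefore the $E^2$-term with $H_p(N_*\pi_q(Y))\iso H_p(\pi_q(Y))$, which is the classical identification of homotopy of a simplicial abelian group with the homology of its normalized chain complex.

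Finally, convergence: since the filtration is exhaustive (the canonical map $\colim_p|\Sk_p Y|\rightarrow |Y|$ is an isomorphism) and the filtration is increasing with $|\Sk_{-1} Y|=*$, each $\pi_i(|Y|)$ has an induced filtration by the images from $\pi_i(|\Sk_p Y|)$. Because the connectivity of the cofiber $\Sigma^p(Y_p/L_p Y)$ increases with $p$ (so that for fixed total degree $p+q$, only finitely many filtration quotients contribute) the spectral sequence lies in the right half-plane and converges strongly to $\pi_{p+q}(|Y|)$, yielding the desired spectral sequence. Naturality in $Y$ is immediate from the functoriality of the skeletal filtration and of the cofiber sequences.

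The main technical step, and the only one requiring care, is identifying $d^1$ with the alternating sum of face maps so that one recovers the normalized chains $N_*\pi_q(Y)$ on the $E^1$-page; everything else is routine once the skeletal filtration is in place. This is the same calculation as in the classical simplicial spaces case, so I would simply invoke the identification carried out in the references \cite{EKMM} and \cite{Jardine_generalized_etale}.
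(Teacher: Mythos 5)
The paper does not actually prove this proposition; it records the statement as a well-known fact and cites EKMM \cite[X.2.9]{EKMM} and Jardine \cite[4.3]{Jardine_generalized_etale}. Your outline is precisely the standard skeletal-filtration argument that those references carry out, so in effect you are reconstructing the cited proof rather than departing from the paper's approach. The structure is correct: filter $|Y|$ by $|\Sk_p Y|$, identify the filtration quotients as $\Sigma^p(Y_p/L_p Y)$, pass to the exact couple, identify the $E^1$-page with the normalized chains $N_*\pi_q(Y)$ via the Dold--Kan correspondence, and obtain $E^2 = H_*(\pi_q(Y))$.

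Two details are worth tightening. First, the pushout you describe is slightly off: $|\Sk_p Y|$ is not obtained by attaching $\Delta^p_+\wedge(Y_p/L_p Y)$ along $\partial\Delta^p_+\wedge(Y_p/L_p Y)$; rather, one attaches $\Delta^p_+\wedge Y_p$ along the pushout-product $(\partial\Delta^p_+\wedge Y_p)\cup_{\partial\Delta^p_+\wedge L_p Y}(\Delta^p_+\wedge L_p Y)$. Your conclusion about the cofiber $\Sigma^p(Y_p/L_p Y)$ is still the correct one, but the intermediate description should be fixed. Second, and more substantively, the identification $\pi_q(Y_p/L_p Y)\cong N_p\pi_q(Y)$ and the fact that each $|\Sk_{p-1}Y|\rightarrow|\Sk_p Y|$ is a levelwise cofibration silently use a \emph{properness} hypothesis on the simplicial spectrum (i.e., that the latching maps $L_p Y\rightarrow Y_p$ are cofibrations); this is exactly the hypothesis EKMM impose in X.2.9. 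In the applications in this paper the simplicial objects are bar constructions arising from the Cofibrancy Condition \ref{CofibrancyCondition}, where this is automatic, but the hypothesis should be acknowledged if you intend the proof to stand alone. Your discussion of convergence implicitly uses a uniform lower bound on the connectivity of the $Y_p$; again this holds in the paper's applications but is worth flagging explicitly for strong convergence of the right-half-plane spectral sequence.
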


The following calculations, which use the notation in Harper-Hess \cite[Section 2]{Harper_Hess}, encode all of the combinatorics needed for calculating explicitly the iterated homotopy fibers of the coface and codegeneracy $n$-cubes discussed below. In particular, these combinatorics allow us to efficiently calculate connectivities by using exactly the same line of arguments that were used in Harper-Hess \cite[Proof of 1.8]{Harper_Hess} for calculating the homotopy fibers of certain $1$-cubes, but now iterated up to calculate the total homotopy fibers of certain $n$-cubes.

\begin{prop}
\label{prop:calculations_for_iterated_fibers_part1}
Consider symmetric sequences in $\ModR$. Let $Y\in\SymSeq$ such that $Y[0]=*$. If $n\geq 1$, then the diagram
\begin{align*}
\xymatrix{
  (Y^{\tensorcheck n})^{>n}\ar[r]^-{\subset}\ar[d] & Y^{\tensorcheck n}\ar[d]^{(*)}\\
  {*}\ar[r] & (\tau_1 Y)^{\tensorcheck n}
}
\end{align*}
is a pushout diagram, and a pullback diagram, in $\SymSeq$. In particular, the fiber of the map $(*)$ is a symmetric sequence that starts at level $n+1$.
\end{prop}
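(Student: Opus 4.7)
The plan is to verify both the pushout and pullback claims level by level, by unpacking the explicit formula for the tensor product of symmetric sequences. Recall that for $A,B\in\SymSeq$ we have the natural isomorphism
\begin{align*}
  (A\tensorcheck B)[k]\Iso\coprod_{p+q=k}\Sigma_k\cdot_{\Sigma_p\times\Sigma_q}\bigl(A[p]\Smash B[q]\bigr),
\end{align*}
which iterates to
\begin{align*}
  (Y^{\tensorcheck n})[k]\Iso\coprod_{k_1+\cdots+k_n=k}\Sigma_k\cdot_{\Sigma_{k_1}\times\cdots\times\Sigma_{k_n}}\bigl(Y[k_1]\Smash\cdots\Smash Y[k_n]\bigr).
\end{align*}

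First, I would observe that the assumption $Y[0]=*$ forces every summand above with some $k_i=0$ to be the trivial object. Hence $(Y^{\tensorcheck n})[k]=*$ for $k<n$, and at level $k=n$ the only surviving summand is the one with all $k_i=1$, giving the natural isomorphism $(Y^{\tensorcheck n})[n]\Iso\Sigma_n\cdot Y[1]^{\Smash n}$ (with $\Sigma_n$ acting by permutation of tensor factors). Next I would apply the same analysis to $\tau_1 Y$: since $(\tau_1 Y)[0]=*$, $(\tau_1 Y)[1]=Y[1]$, and $(\tau_1 Y)[m]=*$ for $m\geq 2$, the only non-trivial summand in $(\tau_1 Y)^{\tensorcheck n}[k]$ is the one with all $k_i=1$, which contributes precisely when $k=n$. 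It follows that the canonical projection $Y\rarrow\tau_1 Y$ induces an isomorphism $(Y^{\tensorcheck n})[n]\xrightarrow{\Iso}(\tau_1 Y)^{\tensorcheck n}[n]$ and the trivial map at every other level.

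Combining these observations, at each level $k$ the square takes one of three forms: at $k<n$, every corner is $*$; at $k=n$, the right-hand vertical map is the identity on $(Y^{\tensorcheck n})[n]$ and the left-hand vertical map is $*\rarrow *$; and at $k>n$, the right-hand vertical map is $(Y^{\tensorcheck n})[k]\rarrow *$ while the top horizontal inclusion is the identity. In each of these three cases the square is trivially both a pushout and a pullback of pointed objects. Since colimits and limits in $\SymSeq$ are computed levelwise, the original square is both a pushout and a pullback in $\SymSeq$, as claimed.

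The final conclusion about the fiber is then immediate: computing the fiber of $(*)$ levelwise, we get $*$ at every level $k\leq n$ (using that the target is trivial for $k<n$ and that $(*)$ is an isomorphism for $k=n$) and we get $(Y^{\tensorcheck n})[k]$ for $k>n$. I expect no substantive obstacle to this argument beyond careful bookkeeping; the entire content of the proposition is a direct unpacking of the definition of $\tensorcheck$ together with the vanishing conditions imposed by $Y[0]=*$ and by the truncation $\tau_1$.
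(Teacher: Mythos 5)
The paper's ``proof'' of this proposition is simply the one-line remark ``This is an exercise left to the reader,'' so there is no argument in the paper to compare against. Your levelwise computation is correct and is the natural way to do the exercise: you correctly observe that the hypothesis $Y[0]=*$ kills every summand of $(Y^{\tensorcheck n})[k]$ with some $k_i=0$, that the surviving summand at level $n$ is exactly $k_1=\cdots=k_n=1$, that $(\tau_1 Y)^{\tensorcheck n}$ is concentrated at level $n$ where it agrees with $Y^{\tensorcheck n}$, and that limits and colimits in $\SymSeq$ are computed levelwise (and further, levelwise in $\ModR$, since (co)limits of $\Sigma_k$-objects are created by the forgetful functor), so the three-case levelwise check suffices for both the pushout and pullback claims, and the identification of the fiber with $(Y^{\tensorcheck n})^{>n}$ then falls out.
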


\begin{proof}
This is an exercise left to the reader.
\end{proof}

\begin{prop}
\label{prop:calculations_for_iterated_fibers_part2}
Consider symmetric sequences in $\ModR$. Let $W,Z\in\SymSeq$ such that $W[0]=*$ and $Z[0]=*$. If $m\geq 0$, then
$W^{>m}\circ Z = (W^{>m}\circ Z)^{>m}$. In other words, if $W$ starts at level $m+1$, then $W\circ Z$ starts at level $m+1$.
\end{prop}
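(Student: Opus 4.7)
The plan is to unpack the definition of the circle product at level $n$ and combine the two starting-level hypotheses by a simple index bookkeeping. Recall that for symmetric sequences $W,Z\in\SymSeq$ with $Z[0]=*$, the circle product can be described levelwise as
\[
  (W\circ Z)[n]\Iso\coprod_{t\geq 0}W[t]\Smash_{\Sigma_t}Z^{\tensorcheck t}[n],
\]
and that $Z^{\tensorcheck t}[n]$ is built from wedges of smash products of the form $Z[n_1]\Smash\cdots\Smash Z[n_t]$ indexed over ordered partitions $n_1+\cdots+n_t=n$. The basepoint assumption $Z[0]=*$ forces every summand with some $n_i=0$ to collapse, so only partitions with each $n_i\geq 1$ contribute; in particular $Z^{\tensorcheck t}[n]=*$ whenever $n<t$.

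First I would substitute $W^{>m}$ for $W$ in the displayed formula. By definition of the truncation, $W^{>m}[t]=*$ for $t\leq m$, so the coproduct reduces to the range $t\geq m+1$:
\[
  (W^{>m}\circ Z)[n]\Iso\coprod_{t\geq m+1}W^{>m}[t]\Smash_{\Sigma_t}Z^{\tensorcheck t}[n].
\]
Next, for any $t\geq m+1$ and any $n\leq m$, the previous paragraph gives $Z^{\tensorcheck t}[n]=*$ since $n\leq m<t$ precludes the existence of a partition of $n$ into $t$ positive parts. Therefore every summand is trivial at level $n\leq m$, and we conclude $(W^{>m}\circ Z)[n]=*$ for $n\leq m$, which is precisely the assertion $W^{>m}\circ Z=(W^{>m}\circ Z)^{>m}$.

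There is no real obstacle here; the argument is a direct unwinding of definitions, and the only subtlety to keep track of is the combinatorial fact that $Z^{\tensorcheck t}[n]$ vanishes when $n<t$ under the hypothesis $Z[0]=*$, which is immediate from the description of $\tensorcheck$ as a coproduct indexed on ordered partitions (cf.\ Harper-Hess \cite[Section 2]{Harper_Hess}). Thus the proof is essentially a one-line index check once the levelwise formula for $\circ$ and the vanishing of $Z^{\tensorcheck t}[n]$ for $n<t$ are in hand.
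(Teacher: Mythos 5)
Your argument is correct, and it fills in exactly the exercise that the paper leaves to the reader: the paper's stated proof is simply ``This is an exercise left to the reader,'' so there is no alternative approach in the paper to compare against. Your unwinding of $(W^{>m}\circ Z)[n]\Iso\coprod_{t\geq m+1}W^{>m}[t]\Smash_{\Sigma_t}Z^{\tensorcheck t}[n]$, combined with the observation that $Z^{\tensorcheck t}[n]=*$ when $n<t$ under the hypothesis $Z[0]=*$ (since every ordered partition $n_1+\cdots+n_t=n$ with all $n_i\geq 1$ requires $n\geq t$), is the natural and intended verification.
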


\begin{proof}
This is an exercise left to the reader.
\end{proof}

\begin{prop}
\label{prop:calculations_for_iterated_fibers_part3}
Consider symmetric sequences in $\ModR$. Let $W,Y\in\SymSeq$ such that $W[0]=*$ and $Y[0]=*$. If $m\geq 0$, then the left-hand diagram
\begin{align*}
\xymatrix{
  \widetilde{W}=\widetilde{W}^{>m+1}\ar[d]\ar[r]^-{\subset} &
  W^{>m}\circ Y\ar[d]^{(*)}
  &
  \widetilde{W} := \coprod\limits_{t>m}W[t]
  \Smash_{\Sigma_t}(Y^{\tensorcheck t})^{>t}\\
  {*}\ar[r] & W^{>m}\circ\tau_1 Y
}
\end{align*}
is a pushout diagram, and a pullback diagram, in $\SymSeq$. In particular, the fiber of the map $(*)$ is a symmetric sequence that starts at level $m+2$.
\end{prop}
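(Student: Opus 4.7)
The plan is to verify both the pushout and the pullback assertions by a straightforward level-by-level analysis in $\SymSeq$, using the formulas for circle products and the explicit combinatorics of $\tensorcheck$. First I would unpack the relevant formulas: by definition of the circle product,
\begin{align*}
  (W^{>m}\circ Y)[r] &= \coprod_{t>m} W[t]\Smash_{\Sigma_t}(Y^{\tensorcheck t})[r], \\
  (W^{>m}\circ\tau_1 Y)[r] &= \coprod_{t>m} W[t]\Smash_{\Sigma_t}(\tau_1 Y)^{\tensorcheck t}[r],
\end{align*}
and the specified $\widetilde{W}[r]=\coprod_{t>m} W[t]\Smash_{\Sigma_t}(Y^{\tensorcheck t})^{>t}[r]$. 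Since $Y[0]=*$, each summand of $(Y^{\tensorcheck t})[r]$ is built from $Y[r_1]\wedge\cdots\wedge Y[r_t]$ with $r_1+\cdots+r_t=r$ and $r_i\geq 1$, so $(Y^{\tensorcheck t})[r]=*$ for $r<t$; in particular $(\tau_1 Y)^{\tensorcheck t}[r]=*$ unless $r=t$.

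Next I would split into three cases according to $r$. For $r\leq m$, every nontrivial entry of the square is $*$. For $r=m+1$, the top-left $\widetilde{W}[m+1]$ vanishes since each summand needs $r>t>m$ (hence $r\geq m+2$), while the top-right and bottom-right both equal $W[m+1]\Smash_{\Sigma_{m+1}}Y^{\tensorcheck(m+1)}[m+1]$ via the isomorphism induced by $Y^{\tensorcheck(m+1)}[m+1]\iso(\tau_1Y)^{\tensorcheck(m+1)}[m+1]$. For $r\geq m+2$, summands at level $r$ require $t\leq r$ in the top row and $t=r$ in the bottom-right, so the top map is an inclusion whose complement is exactly the $t=r$ summand $W[r]\Smash_{\Sigma_r}Y^{\tensorcheck r}[r]$; this makes the square both a pushout (cofiber identification) and a pullback (kernel identification). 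In all three cases the square is therefore simultaneously a pushout and a pullback in $\SymSeq$. This essentially reduces the assertion to Proposition \ref{prop:calculations_for_iterated_fibers_part1} applied termwise and then assembled via $W[t]\Smash_{\Sigma_t}(-)$ and coproducts.

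For the final claim about the fiber, I would read off $\widetilde{W}[r]=*$ for every $r\leq m+1$ directly from the above: a summand at $t>m$ contributes only when $r>t$, which forces $r\geq m+2$. Hence $\widetilde{W}=\widetilde{W}^{>m+1}$, and the pullback property identifies $\widetilde{W}$ with the fiber of $(*)$, which therefore starts at level $m+2$ as claimed.

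The only real obstacle I anticipate is keeping the combinatorics straight at the borderline level $r=m+1$, where one must notice both that $\widetilde{W}$ is already empty and that the comparison map $(*)$ is an isomorphism at that level; the other cases then follow cleanly. No additional machinery beyond Proposition \ref{prop:calculations_for_iterated_fibers_part1} and the explicit formulas for $\circ$ and $\tensorcheck$ should be needed.
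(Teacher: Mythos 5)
The paper leaves this as an exercise, and your direct level-by-level verification is correct and is the natural way to carry it out: at each level $r$ the source is either trivial ($r \leq m$), a single summand mapping isomorphically ($r = m+1$), or a split coproduct with $\widetilde{W}[r]$ the sub-summand indexed by $m < t < r$ and the $t = r$ summand mapping isomorphically to $(W^{>m}\circ\tau_1 Y)[r]$; in each case the square is simultaneously a strict pushout and a strict pullback. One small caveat: the closing remark that this ``reduces'' to Proposition \ref{prop:calculations_for_iterated_fibers_part1} by applying $W[t]\Smash_{\Sigma_t}(-)$ and coproducts should be treated as motivation only, since those operations (being left adjoints / colimits) do not automatically preserve pullbacks; your self-contained levelwise argument does not actually rely on such preservation, so the proof stands as written.
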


\begin{proof}
This is an exercise left to the reader.
\end{proof}

The following notation will be useful below.

\begin{defn} Let $n\geq 1$ and denote by $\Cube_n$ the category with objects the vertices $(v_1,\dotsc,v_n)\in\{0,1\}^n$ of the unit $n$-cube. There is at most one morphism between any two objects, and there is a morphism
$
  (v_1,\dotsc,v_n)\rarrow  (v_1',\dotsc,v_n')
$
if and only if $v_i\leq v_i'$ for each $1\leq i\leq n$. In particular, $\Cube_n$ is the category associated to a partial order on the set $\{0,1\}^n$.
\end{defn}

For each $n\geq 1$, denote by $\capX_n$ the coface $n$-cube associated to the coaugmentation $X\rarrow C(QX)$. The reason for introducing the $n$-cubes $\capX'_n$, $\capX''_n$, $\capX'''_n$ in Definiton \ref{defn:sequence_of_weakly_equivalent_models_of_the_coface_cube} below is that we can homotopically analyze the iterated homotopy fibers of $\capX'''_n$ using the combinatorics developed above; since these intermediate $n$-cubes fit into a zigzag of weak equivalences $\capX_n\wequiv\capX_n'''$ of $n$-cubes, the combinatorial analysis results in a homotopical analysis of $\capX_n$ (Proposition \ref{prop:connectivities_for_total_homotopy_fiber_homotopy_completion_cube}).

\begin{defn}
\label{defn:sequence_of_weakly_equivalent_models_of_the_coface_cube}
If $X\in\AlgO$ is cofibrant and $n\geq 1$, denote by $\capX_n$ the coface $n$-cube associated to $X\rarrow C(QX)$, and by $\capX'_n$, $\capX''_n$, $\capX'''_n$ the following weakly equivalent $n$-cubes. The functor $\function{\capX'_n}{\Cube_{n}}{\AlgO}$ is defined by
\begin{align*}
  (v_1,v_2,\dotsc,v_n)&\longmapsto A_1\circ_\capO A_{2}\circ_\capO\cdots A_n\circ_\capO(X)\\
  \text{where}
  \quad\quad
  A_i &:=
  \left\{
    \begin{array}{ll}
    \capO,&\text{for $v_i=0$,}\\
    J,&\text{for $v_i=1$,}
  \end{array}
  \right.
\end{align*}
with maps induced by $\capO\rarrow J$; the functor $\function{\capX''_n}{\Cube_{n}}{\AlgO}$ is defined by
\begin{align*}
  (v_1,v_2,\dotsc,v_n)&\longmapsto|\BAR(A_1,\capO,|\BAR(A_{2},\capO,\dots,|\BAR(A_n,\capO,X)|\dotsb)|)|\\
  \text{where}
  \quad\quad
  A_i &:=
  \left\{
    \begin{array}{ll}
    \capO,&\text{for $v_i=0$,}\\
    J,&\text{for $v_i=1$,}
  \end{array}
  \right.
\end{align*}
with maps induced by $\capO\rarrow J$; the functor $\function{\capX'''_n}{\Cube_{n}}{\AlgO}$ is defined by
\begin{align*}
  (v_1,v_2,\dotsc,v_n)&\longmapsto|\BAR(A_1,\capO,|\BAR(A_{2},\capO,\dots,|\BAR(A_n,\capO,X)|\dotsb)|)|\\
  \text{where}
  \quad\quad
  A_i &:=
  \left\{
    \begin{array}{ll}
    \capO,&\text{for $v_i=0$,}\\
    \tau_1\capO,&\text{for $v_i=1$,}
  \end{array}
  \right.
\end{align*}
with maps induced by $\capO\rarrow\tau_1\capO$.
\end{defn}

\begin{rem}
For instance, $\capX_2$ is the $2$-cube associated to $X\rarrow C(QX)$ of the form
\begin{align*}
\xymatrix{
  X\ar[d]^-{d^0}\ar[r]^-{d^0} &
  J\circ_\capO(X)\ar[d]^-{d^0}\\
  J\circ_\capO(X)\ar[r]^-{d^1} &
  J\circ_\capO J\circ_\capO(X)
}
\end{align*}
$\capX'_2$ is the associated $2$-cube of the form
\begin{align*}
\xymatrix{
  \capO\circ_\capO\capO\circ_\capO(X)\ar[d]^-{d^0}\ar[r]^-{d^0} &
  J\circ_\capO\capO\circ_\capO(X)\ar[d]^-{d^0}\\
  \capO\circ_\capO J\circ_\capO(X)\ar[r]^-{d^1} &
  J\circ_\capO J\circ_\capO(X)
}
\end{align*}
$\capX''_2$ is the associated $2$-cube of the form
\begin{align*}
\xymatrix{
  |\BAR(\capO,\capO,|\BAR(\capO,\capO,X)|)|\ar[d]^-{d^0}\ar[r]^-{d^0} &
  |\BAR(J,\capO,|\BAR(\capO,\capO,X)|)|\ar[d]^-{d^0}\\
  |\BAR(\capO,\capO,|\BAR(J,\capO,X)|)|\ar[r]^-{d^1} &
  |\BAR(J,\capO,|\BAR(J,\capO,X)|)|
}
\end{align*}
and $\capX'''_2$ is the associated $2$-cube of the form
\begin{align}
\label{eq:diagram_n_equals_2_case_capX_2_cube}
\xymatrix{
  |\BAR(\capO,\capO,|\BAR(\capO,\capO,X)|)|\ar[d]^-{d^0}\ar[r]^-{d^0} &
  |\BAR(\tau_1\capO,\capO,|\BAR(\capO,\capO,X)|)|\ar[d]^-{d^0}\\
  |\BAR(\capO,\capO,|\BAR(\tau_1\capO,\capO,X)|)|\ar[r]^-{d^1} &
  |\BAR(\tau_1\capO,\capO,|\BAR(\tau_1\capO,\capO,X)|)|
}
\end{align}
\end{rem}

\begin{prop}
\label{prop:connectivities_for_total_homotopy_fiber_homotopy_completion_cube}
If $X$ is a cofibrant $\capO$-algebra and $n\geq 1$, then the natural maps
\begin{align}
\label{eq:zigzag_of_codegeneracy_cubes}
  \capX_n\Iso\capX'_n\xleftarrow{\wequiv}\capX''_n\xrightarrow{\wequiv}\capX'''_n
\end{align}
of $n$-cubes are objectwise weak equivalences. If furthermore, $X$ is $0$-connected, then the iterated homotopy fiber of $\capX'''_n$ is $n$-connected, and hence the total homotopy fiber of $\capX_n$ is $n$-connected.
\end{prop}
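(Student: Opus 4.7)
The plan is as follows. For the first assertion (the zigzag), the isomorphism $\capX_n\Iso\capX'_n$ is immediate from the unit identity of the relative circle product $\capO\circ_\capO(-)\Iso\id$, so inserting redundant $\capO$-factors into $\capX_n$ recovers $\capX'_n$. The weak equivalence $\capX''_n\xrightarrow{\wequiv}\capX'_n$ is verified vertex-by-vertex by iterating, from the innermost to outermost bar construction, the standard augmentation $|\BAR(A,\capO,X)|\xrightarrow{\wequiv}A\circ_\capO X$ for cofibrant $X$ from Harper \cite[1.10]{Harper_bar_constructions}; this is permitted because bar constructions preserve cofibrancy under Cofibrancy Condition \ref{CofibrancyCondition}. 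The weak equivalence $\capX''_n\xrightarrow{\wequiv}\capX'''_n$ is induced by applying the operadic weak equivalence $h\colon J\rarrow\tau_1\capO$ in each position with $v_i=1$, using that weak equivalences between suitably cofibrant operads induce weak equivalences on the bar constructions.

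For the second assertion, the plan is to compute the total homotopy fiber of $\capX'''_n$ by taking fibers one direction at a time, working from the innermost outward. Since circle product is additive in its first argument and $\capO^{>1}\rarrow\capO\rarrow\tau_1\capO$ is a fiber sequence of symmetric sequences, the fiber of $\capO\circ Z\rarrow\tau_1\capO\circ Z$ is $\capO^{>1}\circ Z$ for any $Z$ (cf.\ Proposition \ref{prop:calculations_for_iterated_fibers_part3} with $m=0$). Consequently, at each simplicial level the fiber of $|\BAR(\capO,\capO,Z)|\rarrow|\BAR(\tau_1\capO,\capO,Z)|$ is the level of $\BAR(\capO^{>1},\capO,Z)$; under Cofibrancy Condition \ref{CofibrancyCondition} the relevant simplicial bar constructions are Reedy cofibrant and realization commutes well enough with these level-wise fiber sequences that the fiber of the realizations is $|\BAR(\capO^{>1},\capO,Z)|$. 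Iterating through all $n$ directions yields
\begin{align*}
(\iter\hofib)\capX'''_n \wequiv
|\BAR(\capO^{>1},\capO,|\BAR(\capO^{>1},\capO,\dots|\BAR(\capO^{>1},\capO,X)|\dots)|)|
\end{align*}
with $n$ nested bar constructions.

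For the final connectivity estimate, I would use that $\capO$ is $(-1)$-connected: for any $m$-connected $Y$, each $\capO[t]\Smash_{\Sigma_t}Y^{\wedge t}$ is at least $(t(m+1)-1)$-connected, so $\capO\circ Y$ is $m$-connected (from the $t=1$ term) while $\capO^{>1}\circ Y$ is at least $(2m+1)$-connected (the worst term now being $t=2$). Starting from $X$ which is $0$-connected, and iterating $n$ applications of $\capO^{>1}\circ(-)$ (interleaved with $\capO^{\circ k}\circ(-)$ at each simplicial level, which only preserve connectivity), shows that the $n$-fold nested bar construction is level-wise at least $(2^n-1)$-connected. Applying the homotopy spectral sequence of Proposition \ref{prop:homotopy_spectral_sequence} iteratively, once per realization, shows the same connectivity for the realization, which is at least $n$-connected as required. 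The main technical obstacle is justifying that realization commutes with the relevant level-wise fiber sequences to validate the iterated hofiber identification; this is handled by the cofibrancy hypotheses together with standard Reedy cofibrancy arguments for simplicial bar constructions.
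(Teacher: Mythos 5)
Your treatment of the zigzag $\capX_n\Iso\capX'_n\xleftarrow{\wequiv}\capX''_n\xrightarrow{\wequiv}\capX'''_n$ matches the paper's: insert $\capO\circ_\capO$, use the bar resolution augmentation (justified by Propositions \ref{prop:homotopical_analysis_of_forgetful_functors} and \ref{prop:reedy_cofibrant_for_bar_constructions}), and use the weak equivalence $J\rarrow\tau_1\capO$. That part is fine.

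The connectivity argument, however, contains a genuine error. You claim that iterating the fiber one direction at a time yields
\begin{align*}
(\iter\hofib)\capX'''_n \wequiv
|\BAR(\capO^{>1},\capO,|\BAR(\capO^{>1},\capO,\dots|\BAR(\capO^{>1},\capO,X)|\dots)|)|,
\end{align*}
but this identification is false for $n\geq 2$. The first fiber direction does work: the symmetric sequence circle product $A\mapsto A\circ Z$ is additive in its \emph{first} argument for fixed $Z$, so the fiber of $|\BAR(\capO,\capO,Z)|\rarrow|\BAR(\tau_1\capO,\capO,Z)|$ is indeed $|\BAR(\capO^{>1},\capO,Z)|$. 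But after that, you need the fiber of $|\BAR(\capO^{>1},\capO,-)|$ applied to the map $Z'=|\BAR(\capO,\capO,W)|\rarrow|\BAR(\tau_1\capO,\capO,W)|=Z$, and the functor $|\BAR(\capO^{>1},\capO,-)|$ is \emph{not} additive in its last argument: levelwise it involves smash powers $Y^{\wedge t}$, $t\geq 2$, which do not preserve fiber sequences. The actual levelwise fiber, by Proposition \ref{prop:calculations_for_iterated_fibers_part3}, is $\widetilde{W(r)}\circ\capO^{\circ s}\circ X$ with $\widetilde{W(r)}=\coprod_{t>1}W(r)[t]\Smash_{\Sigma_t}(\capO^{\tensorcheck t})^{>t}$, which starts at level $3$; this differs from $W(r)\circ\capO^{>1}$, which uses $(\capO^{>1})^{\tensorcheck t}$ (starting at level $2t$) instead of $(\capO^{\tensorcheck t})^{>t}$ (starting at level $t+1$). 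The missing ``cross terms'' such as $\capO[1]^{\otimes(t-1)}\otimes\capO[2]$ at level $t+1$ are exactly what your formula drops. A symptom of the error is that your formula would give $(2^n-1)$-connectivity, strictly stronger than the $n$-connectivity the proposition asserts and which is sharp already for $n=2$.

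The fix is the one the paper implements: strip \emph{all} realizations from $\capX'''_n$ before computing fibers, reducing to an $n$-multi-simplicial $n$-cube whose entries in each multidegree $(r_1,\dotsc,r_n)$ have the form $(\text{symmetric sequence})\circ X$. In each multidegree, compute the iterated homotopy fiber entirely in symmetric sequences using Propositions \ref{prop:calculations_for_iterated_fibers_part1}, \ref{prop:calculations_for_iterated_fibers_part2}, \ref{prop:calculations_for_iterated_fibers_part3}; since $(-)\circ X$ is additive in the symmetric sequence variable, this commutes with the fiber computations. Each of the $n$ fiber directions bumps the starting level of the resulting symmetric sequence up by one, yielding a symmetric sequence starting at level $n+1$; circled with the $0$-connected $X$ this is $n$-connected, and the homotopy spectral sequence (Proposition \ref{prop:homotopy_spectral_sequence}), applied once per realization, carries the $n$-connectivity through to the total homotopy fiber.
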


\begin{proof}
The case $n=1$ is proved in Harper-Hess \cite[1.8]{Harper_Hess}. By using exactly the same arguments as in \cite[1.8]{Harper_Hess}, but for the objectwise calculation of the iterated fibers of the corresponding $n$-multisimplicial objects, it follows easily but tediously from the combinatorics in Propositions \ref{prop:calculations_for_iterated_fibers_part1}, \ref{prop:calculations_for_iterated_fibers_part2}, and  \ref{prop:calculations_for_iterated_fibers_part3} that the iterated homotopy fiber is weakly equivalent to the realization of a simplicial object which is $n$-connected in each simplicial degree; by Proposition \ref{prop:homotopy_spectral_sequence} the realization of such an object is $n$-connected. To finish off the proof, it suffices to verify the maps of cubes are objectwise weak equivalences; this follows from Propositions \ref{prop:homotopical_analysis_of_forgetful_functors} and \ref{prop:reedy_cofibrant_for_bar_constructions} below.
\end{proof}

\begin{rem}
For instance, consider the case $n=2$. Removing the outer realization in diagram \eqref{eq:diagram_n_equals_2_case_capX_2_cube}, evaluating at (horizontal) simplicial degree $r$, and calculating the fibers, together with the induced map $(*)$, we obtain the commutative diagram
\begin{align}
\label{eq:peeling_off_outer_realization_for_iterated_hofiber}
\xymatrix{
  \capO^{>1}\circ\capO^{\circ r}\circ Z'\ar@{.>}[d]^-{(*)}\ar[r]^-{\subset} &
  \capO\circ\capO^{\circ r}\circ Z'\ar[d]\ar[r] &
  \tau_1\capO\circ\capO^{\circ r}\circ Z'\ar[d]\\
  \capO^{>1}\circ\capO^{\circ r}\circ Z\ar[r]^-{\subset} &
  \capO\circ\capO^{\circ r}\circ Z\ar[r] &
  \tau_1\capO\circ\capO^{\circ r}\circ Z
}
\end{align}
where $Z':=|\BAR(\capO,\capO,X)|$, $Z:=|\BAR(\tau_1\capO,\capO,X)|$, and the rows are cofiber sequences in $\ModR$. We want to calculate the fiber of the map $(*)$. Removing the realization from $(*)$ in diagram \eqref{eq:peeling_off_outer_realization_for_iterated_hofiber} and evaluating at (vertical) simplicial degree $s$, we obtain the commutative diagram
\begin{align*}
\xymatrix{
  (\iter\hofib)_{r,s}\ar@{=}[r] &
  \bigl(\widetilde{W(r)}\bigr)^{\,>2}\circ\capO^{\circ s}\circ X\ar[d]\ar[r]^-{\subset} &
  W(r)\circ\capO\circ\capO^{\circ s}\circ X\ar[d]\\
  & {*}\ar[r] &
  W(r)\circ\tau_1\capO\circ\capO^{\circ s}\circ X
}
\end{align*}
where
\begin{align*}
  W(r)&:=\capO^{>1}\circ\capO^{\circ r}\\
\widetilde{W(r)} &:= \coprod\limits_{t>1}W(r)[t]
  \Smash_{\Sigma_t}(\capO^{\tensorcheck t})^{>t}
\end{align*}
The indicated square is a pushout diagram in $\ModR$ and a pullback diagram in $\AlgO$; here, we used Propositions \ref{prop:calculations_for_iterated_fibers_part2} and \ref{prop:calculations_for_iterated_fibers_part3}. Since $X$ is $0$-connected, it follows that $(\iter\hofib)_{r,s}$ is $2$-connected for each $r,s\geq 0$. Hence, applying realization in the vertical direction gives a (horizontal) simplicial $\capO$-algebra that is $2$-connected in every simplicial degree $r$, and therefore realization in the horizontal direction gives an $\capO$-algebra that is $2$-connected; hence the total homotopy fiber of $\capX'''_2$ is $2$-connected.
\end{rem}

The following two technical propositions are the reason for requiring $\capO$ to satisfy Cofibrancy Condition \ref{CofibrancyCondition}; together they imply that the maps of cubes in \eqref{eq:zigzag_of_codegeneracy_cubes} are objectwise weak equivalences (\cite[4.10]{Harper_Hess}).

\begin{prop}
\label{prop:homotopical_analysis_of_forgetful_functors}
Let $\capO$ be an operad in $\capR$-modules such that $\capO[r]$ is flat stable cofibrant in $\ModR$ for each $r\geq 0$.
\begin{itemize}
\item[(a)] If $\function{j}{A}{B}$ is a cofibration between cofibrant objects in $\AlgO$, then $j$ is a positive flat stable cofibration in $\ModR$.
\item[(b)] If $A$ is a cofibrant $\capO$-algebra and $\capO[0]=*$, then $A$ is positive flat stable cofibrant in $\ModR$.
\end{itemize}
\end{prop}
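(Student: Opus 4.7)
The plan is to reduce both statements to an analysis of the canonical filtration of pushouts of free $\capO$-algebra maps, following the line of argument developed in Harper \cite{Harper_symmetric_spectra} and Harper-Hess \cite{Harper_Hess}. Part (b) will fall out of part (a) as a corollary: when $\capO[0]=*$, the initial $\capO$-algebra is $*$ (which is trivially flat stable cofibrant), so if $A$ is cofibrant in $\AlgO$, then the map $*\rightarrow A$ is a cofibration between cofibrant $\capO$-algebras, and applying (a) shows that $A$ is positive flat stable cofibrant in $\ModR$.

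For part (a), I would first reduce to the case where $j\colon A\rightarrow B$ is a relative cell complex, that is, a (possibly transfinite) composition of cobase changes along the generating cofibrations of $\AlgO$, which have the form $\capO\circ(K)\rightarrow\capO\circ(L)$ with $K\rightarrow L$ a generating positive flat stable cofibration in $\ModR$. Since retracts and transfinite compositions of positive flat stable cofibrations are positive flat stable cofibrations, it suffices to show that each such pushout
\begin{equation*}
\xymatrix{
\capO\circ(K)\ar[d]\ar[r] & \capO\circ(L)\ar[d]\\
A\ar[r] & A'
}
\end{equation*}
yields an underlying map $A\rightarrow A'$ in $\ModR$ that is a positive flat stable cofibration, under the inductive hypothesis that $A$ is positive flat stable cofibrant.

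The core step is the filtration analysis. The underlying $\capR$-module map $A\rightarrow A'$ admits a canonical filtration $A=A'_0\rightarrow A'_1\rightarrow\cdots$ with colimit $A'$, in which each layer $A'_{t-1}\rightarrow A'_t$ is the pushout, in $\ModR$, of a map built from the $\Sigma_t$-coinvariants of $\capO[t]\Smash_\capR(-)$ applied to the $t$-fold pushout-product (with respect to $\Smash_\capR$) of $K\rightarrow L$, smashed with a flat cofibrant ``coefficient'' depending on $A$ (see \cite[4.9]{Harper_Hess} and \cite[Section 7]{Harper_symmetric_spectra}). The hypothesis that each $\capO[r]$ is flat stable cofibrant, combined with $K\rightarrow L$ being a positive flat stable cofibration and the pushout-product/monoidality properties of the positive flat stable model structure on $\ModR$ (as developed in Harper \cite{Harper_symmetric_spectra}), then implies that each layer map is a positive flat stable cofibration between positive flat stable cofibrant objects. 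Taking the colimit gives the desired conclusion.

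The main obstacle is keeping track of the equivariance in the filtration layers: one needs the pushout-product $t$-fold iterate of $K\rightarrow L$ to remain a positive flat stable cofibration after taking $\Sigma_t$-coinvariants when smashed with $\capO[t]$, and this is exactly where the flat stable cofibrancy of $\capO[r]$ (rather than mere stable cofibrancy) is essential --- the flat model structure is designed precisely so that this equivariant pushout-product condition holds without requiring $\Sigma$-cofibrancy of the operad. Once this equivariant pushout-product property is established (by appeal to \cite[7.19]{Harper_Hess} or the analogous result in \cite{Harper_symmetric_spectra}), the filtration argument goes through routinely and part (b) follows immediately as indicated above.
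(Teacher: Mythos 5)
Your proposal follows the same filtration strategy that underlies Harper-Hess [4.11], which is precisely what the paper's one-line proof cites, and the reduction of (b) from (a) via the initial object $*$ (valid because $\capO[0]=*$) is exactly right. But you are glossing over the step the paper goes out of its way to flag as ``an important technical result verified in Pereira.'' The ``flat cofibrant coefficient depending on $A$'' in the layer
\[
  A'_{t-1}\longrightarrow A'_t
\]
is the enveloping operad $\capO_A[t]$ of $A$, which is \emph{not} of the form $\capO[t]\Smash_\capR(\text{something})$; it is built from $\capO$ and $A$ by colimits encoding the operad structure. For the filtration argument to close, one needs to know that when $A$ is a cofibrant $\capO$-algebra, $\capO_A[t]$ is flat stable cofibrant for each $t$. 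This is a separate, nontrivial induction up the cell structure of $A$ — one cannot simply invoke the pushout-product axiom for $\Smash_\capR$ plus flatness of $\capO[r]$, since the coefficient you are smashing against changes as $A$ is built. If $A$ is free the coefficient has a direct description and the argument is straightforward, but for an arbitrary cofibrant $A$ the flatness of $\capO_A$ is the crux, and it is Pereira's theorem (independently Pavlov--Scholbach) that supplies it. Your write-up acknowledges the coefficient exists but does not identify this as the load-bearing lemma; that identification is the substantive content of the paper's citation.
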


\begin{proof}
This is proved in Harper-Hess \cite[4.11]{Harper_Hess} using an important technical result verified in Pereira \cite{Pereira}. This is also proved independently in \cite{Pereira}, and in Pavlov-Scholbach \cite{Pavlov_Scholbach} under very general conditions.
\end{proof}

\begin{prop}
\label{prop:reedy_cofibrant_for_bar_constructions}
Let $\function{f}{\capO}{\capO'}$ be a morphism of operads in $\capR$-modules such that $\capO[\mathbf{0}]=*$. Assume that $\capO$ satisfies Cofibrancy Condition \ref{CofibrancyCondition}. Let $Y$ be an $\capO$-algebra (resp. left $\capO$-module) and consider the simplicial bar construction $\BAR(\capO',\capO,Y)$.
\begin{itemize}
\item[(a)] If $Y$ is positive flat stable cofibrant in $\ModR$, then $\BAR(\capO',\capO,Y)$ is Reedy cofibrant in $(\Alg_{\capO'})^{\Delta^\op}$.
\item[(b)] If $Y$ is positive flat stable cofibrant in $\ModR$, then $|\BAR(\capO',\capO,Y)|$ is cofibrant in $\Alg_{\capO'}$.
\end{itemize}
\end{prop}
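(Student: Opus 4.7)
The plan is to first establish part (a) by identifying the latching maps of the simplicial bar construction with explicit iterated pushout-product maps built from the unit $\eta\colon I\to\capO$ of the operad, and then deduce part (b) from part (a) via the standard fact that realization of Reedy cofibrant simplicial objects is cofibrant.

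First, I would give an explicit description of the $n$-th latching object of $\BAR(\capO',\capO,Y)$, whose $n$-simplices are $\capO'\circ\capO^{\circ n}\circ Y$ and whose degeneracy maps $s_i$ (for $0\leq i\leq n-1$) insert the unit $\eta\colon I\to\capO$ at the $(i{+}1)$-st factor of $\capO^{\circ n}$. The latching object $L_n$ is the colimit over the full subcategory of the degeneracy operators, i.e.\ the union of the images of all $s_i$. A direct combinatorial calculation, using that $I$ is the initial operad and that composition product distributes over colimits in symmetric sequences, identifies the latching map
\[
L_n \longrightarrow \capO'\circ\capO^{\circ n}\circ Y
\]
with the map obtained by applying $\capO'\circ(-)\circ Y$ to the iterated pushout-product $\eta^{\square n}$ of $n$ copies of $\eta$ in the composition product on symmetric sequences, where in our notation the pushout-product is formed using $\circ$ in place of $\tensor$.

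Next, I would show that $\eta^{\square n}$ is a flat stable cofibration between flat stable cofibrant symmetric sequences. By the Cofibrancy Condition, each $\eta[r]\colon I[r]\to\capO[r]$ is a flat stable cofibration between flat stable cofibrant objects in $\ModR$. By iterated application of the pushout-product axiom for the positive flat stable model structure on $\ModR$ and its extension to symmetric sequences (as developed in \cite[Section 7]{Harper_Hess}), the iterated pushout-product $\eta^{\square n}$ is a flat stable cofibration between flat stable cofibrant symmetric sequences. Applying the simplicial bar filtration of $\capO'\circ(-)$, combined with the fact that $\capO'\circ(-)\circ Y$ preserves cofibrations between cofibrant objects once $Y$ is positive flat stable cofibrant (by Proposition \ref{prop:homotopical_analysis_of_forgetful_functors} and the cofibrancy condition on $\capO'$, applied via the free-forgetful Quillen adjunction to $\Alg_{\capO'}$), we conclude that the latching map is a cofibration in $\Alg_{\capO'}$, establishing part (a).

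For part (b), I would invoke the standard simplicial/geometric realization argument: for a Reedy cofibrant simplicial object $X_\bullet$ in a simplicial model category, the realization $|X_\bullet|$ is the colimit of a sequence of pushouts
\[
|X|_{n-1} \longrightarrow |X|_n,
\]
where each map is the pushout of $L_n\tensordot\Delta[n]\cup X_n\tensordot\partial\Delta[n]\to X_n\tensordot\Delta[n]$ along the latching inclusion. Since $\Alg_{\capO'}$ is a simplicial model category (Section \ref{sec:simplicial_structures}) and the latching maps $L_n\to X_n$ are cofibrations in $\Alg_{\capO'}$ by part (a), the pushout-product with $\partial\Delta[n]\to\Delta[n]$ yields cofibrations in $\Alg_{\capO'}$; hence $|\BAR(\capO',\capO,Y)|$ is cofibrant as a transfinite composition of cofibrations starting from the initial $\capO'$-algebra.

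The main obstacle is the first step: giving a clean, rigorous identification of the latching object $L_n$ with $\capO'\circ(I\to\capO)^{\square n}\circ Y$. The delicate point is tracking, combinatorially, which factors in the composition product $\capO^{\circ n}$ come from degeneracy insertions and verifying that the resulting pushout diagram precisely matches the $n$-fold iterated pushout-product in the composition product on symmetric sequences; this requires care with the non-symmetric monoidal structure and the fact that composition product does not commute past pushouts on the right in the usual sense.
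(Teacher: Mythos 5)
The paper offers no argument of its own for this proposition: it is justified entirely by citation to Harper--Hess \cite[4.19]{Harper_Hess}, Pereira \cite{Pereira}, and Pavlov--Scholbach \cite{Pavlov_Scholbach}. So the real question is whether your outline reproduces what those references establish. The overall architecture — identify the latching maps of $\BAR(\capO',\capO,Y)$ with iterated pushout-product maps built from $\eta\colon I\to\capO$, show those are cofibrations, then deduce (b) from (a) by the standard skeletal filtration of realization in a simplicial model category — is indeed the shape of the argument. Your reduction of (a) to a statement about $L_n(\capO^{\circ\bullet})\circ Y\to\capO^{\circ n}\circ Y$ is also sound: $\capO'\circ(-)$ is left Quillen, it preserves the latching colimit, and $(-)\circ Y$ preserves colimits in the left $\circ$-variable, so the latching object in $\Alg_{\capO'}$ is $\capO'\circ\bigl(L_n(\capO^{\circ\bullet})\circ Y\bigr)$ and part (b) follows from part (a) exactly as you say.

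However, the middle step — ``by iterated application of the pushout-product axiom for the positive flat stable model structure on $\ModR$ and its extension to symmetric sequences, the iterated pushout-product $\eta^{\square n}$ is a flat stable cofibration'' — is where the entire difficulty lives, and it is not justified by what you cite. Two problems. First, the circle product $\circ$ does \emph{not} distribute over colimits in its right variable (only the left variable, and sifted colimits on the right), because $X\circ Y$ involves $Y^{\check\tensor t}$ and $\Sigma_t$-coinvariants. So your claim that ``composition product distributes over colimits in symmetric sequences'' is false as stated; as a consequence, the pushout-product $f\circhat g$ is not associative for $\circ$, and the equality between the latching object and the source of an iterated pushout-product must be proved via a careful choice of association (left-bracketed, say, using only left-exactness of $\circ$), not asserted. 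You do flag this, but you locate the obstacle in the combinatorics of the identification when in fact it is more fundamental. Second, and more seriously, there is no ``pushout-product axiom for $\circ$'' available to invoke: the pushout-product axiom in \cite[Section 7]{Harper_Hess} is for the levelwise tensor product $\check\tensor$ on symmetric sequences, which is genuinely symmetric monoidal and biexact, whereas $\circ$ is a nonsymmetric, non-biexact monoidal product whose interaction with cofibrations requires a direct analysis of the $\Sigma_t$-coinvariants (this is precisely where ``flat'' cofibrancy earns its keep). Establishing that $\eta^{\square n}$ and $(-)\circ Y$ of it are positive flat stable cofibrations under Cofibrancy Condition \ref{CofibrancyCondition} is a substantial filtration/cell-attachment theorem — it is the content of Pereira's result that the paper cites — and cannot be reduced to iterating the $\check\tensor$ pushout-product axiom. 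Your appeal to Proposition \ref{prop:homotopical_analysis_of_forgetful_functors} does not fill this gap either; that proposition concerns underlying $\capR$-modules of cofibrant $\capO$-algebras and is itself proved using the same Pereira input, so invoking it here is close to circular. In short: correct skeleton, but the decisive cofibrancy step is asserted rather than proved, and the asserted justification (a $\circ$-pushout-product axiom) does not exist in the generality you need.
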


\begin{proof}
This is proved in Harper-Hess \cite[4.19]{Harper_Hess} using an important technical result verified in Pereira \cite{Pereira}. This is also proved independently in \cite{Pereira}, and in Pavlov-Scholbach \cite{Pavlov_Scholbach} under very general conditions.
\end{proof}

The following proposition gives the connectivity estimates that we need.

\begin{prop}[Proposition \ref{prop:connectivities_for_total_homotopy_fiber_homotopy_completion_cube_no_zigzags} restated]
\label{prop:connectivities_for_total_homotopy_fiber_homotopy_completion_cube_no_zigzags_other_section}
Let $X$ be a cofibrant $\capO$-algebra and $n\geq 1$. Denote by $\capX_n$ the coface $n$-cube associated to the cosimpicial $\TQ$-homology resolution $X\rarrow C(QX)$ of $X$. If $X$ is $0$-connected, then the total homotopy fiber of $\capX_n$ is $n$-connected.
\end{prop}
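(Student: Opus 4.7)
The plan is to deduce this essentially as an immediate corollary of Proposition \ref{prop:connectivities_for_total_homotopy_fiber_homotopy_completion_cube}, which already contains the substance of the argument; the restated form merely strips away the auxiliary $n$-cubes $\capX'_n, \capX''_n, \capX'''_n$ used in the internal proof. First I would invoke the zigzag of objectwise weak equivalences $\capX_n \Iso \capX'_n \xleftarrow{\wequiv} \capX''_n \xrightarrow{\wequiv} \capX'''_n$ of $n$-cubes in $\AlgO$ supplied by that proposition. Since the total (equivalently iterated) homotopy fiber is a homotopy invariant functor of $n$-cubes, any zigzag of objectwise weak equivalences induces a zigzag of weak equivalences on total homotopy fibers; hence it suffices to establish the connectivity estimate for $\capX'''_n$.

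For $\capX'''_n$, the strategy is the iterative peel-off already outlined in the discussion following Proposition \ref{prop:connectivities_for_total_homotopy_fiber_homotopy_completion_cube}: remove the outermost realization, evaluate at simplicial degree $r_1$, and use the combinatorial pushout/pullback diagrams of Propositions \ref{prop:calculations_for_iterated_fibers_part1}, \ref{prop:calculations_for_iterated_fibers_part2}, and \ref{prop:calculations_for_iterated_fibers_part3} to identify the fiber of the induced map as a symmetric sequence that starts at level $\geq 2$. Iterating this $n$ times — peeling off each realization and evaluating at simplicial degrees $(r_1,\dotsc,r_n)$ — one finds that the multi-degree-$(r_1,\dotsc,r_n)$ piece of the iterated fiber is a symmetric sequence starting at level $\geq n+1$ tensored with suitable copies of $X$, using that $X$ is $0$-connected this forces the piece to be $n$-connected in every multi-simplicial degree.

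Finally, applying realization $n$ times along the simplicial directions preserves the $n$-connectivity bound via Proposition \ref{prop:homotopy_spectral_sequence}: if a simplicial symmetric spectrum is $n$-connected in every simplicial degree, then the spectral sequence $E^2_{p,q} = H_p(\pi_q Y) \Rightarrow \pi_{p+q}|Y|$ is concentrated in the range $q > n$, and hence $|Y|$ is $n$-connected. Iterating this $n$ times yields that the total homotopy fiber of $\capX'''_n$ is $n$-connected, and transporting along the zigzag completes the proof.

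The main obstacle, already absorbed into the quoted results, is really twofold: (i) ensuring that the objectwise weak equivalences between bar constructions descend through realization, which rests on the Reedy cofibrancy results of Propositions \ref{prop:homotopical_analysis_of_forgetful_functors} and \ref{prop:reedy_cofibrant_for_bar_constructions} under Cofibrancy Condition \ref{CofibrancyCondition}; and (ii) the purely combinatorial bookkeeping involved in tracking which symmetric sequence level the iterated fiber starts at after $n$ peel-offs, which is precisely what Propositions \ref{prop:calculations_for_iterated_fibers_part1}–\ref{prop:calculations_for_iterated_fibers_part3} were designed to handle uniformly.
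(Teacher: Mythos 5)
Your proposal is correct and takes essentially the same approach as the paper, whose proof of the restated proposition is the one-line observation that it is a special case of Proposition~\ref{prop:connectivities_for_total_homotopy_fiber_homotopy_completion_cube}. Your recapitulation of the internal argument of that proposition (zigzag of $n$-cubes, multisimplicial peel-off via Propositions~\ref{prop:calculations_for_iterated_fibers_part1}--\ref{prop:calculations_for_iterated_fibers_part3}, connectivity of realization via Proposition~\ref{prop:homotopy_spectral_sequence}, and the Reedy cofibrancy inputs from Propositions~\ref{prop:homotopical_analysis_of_forgetful_functors} and \ref{prop:reedy_cofibrant_for_bar_constructions}) accurately reflects the paper's reasoning.
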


\begin{proof}
This is a special case of Proposition \ref{prop:connectivities_for_total_homotopy_fiber_homotopy_completion_cube}.
\end{proof}

\begin{prop}[Proposition \ref{prop:iterated_homotopy_fibers_calculation} restated]
\label{prop:iterated_homotopy_fibers_calculation_other_section}
Let $Z$ be a cosimplicial $\capO$-algebra and $n\geq 0$. There are natural zigzags of weak equivalences
\begin{align*}
  \hofib(\holim_{\Delta^{\leq n}}Z\rarrow\holim_{\Delta^{\leq n-1}}Z)
  \wequiv\Omega^n(\iter\hofib)\capY_n
\end{align*}
where $\capY_n$ denotes the canonical $n$-cube built from the codegeneracy maps of
\begin{align*}
\xymatrix{
  Z^0 &
  Z^1
  \ar[l]_-{s^0} &
  Z^2\ar@<-0.5ex>[l]_-{s^0}\ar@<0.5ex>[l]^-{s^1}
  \ \cdots\ Z^n
}
\end{align*}
 the $n$-truncation of $Z$; in particular, $\capY_0$ is the object (or $0$-cube) $Z^0$. Here, $\Omega^n$ is weakly equivalent, in the underlying category $\ModR$, to the $n$-fold desuspension $\Sigma^{-n}$ functor. We often refer to $\capY_n$ as the \emph{codegeneracy} $n$-cube associated to $Z$.
\end{prop}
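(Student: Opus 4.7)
The plan is to follow the strategy of Bousfield-Kan \cite[X.6.3]{Bousfield_Kan} verbatim, adapted to the present setting of cosimplicial $\capO$-algebras. First I would replace $Z$ by a functorial Reedy fibrant replacement $Z^f$ in $(\AlgO)^\Delta$, so that by Proposition \ref{prop:punctured_cube_calculation_of_holim_truncated_delta} and Proposition \ref{prop:comparing_holim_with_Tot_and_Tot_restricted} the tower $\{\holim_{\Delta^{\leq n}} Z\}$ is naturally weakly equivalent (through a zigzag) to the $\Tot$ tower $\{\Tot_n Z^f\}$. Thus it suffices to compute the homotopy fiber of the map $\Tot_n Z^f \to \Tot_{n-1} Z^f$ between fibrant $\capO$-algebras.

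Next, I would apply the pullback square of Proposition \ref{prop:key_pullback_diagram_for_Tot_tower} to $Z^f$. Reedy fibrancy forces the matching map $(Z^f)^n \to M^{n-1} Z^f$, and hence the right-hand vertical map in that pullback, to be a fibration, so the homotopy fiber of $\Tot_n Z^f \to \Tot_{n-1} Z^f$ agrees with its strict fiber. A direct inspection identifies this strict fiber with $\hombold(\Delta[n]/\partial\Delta[n], N^n Z^f)$, where $N^n Z^f$ denotes the strict fiber of the matching map $(Z^f)^n \to M^{n-1} Z^f$; since $\Delta[n]/\partial\Delta[n] \simeq S^n$ in simplicial sets, this mapping object is weakly equivalent to $\Omega^n N^n Z^f$ (in the underlying category $\ModR$, to $\Sigma^{-n} N^n Z^f$), exactly matching the $\Omega^n$ factor in the target of the desired weak equivalence.

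The remaining step is to identify $N^n Z^f$ with the total homotopy fiber $(\iter\hofib)\capY_n$ of the codegeneracy $n$-cube. The matching object $M^{n-1} Z^f$, as defined in \cite[VIII.1]{Goerss_Jardine}, is the limit of a punctured cube whose vertices are iterated codegeneracies applied to $(Z^f)^0, \ldots, (Z^f)^{n-1}$; unpacking the definition shows this punctured cube is precisely $\capY_n$ with its initial vertex $(Z^f)^n$ deleted. Reedy fibrancy ensures the relevant maps in this punctured cube are fibrations, so the strict limit agrees with the homotopy limit, and $N^n Z^f = \hofib\bigl((Z^f)^n \to M^{n-1} Z^f\bigr)$ is the total homotopy fiber of $\capY_n$. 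Since $\capY_n$ for $Z^f$ is objectwise weakly equivalent to $\capY_n$ for $Z$, this produces the required zigzag.

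I expect the main obstacle to be the combinatorial identification of the punctured limit defining $M^{n-1}Z^f$ with the punctured codegeneracy cube; however, this is formal, and dual to the standard identification of latching objects via degeneracy cubes. An inductive alternative exploits Proposition \ref{prop:retraction_two_cube_argument}: since codegeneracies retract the appropriate coface maps, one can peel off one dimension at a time. The degenerate case $n=0$ is immediate from the natural isomorphism $\Tot_0 Z^f \Iso (Z^f)^0 = \capY_0$ supplied by Proposition \ref{prop:adjunctions_stages_of_tot_towers} together with the convention $\Tot_{-1} Z^f = *$. Naturality of the entire zigzag is inherited from the naturality of the pullback square, the matching construction, and Reedy fibrant replacement.
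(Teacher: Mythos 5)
Your proof is correct and follows the Bousfield--Kan \cite[X.6.3]{Bousfield_Kan} template faithfully, which is exactly what the paper cites. The paper's own proof is terse and emphasizes the repeated application of Proposition \ref{prop:retraction_two_cube_argument} (the retraction lemma) as the unifying device; you carry out the underlying matching-object computation in full---identifying $N^n Z^f = \fiber\bigl((Z^f)^n \to M^{n-1}Z^f\bigr)$, recognizing $M^{n-1}Z^f$ as the (homotopy) limit of the punctured codegeneracy cube via the standard correspondence between nonidentity surjections out of $[n]$ and nonempty subsets of $\{0,\dots,n-1\}$, and then extracting the $\Omega^n$ from the sphere $\Delta[n]/\partial\Delta[n]$---and you mention the retraction route as an alternative. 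These are two ways to organize the same argument, and yours is arguably more self-contained.

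Two small points worth tightening. First, the fiber in question is the \emph{pointed} mapping object out of $\Delta[n]/\partial\Delta[n]$, not $\hombold(\Delta[n]/\partial\Delta[n],N^n Z^f)$ in the paper's convention $\hombold(K,X)=\Map(K_+,X)$, which would add a disjoint basepoint and give an extra $N^n Z^f$ factor; you want the fiber of the restriction map $\hombold(\Delta[n],N^nZ^f)\to\hombold(\partial\Delta[n],N^nZ^f)$, which is the pointed mapping object and is what carries the $\Omega^n$. Second, the assertion that Reedy fibrancy of $Z^f$ makes the strict limit of the punctured codegeneracy cube a homotopy limit is true but not immediate from the individual maps being fibrations; what one actually uses is that the restriction of a Reedy fibrant cosimplicial object to the matching category is injectively (or Reedy) fibrant, which is a standard consequence of the Reedy structure rather than a pointwise statement. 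Neither issue affects the validity of the argument, but both deserve an explicit word if this were to be written out in full.
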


\begin{proof}
This is proved in Bousfield-Kan \cite[X.6.3]{Bousfield_Kan} for the $\Tot$ tower of a Reedy fibrant cosimplicial pointed space, and the same argument verifies it in our context. It follows easily from the cosimplicial identities, together with repeated application of Proposition \ref{prop:retraction_two_cube_argument} by using the fact that codegeneracy maps provide retractions for the appropriate coface maps.
\end{proof}

For each $n\geq 1$, denote by $\capY_n$ the codegeneracy $n$-cube associated to $C(Y)$. The reason for introducing $\capY'_n$, $\capY''_n$, $\capY'''_n$, $\capY''''_n$ in Definition \ref{defn:sequence_of_weakly_equivalent_codegeneracy_cubes} is that we can homotopically analyze the iterated homotopy fibers of the $n$-cube $\capY''''_n$ using the combinatorics developed above; since these intermediate $n$-cubes fit into a zigzag of weak equivalences $\capY_n\wequiv\capY_n''''$ of $n$-cubes, the combinatorial analysis results in a homotopical analysis of $\capY_n$ (Proposition \ref{prop:multisimplicial_calculation_of_iterated_hofiber_codegeneracy_cube}).

\begin{defn}
\label{defn:sequence_of_weakly_equivalent_codegeneracy_cubes}
If $Y$ is a cofibrant $\K$-coalgebra and $n\geq 1$, denote by $\capY_n$ the codegeneracy $n$-cube associated to $C(Y)$, and by $\capY'_n$, $\capY''_n$, $\capY'''_n$, $\capY''''_n$ the following weakly equivalent $n$-cubes. The functor $\function{\capY'_n}{\Cube_{n}}{\AlgJ}$ is defined by
\begin{align*}
  (v_1,v_2,\dotsc,v_n)&\longmapsto J\circ_{A_1}J\circ_{A_2}\cdots J\circ_{A_n}(Y)\\
  \text{where}
  \quad\quad
  A_i &:=
  \left\{
    \begin{array}{ll}
    \capO,&\text{for $v_i=0$,}\\
    J,&\text{for $v_i=1$,}
  \end{array}
  \right.
\end{align*}
with maps induced by $\capO\rarrow J$; the functor $\function{\capY''_n}{\Cube_{n}}{\AlgJ}$ is defined by
\begin{align*}
  (v_1,v_2,\dotsc,v_n)&\longmapsto|\BAR(J,A_1,|\BAR(J,A_2,\dots,|\BAR(J,A_n,Y)|\dotsb)|)|\\
  \text{where}
  \quad\quad
  A_i &:=
  \left\{
    \begin{array}{ll}
    \capO,&\text{for $v_i=0$,}\\
    J,&\text{for $v_i=1$,}
  \end{array}
  \right.
\end{align*}
with maps induced by $\capO\rarrow J$; the functor $\function{\capY'''_n}{\Cube_{n}}{\AlgJ}$ is defined by
\begin{align*}
  (v_1,v_2,\dotsc,v_n)&\longmapsto|\BAR(J,A_1,|\BAR(J,A_2,\dots,|\BAR(J,A_n,\tilde{Y})|\dotsb)|)|\\
  \text{where}
  \quad\quad
  A_i &:=
  \left\{
    \begin{array}{ll}
    \capO,&\text{for $v_i=0$,}\\
    J,&\text{for $v_i=1$,}
  \end{array}
  \right.
\end{align*}
with maps induced by $\capO\rarrow J$; the functor $\function{\capY''''_n}{\Cube_{n}}{\Alg_{\tau_1\capO}}$ is defined by
\begin{align*}
  (v_1,v_2,\dotsc,v_n)&\longmapsto|\BAR(\tau_1\capO,A_1,|\BAR(\tau_1\capO,A_2,\dots,|\BAR(\tau_1\capO,A_n,\tilde{Y})|\dotsb)|)|\\
  \text{where}
  \quad\quad
  A_i &:=
  \left\{
    \begin{array}{ll}
    \capO,&\text{for $v_i=0$,}\\
    \tau_1\capO,&\text{for $v_i=1$,}
  \end{array}
  \right.
\end{align*}
with maps induced by $\capO\rarrow\tau_1\capO$; here $\tilde{Y}:=\tau_1\capO\circ_J(Y)$. It is important to note that the natural map $Y\rightarrow\tilde{Y}$ in $\AlgJ$ is a weak equivalence since $Y\in\AlgJ$ is cofibrant.
\end{defn}

\begin{rem}
For instance, $\capY_2$ is the $2$-cube associated to $C(Y)$ of the form
\begin{align*}
\xymatrix@1{
  J\circ_\capO J\circ_\capO(Y)\ar[d]^-{s^1}\ar[r]^-{s^0} &
  J\circ_\capO(Y)\ar[d]^-{s^0}\\
  J\circ_\capO(Y)\ar[r]^-{s^0} & Y
}
\end{align*}
$\capY'_2$ is the associated $2$-cube of the form
\begin{align*}
\xymatrix@1{
  J\circ_\capO J\circ_\capO(Y)\ar[d]^-{s^1}\ar[r]^-{s^0} &
  J\circ_J J\circ_\capO(Y)\ar[d]^-{s^0}\\
  J\circ_\capO J\circ_J(Y)\ar[r]^-{s^0} &
  J\circ_J J\circ_J(Y)
}
\end{align*}
$\capY''_2$ is the associated $2$-cube of the form
\begin{align*}
\xymatrix@1{
  |\BAR(J,\capO,|\BAR(J,\capO,Y)|)|\ar[d]^-{s^1}\ar[r]^-{s^0} &
  |\BAR(J,J,|\BAR(J,\capO,Y)|)|\ar[d]^-{s^0}\\
  |\BAR(J,\capO,|\BAR(J,J,Y)|)|\ar[r]^-{s^0} &
  |\BAR(J,J,|\BAR(J,J,Y)|)|
}
\end{align*}
$\capY'''_2$ is the associated $2$-cube of the form
\begin{align*}
\xymatrix@1{
  |\BAR(J,\capO,|\BAR(J,\capO,\tilde{Y})|)|\ar[d]^-{s^1}\ar[r]^-{s^0} &
  |\BAR(J,J,|\BAR(J,\capO,\tilde{Y})|)|\ar[d]^-{s^0}\\
  |\BAR(J,\capO,|\BAR(J,J,\tilde{Y})|)|\ar[r]^-{s^0} &
  |\BAR(J,J,|\BAR(J,J,\tilde{Y})|)|
}
\end{align*}
and $\capY''''_2$ the associated $2$-cube of the form
\begin{align}
\label{eq:diagram_n_equals_2_case_capX_2_cube_codegeneracy}
\xymatrix@1{
  |\BAR(\tau_1\capO,\capO,|\BAR(\tau_1\capO,\capO,\tilde{Y})|)|
  \ar[d]^-{s^1}\ar[r]^-{s^0} &
  |\BAR(\tau_1\capO,\tau_1\capO,|\BAR(\tau_1\capO,\capO,\tilde{Y})|)|\ar[d]^-{s^0}\\
  |\BAR(\tau_1\capO,\capO,|\BAR(\tau_1\capO,\tau_1\capO,\tilde{Y})|)|\ar[r]^-{s^0} &
  |\BAR(\tau_1\capO,\tau_1\capO,|\BAR(\tau_1\capO,\tau_1\capO,\tilde{Y})|)|
}
\end{align}
\end{rem}

\begin{prop}
\label{prop:multisimplicial_calculation_of_iterated_hofiber_codegeneracy_cube}
If $Y$ is a cofibrant $\K$-coalgebra and $n\geq 1$, then the maps
\begin{align*}
  \capY_n\Iso\capY'_n
  \xleftarrow{\wequiv}\capY''_n
  \xrightarrow{\wequiv}\capY'''_n
  \xrightarrow{\wequiv}\capY''''_n
\end{align*}
of $n$-cubes are objectwise weak equivalences. If furthermore, $Y$ is $0$-connected, then the iterated homotopy fiber of $\capY''''_n$ is $2n$-connected, and hence the total homotopy fiber of $\capY_n$ is $2n$-connected.
\end{prop}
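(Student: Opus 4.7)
The plan is to follow the same two-step strategy used in the proof of Proposition \ref{prop:connectivities_for_total_homotopy_fiber_homotopy_completion_cube}: first identify $\capY_n$ with the more tractable model $\capY''''_n$ via the zigzag of objectwise weak equivalences, and then compute the iterated homotopy fiber of $\capY''''_n$ using the combinatorial Propositions \ref{prop:calculations_for_iterated_fibers_part1}, \ref{prop:calculations_for_iterated_fibers_part2}, \ref{prop:calculations_for_iterated_fibers_part3} in tandem with the homotopy spectral sequence of Proposition \ref{prop:homotopy_spectral_sequence}.

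For the zigzag of objectwise weak equivalences, I would proceed one map at a time. The iso $\capY_n \Iso \capY'_n$ is a direct unpacking using $\K Z = J\circ_\capO Z$ and $J\circ_J Z = Z$. The map $\capY''_n \xrightarrow{\wequiv} \capY'_n$ is the augmentation of the simplicial bar construction, which is an objectwise weak equivalence because under Cofibrancy Condition \ref{CofibrancyCondition} the inputs are positive flat stable cofibrant in $\ModR$ (using Proposition \ref{prop:homotopical_analysis_of_forgetful_functors} on the cofibrant $\K$-coalgebra $Y$) and Proposition \ref{prop:reedy_cofibrant_for_bar_constructions} then applies. The map $\capY''_n \xrightarrow{\wequiv}\capY'''_n$ replaces $Y$ by $\tilde Y=\tau_1\capO\circ_J Y$, which is a weak equivalence on cofibrant $J$-algebras because $J\xrightarrow{\wequiv}\tau_1\capO$ induces a Quillen equivalence; this weak equivalence propagates through the (objectwise-cofibrant) bar constructions. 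Finally, $\capY'''_n\xrightarrow{\wequiv}\capY''''_n$ replaces $J$ by $\tau_1\capO$ in the leftmost slot, again by $J\xrightarrow{\wequiv}\tau_1\capO$ and cofibrancy.

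For the connectivity estimate, I would compute the iterated homotopy fiber of $\capY''''_n$ as an $n$-fold realization of a multi-simplicial object, taking the iterated fiber objectwise in multi-degree $(r_1,\dotsc,r_n)$ first and then realizing in each direction. The key dichotomy is: if any $r_i=0$, then the $i$-th bar entry in that slot degenerates to $\tau_1\capO\circ(\text{rest})$ independently of the choice of $A_i\in\{\capO,\tau_1\capO\}$, so the cube is constant in the $i$-th direction and the objectwise iterated fiber is contractible; otherwise, iteratively applying Propositions \ref{prop:calculations_for_iterated_fibers_part1}, \ref{prop:calculations_for_iterated_fibers_part2}, \ref{prop:calculations_for_iterated_fibers_part3}, exactly in the spirit of the proof of Proposition \ref{prop:connectivities_for_total_homotopy_fiber_homotopy_completion_cube}, identifies the iterated fiber with a symmetric-sequence-valued expression composed with $\tilde Y$ that carries one $\capO^{>1}$ factor per cube direction. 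Since $\tilde Y$ is $0$-connected, this identification shows that each interior multi-degree is at least $n$-connected, and the exterior multi-degrees are contractible.

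The final step is to convert the combined information ``contractible whenever some $r_i=0$, at least $n$-connected otherwise'' into the claimed $2n$-connectivity of the realization. This is where I expect the main bookkeeping obstacle to lie: applying Proposition \ref{prop:homotopy_spectral_sequence} in the $i$-th simplicial direction to a simplicial spectrum whose $0$-simplices are contractible and whose higher simplices are $k$-connected shows that the realization is $(k+1)$-connected, because the spectral sequence with $E^2_{p,q}=H_p(\pi_q(-))$ satisfies $E^2_{p,q}=0$ whenever $p+q\leq k+1$ (the $q\leq k$ rows vanish by connectivity, and the $p=0$ column in the $(k+1)$-row vanishes by reducedness). Iterating this boost once in each of the $n$ simplicial directions produces an additional $n$ levels of connectivity, bringing the total from $n$ to $2n$ as required. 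The hard part will be tracking the combinatorial identification of the iterated fiber carefully enough to verify both (a) that each interior multi-degree is at least $n$-connected via stacked $\capO^{>1}$ factors and (b) that the simplicial structure is genuinely reduced with respect to each of the $n$ bar directions, so that the spectral sequence argument legitimately yields the extra $n$ levels of connectivity.
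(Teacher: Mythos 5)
Your proposal is correct and follows essentially the same route as the paper's (very terse) proof: identify $\capY_n$ with $\capY''''_n$ via the zigzag of objectwise equivalences (using cofibrancy, Propositions \ref{prop:homotopical_analysis_of_forgetful_functors} and \ref{prop:reedy_cofibrant_for_bar_constructions}), then compute the objectwise iterated fiber of the $n$-fold multisimplicial object using Propositions \ref{prop:calculations_for_iterated_fibers_part1}--\ref{prop:calculations_for_iterated_fibers_part3}, obtaining $n$-connectivity at interior multidegrees and contractibility whenever some $r_i=0$, and finally boost by one in each of the $n$ realization directions via Proposition \ref{prop:homotopy_spectral_sequence} and reducedness. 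The ``bookkeeping obstacle'' you flag is exactly what the paper dismisses as ``easily but tediously'' and illustrates only for $n=2$; your explanation of why the cube is constant in direction $i$ when $r_i=0$ (since the middle slot $A_i$ does not appear in simplicial degree $0$ of the bar construction) is a clean way to justify the reducedness claim that the paper leaves implicit.
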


\begin{proof}
This is argued exactly as in the proof of Proposition \ref{prop:connectivities_for_total_homotopy_fiber_homotopy_completion_cube}.
By the objectwise calculation of the iterated fibers of the corresponding $n$-multisimplicial objects, it follows easily but tediously from the combinatorics in Propositions \ref{prop:calculations_for_iterated_fibers_part1}, \ref{prop:calculations_for_iterated_fibers_part2}, and  \ref{prop:calculations_for_iterated_fibers_part3} that the iterated homotopy fiber is weakly equivalent to the realization of a simplicial object which is $(2n-1)$-connected in each simplicial degree, and which is a point in simplicial degree $0$; by Proposition \ref{prop:homotopy_spectral_sequence} the realization of such an object is $2n$-connected. To finish off the proof, it suffices to verify that the maps of cubes are objectwise weak equivalences; this follows from Propositions \ref{prop:homotopical_analysis_of_forgetful_functors} and \ref{prop:reedy_cofibrant_for_bar_constructions}.
\end{proof}

\begin{rem}
For instance, consider the case $n=2$. Removing the outer realization in diagram \eqref{eq:diagram_n_equals_2_case_capX_2_cube_codegeneracy}, evaluating at (horizontal) simplicial degree $r$, and calculating the fibers, together with the induced map $(*)$, we obtain the commutative diagram
\begin{align}
\label{eq:peeling_off_outer_realization_for_iterated_hofiber_codeg_cube}
\xymatrix{
  \tau_1\capO\circ(\capO^{\circ r})^{>1}\circ Z'\ar@{.>}[d]^-{(*)}\ar[r]^-{\subset} &
  \tau_1\capO\circ(\capO^{\circ r})\circ Z'\ar[d]\ar[r] &
  \tau_1\capO\circ\tau_1(\capO^{\circ r})\circ Z'\ar[d]\\
  \tau_1\capO\circ(\capO^{\circ r})^{>1}\circ Z\ar[r]^-{\subset} &
  \tau_1\capO\circ(\capO^{\circ r})\circ Z\ar[r] &
  \tau_1\capO\circ\tau_1(\capO^{\circ r})\circ Z
}
\end{align}
where $Z':=|\BAR(\tau_1\capO,\capO,\tilde{Y})|$, $Z:=|\BAR(\tau_1\capO,\tau_1\capO,\tilde{Y})|$, $\tau_1(\capO^{\circ r})\Iso(\tau_1\capO)^{\circ r}$, and the rows are cofiber sequences in $\ModR$. We want to calculate the fiber of the map $(*)$. Removing the realization from $(*)$ in diagram \eqref{eq:peeling_off_outer_realization_for_iterated_hofiber_codeg_cube} and evaluating at (vertical) simplicial degree $s$, we obtain the commutative diagram
\begin{align*}
\xymatrix{
  (\iter\hofib)_{r,s}\ar@{=}[r] &
  \tau_1\capO\circ\bigl(\widetilde{W(r,s)}\bigr)^{\,>2}\circ\tilde{Y}\ar[d]\ar[r]^-{\subset} &
  \tau_1\capO\circ W(r,s)\circ\capO^{\circ s}\circ\tilde{Y}\ar[d]\\
  & {*}\ar[r] &
  \tau_1\capO\circ W(r,s)\circ\tau_1(\capO^{\circ s})\circ\tilde{Y}
}
\end{align*}
where
\begin{align*}
W(r,s)&:=(\capO^{\circ r})^{>1}\circ\tau_1\capO\\
\widetilde{W(r,s)} &:= \coprod\limits_{t>1}W(r)[t]
  \Smash_{\Sigma_t}((\capO^{\circ s})^{\tensorcheck t})^{>t}
\end{align*}
The indicated square is a pushout diagram in $\ModR$ and a pullback diagram in $\AlgO$; here, we used Propositions \ref{prop:calculations_for_iterated_fibers_part2} and \ref{prop:calculations_for_iterated_fibers_part3}. Since $\tilde{Y}$ is $0$-connected, it follows that $(\iter\hofib)_{r,s}$ is $2$-connected, $(\iter\hofib)_{0,s}=*$, and $(\iter\hofib)_{r,0}=*$, for each $r,s\geq 0$. In other words, the iterated homotopy fiber, after removing the horizontal and vertical realizations, is a bisimplicial $\capO$-algebra of the form (not showing the face or degeneracy maps)
\begin{align*}
\xymatrix@!0{
  \vdots \\
  {*} & \square & \square \\
  {*} & \square & \square \\
  {*} & {*} & {*} & \cdots\\
}
\end{align*}
where each box $\square$ indicates a $2$-connected $\capO$-algebra. Applying realization in the vertical direction gives a (horizontal) simplicial $\capO$-algebra that is $3$-connected in each simplicial degree $r$, and is the null object $*$ in simplicial degree $0$, and therefore realization in the horizontal direction gives an $\capO$-algebra that is $4$-connected; hence the total homotopy fiber of $\capY''''_2$ is $4$-connected.
\end{rem}

\begin{prop}[Proposition \ref{prop:multisimplicial_calculation_of_iterated_hofiber_codegeneracy_cube_no_zigzags} restated]
\label{prop:multisimplicial_calculation_of_iterated_hofiber_codegeneracy_cube_no_zigzags_other_section}
Let $Y$ be a cofibrant $\K$-coalgebra and $n\geq 1$. Denote by $\capY_n$ the the codegeneracy $n$-cube associated to the cosimplicial cobar construction $C(Y)$ of $Y$. If $Y$ is $0$-connected, then the total homotopy fiber of $\capY_n$ is $2n$-connected.
\end{prop}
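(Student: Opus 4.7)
The plan is to recognize this as a special case of Proposition \ref{prop:multisimplicial_calculation_of_iterated_hofiber_codegeneracy_cube} (the version with intermediate cubes $\capY'_n, \capY''_n, \capY'''_n, \capY''''_n$), which has already been proved in this section. The strategy is to reduce the homotopical analysis of the abstract codegeneracy cube $\capY_n$ to a point-set combinatorial analysis of a specific cube $\capY''''_n$ built entirely out of simplicial bar constructions and operadic truncations, where every entry is explicit enough to compute fibers level-by-level.

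First I would construct the zigzag $\capY_n \cong \capY'_n \xleftarrow{\simeq} \capY''_n \xrightarrow{\simeq} \capY'''_n \xrightarrow{\simeq} \capY''''_n$ as in Definition \ref{defn:sequence_of_weakly_equivalent_codegeneracy_cubes}, where each $\capY_n^{(k)}$ replaces tensored circle products by their derived bar construction models and successively trades the inner operad $J$ for $\tau_1\capO$. Objectwise, each arrow is a weak equivalence: the isomorphism $\capY_n \cong \capY'_n$ is formal (using $J\circ_J(-) \cong \id$), the map out of $\capY''_n$ uses that the realization of the simplicial bar construction computes the derived circle product (Proposition \ref{prop:reedy_cofibrant_for_bar_constructions} ensures the cofibrancy needed), the map into $\capY'''_n$ uses $Y \simeq \tilde{Y} = \tau_1\capO\circ_J Y$ for a cofibrant $\K$-coalgebra $Y$, and the last map uses that $J \to \tau_1\capO$ is a weak equivalence along with Cofibrancy Condition \ref{CofibrancyCondition} together with Proposition \ref{prop:homotopical_analysis_of_forgetful_functors}.

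Next I would analyze the iterated homotopy fiber of $\capY''''_n$ by unrolling the bar constructions: peel off the outermost realization, evaluate the resulting horizontal simplicial object at each degree $r$, and compute the cofiber sequences from each of the $n$ directions using the key combinatorial identities in Propositions \ref{prop:calculations_for_iterated_fibers_part1}, \ref{prop:calculations_for_iterated_fibers_part2}, \ref{prop:calculations_for_iterated_fibers_part3}. Iterating this peeling argument $n$ times yields, at multisimplicial degree $(r_1,\dots,r_n)$, an iterated homotopy fiber of the form $\tau_1\capO\circ\widetilde{W(r_1,\dots,r_n)}^{>n}\circ\tilde{Y}$. Under the hypothesis that $Y$ (and hence $\tilde{Y}$) is $0$-connected, this object is $n$-connected for every $(r_1,\dots,r_n)$, and vanishes whenever some $r_i = 0$ because the factor $(\capO^{\circ r_i})^{>1}$ collapses to $*$.

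Finally I would reassemble by applying realization in each of the $n$ simplicial directions, using the homotopy spectral sequence for simplicial symmetric spectra (Proposition \ref{prop:homotopy_spectral_sequence}) at each stage. Realization of a simplicial object that is $m$-connected in every degree is $m$-connected; but realization of one that is additionally a point in simplicial degree $0$ picks up an extra degree of connectivity. Since the iterated fiber is a point whenever \emph{any} coordinate is zero, each of the $n$ realizations gains one additional degree of connectivity beyond the base $n$-connectivity, yielding total connectivity $n + n = 2n$, exactly as claimed. The main technical obstacle is keeping the multisimplicial bookkeeping straight and ensuring that the vanishing along the coordinate hyperplanes $r_i = 0$ truly propagates through each successive realization; this is precisely why the intermediate cube $\capY''''_n$ was engineered (via passing from $\capY$ through $\capY'$, $\capY''$, $\capY'''$) so that the $>1$ superscripts appear in the right places, and where the combinatorial lemmas on starting levels of symmetric sequences do the real work.
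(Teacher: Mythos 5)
Your proposal is correct and follows essentially the same route as the paper: reduce via the zigzag $\capY_n \cong \capY'_n \xleftarrow{\simeq} \capY''_n \xrightarrow{\simeq} \capY'''_n \xrightarrow{\simeq} \capY''''_n$ (with objectwise equivalences supplied by Propositions \ref{prop:homotopical_analysis_of_forgetful_functors} and \ref{prop:reedy_cofibrant_for_bar_constructions}), peel the $n$ realizations off $\capY''''_n$, identify the multidegree iterated fiber via the combinatorics of Propositions \ref{prop:calculations_for_iterated_fibers_part1}--\ref{prop:calculations_for_iterated_fibers_part3}, and then reassemble via the homotopy spectral sequence of Proposition \ref{prop:homotopy_spectral_sequence}, gaining one degree of connectivity per realization from the vanishing along each coordinate hyperplane. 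This is exactly the paper's argument (the paper itself defers to the proof pattern of Proposition \ref{prop:connectivities_for_total_homotopy_fiber_homotopy_completion_cube} and spells out the $n=2$ case), and your accounting $n+n=2n$ matches the paper's description of reaching a simplicial object that is $(2n-1)$-connected in each degree and $*$ in degree $0$ after $n-1$ realizations.
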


\begin{proof}
This is a special case of Proposition \ref{prop:multisimplicial_calculation_of_iterated_hofiber_codegeneracy_cube}.
\end{proof}

\subsection{Commuting $\LL Q$ past homotopy limits over $\Delta^{\leq n}$}

The purpose of this section is to prove Theorem \ref{thm:connectivities_for_map_that_commutes_Q_into_inside_of_holim}, which provides connectivity estimates for the comparison map
$\LL Q\holim\nolimits_{\Delta^{\leq n}} C(Y)\rightarrow\holim\nolimits_{\Delta^{\leq n}} \LL Q\,C(Y)$.

The following definitions and constructions appear in Goodwillie \cite{Goodwillie_calculus_2} in the context of spaces, and will also be useful in our context when working with the spectral algebra higher Blakers-Massey theorems proved in Ching-Harper \cite{Ching_Harper}.

\begin{defn}[Indexing categories for cubical diagrams]
Let $W$ be a finite set and $\M$ a category.
\begin{itemize}
\item Denote by $\powerset(W)$ the poset of all subsets of $W$, ordered by inclusion $\subset$ of sets. We will often regard $\powerset(W)$ as the category associated to this partial order in the usual way; the objects are the elements of $\powerset(W)$, and there is a morphism $U\rarrow V$ if and only if $U\subset V$.
\item Denote by $\powerset_0(W)\subset\powerset(W)$ the poset of all nonempty subsets of $W$; it is the full subcategory of $\powerset(W)$ containing all objects except the initial object $\emptyset$.
\item A \emph{$W$-cube} $\capX$ in $\M$ is a $\powerset(W)$-shaped diagram $\capX$ in $\M$; in other words, a functor $\function{\capX}{\powerset(W)}{\M}$.
\end{itemize}
\end{defn}

\begin{rem}
If $n=|W|$ and $\capX$ is a $W$-cube in $\M$, we will sometimes refer to $\capX$ simply as an \emph{$n$-cube} in $\M$. In particular, a $0$-cube is an object in $\M$ and a $1$-cube is a morphism in $\M$.
\end{rem}

\begin{defn}[Faces of cubical diagrams]
Let $W$ be a finite set and $\M$ a category. Let $\capX$ be a $W$-cube in $\M$ and consider any subsets $U\subset V\subset W$. Denote by $\partial_U^V\capX$ the $(V-U)$-cube defined objectwise by
\begin{align*}
  T\mapsto(\partial_U^V\capX)_T:=\capX_{T\cup U},\quad\quad T\subset V-U.
\end{align*}
In other words, $\partial_U^V\capX$ is the $(V-U)$-cube formed by all maps in $\capX$ between $\capX_U$ and $\capX_V$. We say that $\partial_U^V\capX$ is a \emph{face} of $\capX$ of \emph{dimension} $|V-U|$.
\end{defn}

\begin{defn}
\label{defn:the_wide_tilde_construction}
Let $Z\in(\AlgO)^\Delta$ and $n\geq 0$. Assume that $Z$ is objectwise fibrant and denote by $\function{Z}{\capP_0([n])}{\ModR}$ the composite
\begin{align*}
  \capP_0([n])\rightarrow\Delta^{\leq n}
  \rightarrow\Delta
  \rightarrow\AlgO,
  \quad\quad
  T\longmapsto Z_T,
\end{align*}
or ``restriction'' to $\capP_0([n])$. The \emph{associated $\infty$-cartesian $(n+1)$-cube built from $Z$}, denoted $\function{\widetilde{Z}}{\capP([n])}{\AlgO}$, is defined objectwise by
\begin{align*}
  \widetilde{Z}_V :=
  \left\{
    \begin{array}{rl}
    \holim^\BK_{T\neq\emptyset}Z_T,&\text{for $V=\emptyset$,}\\
    Z_V,&\text{for $V\neq\emptyset$}.
    \end{array}
  \right.
\end{align*}
It is important to note (Proposition \ref{prop:punctured_cube_calculation_of_holim_truncated_delta}) that there are natural weak equivalences
\begin{align*}
  \holim_{\Delta^{\leq n}}Z\wequiv
  \holim\nolimits^\BK_{T\neq\emptyset}Z_T=\widetilde{Z}_\emptyset
\end{align*}
in $\AlgO$.
\end{defn}

The following proposition is motivated by Munson-Volic \cite[10.6.10]{Munson_Volic_book_project}, and is proved in essentially the same way; it is closely related to Sinha \cite[7.2]{Sinha_cosimplicial_models}.

\begin{prop}
\label{prop:comparing_faces_of_coface_cube_with_codegeneracy_cube}
Let $Z\in(\AlgO)^\Delta$ and $n\geq 0$. Assume that $Z$ is objectwise fibrant. Let $\emptyset\neq T\subset[n]$ and $t\in T$. Then there is a weak equivalence
\begin{align*}
  (\iter\hofib)\partial_{\{t\}}^T\widetilde{Z}\wequiv
  \Omega^{|T|-1}(\iter\hofib)\capY_{|T|-1}
\end{align*}
in $\AlgO$, where $\capY_{|T|-1}$ denotes the codegeneracy $(|T|-1)$-cube associated to $Z$. Here, $\Omega^k$ is weakly equivalent, in the underlying category $\ModR$, to the $k$-fold desuspension $\Sigma^{-k}$ functor for each $k\geq 0$.
\end{prop}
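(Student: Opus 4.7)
The plan is to prove this by an inductive retraction argument, mirroring the proofs of Propositions \ref{prop:homotopy_fiber_calculation_other_section} and \ref{prop:iterated_homotopy_fibers_calculation_other_section}. First, I would identify the cube $\partial_{\{t\}}^T\widetilde{Z}$ explicitly. Since every subset $V$ with $\{t\}\subset V\subset T$ contains $t$ and is therefore non-empty, we have $\widetilde{Z}_V = Z_V$ for all such $V$. Hence $\partial_{\{t\}}^T\widetilde{Z}$ is a $k$-cube (with $k := |T|-1$) whose vertex at $V'\subset T\setminus\{t\}$ is $Z^{|V'|}$, and whose edges are specific coface maps $d^i$ of $Z$ determined by the positions of the new elements being adjoined to the subset $\{t\}\cup V'$.

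The crucial point is that each coface $d^i$ appearing as an edge in this cube admits a codegeneracy retraction via the cosimplicial identity $s^{i-1}d^i=\id$. Working one cube direction at a time and applying Proposition \ref{prop:retraction_two_cube_argument} to each 2-dimensional face formed by such a coface and its codegeneracy retraction, each coface edge in the chosen direction can be traded for its codegeneracy retraction at the cost of a factor of $\Omega$. The fact that these individual retraction-replacements assemble into a compatible equivalence of $(k-1)$-cubes follows from the remaining cosimplicial identities $s^j d^i = d^i s^{j-1}$ (for $i<j$) and $s^j d^i = d^{i-1}s^j$ (for $i>j+1$), which guarantee that the chosen retractions interact appropriately with the coface maps in the orthogonal directions. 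After iterating over all $k$ directions of the cube, we obtain a weak equivalence
\begin{align*}
  (\iter\hofib)\,\partial_{\{t\}}^T\widetilde{Z} \wequiv \Omega^k\,(\iter\hofib)\,\widehat{\capY}_k,
\end{align*}
where $\widehat{\capY}_k$ is a $k$-cube with vertices $Z^{k-|S|}$ at $S\subset\{1,\ldots,k\}$, built entirely from codegeneracies of $Z$.

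The final step is to identify $\widehat{\capY}_k$ with the standard codegeneracy cube $\capY_{|T|-1}$ of Proposition \ref{prop:iterated_homotopy_fibers_calculation_other_section}. Both cubes have the same shape and are built entirely from codegeneracy maps; using the cosimplicial identity $s^j s^i = s^i s^{j+1}$ (for $i\leq j$) to normalize any composite of codegeneracies into standard form, one verifies that the two cubes are isomorphic up to a reindexing of cube directions, which does not affect the iterated homotopy fiber. I expect the main technical obstacle to be precisely this final bookkeeping step---tracking which specific codegeneracies emerge from the iterated retraction procedure and matching them against those prescribed in the definition of $\capY_{|T|-1}$---but it presents no conceptual difficulty, reducing to a routine verification using only the cosimplicial identities, exactly analogous to the bookkeeping already carried out implicitly in the proofs of the previous two propositions.
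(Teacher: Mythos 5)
Your overall strategy---trading each coface direction for a codegeneracy retraction and invoking Proposition \ref{prop:retraction_two_cube_argument} once per direction, for a total factor of $\Omega^{|T|-1}$---is the approach the paper takes. But there is a genuine gap in your choice of retractions. You declare the retraction of each coface $d^i$ to be $s^{i-1}$ (which does not even exist when $i=0$), and then assert that the mixed cosimplicial identities \emph{guarantee} that these retractions assemble compatibly with the orthogonal coface maps. They do not: compatibility is a property of the chosen retractions, not automatic. Take $T=\{0,1,2\}$ and $t=2$, so that $\partial_{\{2\}}^T\widetilde{Z}$ has edges $d^0\colon Z_{\{2\}}\to Z_{\{1,2\}}$ and $d^1\colon Z_{\{0,2\}}\to Z_{\{0,1,2\}}$ in the direction of adding $1$, and $d^0$'s in the orthogonal direction of adding $0$. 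Trading the first direction with your retractions ($s^0$ for the $d^0$, $s^{1-1}=s^0$ for the $d^1$) produces a square whose two composites $Z_{\{1,2\}}\to Z_{\{0,2\}}$ are $s^0d^0=\id$ (via $Z_{\{0,1,2\}}$) and $d^0s^0\neq\id$ (via $Z_{\{2\}}$); it does not commute, so Proposition \ref{prop:retraction_two_cube_argument} cannot be applied to it. The correct retraction of that $d^1$ is $s^1$, for which $s^1d^0=d^0s^0$ by the mixed identity.

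What you have missed is that the retraction of the coface $d^p$ adding an element $w$ to a subset must round \emph{toward} $t$: it is $s^{p-1}$ when $w>t$ but $s^{p}$ when $w<t$. There is no uniform rule independent of $t$. This $t$-dependent choice is exactly what the function $\sigma_{W,V}$ in Remark \ref{rem:formalizing_the_coordinate_free_codegeneracy_maps} encodes, and it is what makes the concatenated sequence of $(|T|-1)$-cubes commute and what identifies the terminal cube with $\capY_{|T|-1}$. This is not routine after-the-fact bookkeeping but the essential combinatorial input to the proof---without it the sequence of retraction squares you want to iterate over simply does not exist. Once the choice is corrected along these lines, the rest of your argument coincides with the paper's.
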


\begin{proof}
It follows easily from the cosimplicial identities that $\partial_{\{t\}}^T\widetilde{Z}$ is connected to $\capY_{|T|-1}$ by a sequence of retractions, built from codegeneracy maps, that have the following special property: the retractions fit into a commutative diagram made up of a sequence of concatenated $(|T|-1)$-cubes, starting with $\partial_{\{t\}}^T\widetilde{Z}$, such that each added cube is in a distinct direction (the concatenation direction of the added cube), and such that each arrow in the added cube, whose direction is in the concatenation direction, is a retraction of the preceding arrow in that direction (in the previous cube in the sequence), with the last $(|T|-1)$-cube in the sequence composed entirely of codegeneracy maps; this is elaborated more precisely in Remark \ref{rem:formalizing_the_coordinate_free_codegeneracy_maps}. The resulting commutative diagram is a sequence of $|T|$ concatenated $(|T|-1)$-cubes that starts with $\partial_{\{t\}}^T\widetilde{Z}$ and ends with $\capY_{|T|-1}$. Repeated application of Proposition \ref{prop:retraction_two_cube_argument} finishes the proof.
\end{proof}

Consider the following special case.

\begin{rem}
\label{rem:example_of_concatenation_sequences}
For instance, suppose $n=2$ and $T=\{0,1,2\}$. In the case $t=1$, consider the following left-hand commutative diagram
\begin{align}
\label{eq:path_of_2_cubes_cases_one_and_two}
\xymatrix@1{
  Z_{\{0\}}\ar[d]^-{d^1}\ar[r]^-{d^1}\ar@{}[dr]|(0.43){(*)'} &
  Z_{\{0,1\}}\ar[d]^-{d^2}\ar[r]^-{s^0} &
  Z_{\{0\}}\ar[d]^-{d^1}\\
  Z_{\{0,2\}}\ar[r]^-{d^1} &
  Z_{\{0,1,2\}}\ar[d]^-{s^1}\ar[r]^-{s^0} &
  Z_{\{0,2\}}\ar[d]^-{s^0}\\
& Z_{\{0,1\}}\ar[r]^-{s^0} &
  Z_{\{0\}}
}\quad\quad
\xymatrix@1{
  Z_{\{1\}}\ar[d]^-{d^1}\ar[r]^-{d^0}\ar@{}[dr]|(0.43){(*)''} &
  Z_{\{0,1\}}\ar[d]^-{d^2}\ar[r]^-{s^0} &
  Z_{\{1\}}\ar[d]^-{d^1}\\
  Z_{\{1,2\}}\ar[d]^-{s^0}\ar[r]^-{d^0} &
  Z_{\{0,1,2\}}\ar[d]^-{s^1}\ar[r]^-{s^0} &
  Z_{\{1,2\}}\ar[d]^-{s^0}\\
  Z_{\{1\}}\ar[r]^-{d^0} &
  Z_{\{0,1\}}\ar[r]^-{s^0} &
  Z_{\{1\}}
}
\end{align}
in the case $t=2$, consider the right-hand commutative diagram in \eqref{eq:path_of_2_cubes_cases_one_and_two},
and in the case $t=3$, consider the commutative diagram
\begin{align*}
\xymatrix@1{
  Z_{\{2\}}\ar[d]^-{d^0}\ar[r]^-{d^0}\ar@{}[dr]|(0.43){(*)'''} &
  Z_{\{0,2\}}\ar[d]^-{d^1}\\
  Z_{\{1,2\}}\ar[d]^-{s^0}\ar[r]^-{d^0} &
  Z_{\{0,1,2\}}\ar[d]^-{s^1}\ar[r]^-{s^0} &
  Z_{\{1,2\}}\ar[d]^-{s^0}\\
  Z_{\{2\}}\ar[r]^-{d^0} &
  Z_{\{0,2\}}\ar[r]^-{s^0} &
  Z_{\{2\}}
}
\end{align*}
The upper left-hand squares $(*)'$, $(*)''$, $(*)'''$ are the $2$-cubes $\partial_{\{0\}}^{T}\widetilde{Z}$, $\partial_{\{1\}}^{T}\widetilde{Z}$, $\partial_{\{2\}}^{T}\widetilde{Z}$, respectively, and the lower right-hand squares are each a copy of the codegeneracy $2$-cube $\capY_2$. By repeated application of Proposition \ref{prop:retraction_two_cube_argument}, it follows easily that there are weak equivalences
\begin{align*}
  (\iter\hofib)\partial_{\{t\}}^T\widetilde{Z}\wequiv
  \Omega^2(\iter\hofib)\capY_2
\end{align*}
in $\AlgO$ for each $t\in T$. Note that there are two distinct paths connecting $\partial_{\{1\}}^{T}\widetilde{Z}$ with $\capY_2$, but there is only one path connecting $\partial_{\{0\}}^{T}\widetilde{Z}$ with $\capY_2$; similarly, there is only one path connecting $\partial_{\{2\}}^{T}\widetilde{Z}$ with $\capY_2$. The proof of Proposition \ref{prop:comparing_faces_of_coface_cube_with_codegeneracy_cube} is simply the observation, which follows easily from the cosimplicial identities, that one such path always exists.
\end{rem}

\begin{rem}
\label{rem:formalizing_the_coordinate_free_codegeneracy_maps}
The construction in the proof of Proposition \ref{prop:comparing_faces_of_coface_cube_with_codegeneracy_cube} can be elaborated as follows. Let $n\geq 1$. Suppose that $\emptyset\neq T\subset[n]$ and $t\in T$. For each $\emptyset\neq S\subset T$, denote by $\capP_S^T$ the poset of all subsets of $T$ that contain $S$, ordered by inclusion $\subset$ of sets; it is the full subcategory of $\capP_0(T)\subset\capP_0([n])$ containing all objects that contain the set $S$. Define $\widetilde{Z}_S^T$ to be the restriction of $\widetilde{Z}$ to $\capP_S^T$; it is important to note that $\widetilde{Z}_S^T$ is a copy of the cube $\partial_S^T\widetilde{Z}$.

Denote by $\Delta_\mathrm{large}$ the category of nonempty totally-ordered finite sets and order-preserving maps; note that $\Delta\subset\Delta_\mathrm{large}$ is a skeletal subcategory and there is an equivalence of categories $\Delta_\mathrm{large}\rarrow\Delta$. Consider the functor $\function{\sigma}{\bigl(\capP_{\{t\}}^T\bigr)^\op}{\Delta_\mathrm{large}}$ which on objects is defined by $\sigma(V):=V$ and on arrows is defined as follows. It maps each $V\subset W$ in $\capP_{\{t\}}^T$ to the order-preserving function $\function{\sigma_{W,V}}{W}{V}$ defined by
\begin{align*}
  \sigma_{W,V}(w) :=
  \left\{
    \begin{array}{rl}
    \max\{v\in V: v\leq w\},&\text{for $t\leq w$,}\\
    \min\{v\in V: v\geq w\},&\text{for $t\geq w$}.
    \end{array}
  \right.
\end{align*}
Note that $\sigma_{W,V}(v)=v$ for each $v\in V$. Define $\capY_{\{t\}}^T$ to be the composite
\begin{align*}
  \bigl(\capP_{\{t\}}^T\bigr)^\op\xrightarrow{\ \sigma\ }
  \Delta_\mathrm{large}\rarrow\Delta\xrightarrow{Z}\AlgO
\end{align*}
Then $\capY_{\{t\}}^T$ is a $(|T|-1)$-cube with the same vertices as $\widetilde{Z}_{\{t\}}^T$, but with edges going in the opposite direction; it is important to note that $\capY_{\{t\}}^T$ is a copy of the codegeneracy cube $\capY_{|T|-1}$.

By construction, for each $V\subset W$ in $\capP_{\{t\}}^T$ the composite
\begin{align*}
  \bigl(\widetilde{Z}_{\{t\}}^T\bigr)_{V}\longrightarrow
  \bigl(\widetilde{Z}_{\{t\}}^T\bigr)_{W}=
  \bigl(\capY_{\{t\}}^T\bigr)_{W}\longrightarrow
  \bigl(\capY_{\{t\}}^T\bigr)_{V}\Equal
  \bigl(\widetilde{Z}_{\{t\}}^T\bigr)_{V}
\end{align*}
is the identity map; in other words, we have built a collection of retracts from the  coordinate free description of the codegeneracy maps. It follows from this construction that $\widetilde{Z}_{\{t\}}^T$ is connected to $\capY_{\{t\}}^T$ by a sequence of retractions, built from codegeneracy maps.  The resulting commutative diagram is a sequence of $|T|$ concatenated $(|T|-1)$-cubes that starts with $\widetilde{Z}_{\{t\}}^T$ and ends with $\capY_{\{t\}}^T$.
\end{rem}

\begin{prop}
\label{prop:base_case_of_induction_argument_connectivity}
Let $n\geq 0$. Suppose that $\emptyset\neq T\subset[n]$ and $t\in T$. If $Y\in\coAlgK$ is cofibrant and $0$-connected, then the cube
\begin{align*}
  \partial_{\{t\}}^T\widetilde{\mathfrak{C}(Y)}\quad
  \text{is $|T|$-cartesian.}
\end{align*}
\end{prop}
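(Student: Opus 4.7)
The plan is to reduce the cube $\partial_{\{t\}}^T\widetilde{\mathfrak{C}(Y)}$, a $(|T|-1)$-cube of $\capO$-algebras, to a codegeneracy cube of the cosimplicial cobar construction, and then invoke the connectivity estimate already proved in Proposition \ref{prop:multisimplicial_calculation_of_iterated_hofiber_codegeneracy_cube_no_zigzags_other_section}. Recall that in this setting a $k$-cube is $m$-cartesian if and only if its total (iterated) homotopy fiber is $(m-1)$-connected; in particular a $0$-cube is $m$-cartesian when the underlying object itself is $(m-1)$-connected. So the conclusion to prove is that the total homotopy fiber of $\partial_{\{t\}}^T\widetilde{\mathfrak{C}(Y)}$ is $(|T|-1)$-connected.

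First I would verify the base case $|T|=1$ separately, since then $\partial_{\{t\}}^T\widetilde{\mathfrak{C}(Y)}$ is simply the $0$-cube given by the vertex $\mathfrak{C}(Y)_{\{t\}} = UFY$, whose iterated homotopy fiber is $UFY$ itself. Because the simplicial fibrant replacement monad $F$ and the forgetful functor $U$ both preserve $0$-connectedness (the latter since $U = g^*$ is the forgetful functor along a cofibration of operads, hence preserves weak equivalences and the underlying $\capR$-module), the $0$-connectedness of $Y$ gives that $UFY$ is $0$-connected, establishing the $|T|=1$ case.

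Next, for $|T|\geq 2$, I would apply Proposition \ref{prop:comparing_faces_of_coface_cube_with_codegeneracy_cube} to $Z = \mathfrak{C}(Y)$. The hypothesis of objectwise fibrancy is satisfied: at cosimplicial level $n$ we have $\mathfrak{C}(Y)^n = U(F\K)^n FY$, which is fibrant because each application of $F$ produces a fibrant object in $\AlgJ$ and $U$ (being the right adjoint in the Quillen adjunction $(Q,U)$) preserves fibrant objects. The proposition then supplies a natural zigzag of weak equivalences
\begin{align*}
(\iter\hofib)\partial_{\{t\}}^T\widetilde{\mathfrak{C}(Y)} \wequiv \Omega^{|T|-1}(\iter\hofib)\capY_{|T|-1},
\end{align*}
where $\capY_{|T|-1}$ denotes the codegeneracy $(|T|-1)$-cube associated to $\mathfrak{C}(Y)$.

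Finally I would combine this equivalence with the connectivity estimate for codegeneracy cubes. Since $\mathfrak{C}(Y)$ and $C(Y)$ are objectwise weakly equivalent (Proposition \ref{prop:fattened_version_of_C} together with the fact that $\function{\eta}{\id}{F}$ is a natural weak equivalence on cofibrant $\K$-coalgebras), their codegeneracy cubes agree up to objectwise weak equivalence, so Proposition \ref{prop:multisimplicial_calculation_of_iterated_hofiber_codegeneracy_cube_no_zigzags_other_section} applies and yields that $(\iter\hofib)\capY_{|T|-1}$ is $2(|T|-1)$-connected. Since $\Omega$ is weakly equivalent in $\ModR$ to $\Sigma^{-1}$, the functor $\Omega^{|T|-1}$ reduces connectivity by exactly $|T|-1$; thus the total homotopy fiber of $\partial_{\{t\}}^T\widetilde{\mathfrak{C}(Y)}$ is at least $\bigl(2(|T|-1) - (|T|-1)\bigr) = (|T|-1)$-connected, completing the proof that the cube is $|T|$-cartesian. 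The only potential obstacle is bookkeeping around the connectivity conventions and the verification of objectwise fibrancy; both are essentially formal once the relevant previous results are in hand.
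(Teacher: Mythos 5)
Your proof is correct and follows essentially the same route as the paper: you invoke Proposition \ref{prop:comparing_faces_of_coface_cube_with_codegeneracy_cube} to reduce to the codegeneracy cube, then apply the $2(|T|-1)$-connectedness estimate from Proposition \ref{prop:multisimplicial_calculation_of_iterated_hofiber_codegeneracy_cube_no_zigzags}, subtract $|T|-1$ for the $\Omega^{|T|-1}$, and conclude. The extra care you take with the $|T|=1$ base case, the objectwise fibrancy of $\mathfrak{C}(Y)$, and the passage between $C(Y)$ and $\mathfrak{C}(Y)$ is accurate and merely makes explicit what the paper's terser proof leaves implicit.
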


\begin{proof}
We know by Proposition \ref{prop:multisimplicial_calculation_of_iterated_hofiber_codegeneracy_cube} that the iterated homotopy fiber of $\capY_{|T|-1}$ is $2(|T|-1)$-connected. Hence by Proposition \ref{prop:comparing_faces_of_coface_cube_with_codegeneracy_cube} we know that the iterated homotopy fiber of $\partial_{\{t\}}^T\widetilde{\mathfrak{C}(Y)}$ is $(|T|-1)$-connected; it follows that $\partial_{\{t\}}^T\widetilde{\mathfrak{C}(Y)}$ is $|T|$-cartesian.
\end{proof}

\begin{prop}
\label{prop:estimates_for_the_face_from_S_to_T_of_associated_cube}
Let $n\geq 0$. Suppose that $\emptyset\neq T\subset[n]$ and $\emptyset\neq S\subset T$. If $Y\in\coAlgK$ is cofibrant and $0$-connected, then the cube
\begin{align}
\label{eq:connectivity_of_desired_face}
  \partial_S^T\widetilde{\mathfrak{C}(Y)}\quad
  \text{is $(|T|-|S|+1)$-cartesian}.
\end{align}
\end{prop}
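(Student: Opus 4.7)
The plan is to induct on $|S|$. The base case $|S|=1$ is precisely Proposition \ref{prop:base_case_of_induction_argument_connectivity}, which was set up to serve as the induction start. For the inductive step I assume $|S|\geq 2$ and that the statement holds for all pairs $(S',T')$ with $|S'|<|S|$ and $\emptyset\neq S'\subset T'\subset[n]$.

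The key move is to pick any element $s\in S$ and observe that the $(|T|-|S|+1)$-cube $\partial_{S-\{s\}}^T\widetilde{\mathfrak{C}(Y)}$ factors as a morphism of $(|T|-|S|)$-cubes
\begin{align*}
  \partial_{S-\{s\}}^{T-\{s\}}\widetilde{\mathfrak{C}(Y)}
  \longrightarrow
  \partial_S^T\widetilde{\mathfrak{C}(Y)},
\end{align*}
in the $1$-cube direction corresponding to adjoining $s$. This is a routine combinatorial check: a subset $V\subset T-(S-\{s\})=(T-S)\cup\{s\}$ either omits $s$, in which case $V\cup(S-\{s\})\subset T-\{s\}$ and the associated vertex lives in $\partial_{S-\{s\}}^{T-\{s\}}\widetilde{\mathfrak{C}(Y)}$, or contains $s$, in which case $V\cup(S-\{s\})=(V-\{s\})\cup S$ and the associated vertex lives in $\partial_S^T\widetilde{\mathfrak{C}(Y)}$.

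I would then feed in the induction hypothesis twice: applied to $(S-\{s\},T)$ it gives that the total cube $\partial_{S-\{s\}}^T\widetilde{\mathfrak{C}(Y)}$ is $(|T|-|S|+2)$-cartesian, and applied to $(S-\{s\},T-\{s\})$ it gives that the source $\partial_{S-\{s\}}^{T-\{s\}}\widetilde{\mathfrak{C}(Y)}$ is $(|T|-|S|+1)$-cartesian. The last step is to combine these via the standard fiber sequence
\begin{align*}
  (\iter\hofib)\capZ\longrightarrow(\iter\hofib)\capA\longrightarrow(\iter\hofib)\capB
\end{align*}
attached to any morphism $\capA\rightarrow\capB$ of cubes with associated $(k+1)$-cube $\capZ$; the long exact sequence of homotopy groups immediately forces $(\iter\hofib)\partial_S^T\widetilde{\mathfrak{C}(Y)}$ to be $(|T|-|S|)$-connected, which is exactly the claim that $\partial_S^T\widetilde{\mathfrak{C}(Y)}$ is $(|T|-|S|+1)$-cartesian.

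There is no genuine obstacle here, only bookkeeping. The two points to stay honest about are (i) keeping straight the convention that a cube is $m$-cartesian precisely when its total (equivalently, iterated) homotopy fiber is $(m-1)$-connected, which is already implicit in the paper via the passage from Proposition \ref{prop:comparing_faces_of_coface_cube_with_codegeneracy_cube} to the statement of Proposition \ref{prop:base_case_of_induction_argument_connectivity}, and (ii) correctly identifying which of the two $(|T|-|S|)$-cubes is the source versus the target of the new $1$-cube direction in $\partial_{S-\{s\}}^T\widetilde{\mathfrak{C}(Y)}$, since this determines where the three pieces sit in the fiber sequence and hence which connectivity estimate transfers to which cube.
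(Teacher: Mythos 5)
Your proof is correct and follows the same route as the paper's: upward induction on $|S|$ with base case $|S|=1$ from Proposition \ref{prop:base_case_of_induction_argument_connectivity}, decomposing the $(|T|-|S|+1)$-cube $\partial_{S-\{s\}}^T\widetilde{\mathfrak{C}(Y)}$ as the map of $(|T|-|S|)$-cubes $\partial_{S-\{s\}}^{T-\{s\}}\widetilde{\mathfrak{C}(Y)}\rightarrow\partial_S^T\widetilde{\mathfrak{C}(Y)}$, and feeding in the induction hypothesis on both the total cube and the source. The only cosmetic difference is that the paper delegates the final step to Ching--Harper [3.8, 3.10], whereas you unwind the fiber sequence of iterated homotopy fibers and the resulting long exact sequence explicitly; this is the same argument in more elementary dress.
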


\begin{proof}
We want to verify \eqref{eq:connectivity_of_desired_face} for each $\emptyset\neq S\subset T$. We know that \eqref{eq:connectivity_of_desired_face} is true for $|S|=1$ by Proposition \ref{prop:base_case_of_induction_argument_connectivity}. We will argue by upward induction on $|S|$. Let $t\in S$ and note that the cube $\partial_{S-\{t\}}^T\widetilde{\mathfrak{C}(Y)}$ can be written as the composition of cubes
\begin{align*}
  \partial_{S-\{t\}}^{T-\{t\}}\widetilde{\mathfrak{C}(Y)}\longrightarrow
  \partial_S^T\widetilde{\mathfrak{C}(Y)}
\end{align*}
We know by the induction hypothesis that the composition is $(|T|-|S|+2)$-cartesian and that the left-hand cube is $(|T|-|S|+1)$-cartesian, hence it follows (Ching-Harper \cite[3.8, 3.10]{Ching_Harper}) that the right-hand cube is $(|T|-|S|+1)$-cartesian, which finishes the argument that \eqref{eq:connectivity_of_desired_face} is true for each $\emptyset\neq S\subset T$.
\end{proof}

\begin{thm}
\label{thm:cocartesian_and_cartesian_estimates}
Let $Y\in\coAlgK$ be cofibrant and $n\geq 1$. Consider the $\infty$-cartesian $(n+1)$-cube $\widetilde{\mathfrak{C}(Y)}$ in $\AlgO$ built from $\mathfrak{C}(Y)$. If $Y$ is $0$-connected, then
\begin{itemize}
\item[(a)] the cube $\widetilde{\mathfrak{C}(Y)}$ is $(2n+4)$-cocartesian in $\AlgO$,
\item[(b)] the cube $FQc\widetilde{\mathfrak{C}(Y)}$ is $(2n+4)$-cocartesian in $\AlgJ$,
\item[(c)] the cube $FQc\widetilde{\mathfrak{C}(Y)}$ is $(n+4)$-cartesian in $\AlgJ$.
\end{itemize}
\end{thm}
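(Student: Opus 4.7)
For part (a), the strategy is to apply the spectral algebra higher dual Blakers-Massey theorem of Ching-Harper \cite[1.11]{Ching_Harper} to the $(n+1)$-cube $\widetilde{\mathfrak{C}(Y)}$, which is $\infty$-cartesian by construction (its initial vertex is defined precisely as the homotopy limit of the punctured cube). The required input consists of cartesianness estimates for the various subface cubes of $\widetilde{\mathfrak{C}(Y)}$, and these are exactly what Proposition \ref{prop:estimates_for_the_face_from_S_to_T_of_associated_cube} delivers: for each $\emptyset\neq S\subset T\subset[n]$, the $(|T|-|S|)$-dimensional face $\partial_S^T\widetilde{\mathfrak{C}(Y)}$ is $(|T|-|S|+1)$-cartesian. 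Plugging these into the partition formula of \cite[1.11]{Ching_Harper} and optimizing over partitions of $[n]$ yields the claimed $(2n+4)$-cocartesian bound.

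For part (b), we apply the functor $FQc$ objectwise to the cube from (a). Since $Qc$ models the total left derived functor $\LL Q$, it is a left Quillen functor and therefore preserves homotopy colimits on cofibrant diagrams; in particular it preserves total homotopy cofibers up to weak equivalence. The simplicial fibrant replacement monad $F$ has a natural weak equivalence as its unit and so preserves connectivities. Consequently, the $(2n+4)$-cocartesianness of $\widetilde{\mathfrak{C}(Y)}$ in $\AlgO$ transfers directly to a $(2n+4)$-cocartesianness of $FQc\widetilde{\mathfrak{C}(Y)}$ in $\AlgJ$.

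For part (c), we exploit stability of the target. The Quillen equivalence $(h_*, h^*)$ associated to the weak equivalence $h:J\to\tau_1\capO$ identifies $\AlgJ$, up to Quillen equivalence, with $\Alg_{\tau_1\capO}$, which is the category of left $\capO[1]$-modules---a stable model category. In any stable setting, the total homotopy fiber and total homotopy cofiber of an $m$-cube differ by an $(m-1)$-fold loop, so a $k$-cocartesian $m$-cube is automatically $(k-m+1)$-cartesian. Applying this to the $(n+1)$-cube $FQc\widetilde{\mathfrak{C}(Y)}$ with $k=2n+4$ yields $(2n+4)-(n+1)+1 = n+4$, proving (c).

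The main technical obstacle is the partition-formula bookkeeping in part (a): one must carefully combine the cartesianness estimates that $\partial_S^T\widetilde{\mathfrak{C}(Y)}$ is $(|T|-|S|+1)$-cartesian into the precise numerical bound $(2n+4)$ required. Once (a) is in hand, the transfer in (b) is essentially formal (given that $Qc$ preserves the relevant homotopy colimits on cofibrant input), and the passage in (c) is a standard application of stability.
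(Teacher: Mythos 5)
Your overall strategy for parts (a) and (c) matches the paper. For (a) both you and the paper feed the cartesianness estimates of Proposition \ref{prop:estimates_for_the_face_from_S_to_T_of_associated_cube} into the higher dual Blakers-Massey theorem of Ching-Harper \cite[1.11]{Ching_Harper}; the paper only needs the faces $\partial_{W-V}^W$ with $W=[n]$ fixed, i.e.\ the $(|V|+1)$-cartesianness of the $V$-cubes containing the terminal vertex, but the machinery and the number $2(n+1)+2=2n+4$ are the same. For (c) both you and the paper reduce to the stable category $\Alg_{\tau_1\capO}\Iso\Mod_{\capO[1]}$ via the Quillen equivalence along $J\to\tau_1\capO$ and then convert cocartesianness to cartesianness; your final count $n+4$ is correct (one small slip: the total homotopy fiber of an $m$-cube in a stable category is an $m$-fold, not $(m-1)$-fold, loop of the total homotopy cofiber, but the resulting formula $(k-m+1)$ and the number $n+4$ come out right).

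The real problem is part (b). You argue that because $Qc$ is a left Quillen functor it preserves homotopy colimits and hence total homotopy cofibers, and then conclude that $(2n+4)$-cocartesianness ``transfers directly.'' This does not follow. Preserving homotopy colimits gives $\LL Q(\Tot\mathrm{cofib}\,\widetilde{\mathfrak{C}(Y)})\wequiv\Tot\mathrm{cofib}(\LL Q\,\widetilde{\mathfrak{C}(Y)})$, but it says nothing about the \emph{connectivity} of the result: you still need to know that $\LL Q$ takes a $(2n+4)$-connected object (or map) to a $(2n+4)$-connected object (or map). A generic left Quillen functor does not preserve connectivities, and $Q$ is the indecomposables quotient, not a stable smashing functor. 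The missing ingredient is precisely the relative $\TQ$-Hurewicz theorem of Harper-Hess \cite[1.8, 1.9]{Harper_Hess}, which the paper invokes at exactly this point, together with the fact that $Q$ is left Quillen; without it the passage from the cocartesian estimate in $\AlgO$ to the cocartesian estimate in $\AlgJ$ has no support.
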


\begin{proof}
Consider part (a) and let $W=[n]$. We want to use the higher dual Blakers-Massey theorem for structured ring spectra in Ching-Harper \cite[1.11]{Ching_Harper} to estimate how close the $W$-cube $\widetilde{\mathfrak{C}(Y)}$ in $\AlgO$ is to being cocartesian. We know from Proposition \ref{prop:estimates_for_the_face_from_S_to_T_of_associated_cube} that for each nonempty subset $V\subset W$, the $V$-cube $\partial_{W-V}^W\widetilde{\mathfrak{C}(Y)}$ is $(|V|+1)$-cartesian; it is $\infty$-cartesian by construction when $V=W$. Hence it follows from \cite[1.11]{Ching_Harper} that $\widetilde{\mathfrak{C}(Y)}$ is $(2(n+1)+2)$-cocartesian in $\AlgO$, which finishes the proof of part (a). Part (b) follows from the fact that $\function{Q}{\AlgO}{\AlgJ}$ is a left Quillen functor together with the relative $\TQ$-Hurewicz theorem in Harper-Hess \cite[1.9]{Harper_Hess}. Part (c) follows from the fact that $\AlgJ$ and $\Alg_{\tau_1\capO}\Iso\Mod_{\capO[1]}$ are Quillen equivalent (Section \ref{sec:TQ_homology_completion}) via the change of operads adjunction along $J\rarrow\tau_1\capO$, together with Ching-Harper \cite[3.10]{Ching_Harper}.
\end{proof}

\begin{proof}[Proof of Theorem \ref{thm:connectivities_for_map_that_commutes_Q_into_inside_of_holim}]
We want to verify that the comparison map
\begin{align*}
  \LL Q\holim\nolimits_{\Delta^{\leq n}} C(Y)\longrightarrow
  \holim\nolimits_{\Delta^{\leq n}} \LL Q\,C(Y),
\end{align*}
is $(n+4)$-connected, which is equivalent to verifying that $\LL Q\widetilde{\mathfrak{C}(Y)}$ is $(n+4)$-cartesian.  Since $\LL Q\wequiv FQc$, Theorem \ref{thm:cocartesian_and_cartesian_estimates}(c) completes the proof.
\end{proof}

\section{Proofs}
\label{sec:proofs}

The purpose of this section is to prove Propositions \ref{prop:useful_properties_of_the_adjunction}, \ref{prop:unit_and_counit_are_simplicial}, \ref{prop:basic_properties_of_Tot_towers_etc}, \ref{prop:tot_commutes_with_realization}, and \ref{prop:holim_commutes_with_realization} that were used in this paper.

\begin{proof}[Proof of Proposition \ref{prop:useful_properties_of_the_adjunction}]
Part (c) follows from the observation that the underlying $\capR$-modules are identical and the $\capO$-action maps are the same. Consider part (a). This follows easily from part (c), together with the Yoneda lemma, by verifying there are natural isomorphisms
\begin{align*}
  \hom(f_*(X\tensordot K),Y)\xrightarrow[\Iso]{\varphi}
  \hom(f_*(X)\tensordot K,Y).
\end{align*}
In particular, $\sigma$ is the image under $\varphi$ of the identity map on $f_*(X\tensordot K)$ and $\sigma^{-1}$ is the image under $\varphi^{-1}$ of the identity map on $f_*(X)\tensordot K$. Consider part (b). The indicated isomorphism of simplicial sets is defined objectwise by the composition of natural isomorphisms
\begin{align*}
  \hom(f_*(X)\tensordot\Delta[n],Y)\Iso
  \hom(f_*(X\tensordot\Delta[n]),Y)\Iso
  \hom(X\tensordot\Delta[n],f^*(Y)),
\end{align*}
where the left-hand isomorphism is the map $(\sigma^{-1},\id)$. Consider part (d). The map $\sigma$ is the map induced by $\function{f\circ\id}{\capO\circ(Y\Smash K_+)}{\capO'\circ(Y\Smash K_+)}$ via the colimit description of tensor product in \eqref{eq:tensordot_definition_as_coequalizer}; it will be useful to note that $\sigma$ can also be described as the image of the identity map on $Y\tensordot K$ under the composition of maps
\begin{align*}
  \hom(Y\tensordot K,Y')&\Iso
  \hom(Y,\hombold(K,Y'))\xrightarrow{f^*}
  \hom\bigl(f^*(Y),\hombold(K,f^*(Y'))\bigr)\\
  &\Iso
  \hom(f^*(Y)\tensordot K,f^*(Y'))
\end{align*}
Here, we used the identification in part (c). Consider part (e). It suffices to verify that corresponding diagrams (1) and (2) in \cite[9.8.5]{Hirschhorn} commute. In the case of $f_*$, this follows from the fact that the structure maps are the canonical isomorphisms. Consider the case of $f^*$. Verifying that diagram (1) in \cite[9.8.5]{Hirschhorn} commutes follows most easily from using the colimit description in \eqref{eq:tensordot_definition_as_coequalizer}, and verifying that the following diagram
\begin{align}
\label{eq:sigma_respects_composition}
\xymatrix{
  f^*\bigl(Y\tensordot(K\times L)\bigr) &
  f^*(Y)\tensordot(K\times L)\ar[l]_-{\sigma} &
  \bigl(f^*(Y)\tensordot K\bigr)\tensordot L
  \ar_-{\Iso}[l]\ar[d]^-{\sigma\tensordot\id}\\
  f^*\bigl((Y\tensordot K)\tensordot L\bigr)\ar[u]^-{\Iso} &&
  f^*(Y\tensordot K)\tensordot L\ar[ll]_-{\sigma}
}
\end{align}
commutes---this corresponds to diagram (2) in \cite[9.8.5]{Hirschhorn}---follows similarly from \eqref{eq:tensordot_definition_as_coequalizer}, together with the fact that $f$ respects the operad multiplication maps $\capO\circ\capO\rarrow\capO$ and $\capO'\circ\capO'\rarrow\capO'$.
\end{proof}

\begin{proof}[Proof of Proposition \ref{prop:unit_and_counit_are_simplicial}]
Consider the case of the unit map. By Proposition \ref{prop:characterization_of_simplicial_natural_transformations}, it suffices to verify that the diagram
\begin{align*}
\xymatrix{
  X\tensordot K\ar[d]^-{\eta_X\tensordot\id}\ar@{=}[rr] &&
  X\tensordot K\ar[d]^-{\eta_{X\tensordot K}}\\
  f^*f_*(X)\tensordot K\ar[r]_-{\sigma} &
  f^*(f_*(X)\tensordot K) &
  f^*f_*(X\tensordot K)\ar[l]_-{\Iso}^-{f^*(\sigma^{-1})}
}
\end{align*}
commutes; hence it suffices to verify that the two composite maps of the form
$X\tensordot K\rightrightarrows f^*(f_*(X)\tensordot K)$
are identical. This follows by working with the hom-set description of the indicated $\sigma^{-1}$ and $\sigma$ maps (see the proof of Proposition \ref{prop:useful_properties_of_the_adjunction}) and noting that each arrow is the image of the identity map on $f_*(X)\tensordot K$ under the composition of natural isomorphisms
\begin{align*}
  \hom(f_*(X)\tensordot K,Y)&\Iso
  \hom(f_*(X),\hombold(K,Y))\Iso
  \hom(X,f^*\hombold(K,Y)\\
  &\Equal
  \hom(X,\hombold(K,f^*Y)\Iso
  \hom(X\tensordot K,f^*Y).
\end{align*}
Consider the case of the counit map. It suffices to verify that the diagram
\begin{align*}
\xymatrix{
  f_*f^*(Y)\tensordot K\ar[d]^-{\varepsilon_Y\tensordot\id} &
  f_*(f^*(Y)\tensordot K)\ar[r]^-{f_*(\sigma)}\ar[l]_-{\sigma^{-1}}^-{\Iso} &
  f_*f^*(Y\tensordot K)\ar[d]^-{\varepsilon_{Y\tensordot K}}\\
  Y\tensordot K\ar@{=}[rr] &&
  Y\tensordot K
}
\end{align*}
commutes; this follows, similar to above, by noting that the two composite maps of the form $f_*(f^*(Y)\tensordot K)\rightrightarrows Y\tensordot K$ are each equal to the image of the identity map on $Y\tensordot K$ under the composition of natural maps
\begin{align*}
  \hom(Y\tensordot K, Y')&\Iso
  \hom(Y,\hombold(K,Y'))\xrightarrow{f^*}
  \hom\bigr(f^*(Y),\hombold(K,f^*(Y'))\bigl)\\
  &\Iso
  \hom(f^*(Y)\tensordot K,f^*(Y'))\Iso
  \hom(f_*(f^*(Y)\tensordot K),Y')
\end{align*}
Here we have used the identification in Proposition \ref{prop:useful_properties_of_the_adjunction}(c). The remaining two cases follow from the fact that the multiplication map is $f^*\varepsilon f_*$, the comultiplication map is $f_*\eta f^*$, and composing a simplicial natural transformation with a simplicial functor, on the left or right, gives a simplicial natural transformation.
\end{proof}

\begin{proof}[Proof of Proposition \ref{prop:basic_properties_of_Tot_towers_etc}]
Consider part (a). By definition, $X$ is Reedy fibrant if the natural map $X^s\rarrow M^{s-1}X$ is a fibration for each $s\geq 0$. It follows from the pullback diagrams in Proposition \ref{prop:key_pullback_diagram_for_Tot_tower} that each map $\Tot_s(X)\rarrow\Tot_{s-1}(X)$ is a fibration, and hence each natural map $\Tot(X)\rarrow\Tot_s(X)$ is a fibration. Part (b) follows similarly from the pullback diagrams in Proposition \ref{prop:key_pullback_diagram_for_restricted_Tot_tower}. Consider part (c). Since $X\rarrow X'$ is a weak equivalence between Reedy fibrant objects, it follows from Goerss-Jardine \cite[VIII.1]{Goerss_Jardine} that $M^{s-1}X\rarrow M^{s-1}X'$ is a weak equivalence between fibrant objects. Hence by  Proposition \ref{prop:key_pullback_diagram_for_Tot_tower} the induced map $\{\Tot_s X\}\rarrow\{\Tot_s X'\}$ is an objectwise weak equivalence between towers of fibrations, and applying the limit functor $\lim_s$ finishes the proof that $\Tot(X)\rarrow\Tot(X')$ is a weak equivalence. Part (d) follows similarly by using Proposition \ref{prop:key_pullback_diagram_for_restricted_Tot_tower} instead of Proposition \ref{prop:key_pullback_diagram_for_Tot_tower}.
\end{proof}

\subsection{Proofs: totalization and $\holim_\Delta$ commute with realization}

The purpose of this section is to prove Propositions \ref{prop:tot_commutes_with_realization} and \ref{prop:holim_commutes_with_realization}.

Recall that the realization functor fits into an adjunction
\begin{align}
\label{eq:realization_adjunction_sSet_CGHaus}
&\xymatrix{
  \sSet
  \ar@<0.5ex>[r]^-{|-|} &
  \CGHaus,\ar@<0.5ex>[l]^-{\Sing}
}
\end{align}
with left adjoint on top and right adjoint the singular simplicial set functor defined objectwise by $\Sing Y:=\hom_\CGHaus(\Delta^{(-)},Y)$; see, for instance, \cite[I.1, II.3]{Goerss_Jardine}. In addition to the fact that  \eqref{eq:realization_adjunction_sSet_CGHaus} is a Quillen equivalence (see, for instance, \cite{Goerss_Jardine}), the following properties of the realization functor will also be important.

\begin{prop}
\label{prop:nice_properties_of_realization_into_CGHaus}
The realization functor $\function{|-|}{\sSet}{\CGHaus}$
\begin{itemize}
\item[(a)] commutes with finite limits,
\item[(b)] preserves fibrations,
\item[(c)] preserves weak equivalences.
\end{itemize}
\end{prop}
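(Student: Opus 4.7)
All three statements are classical properties of geometric realization and our plan is essentially to reduce each claim to a well-known fact in the literature, taking care that we are working in $\CGHaus$ rather than in the full category of topological spaces.

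For part (a), the realization functor is a left adjoint in \eqref{eq:realization_adjunction_sSet_CGHaus} and so automatically preserves all small colimits; the content of the claim is therefore commutation with finite \emph{limits}. The plan is to reduce to two cases: (i) finite products and (ii) equalizers. Equalizers are easy because realization is defined as a coend $X\times_\Delta\Delta^{(-)}$ which, when evaluated levelwise on a subobject inclusion, produces a subspace with the subspace topology; alternatively one can apply the fact that in $\CGHaus$ a finite limit is computed as the Kelleyfication of the corresponding limit of underlying sets with the subspace topology, and the underlying set of $|X|$ is $\colim_{\Delta^\op}X_n\times\Delta^n$, which commutes with equalizers on $X$. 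The main step is commutation with finite products, which is Milnor's theorem: the natural continuous bijection $|X\times Y|\rarrow |X|\times|Y|$ is a homeomorphism precisely because we compute the right-hand product in $\CGHaus$ (i.e.\ as a Kelley product). This is exactly the reason the codomain of $|-|$ is taken to be $\CGHaus$ rather than the category of all topological spaces.

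For part (b), the plan is to cite Quillen's theorem that the geometric realization of a Kan fibration is a Serre fibration; this remains true when the codomain is taken to be $\CGHaus$, since Serre fibrations of compactly generated Hausdorff spaces are the fibrations of the standard model structure on $\CGHaus$. For part (c) the plan is to appeal to the Quillen equivalence \eqref{eq:realization_adjunction_sSet_CGHaus}: every simplicial set is cofibrant in $\sSet$, and a left Quillen functor preserves weak equivalences between cofibrant objects (Ken Brown's lemma). Alternatively, one may combine part (b) with the long exact sequence of a fibration, or appeal directly to the classical fact that $|-|$ induces an isomorphism $\pi_n(X)\iso\pi_n(|X|)$ for every simplicial set $X$.

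There is no real obstacle here; the only subtle point is that part (a) fails if the target category is taken to be all topological spaces, so the proof must genuinely use the $\CGHaus$ convention of Definition \ref{defn:realization_sSet}. We will record parts (a)--(c) as consequences of Milnor's theorem, Quillen's theorem, and Ken Brown's lemma applied to the Quillen equivalence \eqref{eq:realization_adjunction_sSet_CGHaus}, respectively, and this will suffice for the subsequent applications in Propositions \ref{prop:tot_commutes_with_realization} and \ref{prop:holim_commutes_with_realization}.
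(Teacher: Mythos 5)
The paper does not actually prove this proposition: it is stated as a classical fact about geometric realization valued in compactly generated Hausdorff spaces, and then immediately used in the proof of Proposition~\ref{prop:realization_preserves_reedy_fibrant_objects}. Your sketch is the standard justification and is correct: for (a), preservation of finite products is Milnor's theorem as developed in Gabriel--Zisman, and the $\CGHaus$ convention of Definition~\ref{defn:realization_sSet} is exactly what makes this work (it fails in $\mathsf{Top}$); for (b), Quillen's theorem that realizations of Kan fibrations are Serre fibrations, which are the fibrations of the standard model structure on $\CGHaus$; for (c), Ken Brown's lemma applied to the Quillen equivalence \eqref{eq:realization_adjunction_sSet_CGHaus}, using that every simplicial set is cofibrant. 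The one place your sketch is compressed is the equalizer case of (a): the assertion that the underlying-set realization $X\mapsto\colim_n X_n\times\Delta^n$ ``commutes with equalizers on $X$'' is not formal (colimits do not generally preserve equalizers); it is a combinatorial statement resting on the Eilenberg--Zilber lemma (unique nondegenerate normal form for points of $|X|$), after which one also needs that the realization of a monomorphism is a closed inclusion so that the subspace topology is correct. The cleanest route is simply to cite Gabriel--Zisman \S III.3 for (a). None of this amounts to a gap, as these are thoroughly documented classical results, and your proposal would suffice for the subsequent applications in Propositions~\ref{prop:tot_commutes_with_realization} and \ref{prop:holim_commutes_with_realization}.
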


\begin{prop}
\label{prop:realization_preserves_reedy_fibrant_objects}
If $Y\in(\sSet)^\Delta$ is Reedy fibrant, then $|Y|\in(\CGHaus)^\Delta$ is Reedy fibrant.
\end{prop}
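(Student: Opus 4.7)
The plan is to reduce Reedy fibrancy of $|Y|$ to the two properties of realization given in Proposition \ref{prop:nice_properties_of_realization_into_CGHaus}, namely that $|-|$ preserves fibrations and commutes with finite limits. Recall that by definition $Y\in(\sSet)^\Delta$ is Reedy fibrant if and only if, for every $n\geq 0$, the natural map $Y^n\rarrow M^{n-1}Y$ to the matching object is a fibration in $\sSet$ (see \cite[VIII.1]{Goerss_Jardine}); the analogous condition in $(\CGHaus)^\Delta$ is what needs to be verified for $|Y|$.

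First I would observe that each matching object $M^{n-1}Y$ is defined as a finite limit (an end over the matching category of $[n]$, which is a finite category). Since realization commutes with finite limits by Proposition \ref{prop:nice_properties_of_realization_into_CGHaus}(a), there is a natural isomorphism
\begin{align*}
  |M^{n-1}Y|\Iso M^{n-1}|Y|
\end{align*}
in $\CGHaus$, for each $n\geq 0$, where $|Y|$ denotes the levelwise realization of $Y$ in $(\CGHaus)^\Delta$. Next, since $Y$ is Reedy fibrant, the map $Y^n\rarrow M^{n-1}Y$ is a fibration in $\sSet$, and applying part (b) of Proposition \ref{prop:nice_properties_of_realization_into_CGHaus} shows that $|Y^n|\rarrow|M^{n-1}Y|$ is a fibration in $\CGHaus$.

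Finally, I would combine these two facts: composing the fibration $|Y^n|\rarrow|M^{n-1}Y|$ with the natural isomorphism $|M^{n-1}Y|\Iso M^{n-1}|Y|$ yields that the natural map
\begin{align*}
  |Y|^n\rarrow M^{n-1}|Y|
\end{align*}
is a fibration in $\CGHaus$ for each $n\geq 0$, which is exactly the condition that $|Y|$ be Reedy fibrant. There is no real obstacle here; the only subtle point is the verification that the canonical comparison $|M^{n-1}Y|\rarrow M^{n-1}|Y|$ induced by the projection maps agrees with the isomorphism coming from preservation of finite limits, which is immediate from the universal property of the matching object as an end and the fact that $|-|$, as a left adjoint commuting with finite limits, sends the defining limit diagram in $\sSet$ to the corresponding limit diagram in $\CGHaus$.
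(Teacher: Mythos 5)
Your proof is correct and follows the same argument as the paper: apply preservation of finite limits to identify $|M^{n-1}Y|\Iso M^{n-1}|Y|$, then apply preservation of fibrations to the Reedy fibrancy maps. Your write-up just spells out a couple of details (the matching object being a finite limit, and the compatibility of the comparison map with the isomorphism) that the paper leaves implicit.
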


\begin{proof}
By assumption the canonical map $Y^{s+1}\rarrow M^s Y$ into the indicated matching object is a fibration for each $s\geq -1$. Since realization preserves finite limits and fibrations (Proposition \ref{prop:nice_properties_of_realization_into_CGHaus}), it follows that $|M^s Y|\Iso M^s|Y|$ and the natural map $|Y|^{s+1}\rarrow M^s|Y|$ is a fibration for each $s\geq -1$ which finishes the proof.
\end{proof}

\begin{prop}
\label{prop:sing_commutes_with_tot}
Let $Z\in(\CGHaus)^\Delta$. There is a natural isomorphism
\begin{align*}
  \Tot\Sing Z\Iso\Sing\Tot Z
\end{align*}
\end{prop}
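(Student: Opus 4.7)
The plan is to exploit that $\Sing$ is a right adjoint to realization (see \eqref{eq:realization_adjunction_sSet_CGHaus}) and therefore preserves all small limits, in particular ends. First I would rewrite both sides as ends:
\begin{align*}
  \Tot Z &\Iso \catend_{[n]\in\Delta}\Map_{\CGHaus}(\Delta^n, Z^n)\\
  \Tot\Sing Z &\Iso \catend_{[n]\in\Delta}\hombold_{\sSet}(\Delta[n],\Sing Z^n)
\end{align*}
following Definition \ref{defn:totalization_and_restricted_totalization}; here the simplicial structure on $\Tot$ in $\CGHaus$ is the one built from $\Delta^{(-)}$ via the topological mapping space, and in $\sSet$ it is built from $\Delta[-]$ via the simplicial mapping space $\hombold$. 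Applying $\Sing$ to the first line and commuting it past the end yields a natural isomorphism
\begin{align*}
  \Sing\Tot Z \Iso \catend_{[n]\in\Delta}\Sing\Map_{\CGHaus}(\Delta^n, Z^n).
\end{align*}

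Next I would establish the key identification
\begin{align*}
  \Sing\Map_{\CGHaus}(|K|,Y) \Iso \hombold_{\sSet}(K,\Sing Y),
\end{align*}
natural in $K\in\sSet$ and $Y\in\CGHaus$. This follows by checking simplicial levels: at level $m$, both sides compute $\hom_{\CGHaus}(|K|\times\Delta^m, Y)$, using the adjunction $(|-|,\Sing)$ together with the natural isomorphism $|K\times\Delta[m]|\Iso|K|\times\Delta^m$ in $\CGHaus$ coming from the fact that realization preserves finite products (this is where $\CGHaus$ is being used in place of the category of all topological spaces). Specializing to $K=\Delta[n]$ gives $\Sing\Map_{\CGHaus}(\Delta^n, Z^n)\Iso \hombold_{\sSet}(\Delta[n],\Sing Z^n)$, naturally in $[n]\in\Delta$.

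Finally, substituting this identification into each integrand and comparing with the end description of $\Tot\Sing Z$ above produces the desired natural isomorphism $\Sing\Tot Z\Iso \Tot\Sing Z$. There is no real obstacle here: the argument is a formal consequence of (i) $\Sing$ being a right adjoint and hence preserving ends, and (ii) realization preserving finite products, which is exactly why one works in $\CGHaus$ rather than all topological spaces. The only care required is to keep the two notions of internal hom (the topological $\Map_{\CGHaus}$ and the simplicial $\hombold_{\sSet}$) clearly distinguished when unwinding the end formulas.
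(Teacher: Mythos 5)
Your proof is correct, and it rests on exactly the same underlying fact as the paper's argument — that realization preserves finite products in $\CGHaus$, so that $|K\times\Delta[m]|\cong|K|\times\Delta^m$ — but it reaches the conclusion by a direct end computation rather than by the paper's appeal to uniqueness of adjoints. The paper sets up the commuting square of adjunctions
\begin{align*}
\xymatrix{
  (\sSet)^\Delta\ar@<0.5ex>[r]^-{|-|}\ar@<0.5ex>[d]^-{\Tot} &
  (\CGHaus)^\Delta\ar@<0.5ex>[l]^-{\Sing}\ar@<0.5ex>[d]^-{\Tot}\\
  \sSet\ar@<0.5ex>[r]^-{|-|}\ar@<0.5ex>[u]^-{-\times\Delta[-]} &
  \CGHaus\ar@<0.5ex>[l]^-{\Sing}\ar@<0.5ex>[u]^-{-\times\Delta^{(-)}}
}
\end{align*}
notes that the square of left adjoints commutes up to natural isomorphism via $|X\times\Delta[-]|\cong|X|\times\Delta^{(-)}$, and concludes that the square of right adjoints commutes by uniqueness of right adjoints. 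Your version unwinds this: you push $\Sing$ past the end (using that it is a right adjoint), then identify the integrand by the enriched-adjunction isomorphism $\Sing\Map_{\CGHaus}(|K|,Y)\cong\hombold_{\sSet}(K,\Sing Y)$. That identity is precisely the simplicial-enrichment content of the fact that the two composites of left adjoints in the paper's square agree, so the two proofs are logically equivalent. What yours buys is that it is entirely explicit and makes visible exactly where the two internal homs ($\Map_{\CGHaus}$ versus $\hombold_{\sSet}$) are being compared; what the paper's buys is brevity and the avoidance of any end-manipulation. One minor point worth flagging in your write-up: you should note explicitly that the end over $\Delta$ can be written as a small limit (an equalizer of products), so that preservation of limits by $\Sing$ indeed gives preservation of ends — you gesture at this but it is the only step where a reader might pause.
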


\begin{proof}
Consider the following diagram of adjunctions
\begin{align*}
\xymatrix{
  (\sSet)^\Delta\ar@<0.5ex>[r]^-{|-|}\ar@<0.5ex>[d]^-{\Tot} &
  (\CGHaus)^\Delta\ar@<0.5ex>[l]^-{\Sing}\ar@<0.5ex>[d]^-{\Tot}\\
  \sSet\ar@<0.5ex>[r]^-{|-|}\ar@<0.5ex>[u]^-{-\times\Delta[-]} &
  \CGHaus\ar@<0.5ex>[l]^-{\Sing}\ar@<0.5ex>[u]^-{-\times\Delta^{(-)}}
}
\end{align*}
with left adjoints on top and on the left. Since $|X\times\Delta[-]|\Iso|X|\times\Delta^{(-)}$, this diagram commutes up to natural isomorphism, and hence uniqueness of right adjoints (up to isomorphism) finishes the proof.
\end{proof}

\begin{prop}
\label{prop:sing_preserves_reedy_fibrant_objects}
If $Z\in(\CGHaus)^\Delta$ is Reedy fibrant, then $\Sing Z\in(\sSet)^\Delta$ is Reedy fibrant.
\end{prop}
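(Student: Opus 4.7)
The plan is to mimic the argument used in Proposition \ref{prop:realization_preserves_reedy_fibrant_objects} for realization, but exploit the dual fact that $\Sing$ is the right adjoint (and right Quillen functor) in the adjunction \eqref{eq:realization_adjunction_sSet_CGHaus}. Unpacking the definition, to verify that $\Sing Z$ is Reedy fibrant amounts to verifying that for each $s \geq -1$ the canonical map $(\Sing Z)^{s+1} \rarrow M^s(\Sing Z)$ is a fibration in $\sSet$ (i.e., a Kan fibration).

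First I would observe that $\Sing$, being a right adjoint, preserves all small limits; in particular, since matching objects are built as finite limits indexed over the appropriate punctured latching category, there is a natural isomorphism $\Sing(M^s Z) \iso M^s(\Sing Z)$ in $\sSet$, and the canonical map into the matching object is sent to the canonical map into the matching object. Next I would invoke the well-known fact, part of the standard Quillen equivalence \eqref{eq:realization_adjunction_sSet_CGHaus}, that $\Sing$ is a right Quillen functor; i.e., it preserves fibrations (sending Serre fibrations to Kan fibrations) and acyclic fibrations.

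Combining these two facts: since $Z$ is Reedy fibrant, the map $Z^{s+1} \rarrow M^s Z$ is a fibration in $\CGHaus$ for each $s \geq -1$; applying $\Sing$ then yields a fibration
\begin{align*}
  (\Sing Z)^{s+1} \Equal \Sing(Z^{s+1}) \longrightarrow \Sing(M^s Z) \Iso M^s(\Sing Z)
\end{align*}
in $\sSet$, which is precisely the Reedy fibrancy condition. There is no real obstacle in this argument; the only thing one must be slightly careful about is checking that the natural isomorphism $\Sing(M^s Z) \iso M^s(\Sing Z)$ does identify the map induced by $\Sing$ with the canonical map into the matching object of $\Sing Z$, which is immediate from the universal property of limits since both are constructed via the same diagram in the underlying categories.
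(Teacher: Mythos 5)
Your proof is correct and follows exactly the same route as the paper: it uses that $\Sing$ preserves (finite) limits so matching objects commute with $\Sing$, and that $\Sing$ preserves fibrations, applied to the maps $Z^{s+1}\rarrow M^s Z$. The paper simply abbreviates this by pointing to the parallel argument for realization; you have expanded that argument with the same ingredients.
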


\begin{proof}
Arguing exactly as in the proof of Proposition \ref{prop:realization_preserves_reedy_fibrant_objects}, this is because $\Sing$ preserves limits and fibrations.
\end{proof}

\begin{prop}
\label{prop:sing_of_realization_of_reedy_fibrant_is_reedy_fibrant}
If $Y\in(\sSet)^\Delta$ is Reedy fibrant, then $\Sing|Y|\in(\sSet)^\Delta$ is Reedy fibrant.
\end{prop}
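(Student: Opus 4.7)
The plan is to observe that this is an immediate two-step composition of the preceding two propositions, and write up exactly that.

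First I would apply Proposition \ref{prop:realization_preserves_reedy_fibrant_objects} to the hypothesis that $Y \in (\sSet)^\Delta$ is Reedy fibrant, to conclude that $|Y| \in (\CGHaus)^\Delta$ is Reedy fibrant. Then I would apply Proposition \ref{prop:sing_preserves_reedy_fibrant_objects} to $|Y|$ to conclude that $\Sing|Y| \in (\sSet)^\Delta$ is Reedy fibrant, which is what we want.

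If I wanted to make this more self-contained rather than just citing, the underlying argument is the same pattern used in both of those propositions: for each $s \geq -1$, Reedy fibrancy amounts to the canonical map into the matching object $(\Sing|Y|)^{s+1} \to M^s(\Sing|Y|)$ being a fibration in $\sSet$. Since both $|-|$ and $\Sing$ preserve finite limits (Proposition \ref{prop:nice_properties_of_realization_into_CGHaus}(a) together with the fact that $\Sing$ is a right adjoint in \eqref{eq:realization_adjunction_sSet_CGHaus}), we get $M^s(\Sing|Y|) \cong \Sing|M^s Y|$, and since both $|-|$ and $\Sing$ preserve fibrations (Proposition \ref{prop:nice_properties_of_realization_into_CGHaus}(b) and the fact that $\Sing$ is right Quillen in \eqref{eq:realization_adjunction_sSet_CGHaus}), the fibration $Y^{s+1} \to M^s Y$ is sent by $\Sing \circ |-|$ to a fibration, which is precisely the desired matching map.

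There is no real obstacle here; the statement is a formal corollary packaged for later convenient reference, and both ingredients have already been established in the preceding two propositions. I would therefore keep the write-up to one or two sentences, simply citing Propositions \ref{prop:realization_preserves_reedy_fibrant_objects} and \ref{prop:sing_preserves_reedy_fibrant_objects} in sequence.
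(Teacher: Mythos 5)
Your proposal matches the paper's proof exactly: the paper's argument is precisely the one-line citation chain applying Proposition \ref{prop:realization_preserves_reedy_fibrant_objects} and then Proposition \ref{prop:sing_preserves_reedy_fibrant_objects}. The extra unpacking you give (commuting both functors past matching objects and using that both preserve fibrations) is a correct restatement of the underlying mechanism but not required.
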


\begin{proof}
This follows from Propositions \ref{prop:realization_preserves_reedy_fibrant_objects} and \ref{prop:sing_preserves_reedy_fibrant_objects}.
\end{proof}

\begin{prop}
\label{prop:working_out_the_weak_equivalences}
If $Y\in(\sSet)^\Delta$ is Reedy fibrant, then the natural maps
\begin{align}
\label{eq:into_tot_sing_realization}
  \Tot Y\xrightarrow{\wequiv}&\Tot\Sing|Y|\\
  \label{eq:into_realization_tot_sing_realization}
  |\Tot Y|\xrightarrow{\wequiv}&|\Tot\Sing(|Y|)|\\
  \label{eq:into_tot_realization}
  |\Sing\Tot(|Y|)|\xrightarrow{\wequiv}&\Tot|Y|
\end{align}
are weak equivalences.
\end{prop}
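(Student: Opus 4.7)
The plan is to deduce all three weak equivalences from the Quillen equivalence $(|-|,\Sing)$ in \eqref{eq:realization_adjunction_sSet_CGHaus}, combined with the fact that the relevant $\Tot$ functors preserve weak equivalences between Reedy fibrant cosimplicial objects (the analog of Proposition \ref{prop:basic_properties_of_Tot_towers_etc}(c) for $\sSet$ and $\CGHaus$, which is proved identically using the pullback description of $\Tot_s$).

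For the first map, I would observe that the unit $\function{\eta}{Y}{\Sing|Y|}$ is a levelwise weak equivalence in $(\sSet)^\Delta$, since every simplicial set is cofibrant and $(|-|,\Sing)$ is a Quillen equivalence. Both source and target are Reedy fibrant: $Y$ by assumption and $\Sing|Y|$ by Proposition \ref{prop:sing_of_realization_of_reedy_fibrant_is_reedy_fibrant}. Hence $\Tot$ (in $\sSet$) carries $\eta$ to a weak equivalence, which gives \eqref{eq:into_tot_sing_realization}. The second weak equivalence \eqref{eq:into_realization_tot_sing_realization} is then obtained by applying the realization functor, which preserves weak equivalences by Proposition \ref{prop:nice_properties_of_realization_into_CGHaus}(c).

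For the third map \eqref{eq:into_tot_realization}, I would use the counit $\function{\varepsilon}{|\Sing Z|}{Z}$ of the adjunction, which is a weak equivalence for every $Z\in\CGHaus$ (since $(|-|,\Sing)$ is a Quillen equivalence and every object in $\CGHaus$ is fibrant). Taking $Z:=\Tot|Y|$ in $\CGHaus$ produces \eqref{eq:into_tot_realization} directly, provided the displayed map is indeed this counit. To confirm this, I would unwind that the indicated natural map in \eqref{eq:into_tot_realization} factors through the natural isomorphism $\Tot\Sing|Y|\Iso\Sing\Tot|Y|$ of Proposition \ref{prop:sing_commutes_with_tot}, so that the map agrees with $\varepsilon_{\Tot|Y|}$ by naturality. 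The only subtle point in the whole argument is keeping track of fibrancy at each stage (so that $\Tot$ is homotopically meaningful), but this is handled by the chain of propositions \ref{prop:realization_preserves_reedy_fibrant_objects}, \ref{prop:sing_preserves_reedy_fibrant_objects}, and \ref{prop:sing_of_realization_of_reedy_fibrant_is_reedy_fibrant}, so no real obstacle remains.
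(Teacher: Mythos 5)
Your proof is correct and follows essentially the same line of argument as the paper: apply $\Tot$ to the unit $Y\rarrow\Sing|Y|$ (a weak equivalence of Reedy fibrant objects, using Proposition \ref{prop:sing_of_realization_of_reedy_fibrant_is_reedy_fibrant}), realize to get the second map, and identify the third map as the counit applied to the fibrant object $\Tot|Y|$. The only cosmetic difference is that you spend a sentence checking that \eqref{eq:into_tot_realization} really is the counit via the isomorphism of Proposition \ref{prop:sing_commutes_with_tot}; this is unnecessary, since the natural map $|\Sing Z|\rarrow Z$ is the counit by definition for any $Z\in\CGHaus$, and the detour through $\Tot\Sing\Iso\Sing\Tot$ only enters later in the proof of Proposition \ref{prop:tot_commutes_with_realization}.
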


\begin{proof}
We know that the natural map $Y\rarrow\Sing|Y|$ is a weak equivalence, since \eqref{eq:realization_adjunction_sSet_CGHaus} is a Quillen equivalence. Since $\Tot$ sends weak equivalences between Reedy fibrant objects to weak equivalences,  Proposition \ref{prop:sing_of_realization_of_reedy_fibrant_is_reedy_fibrant} verifies that \eqref{eq:into_tot_sing_realization} is a weak equivalence. It follows from \eqref{eq:into_tot_sing_realization} that the map \eqref{eq:into_realization_tot_sing_realization} is a weak equivalence since realization preserves weak equivalences. Finally, since $|Y|$ is Reedy fibrant (Proposition \ref{prop:realization_preserves_reedy_fibrant_objects}), we know that $\Tot|Y|$ is fibrant, and hence it follows that \eqref{eq:into_tot_realization} is a weak equivalence since \eqref{eq:realization_adjunction_sSet_CGHaus} is a Quillen equivalence.
\end{proof}

\begin{proof}[Proof of Proposition \ref{prop:tot_commutes_with_realization}]
The following concise line of argument is suggested in Dwyer \cite{Dwyer}.
Consider the commutative diagram
\begin{align*}
\xymatrix{
  |\Tot Y|\ar[d]^-{\wequiv}\ar[r] & \Tot|Y|\\
  |\Tot\Sing(|Y|)|\ar[r]^-{\Iso} & |\Sing\Tot (|Y|)|\ar[u]_-{\wequiv}
}
\end{align*}
By Propositions \ref{prop:working_out_the_weak_equivalences} and \ref{prop:sing_commutes_with_tot} the vertical maps are weak equivalences and the bottom map is an isomorphism, hence the top map is weak equivalence. A similar argument verifies the $\Tot^\res$ case; by replacing Reedy fibrant with objectwise fibrant everywhere.
\end{proof}

\begin{prop}
\label{prop:objectwise_fibrant_to_reedy_fibrant}
If $Y\in(\sSet)^\Delta$ is objectwise fibrant, then the cosimplicial replacements $\prod^*Y\in(\sSet)^\Delta$ and $\prod^*|Y|\in(\CGHaus)^\Delta$ are Reedy fibrant.
\end{prop}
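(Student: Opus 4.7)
The argument is the standard one from Bousfield--Kan \cite[XI.5.3]{Bousfield_Kan}; I sketch the line of attack. By definition one has
\begin{align*}
  \prod\nolimits^n Y \,=\, \prod_{\sigma\in N_n(\Delta)} Y(a_n),
\end{align*}
where $N_n(\Delta)$ denotes the set of $n$-chains $\sigma\colon a_0\to\cdots\to a_n$ in $\Delta$, the codegeneracy operators $s^j\colon\prod^n Y\to\prod^{n-1}Y$ being induced by ``inserting an identity in position $j$'' on chains. The plan is to identify the matching map $\prod^n Y\to M^{n-1}\prod^* Y$ explicitly as a projection onto a subproduct, so that objectwise fibrancy of $Y$ immediately yields that the matching map is a fibration in $\sSet$; this is exactly the Reedy fibrancy condition.

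First I would split the indexing set $N_n(\Delta)$ into its non-degenerate chains (those $\sigma$ with no $\mathrm{id}$ among the arrows) and its degenerate chains (the rest), noting that every degenerate $n$-chain arises from a unique non-degenerate $k$-chain, $k<n$, along a unique composition of codegeneracies. Using this bookkeeping and the universal property of $M^{n-1}\prod^* Y$ as an iterated equalizer of the codegeneracies $s^j$, one checks that the natural map
\begin{align*}
  \prod\nolimits^n Y \,\xrightarrow{\iso}\, P_{\mathrm{nd}}\times M^{n-1}\!\prod\nolimits^* Y,
\end{align*}
is an isomorphism, where $P_{\mathrm{nd}}:=\prod_{\sigma\;\mathrm{nondeg.}}Y(a_n)$, and that the matching map is carried by this isomorphism to the projection onto the second factor. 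Since $Y$ is objectwise fibrant, $P_{\mathrm{nd}}$ is a fibrant object of $\sSet$, and the projection away from a fibrant factor is a fibration. Hence $\prod^n Y\to M^{n-1}\prod^* Y$ is a fibration for every $n\geq 0$, which is the Reedy fibrancy of $\prod^* Y$.

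For $\prod^* |Y|\in(\CGHaus)^\Delta$ the argument is identical. The combinatorial identification above is purely formal and goes through verbatim in $\CGHaus$; it gives a natural isomorphism $\prod^n|Y|\iso|P_{\mathrm{nd}}|\times M^{n-1}\prod^*|Y|$ (here using that realization commutes with finite products, Proposition \ref{prop:nice_properties_of_realization_into_CGHaus}). Every object of $\CGHaus$ is fibrant, so projection away from $|P_{\mathrm{nd}}|$ is automatically a fibration, establishing Reedy fibrancy of $\prod^*|Y|$.

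The only potential obstacle is the combinatorial bookkeeping needed to pin down the matching map as a projection; once one has sorted through the degenerate-versus-non-degenerate decomposition of $n$-chains and matched this with the equalizer description of $M^{n-1}$, the rest is formal. This calculation is carried out in detail in \cite[XI.5.2--XI.5.3]{Bousfield_Kan}, and exactly the same argument works here.
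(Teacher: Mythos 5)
Your argument is exactly the Bousfield--Kan computation the paper has in mind (it cites \cite[XI.5.3]{Bousfield_Kan} for the analogous claim about acyclic fibrations in the proof of Proposition \ref{prop:holim_BK_adjunctions} and then leaves this one as an exercise): decompose the $n$-chains of $\Delta$ into nondegenerate and degenerate, use the Eilenberg--Zilber decomposition of $N\Delta$ to identify the degenerate factor with $M^{n-1}\prod^*Y$, and conclude that the matching map is the projection away from the nondegenerate factor. That identification and the conclusion for $\prod^*Y$ are correct.

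One small wrinkle in your treatment of $\prod^*|Y|$: you appeal to ``realization commutes with finite products'' and write $|P_{\mathrm{nd}}|$, as though the $\CGHaus$ decomposition were obtained by applying $|-|$ to the simplicial one. That step does not go through as stated, because $\prod^n Y$ is an infinite product (there are infinitely many $n$-chains in $\Delta$) and realization does not commute with infinite products. The fix is the one you also mention in passing: the decomposition is purely a statement about the indexing set, so it should be carried out directly on $|Y|\in(\CGHaus)^\Delta$, giving $\prod^n|Y|\cong\bigl(\prod_{\sigma\ \mathrm{nondeg.}}|Y(a_n)|\bigr)\times M^{n-1}\prod^*|Y|$ with no realization applied anywhere; the first factor is fibrant simply because every object of $\CGHaus$ is fibrant. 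So drop the appeal to Proposition \ref{prop:nice_properties_of_realization_into_CGHaus} and replace $|P_{\mathrm{nd}}|$ by the product formed in $\CGHaus$ --- otherwise the argument is sound.
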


\begin{proof}
This is an exercise left to the reader.
\end{proof}

\begin{prop}
\label{prop:realization_commutes_with_cosimplicial_replacement_weak_equivalence}
If $Y\in(\sSet)^\Delta$ is objectwise fibrant, then the natural map
\begin{align*}
  \xymatrix{|\prod^* Y|\xrightarrow{\wequiv}\prod^*|Y|}
\end{align*}
is a weak equivalence.
\end{prop}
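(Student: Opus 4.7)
The plan is to verify that the given natural transformation is a levelwise weak equivalence of cosimplicial objects in $\CGHaus$; for each $n\geq 0$ we must show that the map $\function{\phi_n}{|\prod^n Y|}{\prod^n|Y|}$ is a weak equivalence. Unpacking the definition of cosimplicial replacement, $\phi_n$ is the canonical comparison map from the realization of the product (indexed by strings $a_0\to\cdots\to a_n$ in $\Delta$) of the simplicial sets $Y(a_n)$ to the product of the spaces $|Y(a_n)|$. Since $Y$ is objectwise fibrant and Kan complexes are closed under arbitrary small products, $\prod^n Y$ is a Kan complex; similarly each $|Y(a_n)|$ is a CW complex and small products of fibrant objects in $\CGHaus$ are fibrant, so both source and target of $\phi_n$ are fibrant spaces.

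It therefore suffices to show $\phi_n$ induces an isomorphism on $\pi_k$ for each $k\geq 0$ at every basepoint. I would proceed via the chain of natural isomorphisms
\begin{align*}
  \pi_k|{\textstyle\prod}^n Y|
  \Iso\pi_k\bigl({\textstyle\prod}^n Y\bigr)
  \Iso{\textstyle\prod}\,\pi_k(Y(a_n))
  \Iso{\textstyle\prod}\,\pi_k|Y(a_n)|
  \Iso\pi_k\bigl({\textstyle\prod}^n |Y|\bigr),
\end{align*}
using in order: (i) that simplicial homotopy groups of a Kan complex agree with those of its realization, (ii) that simplicial $\pi_k$ commutes with small products of pointed Kan complexes, (iii) the same agreement from (i) applied coordinatewise, and (iv) that topological $\pi_k$ commutes with small products in $\CGHaus$ (valid because $S^k$ is compact). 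A straightforward diagram chase against each projection $\prod^n Y\rarrow Y(a_n)$ confirms that $\pi_k(\phi_n)$ agrees with this composite, hence $\phi_n$ is a weak equivalence.

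The main conceptual obstacle is that $|-|$ does not commute with infinite products on the nose---only with finite limits, by Proposition \ref{prop:nice_properties_of_realization_into_CGHaus}(a)---so the argument must proceed through homotopy groups rather than through an honest isomorphism of spaces. The saving feature is that both simplicial and topological $\pi_k$ do commute with small products, which upgrades coordinatewise agreement into a single weak equivalence once we have the canonical comparison map in hand. This is exactly the homotopical mechanism by which realization commutes with $\holim_\Delta^\BK$ of objectwise fibrant cosimplicial diagrams, as is needed in the proof of Proposition \ref{prop:holim_commutes_with_realization}.
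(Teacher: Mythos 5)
Your argument is correct, but it takes a genuinely different route from the paper. The paper's proof is the one-line remark that one argues exactly as in the proof of Proposition \ref{prop:tot_commutes_with_realization}: since $\Sing$ is a right adjoint it commutes with the limit-built functor $\prod^*$, so one writes down the commuting square
\begin{align*}
\xymatrix{
  |\prod^* Y|\ar[d]_-{\wequiv}\ar[r] & \prod^*|Y|\\
  |\prod^*\Sing(|Y|)|\ar[r]^-{\Iso} & |\Sing\prod^*(|Y|)|\ar[u]_-{\wequiv}
}
\end{align*}
where the left vertical map is a weak equivalence because the unit $Y\to\Sing|Y|$ is an objectwise weak equivalence of objectwise fibrant diagrams (so $\prod^*$ preserves it) and realization preserves weak equivalences, while the right vertical map is the counit of the realization--$\Sing$ Quillen equivalence applied levelwise. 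This argument is entirely formal: it never touches homotopy groups, and it is deliberately parallel to the $\Tot$ and $\Tot^\res$ cases. Your approach instead checks directly that each level $\phi_n\colon|\prod^n Y|\to\prod^n|Y|$ is a weak equivalence by chasing $\pi_k$ through the chain $\pi_k|\prod^n Y|\cong\pi_k(\prod^n Y)\cong\prod\pi_k Y(a_n)\cong\prod\pi_k|Y(a_n)|\cong\pi_k(\prod^n|Y|)$. This works, and it does correctly isolate the key points---that both simplicial $\pi_k$ (on Kan complexes) and topological $\pi_k$ commute with small products, and that the obstruction is precisely the failure of $|-|$ to preserve infinite limits on the nose---but it is lower-level and requires you to separately verify fibrancy, to treat all basepoints, and to confirm compatibility of your isomorphisms with $\phi_n$ via the projections. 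The adjunction argument in the paper absorbs all of these checks into the Quillen equivalence machinery and dovetails cleanly with Propositions \ref{prop:objectwise_fibrant_to_reedy_fibrant} and \ref{prop:tot_commutes_with_realization}, which is why it is preferred for the subsequent proof of Proposition \ref{prop:holim_commutes_with_realization}. Both proofs are valid; yours trades uniformity for explicitness.
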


\begin{proof}
This follows by arguing exactly as in the proof of Proposition \ref{prop:tot_commutes_with_realization} above.
\end{proof}

\begin{proof}[Proof of Proposition \ref{prop:holim_commutes_with_realization}]
The map \eqref{eq:natural_map_realization_into_holim_nice} factors as
\begin{align*}
  \xymatrix{|\Tot\prod^* Y|\rarrow
  \Tot|\prod^* Y|\rarrow
  \Tot\prod^* |Y|
  }
\end{align*}
The left-hand map is a weak equivalence by Propositions \ref{prop:objectwise_fibrant_to_reedy_fibrant} and \ref{prop:tot_commutes_with_realization}, and the right-hand map is a weak equivalence by Propositions \ref{prop:objectwise_fibrant_to_reedy_fibrant} and \ref{prop:realization_commutes_with_cosimplicial_replacement_weak_equivalence}, since $\Tot$ sends weak equivalences between Reedy fibrant objects to weak equivalences; hence the composition is a weak equivalence.
\end{proof}

\bibliographystyle{plain}
\bibliography{DerivedKoszulDuality.bib}

\end{document}